\newtheorem{theorem}{Theorem}
\newtheorem{lemma}[theorem]{Lemma}
\newtheorem{proposition}[theorem]{Proposition}
\newtheorem{claim}[theorem]{Claim}
\newtheorem{corollary}[theorem]{Corollary}
\theoremstyle{definition}
\newtheorem{definition}[theorem]{Definition}
\theoremstyle{remark}
\renewcommand{\ll}{\mbox{\rm [\![}}
\newcommand{\rr}{\mbox{\rm ]\!]}}
\def\around#1#2{#1\ll#2\rr}
\newcounter{rnc}
\newcommand{\Rn}[1]{\setcounter{rnc}{#1}\Roman{rnc}}
\newcommand{\id}{1_\Gamma}
\renewcommand{\G}{{\mathbf G}}
\newcommand{\bH}{{\mathbf H}}
\newcommand{\vE}{{\vec{E}}}
\newcommand{\vF}{{\vec{F}}}
\newcommand{\vG}{{\vec{G}}}
\newcommand{\bHead}{\mathrm{head}}
\newcommand{\cD}{{\cal D}}
\newcommand{\cDab}{{\cal D}_{\alpha, \beta}}
\newcommand{\cDabi}{{\cal D}_{\alpha_i, \beta_i}}
\newcommand{\cDabp}{{\cal D}_{\alpha', \beta'}}
\newcommand{\cDabpi}{{\cal D}_{\alpha'_i, \beta'_i}}
\newcommand{\tG}{{\tilde{G}}}
\newcommand{\tE}{{\tilde{E}}}
\newcommand{\tF}{{\tilde{F}}}
\newcommand{\ZZ}{{\mathbb Z}}
\newcommand{\two}{ /_{\!2} }
\newcommand{\three}{ /_{\!3} }
\newcommand{\conti}{ /_{\!i} }
\newcommand{\algoTTL}{{\sc TestTwoLabels}}
\newcommand{\algoFTP}{{\sc FindThreePaths}}
\newcommand{\ourtitle}{Finding~a~Path with~Two~Labels~Forbidden in~Group-Labeled~Graphs}
\title{\ourtitle\thanks{A preliminary version \cite{KKY} of this paper appeared in ICALP 2015.}}
\author{
  Yasushi Kawase\\[1mm]Tokyo Institute of Technology, Tokyo 152-8550, Japan.\\[0mm]Email: {\tt kawase.y.ab@m.titech.ac.jp}\vspace{2mm} \and
  Yusuke Kobayashi\\[1mm]Kyoto University, Kyoto 606-8502, Japan.\\[0mm]Email: {\tt yusuke@kurims.kyoto-u.ac.jp}\vspace{2mm} \and
  Yutaro Yamaguchi\thanks{The corresponding author. The full postal address is as follows: Department of Information and Physical Sciences, Osaka University, 1-5 Yamadaoka, Suita, Osaka 565-0871, Japan.}\\[1mm]Osaka University, Osaka 565-0871, Japan.\\[0mm]Email: \texttt{yutaro\_yamaguchi@ist.osaka-u.ac.jp}}
\date{\empty}
\begin{document}
\maketitle
\thispagestyle{empty}

\begin{abstract}
The parity of the length of paths and cycles
is a classical and well-studied topic in graph theory and theoretical computer science.
The parity constraints can be extended to label constraints
in a group-labeled graph, which is a directed graph with each arc labeled by an element of a group.
Recently, paths and cycles in group-labeled graphs have been investigated,
such as packing non-zero paths and cycles, where ``non-zero'' means that the identity element is a unique forbidden label.

In this paper, we present a solution to
finding an $s$--$t$ path with two labels forbidden in a group-labeled graph.
This also leads to an elementary solution to finding a zero $s$--$t$ path
in a $\ZZ_3$-labeled graph, which is the first nontrivial case of finding a zero path.
This situation in fact generalizes the 2-disjoint paths problem in undirected graphs,
which also motivates us to consider that setting.
More precisely, we provide a polynomial-time algorithm
for testing whether there are at most two possible labels of $s$--$t$ paths
in a group-labeled graph or not, and finding
$s$--$t$ paths attaining at least three distinct labels if exist.
The algorithm is based on a necessary and sufficient condition
for a group-labeled graph to have exactly two possible labels of $s$--$t$ paths,
which is the main technical contribution of this paper.
\end{abstract}

\paragraph{Keywords}
Group-labeled graph, non-zero path, $s$--$t$ path, 2-disjoint paths.

\clearpage
\thispagestyle{empty}
\tableofcontents
\clearpage
\setcounter{page}{1}

\section{Introduction}
\subsection{Background}\label{sec:background}
The parity of the length of paths and cycles in a graph
is a classical and well-studied topic in graph theory and theoretical computer science.
As the simplest example,
one can easily check the bipartiteness of a given undirected graph by determining whether it contains a cycle of odd length or not.
Also in a directed graph, a directed cycle of odd length can be detected in polynomial time by using the ear decomposition.
It is also an important problem to test whether a given directed graph contains a directed cycle of even length
or not, which is known to be equivalent to P\'{o}lya's permanent problem~\cite{Polya} (see, e.g., \cite{McCuaig}). 
A polynomial-time algorithm for this problem was 
devised by Robertson, Seymour, and Thomas~\cite{RST}.

In this paper, we focus on paths connecting two specified vertices $s$ and $t$. 
It is easy to test whether a given undirected graph
contains an $s$--$t$ path of odd (or even) length or not, 
whereas the same problem is NP-complete in the directed case~\cite{LP} (follows from~\cite{FHW}). 
A natural generalization of this problem is 
to consider paths of length $p$ modulo $q$.
One can easily see that, by considering the case when $q = 2$, 
the following  problems both generalize the problem of finding an odd (or even) $s$--$t$ path in an undirected graph:
\begin{itemize}
  \setlength{\itemsep}{.5mm}
\item
  finding an $s$--$t$ path of length $p$ modulo $q$ in an undirected graph, and
\item
  finding an $s$--$t$ path whose length is NOT $p$ modulo $q$ in an undirected graph, 
  which is equivalent to determining whether  
  all $s$--$t$ paths are of length $p$ modulo $q$ or not.
\end{itemize}
Although these two generalizations seem similar to each other, 
they are essentially different when $q \geq 3$. 
A linear-time algorithm for the second generalization 
was given by Arkin, Papadimitriou, and Yannakakis~\cite{APY} for any $q$, 
whereas not so much is known about the first generalization. 

\medskip
Recently, as another generalization of the parity constraints (including several other concepts such as contractibility in surfaces),
paths and cycles in a group-labeled graph have been investigated,
where a group-labeled graph is a directed graph with each arc labeled by an element of a group.
More specifically, for a fixed group $\Gamma$,
a pair of a directed graph and a mapping from its arc set to $\Gamma$
is called a {\em $\Gamma$-labeled graph}.
In a $\Gamma$-labeled graph, the label of a walk is defined
by sequential applications of the group operation of $\Gamma$ to the labels of the traversed arcs,
where each arc can be traversed in the backward direction by inverting its label
(see Section~\ref{sec:notations} for the precise definition).
Analogously to paths of length $p$ modulo $q$, 
it is natural to consider the following two problems\footnote{We remark that the group-labeled graphs do not generalize the setting ``$p$ modulo $q$'' when $q \geq 3$, because the labels are inverted when arcs are traversed in the backward direction.}: for a given element $\alpha \in \Gamma$,
\begin{itemize}
  \setlength{\itemsep}{.5mm}
\item[(\Rn{1})]
  finding an $s$--$t$ path of label $\alpha$ in a $\Gamma$-labeled graph, and
\item[(\Rn{2})]
  finding an $s$--$t$ path whose label is NOT $\alpha$ in a $\Gamma$-labeled graph,
  which is equivalent to determining whether  
  all $s$--$t$ paths are of label $\alpha$ or not.
\end{itemize}
Note that, when we consider Problem (\Rn{1}) or (\Rn{2}),
by changing uniformly the labels of the arcs around $s$ if necessary, 
we may assume that $\alpha$ is the identity element $\id \in \Gamma$.
Hence, each problem is equivalent to finding a path whose label is $\id$ or is not $\id$ in a $\Gamma$-labeled graph.
In what follows, we assume the black-box access to the underlying group $\Gamma$,
i.e., we can perform elementary operations in constant time
(see Section~\ref{sec:computation} for the precise assumption).

If the underlying group $\Gamma$ is $\ZZ_2 = \ZZ / 2\ZZ = (\{0, 1\}, +)$,
then the label of a path corresponds to the parity of the number of traversed arcs of label $1$.
Hence, by assigning label $1$ to all arcs,
both problems can formulate the problem of finding an odd (or even) $s$--$t$ path in an undirected graph.
We note that, in a $\ZZ_2$-labeled graph, 
finding an $s$--$t$ path of label $\alpha \in \ZZ_2$
is equivalent to finding an $s$--$t$ path whose label is not $\alpha + 1 \in \ZZ_2$,
but such an equivalence does not hold for any other nontrivial group. 

As shown in Section~\ref{sec:non-zero},
Problem (\Rn{2}) can be reduced to testing whether a $\Gamma$-labeled graph contains
a cycle whose label is not $\id$; such a cycle is called a {\em non-zero} cycle in some contexts.\footnote{Additive terms and notation are often used, whereas the commutativity is not assumed as with this paper.}
Based on this fact, Problem (\Rn{2}) can be easily solved in polynomial time 
for any group $\Gamma$ (Proposition~\ref{prop:non-zero}).
We mention that there are several results for packing non-zero paths
\cite{non-zero, non-zero_algorithm, TY, Yamaguchi} and non-zero cycles
\cite{HJW2019, KW, LRS2017, Wollan} with some conditions. 

On the other hand, the difficulty of Problem (\Rn{1}) 
is heavily dependent on the group $\Gamma$.
When $\Gamma \simeq \ZZ_2$, 
since Problems (\Rn{1}) and (\Rn{2}) are equivalent as discussed above,
it can be easily solved in polynomial time. 
When $\Gamma = \ZZ$ (as the additive group), 
Problem (\Rn{1}) is NP-complete
since the directed $s$--$t$ Hamiltonian path problem
reduces to this problem by labeling each arc with $1 \in \ZZ$
and letting $\alpha \coloneqq n - 1 \in \ZZ$,
where $n$ denotes the number of vertices.
Huynh~\cite{Huynh} showed the polynomial-time solvability
of Problem (I) for any fixed finite abelian group,
which is deeply dependent on the graph minor theory.

\subsection{Our contribution}
To investigate the gap between Problems (\Rn{1}) and (\Rn{2}),
we make a new approach to these problems
by generalizing Problem (\Rn{2}) so that multiple labels are forbidden.
In this paper, we provide a solution to the case when two labels are forbidden.
For a $\Gamma$-labeled graph $\G$ and two distinct vertices $s$ and $t$ in the graph,
let $l(\G; s, t)$ denote the set of all possible labels of $s$--$t$ paths in $\G$.

\begin{theorem}\label{thm:non-zero2}
Let $\G$ be a $\Gamma$-labeled graph with two specified vertices $s$ and $t$.
Then, for any distinct $\alpha, \beta \in \Gamma$, in polynomial time,
one can either find an $s$--$t$ path in $\G$ whose label is neither $\alpha$ nor $\beta$,
or conclude that $l(\G; s, t) \subseteq \{\alpha, \beta\}$.
\end{theorem}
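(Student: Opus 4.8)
The plan is to cut $\G$ down by cheap reductions to a highly connected ``core'', to establish a structural dichotomy for that core (either a third label exists, or $\G$ has one of a few special forms), and to read off the algorithm; the structure theorem for the core is the part I expect to be hard.

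\textbf{Step 1 (normalization and easy exits).} By uniformly multiplying the labels of the arcs incident with $s$ we may assume $\alpha=\id$, so the task becomes: find an $s$--$t$ path of label outside $\{\id,\beta\}$, or certify $l(\G;s,t)\subseteq\{\id,\beta\}$. Run the algorithm of Proposition~\ref{prop:non-zero} (Problem~(\Rn{2})) twice, once with forbidden label $\id$ and once with forbidden label $\beta$. If a run returns an $s$--$t$ path whose label lies outside $\{\id,\beta\}$, output it; if a run certifies that all $s$--$t$ paths carry the single label $\id$ (or the single label $\beta$), then $l(\G;s,t)\subseteq\{\id,\beta\}$ and we are done. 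In the remaining case these two runs hand us $s$--$t$ paths $P_0$ and $P_1$ of labels $\id$ and $\beta$, and it only remains to decide whether a third label occurs. I would also delete every vertex and arc on no $s$--$t$ path (detectable by flow/connectivity, and harmless for $l(\G;s,t)$) and suppress every non-terminal degree-2 vertex by merging its two arcs into one carrying the product label; call the result the reduced graph.

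\textbf{Step 2 (peeling off small separators).} If some vertex $v\notin\{s,t\}$ separates $s$ from $t$, then $l(\G;s,t)=l(\G_1;s,v)\cdot l(\G_2;v,t)$, the product in $\Gamma$ of the label sets on the two sides; a short lemma on product sets shows this product has at least three elements unless each factor is a singleton or the two factors are ``aligned'' in one of finitely many group-theoretic patterns, in which latter cases one recurses on the two smaller sides and recombines. The same device applies to 2-vertex cuts $\{x,y\}$: replace the part behind the cut by parallel virtual arcs between $x$ and $y$, one per realizable internal $x$--$y$ label, the realizable set being computed recursively. Iterating along an SPQR-type decomposition into 3-connected components reduces everything to the case where the reduced graph is 3-connected (or has bounded size). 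The bookkeeping of which combinations of two-valued pieces keep $l(\G;s,t)$ two-valued is local and finite, if tedious.

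\textbf{Step 3 (the structural dichotomy for the 3-connected core).} This is the technical heart and the main obstacle. For a reduced, 3-connected $\G$ in which both $\id$ and $\beta$ occur as $s$--$t$ path labels, I would prove: either an $s$--$t$ path of a third label exists and is found in polynomial time, or $\G$ has one of a short list of special forms forcing $l(\G;s,t)=\{\id,\beta\}$. I expect the list to be essentially (i) a \emph{nearly balanced} form --- after a shift, $\G$ minus one edge $e$ is balanced and $e$ carries $\beta$ with $\beta^2=\id$, so that every $s$--$t$ path uses $e$ at most once --- and (ii) a \emph{planar} configuration echoing the infeasible instances of the 2-disjoint paths problem (the terminals together with the endpoints of the ``non-trivial part'' lying on a single face in a forbidden cyclic order), arising essentially only when $\beta$ has order 3, i.e.\ in the $\ZZ_3$-flavoured regime. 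Since the present situation generalizes the 2-disjoint paths problem, deciding two-valuedness in case~(ii) reduces to an instance of that problem and is therefore polynomial; handling this planar sub-case directly rather than via graph-minors machinery is what makes the resulting zero-$\ZZ_3$-path algorithm elementary. For the converse direction --- producing a third label whenever neither (i) nor (ii) holds --- I would use 3-connectivity and Menger's theorem to attach a cycle $C$ to $P_0\cup P_1$ by two vertex-disjoint ears whose traversal contributes, to a rerouted copy of $P_0$ or $P_1$, an element that is not a power of $\beta$, the reductions of Steps~1--2 guaranteeing the rerouted walk is still a simple path.

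\textbf{Assembling the theorem.} Granting Step~3, the algorithm runs Step~1, recurses through Step~2 down to the 3-connected pieces, applies the Step~3 dichotomy, and then either reports an $s$--$t$ path of a third label (lifted back through all the reductions) or certifies $l(\G;s,t)\subseteq\{\alpha,\beta\}$. The genuinely hard content is the completeness of the obstruction list in Step~3 --- isolating the planar / 2-disjoint-paths case and pinning down its dependence on the order of $\beta$ --- together with keeping the rerouting output simple, which is precisely why Steps~1--2 do all the pruning and decomposition first.
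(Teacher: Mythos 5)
Your overall strategy---normalize so that one forbidden label is $\id$, prune to the 2-connected core, decompose along $1$- and $2$-cuts down to a $3$-connected piece, and then prove a dichotomy whose obstructions are a ``nearly balanced'' form and a planar configuration tied to Seymour's $2$-disjoint-paths theorem---is essentially the route the paper takes (its \algoTTL{} does exactly this pruning and $2$-contraction, and its base class $\cDab^0$ contains the near-balanced Case~(A) and the planar Case~(C)). But your Step~3 is the entire content of the theorem, and as stated it is both unproved and not quite correct as a characterization. Concretely: (a) the obstruction list is incomplete---the paper needs a sporadic six-vertex configuration (Case~(B)) and, more importantly, a \emph{$3$-contraction} operation that replaces balanced pieces hanging off $3$-cuts by balanced triangles before planarity is tested; without that operation the planar case~(ii) is simply false, since a non-planar but balanced blob behind a $3$-cut does not create a third label. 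Your appeal to ``reduces to an instance of the $2$-disjoint-paths problem'' gestures at this but does not supply the labeled-graph analogue of Seymour's contraction, nor the argument that these contractions preserve $l(\G;s,t)$ and commute appropriately. (b) Your case~(i) misdescribes the near-balanced obstruction: it is not ``$\G$ minus one edge is balanced with that edge carrying $\beta$ of order $2$,'' but rather that after shifting, all edges outside the star at $s$ (or at $t$) are identity-labeled while the star edges carry $\alpha$ or $\beta$; and the case $\alpha\beta^{-1}=\beta\alpha^{-1}$ (your $\beta^2=\id$) admits a different, non-planar characterization (Proposition~\ref{prop:2-cyclic}) that your list does not capture. (c) The claim that the planar case arises ``essentially only when $\beta$ has order $3$'' is wrong; Case~(C) occurs for arbitrary $\alpha,\beta$ with $\alpha\beta^{-1}\neq\beta\alpha^{-1}$.

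The completeness proof itself---that a $3$-connected, contraction-free graph with $|l(\G;s,t)|=2$ must be one of these forms---is the paper's main technical contribution, occupying a long minimal-counterexample argument (Section~\ref{sec:proof}) with an extensive case analysis built on Theorem~\ref{thm:2path}. Your one-sentence sketch for the converse direction (attach a cycle by two disjoint ears contributing ``an element that is not a power of $\beta$'') does not work as stated: Menger only gives you two disjoint ears to \emph{some} unbalanced cycle, and the resulting two rerouted paths can realize exactly the two labels $\{\id,\beta\}$ even when the graph is far from planar; producing a genuinely third label requires the full structural analysis. So the proposal is a correct high-level plan that coincides with the paper's architecture, but the step you flag as ``the main obstacle'' is indeed the theorem, and it is not established here.
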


The main technical contribution is to give
a characterization of $\Gamma$-labeled graphs $\G$ with two specified vertices $s$ and $t$
such that $l(\G; s, t) = \{\alpha, \beta\}$, which can be tested in polynomial time.
After it turns out that $l(\G; s, t) \not\subseteq \{\alpha, \beta\}$,
an $s$--$t$ path of label $\gamma \in \Gamma \setminus \{\alpha, \beta\}$
can be found by a rather na\"ive, brute-force strategy (see Section~\ref{sec:algorithm}).

We postpone the precise statement of our characterization (Theorem~\ref{thm:characterization})
to Section~\ref{sec:results}, and provide here only a high-level description.
Roughly speaking,
we show that $l(\G; s, t) = \{\alpha, \beta\}$ for distinct $\alpha, \beta \in \Gamma$ if and only if 
$\G$ is obtained from ``nice'' planar graphs (and some trivial graphs) by ``gluing'' them together 
(see Section~\ref{sec:characterization} for details).
It is interesting that planarity, which is a topological condition, appears in the characterization.

It is worth remarking that our result provides an elementary solution (without relying on the graph minor theory) to
the first nontrivial case of Problem (\Rn{1}), i.e., when $\Gamma \simeq \ZZ_3 = \ZZ / 3\ZZ = (\{0, \pm1\}, +)$.

\begin{corollary}\label{cor:algorithm}
Let $\G$ be a $\ZZ_3$-labeled graph with two specified vertices $s$ and $t$.
Then one can compute $l(\G; s, t)$ in polynomial time. 
Furthermore, for each $\alpha \in l(\G; s, t)$, 
one can find an $s$--$t$ path of label $\alpha$ in $\G$ in polynomial time. 
\end{corollary}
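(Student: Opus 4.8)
The plan is to derive Corollary~\ref{cor:algorithm} directly from Theorem~\ref{thm:non-zero2}, using crucially that $\ZZ_3$ has only three elements. Since the label of any $s$--$t$ path lies in $\ZZ_3$, we have $l(\G; s, t) \subseteq \ZZ_3$, so it suffices to decide, for each $\gamma \in \ZZ_3$, whether $\gamma \in l(\G; s, t)$, and to exhibit an $s$--$t$ path of label $\gamma$ whenever the answer is affirmative.

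First I would apply the algorithm of Theorem~\ref{thm:non-zero2} three times, once for each element $\gamma \in \ZZ_3$, taking $\{\alpha, \beta\} = \ZZ_3 \setminus \{\gamma\}$ as the pair of forbidden labels. Each call runs in polynomial time and yields one of two outcomes. If it returns an $s$--$t$ path whose label is neither of the two forbidden elements, then---since $\ZZ_3$ has exactly three elements---that label must be $\gamma$; this both certifies $\gamma \in l(\G; s, t)$ and provides the required witnessing path. Otherwise it concludes $l(\G; s, t) \subseteq \ZZ_3 \setminus \{\gamma\}$, which is exactly the statement $\gamma \notin l(\G; s, t)$.

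Let $S \subseteq \ZZ_3$ be the set of those $\gamma$ for which the corresponding call returned a path. The first outcome gives $S \subseteq l(\G; s, t)$, and the second outcome, applied to each $\gamma \in \ZZ_3 \setminus S$, gives $l(\G; s, t) \subseteq S$; hence $l(\G; s, t) = S$ has been computed, and for every $\alpha \in l(\G; s, t)$ the corresponding call has already produced an $s$--$t$ path of label $\alpha$. In particular, in the degenerate case where $\G$ has no $s$--$t$ path at all, all three calls end in the second outcome and we correctly obtain $l(\G; s, t) = S = \emptyset$, consistently with $\bigcap_{\gamma \in \ZZ_3}(\ZZ_3 \setminus \{\gamma\}) = \emptyset$. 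There is no genuine obstacle in this argument: the entire difficulty is concentrated in Theorem~\ref{thm:non-zero2} (equivalently, in the characterization behind it), and the corollary is merely a convenient repackaging for the smallest group for which Problem~(\Rn{1}) is nontrivial.
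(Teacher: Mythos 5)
Your proposal is correct: each of the three calls to the algorithm of Theorem~\ref{thm:non-zero2} with forbidden pair $\ZZ_3 \setminus \{\gamma\}$ either produces a path whose label is forced to be $\gamma$ (since all labels lie in $\ZZ_3$) or certifies $\gamma \notin l(\G; s, t)$, and the bookkeeping with the set $S$ (including the empty case) is sound. The paper does not write out a separate proof of the corollary but instead observes that the two subroutines used to establish Theorem~\ref{thm:non-zero2} already deliver it in a single pass: \algoTTL\ computes $l(\G; s, t)$ together with witnessing paths whenever $|l(\G; s, t)| \leq 2$, and \algoFTP\ returns three $s$--$t$ paths of pairwise distinct labels whenever $|l(\G; s, t)| \geq 3$, which over $\ZZ_3$ exhausts all possible labels. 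So the paper leans on the fact that its algorithms ``slightly go farther than required,'' while you treat Theorem~\ref{thm:non-zero2} as a black box and invoke it three times. Your route is the cleaner formal deduction from the theorem as stated and does not depend on internals of the algorithms; the paper's route saves the factor of three and makes explicit that the stronger outputs (label set plus witnesses) are already available. Both are valid, and the difference is one of packaging rather than substance.
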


\subsection{Disjoint paths problem}
Problem (\Rn{1}) in a $\ZZ_3$-labeled graph 
in fact generalizes the {\em $2$-disjoint paths problem},
which also motivates us to consider the situation when two labels are forbidden.
The 2-disjoint paths problem asks whether for given distinct vertices $s_1, s_2, t_1, t_2$ in an undirected graph, there exists an $s_i$--$t_i$ path $P_i$ for each $i \in \{1, 2\}$ such that $P_1$ and $P_2$ are disjoint.
We can reduce the 2-disjoint paths problem to Problem (I) in a $\ZZ_3$-labeled graph as follows:
let $s \coloneqq s_1$ and $t \coloneqq t_2$, replace every edge in the given graph
by an arc with label $0$, add one arc from $t_1$ to $s_2$ with label $1$,
and ask whether the constructed $\ZZ_3$-labeled graph contains
an $s$--$t$ path of label $1$ or not.
Then, the desired two disjoint paths exist if and only if the answer is YES.

The 2-disjoint paths problem can be solved in polynomial time~\cite{Shiloach,2path,Thomassen}, 
and the following theorem characterizes the existence of two disjoint paths. 

\begin{theorem}[Seymour~{\cite{2path}}]\label{thm:2path}
  Let $G = (V, E)$ be an undirected graph and 
  $s_1, t_1, s_2, t_2 \in V$ distinct vertices. 
  Then, there exist two vertex-disjoint paths 
  $P_i$ connecting $s_i$ and $t_i$ $(i = 1, 2)$ 
  if and only if there is no family of disjoint vertex sets 
  $X_1, X_2, \ldots , X_k \subseteq V \setminus \{s_1, t_1, s_2, t_2\}$ such that
  \begin{itemize}
    \setlength{\itemsep}{.5mm}
  \item[$1.$]
    $N_G(X_i) \cap X_j = \emptyset$ if $i \neq j$, where $N_G(X_i)$ denotes the neighborhood of $X_i$ in $G$, 
  \item[$2.$]
    $|N_G(X_i)| \leq 3$ for $i = 1, 2, \ldots, k$, and
  \item[$3.$]
    if $G'$ is the graph obtained from $G$ by deleting $X_i$ and adding a new edge
    joining each pair of distinct vertices in $N_G(X_i)$ for each $i \in \{1, 2, \ldots, k\}$, 
    then $G'$ can be embedded in the plane so that $s_1, s_2, t_1, t_2$ are  
    on the outer boundary in this order. 
  \end{itemize}
\end{theorem}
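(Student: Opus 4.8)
The plan is to prove the two implications separately. Necessity is a short topological argument (the Jordan curve theorem) once a reduction lemma for small separators is in hand; sufficiency is the substantial direction and goes by induction on $|V(G)|+|E(G)|$, the heart of which is a structural statement about highly connected graphs.

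\emph{Necessity.} Assume a family $X_1,\dots,X_k$ as in the statement exists, and let $G'$ be the planar graph obtained by carrying out all $k$ reductions at once; this is well defined since Condition~1 guarantees that the vertices of each $N_G(X_i)$ survive the deletion of the other $X_j$'s. First I would record the reduction lemma: if $G$ had two disjoint paths $P_1,P_2$, then for each $i$ every maximal subpath of $P_1\cup P_2$ contained in $X_i$ has both of its endpoints in $N_G(X_i)$; since $|N_G(X_i)|\le 3$ and these subpaths are pairwise vertex-disjoint, there is at most one such subpath per $i$, and replacing it by the corresponding edge of the clique on $N_G(X_i)$ turns $P_1,P_2$ into two disjoint $s_i$--$t_i$ paths in $G'$. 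But $G'$ embeds in the plane with $s_1,s_2,t_1,t_2$ on the outer face in this cyclic order, so any $s_1$--$t_1$ path, regarded as a Jordan arc in the closed outer disk, separates the boundary point $s_2$ from the boundary point $t_2$; hence every $s_2$--$t_2$ path must meet it, a contradiction. Therefore $G$ has no two disjoint paths.

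\emph{Sufficiency.} I would prove the contrapositive by induction on $|V(G)|+|E(G)|$: if $G$ has no two disjoint $s_i$--$t_i$ paths, produce a valid family. After standard preprocessing --- we may assume $G$ is connected and that every non-terminal vertex has degree at least $3$, by deleting low-degree vertices and suppressing degree-$2$ ones --- the key dichotomy is whether $G$ has a ``nontrivial'' small separator, namely a vertex set $X$ disjoint from the terminals with $|N_G(X)|\le 3$ whose reduction genuinely changes $G$. If such an $X$ exists, apply the induction hypothesis to the smaller reduced graph $G'$ (which still has no two disjoint paths --- for the carefully chosen $X$ the reduction lemma also holds in this direction): if its family is $Y_1,\dots,Y_m$, then $X$ together with the $Y_j$'s forms a family for $G$, after merging any $Y_j$ that meets the newly added clique and re-indexing so that Condition~1 holds. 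If no such $X$ exists, then $G$ is essentially $4$-connected, and we invoke the structural heart of the theorem: \emph{an essentially $4$-connected graph with no two disjoint $s_i$--$t_i$ paths is planar and admits an embedding with $s_1,s_2,t_1,t_2$ on the outer face in this cyclic order} --- in which case the empty family certifies the claim and the induction closes.

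\emph{Structural heart and main obstacle.} Establishing the italicized structural claim is where essentially all the work lies, and I expect it to be the main obstacle. I would argue by contradiction in two cases. If $G$ is non-planar, it contains a subdivision of $K_5$ or $K_{3,3}$; using $4$-connectivity one finds four internally disjoint paths from $s_1,s_2,t_1,t_2$ into this subdivision and then routes through its branch vertices and arcs to obtain two disjoint $s_i$--$t_i$ paths (this is the ``$4$-connected non-planar graphs are $2$-linked'' argument, and the case analysis on how the four attachment paths meet the Kuratowski subdivision is the most delicate part of the whole proof). If instead $G$ is planar but no embedding puts the four terminals on a common face in the order $s_1,s_2,t_1,t_2$, a planar rerouting argument --- using $4$-connectivity to control the faces incident to each terminal and the positions forced by the embedding --- again yields two disjoint paths. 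Either way the hypothesis is contradicted, so the structural claim holds. By comparison, the necessity direction is immediate given the reduction lemma, and the bookkeeping required to preserve Condition~1 while composing the recursively obtained families is tedious but routine.
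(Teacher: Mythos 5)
This statement is not proved in the paper at all: it is Seymour's 2-disjoint-paths theorem, imported as a black box with a citation to \cite{2path} (and the algorithmic versions \cite{Shiloach,Thomassen}). So there is no ``paper proof'' to compare against; I can only assess your sketch on its own terms.

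Your necessity direction is essentially complete and correct. The reduction lemma is right: a traversal of $X_i$ by one of the two simple, vertex-disjoint paths consumes two distinct vertices of $N_G(X_i)$, so $|N_G(X_i)|\le 3$ forces at most one traversal per $X_i$, and Condition~1 guarantees both that the reductions can be performed simultaneously and that the replacement edge's endpoints survive. The Jordan-curve contradiction from the cyclic order $s_1,s_2,t_1,t_2$ on the outer face is standard.

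The sufficiency direction, however, contains a genuine gap: the italicized ``structural heart'' --- that a graph with no nontrivial $\le 3$-separator and no two disjoint $s_i$--$t_i$ paths is planar with the four terminals on a common face in the stated cyclic order --- \emph{is} the theorem. Everything else in your outline (preprocessing, peeling off small separators, recombining the families) is the routine wrapper. Your one-sentence gesture at the non-planar case (``find four internally disjoint paths into a $K_5$ or $K_{3,3}$ subdivision and route through its branch vertices'') is precisely the statement that 4-connected non-planar graphs are 2-linked, which occupies the bulk of Seymour's and Thomassen's papers and requires a substantial case analysis you have not carried out; the planar-but-wrongly-embedded case is likewise only named, not argued. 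Two smaller points also need care in the inductive step: (i) lifting two disjoint paths from $G'$ back to $G$ requires replacing a clique edge on $N_G(X)$ by a path through $X$ avoiding the third neighbour, which needs a connectivity hypothesis on $G[X]$ (compare the connectedness requirement in Definition~\ref{def:3-contraction} of this paper); and (ii) the merging needed to restore Condition~1 must be shown to preserve $|N_G(\cdot)|\le 3$ for the merged sets. As it stands, your proposal is a correct road map of the classical proof, not a proof.
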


Our characterization (Theorem~\ref{thm:characterization})
is inspired by (and further extends) Theorem~\ref{thm:2path},
and we also use this theorem in the proof. 

\medskip
We next mention that 
the {\em $k$-disjoint paths problem} is also regarded as a special case of 
Problem (I) for any fixed integer $k \geq 2$.\footnote{
This was also observed in \cite[p.~11]{Huynh}.
However, the reduction in \cite{Huynh} is inadequate,
which cannot distinguish two pairs of an $s_i$--$t_i$ path and an $s_j$--$t_j$ path
and of an $s_i$--$s_j$ path and a $t_i$--$t_j$ path for any distinct $i$ and $j$.}
The $k$-disjoint paths problem asks whether
for given $2k$ distinct vertices $s_i, t_i$ $(i = 1, 2, \ldots, k)$ in an undirected graph, 
there exist $k$ disjoint paths such that each path connects $s_i$ and $t_i$.
This problem can be formulated as Problem (I) using the alternating group
$A_{2k-1} = \{\, \sigma \mid \sigma~\text{is an even permutation of $\{1, 2, \ldots, 2k - 1\}$} \,\}$
(which is indeed isomorphic to $\ZZ_3$ when $k = 2$) as follows:
replace each edge by an arc with the identity permutation,
add an arc from $t_i$ to $s_{i+1}$ with label $(2i - 1 \ \, 2i + 1 \ \, 2i) \in A_{2k-1}$
(which is identity except $2i - 1 \mapsto 2i + 1 \mapsto 2i \mapsto 2i - 1$)
for each $i = 1, 2, \ldots, k - 1$,
and ask whether there exists an $s_1$--$t_k$ path of label
\[\sigma^\ast \coloneqq (2k - 3 \ \, 2k - 1 \ \, 2k - 2)\cdots(3~5~4)(1~3~2)\]
or not.
It is easy to check that $\sigma^\ast$ is the unique permutation
mapping $1$ to $2k - 1$ which can be constructed
in such an $A_{2k-1}$-labeled graph.

Although the $k$-disjoint paths problem can be solved in polynomial time for fixed $k$~\cite{GM13}, 
its solution requires sophisticated arguments based on the graph minor theory. 
This suggests that 
Problem (I) is a challenging problem  
even if the order of the group $\Gamma$ is bounded.

\subsection{Organization}
The rest of this paper is organized as follows.
In Section~\ref{sec:preliminaries},
we define several terms, notations, and basic operations,
and describe well-known properties.
Section~\ref{sec:results} is devoted to stating our characterization of $\Gamma$-labeled graphs
with exactly two possible labels of $s$--$t$ paths.
Based on the characterization, we present an algorithm for our problem
and prove Theorem~\ref{thm:non-zero2} in Section~\ref{sec:algorithm}.
Finally, in Section~\ref{sec:proof}, we verify the correctness of our characterization.

\clearpage
\section{Preliminaries}\label{sec:preliminaries}
\subsection{Terms and notations}\label{sec:notations}
Throughout this paper, let $\Gamma$ be a group (which can be non-abelian or infinite),
for which we adopt  the multiplicative notation with denoting the identity element by $\id$.

\subsubsection{$\Gamma$-Labeled graphs}
A {\em $\Gamma$-labeled graph} is a pair $\G = (\vec{G}, \psi)$ of a directed graph $\vec{G} = (V, \vec{E})$ and a mapping $\psi \colon \vec{E} \to \Gamma$,
called a \emph{label function}.
For each arc $\vec{e} = uv \in \vec{E}$, we refer to $\vec{e}$ as an \emph{arc from $u$ to $v$ with label $\psi(\vec{e})$},
and we denote by $e = \{u, v\}$ an edge obtained from $\vec{e}$ by ignoring the direction (and label),
which is referred to as an \emph{edge between $u$ and $v$}. 
We denote by $G = (V, E)$ the \emph{underlying graph} of $\vec{G}$, i.e., $E \coloneqq \{\, e  \mid \vec{e} \in \vec{E} \,\}$.
The direction information in $\vec{G}$ is used only to define the labels of walks in the underlying graph $G$,
and we often refer to each arc (with label) in $\G = (\vec{G}, \psi)$ as an edge in $G$ when we do not care its direction (and label).
As described in Section~\ref{sec:graphs}, we also use (and naturally extend) usual terms and notations for undirected graphs also for $\Gamma$-labeled graphs.

Throughout this paper, a $\Gamma$-labeled graph is assumed to be finite, and has no loop but may have parallel edges.
In other words, for the underlying graph $G = (V, E)$, the vertex set $V$ is finite,
and $E$ is a finite multiset of $2$-element subsets of $V$.

\subsubsection{Walks and labels}
Let $G = (V, E)$ be an undirected graph. 
For vertices $v_0, v_1, \dots , v_\ell \in V$ and edges $e_1, e_2, \dots , e_\ell \in E$
with $e_i = \{v_{i-1}, v_i\}$ $(i = 1, 2, \ldots, \ell)$,
an alternating sequence $W = (v_0, e_1, v_1, e_2, v_2, \ldots , e_{\ell}, v_{\ell})$
is called a {\em walk} (or a {\em $v_0$--$v_\ell$ walk} in particular) in $G$.
A walk $W$ is a {\em path} if $v_0, v_1, \ldots, v_\ell$ are all distinct.
Also, $W$ is said to be {\em closed} if $v_0 = v_{\ell}$,
and is called a {\em cycle} if, in addition, $e_1, e_2, \dots, e_\ell$ and $v_0, v_1, \ldots, v_{\ell-1}$ are respectively distinct.
We call $v_0$ and $v_\ell$ (which may coincide) the {\em end vertices of $W$},
and each $v_i$ $(1 \leq i \leq \ell - 1)$ an {\em inner vertex of $W$}.
For $i, j$ with $0 \leq i < j \leq \ell$, let $W[v_i, v_j]$ denote the subwalk
$(v_i, e_{i+1}, v_{i+1}, \ldots, e_j, v_j)$ of $W$ (we use this notation only when it is uniquely determined).
Let $\overline{W}$ denote the reversed walk of $W$, i.e.,
$\overline{W} = (v_\ell, e_\ell, \ldots, v_1, e_1, v_0)$.
The sets of vertices and of edges that appear in $W$ are denoted by $V(W)$ and $E(W)$, respectively,
i.e., $V(W) \coloneqq \{v_0, v_1, \dots, v_\ell\}$ and $E(W) \coloneqq \{e_1, e_2, \dots, e_\ell\}$ (where the multiplicity is ignored).

Let $\G = (\vec{G}, \psi)$ be a $\Gamma$-labeled graph, where we denote by $G = (V, E)$ the underlying graph of $\vec{G} = (V, \vec{E})$.
For each edge $e = \{u, v\} \in E$, we define $\psi_\G(e, v) \coloneqq \psi(\vec{e})$ if the corresponding arc $\vec{e} = uv \in \vec{E}$ \emph{enters} $v$, and $\psi_\G(e, v) \coloneqq \psi(\vec{e})^{-1}$ if $\vec{e} = vu \in \vec{E}$ \emph{leaves} $v$.
For a walk $W = (v_0, e_1, v_1, e_2, v_2, \ldots , e_{\ell}, v_{\ell})$ in $G$,
the {\em label} of $W$ in $\G$ is defined as the product
$\psi_\G(W) \coloneqq \psi_\G(e_\ell, v_\ell) \cdots \psi_\G(e_2, v_2) \cdot \psi_\G(e_1, v_1)$.
Note that $\psi_\G(\overline{W}) = \psi_\G(W)^{-1}$ for the reversed walk $\overline{W}$ of $W$.
We also call $W$ a walk in $\G$ to emphasize that the label information is included.
A walk $W$ in $\G$ is said to be {\em zero} (or {\em balanced} when $W$ is closed) if $\psi_\G(W) = \id$,
and {\em non-zero} (or {\em unbalanced} when $W$ is closed) otherwise, i.e, if $\psi_\G(W) \neq \id$.
We also say that $\G$ is {\em balanced} if all the cycles in $\G$ are balanced.\footnote{Note that whether a cycle in $\G$ is balanced or not 
does not depend on the choices of the direction and the end vertex,
because $\psi_\G(\overline{C}) = \psi_\G(C)^{-1}$ and $\psi_\G(C') = \psi_\G(e_1, v_1)\cdot\psi_\G(C)\cdot\psi_\G(e_1, v_1)^{-1}$
for any cycles $C = (v_0, e_1, v_1, e_2, v_2, \ldots, e_\ell, v_\ell = v_0)$ and $C' = (v_1, e_2, v_2, \ldots, e_\ell, v_\ell = v_0, e_1, v_1)$ in $G$.}

For a $\Gamma$-labeled graph $\G$ and two distinct vertices $s$ and $t$ in the underlying graph,
let $l(\G; s, t)$ denote the set of all possible labels $\psi_\G(P)$ of $s$--$t$ paths $P$ in $\G$. 
When $l(\G; s, t) = \{\alpha\}$ for some $\alpha \in \Gamma$, 
we also denote the element $\alpha$ itself by $l(\G; s, t)$.

\subsubsection{Graphs}\label{sec:graphs}
Let $\G = (\vG, \psi)$ be a $\Gamma$-labeled graph. 
Let $V(\G)$ and $E(\G)$ denote the vertex set $V$ and the edge set $E$ of the underlying graph $G = (V, E)$,
and $\vE(\G)$ the arc set $\vE$ of the directed graph $\vG = (V, \vE)$.
For a vertex set $X \subseteq V$,
we denote by $\delta_\G(X)$ the set of edges between $X$ and $V \setminus X$ in $G$
and by $N_\G(X)$ the set of vertices adjacent to $X$ in $G$, i.e.,
$\delta_\G(X) \coloneqq \{\, e \in E \mid |e \cap X| = 1 \,\}$ and
$N_\G(X) \coloneqq \bigl(\bigcup \delta_\G(X)\bigr) \setminus X = \{\, y \in V \setminus X \mid \delta_\G(X) \cap \delta_\G(\{y\}) \neq \emptyset \,\}$.
To simplify notation, we often write $x$ instead of $\{x\}$.

We define \emph{subgraphs} of $\G$.
For a vertex set $X \subseteq V$,
we denote by $E(X)$ and $\vE(X)$ the sets of edges included in $X$ and of corresponding arcs in $\vG$, respectively, i.e.,
$E(X) \coloneqq \{\, e \in E \mid e \subseteq X \,\}$ and $\vE(X) \coloneqq \{\, \vec{e} \in \vE \mid e \in E(X) \,\}$.
Then, the induced subgraphs in the usual sense of graphs are defined by $G[X] \coloneqq (X, E(X))$ and $\vG[X] \coloneqq (X, \vE(X))$.
Let $\G[X] \coloneqq \bigl(\vG[X], \psi|_{\vE(X)}\bigr)$ denote the subgraph of $\G$ \emph{induced by $X$},
where $\psi|_{\vE(X)} \colon \vE(X) \to \Gamma$ is the restriction of $\psi$ to $\vE(X)$, i.e.,
$\psi|_{\vE(X)}(\vec{e}) = \psi(\vec{e})$ for every $\vec{e} \in \vE(X)$.
We denote by $\G - X$ the subgraph of $\G$ obtained by removing
all vertices in $X$, i.e., $\G - X \coloneqq \G[V \setminus X]$.
For an edge set $F \subseteq E$, let $\vF \coloneqq \{\, \vec{e} \in \vE \mid e \in F \,\}$.
We denote by $\G - F$ 
the subgraph of $\G$ obtained by removing all edges in $F$,
i.e., $\G - F \coloneqq \bigl(\vG - \vF, \psi|_{\vE \setminus \vF}\bigr)$,
where $\vG - \vF = (V, \vE \setminus \vF)$.
Define $\around{\G}{X} \coloneqq \G[X \cup N_\G(X)] - E(N_\G(X))$.

We say that $\G$ is \emph{connected} if so is the underlying graph $G$ in the usual sense,
and other connectivity concepts are extended as follows.
For an integer $k \geq 1$, a proper vertex subset $X \subsetneq V$ with $|X| = k$ is called a {\em $k$-cut} in $\G$ if $\G - X$ is not connected.
We say that $\G$ is {\em $k$-connected} if $|V| > k$
and $\G$ contains no $k'$-cut for every $k' < k$.
A {\em $k$-connected component} of $\G$ is
a maximal $k$-connected induced subgraph $\G[X]$
$(X \subseteq V \text{ with } |X| \geq k)$.
For vertex sets $X, Y, Z \subseteq V$,
we say that $X$ {\em separates $Y$ from $Z$ in $\G$} if
every two vertices $y \in Y \setminus X$ and $z \in Z \setminus X$
are contained in different connected components of $\G - X$.
In particular, if $X$ separates $Y$ and $Z$ in $\G$
and $Y \setminus X \neq \emptyset \neq Z \setminus X$,
then $X$ is an $|X|$-cut in $\G$.

We also extend usual concepts of planar embedding. 
Suppose that $\G$ is connected and embedded in the plane.
We call the unique unbounded face the {\em outer face} of $\G$,
and any other face an {\em inner face}.
For a face $F$ of $\G$,
let ${\rm bd}(F)$ denote a closed walk\footnote{Such a walk is not unique because there are multiple choices of the direction and of the end vertex, but it does not matter because we will only consider whether ${\rm bd}(F)$ is balanced or not.} obtained
by walking once around the boundary of $F$ in an arbitrary direction from an arbitrary vertex.

\subsection{Assumptions}
\subsubsection{On computation model}\label{sec:computation}
The problems considered in this paper involve computation on groups.
For the group $\Gamma$ in question, we discuss computational aspects of the problems under the following assumptions.
Since the input size depends not only on the graph size but also on how the elements of $\Gamma$ are represented,
we assume that each element of $\Gamma$ that appears in the input $\Gamma$-labeled graph is represented by some symbol
whose size is bounded by a constant (independent of the graph size).
Since every element $\alpha \in \Gamma$ that has to be considered in the problems is the label of some walk,
it can be represented by a finite sequence of such symbols and inversions, called a \emph{word},
and a word representing $\alpha$ may not be unique.
We assume that the following operations for the words can be performed in constant time:
for any two words representing $\alpha, \beta \in \Gamma$,
getting a word representing the inverse element $\alpha^{-1} \in \Gamma$,
computing a word representing the product $\alpha\beta \in \Gamma$, and
testing whether $\alpha = \id$ or not.\footnote{One can also test whether $\alpha = \beta$ or not in constant time
by testing whether $\alpha\beta^{-1} = \id$ or not, because a word representing $\alpha\beta^{-1}$ as well as $\beta^{-1}$
can be obtained in constant time due to the first two assumptions.}

\subsubsection{On graphs and labels}
In the definition of the labels of walks,
an arc from $u$ to $v$ with label $\alpha$ and an arc from $v$ to $u$ with label $\alpha^{-1}$ work the same.
We say that two arcs (with labels) are \emph{equivalent} if they work the same, i.e.,
either they are parallel and have the same label or
they connect the same vertices with the opposite directions and have the inverse labels of each other.
We say that two $\Gamma$-labeled graphs $\G_1$ and $\G_2$ with the same vertex set $V$ are \emph{equivalent}
if one is obtained from the other by replacing some arcs with equivalent arcs, i.e.,
there exists a bijection $\pi \colon E(\G_1) \to E(\G_2)$ such that $\pi(e) = e$ as 2-element subsets of $V$ 
and $\psi_{\G_1}(e, v) = \psi_{\G_2}(\pi(e), v)$ for every $e \in E(\G_1)$ and $v \in e$.
Then, for any walk $W$ in $\G_1$, there exists a corresponding walk $W'$ in $\G_2$ with $\psi_{\G_2}(W') = \psi_{\G_1}(W)$, and vice versa.

In this paper, we use label functions only to discuss the labels of walks,
and hence we do not need to distinguish equivalent $\Gamma$-labeled graphs.
We sometimes regard two equivalent $\Gamma$-labeled graphs as ``equal'' by replacing some arcs with equivalent arcs (i.e., by reversing the directions and inverting the labels) if necessary.
In particular, when discussing $l(\G; s, t)$ for a $\Gamma$-labeled graph $\G = (\vec{G}, \psi)$ with $s, t \in V(\G)$,
for convenience, we always assume that, in $\vec{G}$, all arcs around $s$ leave $s$
and all arcs around $t$ enter $t$. 

In addition, we may assume that $\G$ has no vertex or edge trivially redundant as follows:
\begin{itemize}
  \setlength{\itemsep}{.5mm}
\item
  any vertex in $V(\G)$ is contained in some $s$--$t$ path in $\G$, and
\item
  $\G$ has no equivalent arcs, i.e.,
  $\psi_\G(e_1, v) \neq \psi_\G(e_2, v)$ for any parallel edges $e_1, e_2 \in E(\G)$ and $v \in e_1 = e_2$
  (where the equality is as 2-element subsets of $V(\G)$ and does not hold as elements of $E(\G)$).
\end{itemize}
Let $\cD$ be the set of all triplets $(\G, s, t)$ such that $\G$ is a $\Gamma$-labeled graph
and two distinct vertices $s, t \in V(\G)$ satisfying the above two conditions.
For a $\Gamma$-labeled graph $\G$, it is easy to check the second condition and to remove the redundant edges,
where recall the computational assumption on the group $\Gamma$ (cf.~Section~\ref{sec:computation}).
Furthermore, the following lemma guarantees that one can efficiently obtain a unique maximal subgraph $\G'$ of $\G$
such that $(\G', s, t) \in \cD$ and $l(\G'; s, t) = l(\G; s, t)$
by computing a 2-connected component of a graph (e.g., by \cite{HT}).

\begin{lemma}\label{lem:cD}
  Let $\G$ be a $\Gamma$-labeled graph with $|V(\G)| \geq 3$ that has no equivalent arcs.
  For any distinct vertices $s, t \in V(\G)$,
  we have $(\G, s, t) \in \cD$ if and only if the graph $G_{st} = G + e_{st}$ obtained from the underlying graph $G$
  by adding a new edge $e_{st} = \{s, t\}$ is $2$-connected.
\end{lemma}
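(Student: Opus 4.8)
The plan is to establish both directions of the equivalence by relating the two "trivial redundancy" conditions defining $\cD$ to the $2$-connectivity of $G_{st} = G + e_{st}$. First I would recall the standard fact that, in a graph $H$ with at least $3$ vertices, $H$ is $2$-connected if and only if $H$ is connected, has no cut vertex, and (equivalently) any two vertices lie on a common cycle; I will use the cycle formulation since it pairs naturally with the path condition on $\G$. Note that an $s$--$t$ path $P$ in $G$ together with the extra edge $e_{st}$ forms a cycle in $G_{st}$ through $e_{st}$, and conversely any cycle in $G_{st}$ through $e_{st}$ yields an $s$--$t$ path in $G$; so a vertex $v \in V(G)$ lies on some $s$--$t$ path in $G$ exactly when $v$ lies on some cycle through $e_{st}$ in $G_{st}$.

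For the forward direction, suppose $(\G, s, t) \in \cD$. Then every vertex of $G$ lies on an $s$--$t$ path, hence on a cycle through $e_{st}$ in $G_{st}$; in particular $G_{st}$ is connected. To upgrade this to $2$-connectivity I would show $G_{st}$ has no cut vertex: if $x$ were a cut vertex of $G_{st}$, then since $e_{st}$ is a single edge, $s$ and $t$ lie in the closure of one side, so $x$ separates some vertex $w$ from $\{s,t\}$ in $G_{st} - x$, and then $w$ cannot lie on any $s$--$t$ path in $G$ avoiding nothing — more carefully, every $s$--$t$ path through $w$ would have to pass through $x$ twice, which is impossible for a path, contradiction. (One must also handle the degenerate possibility $x \in \{s,t\}$, but a cut vertex at $s$ again forces every $s$--$t$ path through some component to be impossible.) For the reverse direction, suppose $G_{st}$ is $2$-connected. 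Then for any $v \in V(G)$, since $G_{st}$ is $2$-connected, $v$ and the edge $e_{st}$ lie on a common cycle (a standard consequence of Menger's theorem / the fan lemma: $2$-connectivity implies any vertex and any edge lie on a common cycle), which gives an $s$--$t$ path in $G$ through $v$; this establishes the first defining condition of $\cD$. The second condition — that $\G$ has no equivalent arcs — is not affected by passing to $G_{st}$ and is simply assumed in the hypothesis of the lemma, so there is nothing to prove there; I would state this explicitly so the equivalence is clean.

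The main obstacle I anticipate is the careful case analysis around cut vertices coinciding with $s$ or $t$, and around parallel edges: because $\G$ may have parallel edges, "$v$ lies on an $s$--$t$ path" must be argued at the level of the underlying simple structure of cycles through $e_{st}$, and one has to be sure that adding the single new edge $e_{st}$ (rather than possibly a parallel copy of an existing $s$--$t$ edge) does not interfere — it does not, since $e_{st}$ being a bridge or a parallel edge only matters for whether $G_{st}$ is $2$-edge-connected, not $2$-connected, and here we only need vertex connectivity. A secondary subtlety is the hypothesis $|V(\G)| \geq 3$: this is exactly what rules out the trivial case $V(\G) = \{s,t\}$ where $G_{st}$ could be a single edge or a cycle of length $2$; with at least three vertices the cycle-through-$e_{st}$ characterization of $2$-connectivity of $G_{st}$ behaves uniformly. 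I would close by remarking that the "unique maximal subgraph" and algorithmic claim in the surrounding text then follow immediately, since $2$-connected components are computed in linear time and one simply restricts to the $2$-connected component of $G_{st}$ containing $e_{st}$.
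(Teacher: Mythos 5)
Your proposal is correct and follows essentially the same route as the paper: the forward direction argues that a cut vertex of $G_{st}$ would force an $s$--$t$ path through some separated vertex to visit that cut vertex twice, and the reverse direction produces, for each vertex $v$, a cycle of $G_{st}$ through both $v$ and $e_{st}$. The only cosmetic difference is that you invoke the standard ``any vertex and any edge of a $2$-connected graph lie on a common cycle'' fact as a black box, whereas the paper derives it explicitly by subdividing $e_{st}$ and applying Menger's theorem to the subdivision vertex --- the same underlying argument.
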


\begin{proof}
We first show that if $(\G, s, t) \in \cD$ then $G_{st} = G + e_{st}$ is 2-connected.
We prove the contraposition, i.e., suppose that $G_{st}$ is not 2-connected,
and see that some vertex $v \in V(\G)$ is not contained in any $s$--$t$ path in $\G$.
Since $G_{st}$ is not 2-connected, there exists a vertex $x \in V(\G)$ such that $G_{st} - x$ is not connected.
If $x \not\in \{s, t\}$, then $x$ separates some vertex $v \in V(\G) \setminus \{s, t, x\}$ from $\{s, t\}$ in $G$ as well as in $G_{st}$ (in which $s$ and $t$ are adjacent).
This implies that any $s$--$v$ or $v$--$t$ walk in $G$ (if exists) intersects $x$ in between.
Since any $s$--$t$ walk in $G$ containing $v$ is divided into an $s$--$v$ walk and a $v$--$t$ walk,
it intersects $x$ at least twice and hence is not a path.
Otherwise, by symmetry, we may assume $x = s$.
In this case, $s$ separates some vertex $v \in V(\G) \setminus \{s, t\}$ from $t$ in $G$ as well as in $G_{st}$,
and then any $s$--$t$ path in $G$ cannot contain $v$.

To the contrary,
suppose that $G_{st} = G + e_{st}$ is 2-connected, and we show that $(\G, s, t) \in \cD$.
Let $G_{srt}$ be the graph obtained from $G_{st}$ by subdividing the edge $e_{st}$ with one subdividing vertex $r$,
i.e., $G_{srt}$ is obtained from $G$ by adding a new vertex $r$ and two new edges $e_{rs} = \{r, s\}$ and $e_{rt} = \{r, t\}$.
We then see that $G_{srt}$ is also 2-connected as follows.
Suppose to the contrary that $G_{srt}$ is not 2-connected,
i.e., $G_{srt}$ has a 1-cut $x \in V(\G) \cup \{r\}$.
If $x \in V(\G)$, then $x$ separates some vertex $v \in V(\G) \setminus \{s, t, r\}$ from $\{s, t, r\}$ in $G_{srt}$,
and hence $x$ is also a 1-cut in $G_{st}$, which separates $v$ from $\{s, t\}$, a contradiction.
Otherwise, $x = r$ separates some vertex $v \in V(\G) \setminus \{s, t\}$ from $s$ or $t$ in $G_{srt}$,
and then $t$ or $s$, respectively, is a 1-cut in $G_{st}$ that separates $v$ from $s$ or $t$, a contradiction.
Thus, $G_{srt}$ is 2-connected.

Fix any vertex $v \in V(\G) \setminus \{s, t\}$.
By Menger's theorem (see, e.g., \cite[Chapter~3]{Diestel}), there exist two $r$--$v$ paths in $G_{srt}$ that do not share their inner vertices.
Since $r$ has only two neighbors $s$ and $t$, one of these paths starts with the edge $e_{rs} = \{r, s\}$ and the other starts with $e_{rt} = \{r, t\}$, say $P_s$ and $P_t$, respectively.
Then, the path obtained by concatenating $P_s[s, v]$ and $\overline{P_t}[v, t]$ is an $s$--$t$ path in $G$ that contains $v$.
\end{proof}

\subsection{Finding a non-zero path}\label{sec:non-zero}
In this section, we show that a non-zero $s$--$t$ path can be found
(i.e., Problem (\Rn{2}) can be solved)
efficiently by using well-known properties of $\Gamma$-labeled graphs.
The following techniques are often utilized
in dealing with $\Gamma$-labeled graphs
(see, e.g., \cite{non-zero, non-zero_algorithm, TY}).

\begin{definition}[Shifting]\label{def:shifting}
  Let $\G = (\vec{G}, \psi)$ be a $\Gamma$-labeled graph.
  For a vertex $v \in V(\G)$ and an element $\alpha \in \Gamma$,
  we say that $\psi' \colon \vec{E}(\G) \to \Gamma$ (or $\G' = (\vec{G}, \psi')$) is obtained by
  {\em shifting $\psi$} (or $\G$, respectively) {\em by $\alpha$ at $v$} if, for each edge $e \in E(\G)$,
  \[\psi'(\vec{e}) = \begin{cases}
    \alpha \cdot \psi(\vec{e}) & (\vec{e} \text{ enters } v),\\
    \psi(\vec{e}) \cdot \alpha^{-1} & (\vec{e} \text{ leaves } v),\\
    \psi(\vec{e}) & (e \not\in \delta_\G(v)).
  \end{cases}\]
\end{definition}

Suppose that $\G'$ is obtained by shifting $\G$ by $\alpha \in \Gamma$ at $v \in V(\G)$.
Then, $\psi_{\G'}(W) = \psi_\G(W)$ for any walk $W$ in $\G$ whose end vertices are both different from $v$,
and $\psi_{\G'}(C) = \alpha \cdot \psi_\G(C) \cdot \alpha^{-1}$ for any closed walk (in particular, any cycle) $C$ in $\G$ whose end vertex is $v$.

By definition, two sequential applications of shifting by $\alpha$ and (later) by $\beta$ at the same vertex $v$
are always the same operation as shifting by $\beta\alpha$ at $v$ just once.
Furthermore, when we apply shifting operations at different vertices,
the result does not depend on the order of applications,
since the label $\psi(\vec{e})$ of each arc $\vec{e} = uv$ is changed only by shifting at its end vertices $u$ and $v$,
and shifting operations at $u$ and at $v$ affect $\psi(\vec{e})$ in the opposite sides. 

We say that two $\Gamma$-labeled graphs $\G_1$ and $\G_2$ with the same vertex set $V$
are {\em $(s, t)$-equivalent} if there exists a mapping $\varphi \colon V \setminus \{s, t\} \to \Gamma$
such that $\G_2$ is obtained from $\G_1$ after shifting by $\varphi(v)$ at each $v$
(or equivalently $\G_1$ is obtained from $\G_2$ after shifting by $\varphi(v)^{-1}$ at each $v$).
Note that $l(\G_1; s, t) = l(\G_2; s, t)$ if $\G_1$ and $\G_2$ are $(s, t)$-equivalent.

\begin{lemma}\label{lem:shifting}
  If a $\Gamma$-labeled graph $\G = (\vec{G}, \psi)$ with distinct vertices $s, t \in V(\G)$ is connected and balanced,
  then there exists $\alpha \in \Gamma$ $($in fact, $\alpha = l(\G; s, t)$$)$ such that $\G$ is $(s, t)$-equivalent to the $\Gamma$-labeled graph $\G' = (\vec{G}, \psi')$ defined by
  \[\psi'(\vec{e}) \coloneqq \begin{cases}
    \alpha & (\text{$e \in \delta_\G(s)$}),\\
    \id & (\text{\rm otherwise}),
  \end{cases}\]
  for each edge $e \in E(\G)$, where recall the assumption that all arcs around $s$ leave $s$ in $\vec{G}$.
\end{lemma}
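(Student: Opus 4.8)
The plan is to prove Lemma~\ref{lem:shifting} by exhibiting the shifting map $\varphi$ explicitly via a spanning tree, and then checking that after shifting, every arc not incident to $s$ carries the identity label, while every arc incident to $s$ carries the common value $\alpha = l(\G; s, t)$. First I would fix a spanning tree $T$ of the connected graph $G$ and root it so that the structure is convenient; since we want all arcs around $s$ to be special, it is cleanest to root $T$ at $s$. For each vertex $v \neq s$, let $P_v$ be the unique $s$--$v$ path in $T$, and define $\varphi(v) \coloneqq \psi_\G(P_v)^{-1} \cdot \psi_\G(P_w)$ where $w$ is the parent of $v$ in $T$ --- equivalently, I would set $\varphi(v)$ to undo the accumulated label along the tree so that, after shifting, every tree edge not incident to $s$ becomes $\id$ and every tree edge incident to $s$ becomes $\psi_\G(P_v)$ for the child $v$. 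Actually the clean way: define $\mu(v) \coloneqq \psi_\G(P_v)$ for all $v \in V(\G)$ (with $\mu(s) = \id$), and shift by $\mu(v)^{-1}$ at each $v \in V(\G) \setminus \{s,t\}$; if $t \neq s$ one must also account for the (non-)shift at $t$, which is exactly where the hypothesis that $\G$ is balanced enters, as explained below.

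The key steps, in order, are: (1) Set up the rooted spanning tree $T$ at $s$ and the labels $\mu(v) = \psi_\G(P_v)$. (2) Observe that shifting by $\mu(v)^{-1}$ at every vertex $v$ (temporarily including $s$ and $t$, with $\mu(s) = \id$ so the shift at $s$ is trivial) turns the label of each edge $e = \{u,v\}$ from $\psi_\G(e,v)$ into $\mu(v)^{-1}\,\psi_\G(e,v)\,\mu(u)$; for a tree edge $e$ with $v$ the child of $u$, this product equals $\mu(v)^{-1}\,\psi_\G(e,v)\,\mu(u) = \id$ by the definition of $\mu$ and uniqueness of tree paths. (3) For a non-tree edge $e = \{u,v\}$, the fundamental cycle $C_e$ of $e$ with respect to $T$ is balanced by hypothesis, and unwinding $\psi_\G(C_e) = \id$ gives precisely that the shifted label of $e$ is $\id$ as well. (4) Conclude that after shifting by $\mu(v)^{-1}$ at all vertices, every edge carries label $\id$; call this graph $\G_0$, so $l(\G_0; s, t) = \id$ trivially (every $s$--$t$ path has label $\id$). (5) Finally, undo the shift at $t$: the graph $\G'$ we want is $(s,t)$-equivalent to $\G$ using only shifts at $V \setminus \{s,t\}$, so we must re-shift $\G_0$ by $\mu(t)$ at $t$. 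This changes the label of every edge incident to $t$ but leaves all others at $\id$; in particular, since all arcs around $t$ enter $t$, each such arc gets label $\mu(t)$. Setting $\alpha \coloneqq \mu(t) = \psi_\G(P_t)$ --- which equals $l(\G; s, t)$ because $\G$ is balanced, so all $s$--$t$ paths have the same label, namely that of the tree path $P_t$ --- one checks that after possibly merging with the shifts at the other vertices, every edge in $\delta_\G(s)$ has label $\alpha$ and every other edge has label $\id$; this is exactly the claimed $\G'$.

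The one subtlety I would watch carefully is the interaction at $s$ and $t$: the lemma insists that $\varphi$ be defined only on $V \setminus \{s,t\}$, so no shifting is allowed at $s$ or at $t$. The shift at $s$ is harmless because $\mu(s) = \id$. The shift at $t$ is not harmless, and the fix is to absorb it: instead of shifting by $\mu(t)^{-1}$ at $t$, simply do not shift at $t$ at all, i.e.\ take $\varphi(v) = \mu(v)^{-1}$ for $v \in V \setminus \{s,t\}$. Then the tree edges incident to $t$ do not become $\id$ but instead pick up a residual factor of $\mu(t)$ on one side; one must verify that, combined with the balancedness hypothesis (which forces the fundamental cycles through $t$ to be balanced), the net effect is that every arc entering $t$ ends with label exactly $\mu(t)$ and nothing else changes. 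This bookkeeping at $t$ --- confirming that the residual labels are uniform and equal to $\alpha$, not just ``some label depending on the path'' --- is the main obstacle, but it is precisely the content of the balancedness assumption and follows by the same fundamental-cycle argument as step~(3). I would also record the observation that $\alpha = l(\G; s, t)$: since $\G'$ has every $s$--$t$ path of label $\alpha$ (such a path uses exactly one arc of $\delta_\G(s)$ --- namely its first arc --- and no arc of $\delta_\G(t)$ other than its last, wait: its last arc is the unique arc of $\delta_\G(t)$ it uses, which has label $\id$ in $\G'$; so the label is $\id \cdots \id \cdot \alpha = \alpha$), and since $(s,t)$-equivalence preserves $l(\cdot\,; s, t)$, we get $l(\G; s, t) = \alpha$.
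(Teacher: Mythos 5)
Your overall strategy -- fix a spanning tree, shift each vertex by the inverse of its accumulated tree-path label so that all tree edges become $\id$, and use balancedness (via fundamental cycles) to conclude that the non-tree edges become $\id$ as well -- is essentially the paper's argument. The one genuine problem is the endpoint bookkeeping in your step (5). Because you root the tree at $s$ and set $\varphi(v)=\mu(v)^{-1}$ only on $V\setminus\{s,t\}$, the graph you actually construct has label $\mu(t)$ on the arcs \emph{entering $t$} and $\id$ everywhere else; it does \emph{not} have label $\alpha$ on $\delta_\G(s)$. Your closing sentence of step (5) asserts both descriptions of the same graph at once ("each such arc [around $t$] gets label $\mu(t)$ \dots every edge in $\delta_\G(s)$ has label $\alpha$ and every other edge has label $\id$"), which is internally inconsistent: these are two different, merely $(s,t)$-equivalent, labelings, and the lemma demands the latter one specifically. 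The phrase "after possibly merging with the shifts at the other vertices" gestures at the fix but never carries it out.

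The repair is short but must be stated: take $\varphi(v)\coloneqq \mu(t)\,\mu(v)^{-1}$ for $v\in V\setminus\{s,t\}$ (equivalently, follow your shifts with one further uniform shift by $\alpha=\mu(t)$ at every vertex of $V\setminus\{s,t\}$). Then an arc $uv$ with $u,v\notin\{s,t\}$ gets $\alpha\cdot\id\cdot\alpha^{-1}=\id$, an arc leaving $s$ gets $\alpha\cdot\id=\alpha$, and an arc entering $t$ from $u\neq s$ gets $\mu(t)\mu(u)^{-1}\cdot\psi(\vec e)\cdot$(nothing at $t$)$=\id$ by the tree/cycle identity $\mu(t)^{-1}\psi_\G(e,t)\mu(u)=\id$; this is exactly the claimed $\G'$. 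Note that the paper avoids this extra step entirely by growing the tree \emph{from $t$} and never shifting at $t$, so that the only illegal shift occurs at $s$ and undoing it deposits the residual $\alpha$ on $\delta_\G(s)$ directly. Your identification $\alpha=\mu(t)=l(\G;s,t)$ is fine, since balancedness makes all $s$--$t$ paths share the label of the tree path $P_t$.
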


\begin{proof}
Take an arbitrary spanning tree $T$ of the underlying graph $G = (V, E)$.
Consider the following procedure.
Let $X \coloneqq \{t\}$ and $\psi'' \coloneqq \psi$.
While $X \neq V$,
take a neighbor $v \in N_T(X)$,
update $\psi''$ by shifting by $\psi''(\vec{e})$ or $\psi''(\vec{e})^{-1}$ at $v$ for a unique edge $e$ connecting $v$ and $X$ in $T$
so that $\psi''(\vec{e}) = \id$ after the shifting, and let $X \coloneqq X \cup \{v\}$.
This procedure takes $\mathrm{O}(|E|)$ time,
since it can be done just by performing breadth first search once
and shifting operations $|V| - 1$ times, where note that the label of each arc changes at most twice.

After the procedure,
we have $\psi''(\vec{e}) = \id$ for every edge $e$ in $T$,
and also for every edge $e \in E(\G)$ since $\G$ is balanced.
Suppose that we performed a shifting operation by $\alpha$ at $s$.
Then, the $\Gamma$-labeled graph $\G' = (\vec{G}, \psi')$ obtained by shifting $(\vec{G}, \psi'')$ by $\alpha^{-1}$ at $s$ after the procedure
is $(s, t)$-equivalent to $\G$. 
\end{proof}

\begin{lemma}\label{lem:balanced2}
  For any $(\G, s, t) \in \cD$,
  we have $|l(\G; s, t)| = 1$ if and only if $\G$ is balanced.
\end{lemma}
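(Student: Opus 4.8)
The plan is to prove the two implications separately. For the ``if'' direction, suppose $\G$ is balanced. Since $(\G, s, t) \in \cD$, every vertex lies on some $s$--$t$ path, so in particular $\G$ is connected. Then Lemma~\ref{lem:shifting} applies: there is an element $\alpha \in \Gamma$ such that $\G$ is $(s, t)$-equivalent to the $\Gamma$-labeled graph $\G'$ in which every arc in $\delta_\G(s)$ has label $\alpha$ and every other arc has label $\id$. Since $(s, t)$-equivalence preserves $l(\cdot; s, t)$, it suffices to check that $l(\G'; s, t) = \{\alpha\}$. But any $s$--$t$ path $P$ in $\G'$ uses exactly one arc of $\delta_\G(s)$ (namely its first edge, leaving $s$, which cannot be revisited since $P$ is a path), and all of its other arcs have label $\id$; hence $\psi_{\G'}(P) = \id \cdots \id \cdot \alpha = \alpha$. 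Since $\G'$ is connected, at least one $s$--$t$ path exists, so $l(\G'; s, t) = \{\alpha\}$ and $|l(\G; s, t)| = 1$.

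For the ``only if'' direction, I would prove the contrapositive: if $\G$ is not balanced, then $|l(\G; s, t)| \geq 2$. So let $C$ be an unbalanced cycle in $\G$, and fix a vertex $w \in V(C)$. Since $(\G, s, t) \in \cD$, the vertex $w$ lies on some $s$--$t$ path $P$. The idea is to ``detour'' along $C$: travelling around $C$ starting and ending at $w$ changes the accumulated label by a conjugate of $\psi_\G(C)$ (more precisely, if $Q = P[s, w]$ then going $s \to w$ along $P$, then once around $C$, then $w \to t$ along $P$ produces a closed-walk insertion whose net effect multiplies the label by a nontrivial element). The difficulty is that this concatenation is a walk, not necessarily a path, since $C$ may share vertices with $P$ other than $w$. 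The standard fix is to take the \emph{first} vertex of $C$ encountered along $P$ from $s$ and the \emph{last} such vertex, and splice in a subpath of $C$ between them; but here it is cleaner to shift: apply shifting operations to make $P$ a zero path (possible by balancedness along the tree structure of $P$, as in the proof of Lemma~\ref{lem:shifting}), after which $C$ is still unbalanced (shifting conjugates cycle labels, preserving nontriviality), and in the shifted graph one directly compares $P$ with a modified $s$--$t$ path obtained by rerouting through an arc of $C$ incident to $w$.

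Concretely, after shifting so that $\psi_\G(P) = \id$, pick an edge $e = \{w, x\} \in E(C) \setminus E(P)$ incident to $w$ (such an edge exists: $C$ has two edges at $w$, and since $C$ is a cycle not contained in $P$ — as $P$ is a path and $C$ is a cycle — at least one edge of $C$ is off $P$; if both edges of $C$ at $w$ were on $P$ then $w$ would be an inner vertex of $P$ with both incident $P$-edges in $C$, and one can then instead pick a different vertex of $C$ where $C$ leaves $P$). Using $e$ together with an appropriate subpath of $C$ and of $P$, one constructs a second $s$--$t$ path $P'$ whose label is $\psi_\G(P)$ times a conjugate of $\psi_\G(C)^{\pm 1}$, which is $\neq \id$; hence $l(\G; s, t) \ni \id$ and also contains this second element, giving $|l(\G; s, t)| \geq 2$.

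The main obstacle is the bookkeeping in this last construction: ensuring the rerouted object is genuinely a \emph{path} (no repeated vertices) and that its label is provably a nontrivial conjugate of $\psi_\G(C)^{\pm 1}$ rather than accidentally $\id$. I expect this to require choosing the splice points of $C$ relative to $P$ carefully — essentially reproving a baby case of the kind of path-surgery used throughout the paper — and it is the step where the hypothesis $(\G, s, t) \in \cD$ (which forces every vertex, and in particular a well-chosen vertex of $C$, to lie on an $s$--$t$ path) is genuinely used.
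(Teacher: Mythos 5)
Your ``if'' direction is correct and is exactly the paper's argument: shift via Lemma~\ref{lem:shifting} so that every arc outside $\delta_\G(s)$ has label $\id$ and every arc of $\delta_\G(s)$ has label $\alpha$; then every $s$--$t$ path has label $\alpha$.

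The ``only if'' direction, however, has a genuine gap, and you have essentially flagged it yourself without closing it. Your plan is to take a single $s$--$t$ path $P$ through some vertex $w$ of the unbalanced cycle $C$ and reroute locally near $w$; but both of your access routes to $C$ (namely $P[s,w]$ and $P[w,t]$) meet $C$ at the \emph{same} vertex $w$, so there is no way to compare ``the two ways around $C$'' --- which is the only mechanism by which unbalancedness of $C$ forces two distinct path labels. Moreover $P$ may weave in and out of $C$ arbitrarily, so no local splice at $w$ is guaranteed to produce a simple path, nor one whose label provably differs from $\psi_\G(P)$; your sketch asserts that ``one constructs a second $s$--$t$ path $P'$ whose label is \dots a conjugate of $\psi_\G(C)^{\pm1}$'' but never exhibits the construction. (A smaller slip: shifting only at vertices of $V\setminus\{s,t\}$ cannot ``make $P$ a zero path,'' since such shifts preserve the label of every $s$--$t$ path; you would also have to shift at $s$, which is harmless here but should be said.)

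The missing idea is to use the full strength of $(\G,s,t)\in\cD$ via Lemma~\ref{lem:cD}: the graph $G+e_{st}$ is $2$-connected, so Menger's theorem yields two \emph{disjoint} paths from $\{s,t\}$ to $V(C)$, say an $s$--$x$ path $P$ and a $t$--$y$ path $Q$ with $x\neq y$, which can be taken internally disjoint from $C$ (and from each other). Then $C[x,y]$ and $\overline{C}[x,y]$ have different labels precisely because $\psi_\G(\overline{C}[x,y])^{-1}\cdot\psi_\G(C[x,y])=\psi_\G(C)\neq\id$, and extending each of them by $P$ and $\overline{Q}$ gives two genuine $s$--$t$ paths with distinct labels. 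This is the step your proposal lacks; without the two-disjoint-paths configuration the argument does not go through.
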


\begin{proof}
It is obvious from Lemma~\ref{lem:shifting} that $|l(\G; s, t)| = 1$ if $\G$ is balanced.
To prove the inversion,
suppose that $\G$ is not balanced and $|V(\G)| \geq 3$ (because the case when $V(\G) = \{s, t\}$ is trivial),
and let $C$ be an unbalanced cycle in $\G$.
Since $(\G, s, t) \in \cD$ implies that $\G + e_{st}$ is 2-connected by Lemma~\ref{lem:cD},
where $e_{st} = \{s, t\}$ is a new edge (with an arbitrary label),
there exist two disjoint paths between $\{s, t\}$ and $V(C)$ in $\G$ by Menger's theorem
(each path may be of length $0$, i.e., possibly $\{s, t\} \cap V(C) \neq \emptyset$).
Take such disjoint paths, say an $s$--$x$ path $P$ and a $t$--$y$ path $Q$ with $x, y \in V(C)$,
so that they are internally disjoint from $C$ (e.g., by taking minimal ones), i.e., $V(P) \cap V(Q) = \emptyset$ and $(V(P) \cup V(Q)) \cap V(C) = \{x, y\}$.
Since $\psi_\G(\overline{C}[x, y])^{-1} \cdot \psi_\G(C[x, y]) = \psi_\G(C) \neq \id$
(where $x$ is chosen as the end vertex of the cycle $C$),
we have $\psi_\G(C[x, y]) \neq \psi_\G(\overline{C}[x, y])$.
Hence, by extending $C[x, y]$ and $\overline{C}[x, y]$ along $P$ and $Q$,
we can construct two $s$--$t$ paths in $\G$ whose labels are distinct,
which implies $|l(\G; s, t)| \geq 2$.
\end{proof}

Lemmas~\ref{lem:cD}, \ref{lem:shifting}, and \ref{lem:balanced2}
lead to the following proposition.

\begin{proposition}\label{prop:non-zero}
  Let $\G$ be a $\Gamma$-labeled graph with two specified vertices $s, t \in V(\G)$.
  Then, for any $\alpha \in \Gamma$, in polynomial time,
  one can either find an $s$--$t$ path $P$ in $\G$ with $\psi_\G(P) \neq \alpha$,
  or conclude that $l(\G; s, t) \subseteq \{\alpha\}$.
\end{proposition}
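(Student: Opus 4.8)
The plan is to assemble an algorithm from Lemmas~\ref{lem:cD}, \ref{lem:shifting}, and~\ref{lem:balanced2}. I would start with preprocessing and two reductions. If $s$ and $t$ lie in different connected components of $\G$, then $l(\G; s, t) = \emptyset \subseteq \{\alpha\}$ and we are done; otherwise restrict to the component containing $s$ and $t$. If $V(\G) = \{s, t\}$, then every $s$--$t$ path is a single edge in $\delta_\G(s)$, so we simply scan their labels. Hence assume $|V(\G)| \geq 3$; after discarding equivalent arcs (easy under the assumed group model), Lemma~\ref{lem:cD} and the discussion preceding it let us compute, in polynomial time via a $2$-connected-component decomposition of $G + e_{st}$ (e.g., by~\cite{HT}), a subgraph $\G'$ with $(\G', s, t) \in \cD$ and $l(\G'; s, t) = l(\G; s, t)$; so we may assume $(\G, s, t) \in \cD$, and in particular $\G$ is connected and an $s$--$t$ path exists. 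The second reduction makes $\alpha = \id$: since all arcs in $\delta_\G(s)$ leave $s$, replacing $\psi(\vec{e})$ by $\psi(\vec{e}) \cdot \alpha^{-1}$ for each such arc (and keeping all other labels) yields a $\Gamma$-labeled graph $\hat\G$ with $\psi_{\hat\G}(P) = \psi_\G(P) \cdot \alpha^{-1}$ for every $s$--$t$ path $P$; thus $\psi_{\hat\G}(P) \neq \id$ iff $\psi_\G(P) \neq \alpha$, and $l(\hat\G; s, t) \subseteq \{\id\}$ iff $l(\G; s, t) \subseteq \{\alpha\}$. So it suffices to treat $\hat\G$ with the forbidden label $\id$, and we rename $\hat\G$ back to $\G$.

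Now I would decide whether $\G$ is balanced, which takes polynomial time: fix a spanning tree $T$ of the underlying graph and, exactly as in the proof of Lemma~\ref{lem:shifting}, shift labels so that every tree edge gets label $\id$; then $\G$ is balanced iff every non-tree edge also has label $\id$ afterwards (its fundamental cycle has that label, and these determine all cycle labels up to conjugation). If $\G$ \emph{is} balanced, then Lemma~\ref{lem:shifting} (or Lemma~\ref{lem:balanced2}) gives $l(\G; s, t) = \{\gamma\}$ for a single $\gamma \in \Gamma$; we find an $s$--$t$ path $P$ by breadth-first search (it exists since $(\G, s, t) \in \cD$) and compute $\gamma = \psi_\G(P)$. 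If $\gamma \neq \id$ we output $P$, and if $\gamma = \id$ we conclude $l(\G; s, t) = \{\id\}$, hence $\subseteq \{\alpha\}$ after undoing the reduction. If instead $\G$ is \emph{not} balanced, Lemma~\ref{lem:balanced2} yields $|l(\G; s, t)| \geq 2$, so some $s$--$t$ path has label $\neq \id$; to find one I would make the proof of that lemma algorithmic: an unbalanced cycle $C$ is read off from a non-tree edge whose fundamental cycle is unbalanced (available from the shifting above), two disjoint paths from $\{s, t\}$ to $V(C)$ are produced by the (constructive, polynomial-time) Menger argument since $\G + e_{st}$ is $2$-connected, and concatenating these with the two $x$--$y$ paths forming $C$ gives two $s$--$t$ paths of distinct labels; output whichever has label $\neq \id$.

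Every individual step above is polynomial: the graph surgery and connectivity computations are classical, and each manipulation of group elements is performed in constant time under the assumptions of Section~\ref{sec:computation}. The only place needing genuine care is turning the existence argument of Lemma~\ref{lem:balanced2} into an explicit construction of two distinct-label $s$--$t$ paths, but since both the extraction of an unbalanced cycle and Menger's theorem are constructive, I do not expect a real obstacle here.
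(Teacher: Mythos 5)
Your proposal is correct and is exactly the argument the paper intends: it only remarks that Lemmas~\ref{lem:cD}, \ref{lem:shifting}, and \ref{lem:balanced2} ``lead to'' the proposition, and you have filled in precisely those steps (reduction to $\cD$, shifting at $s$ to normalize $\alpha$ to $\id$, the spanning-tree balancedness test, and the constructive Menger argument from the proof of Lemma~\ref{lem:balanced2}). No gaps.
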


\clearpage
\section{Characterization}\label{sec:results}
In this section,
we provide a complete characterization of triplets $(\G, s, t) \in \cD$
with $l(\G; s, t) = \{\alpha, \beta\}$ for some distinct $\alpha, \beta \in \Gamma$.
Since $|l(\G; s, t)| = 1$ if and only if $\G$ is balanced (Lemma~\ref{lem:balanced2}),
our characterization leads to the first nontrivial classification of
$\Gamma$-labeled graphs in terms of the number of possible labels
of $s$--$t$ paths, and the classification is also complete when $\Gamma \simeq \ZZ_3$.
We consider two cases separately: when $\alpha\beta^{-1} = \beta\alpha^{-1}$
and when $\alpha\beta^{-1} \neq \beta\alpha^{-1}$.

\subsection{Two labels $\alpha, \beta$ with $\alpha\beta^{-1} = \beta\alpha^{-1}$}\label{sec:easy}
First, we give a characterization in the case when $\alpha\beta^{-1} = \beta\alpha^{-1}$.
Note that this case does not appear when $\Gamma \simeq \ZZ_3$.
The following proposition holds analogously to Lemmas~\ref{lem:shifting}
and \ref{lem:balanced2} in Section~\ref{sec:non-zero},
which characterize triplets $(\G, s, t) \in \cD$ with $|l(\G; s, t)| = 1$.

\begin{proposition}\label{prop:2-cyclic}
  Let $\alpha$ and $\beta$ be distinct elements in $\Gamma$
  with $\alpha\beta^{-1} = \beta\alpha^{-1}$.
  For any $(\G, s, t) \in \cD$, we have $l(\G; s, t) = \{\alpha, \beta\}$
  if and only if $\G = (\vec{G}, \psi)$ is not balanced and is $(s, t)$-equivalent to a $\Gamma$-labeled graph $\G' = (\vec{G}, \psi')$ such that
  \begin{equation}\label{eq:2cyclic}
    \psi'(\vec{e}) = \begin{cases}
      \alpha~\text{\rm or}~\beta & (\text{$e \in \delta_\G(s)$}),\\
      \id~\text{\rm or}~\alpha\beta^{-1} & (\text{\rm otherwise}),
    \end{cases} \tag{$\ast$}
  \end{equation}
  for every edge $e \in E(\G)$, where recall the assumption that all arcs around $s$ leave $s$ in $\vec{G}$.
  Moreover, one can find such $\G'$ in polynomial time if exists.
\end{proposition}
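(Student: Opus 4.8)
The plan is to treat the two implications separately and then extract the algorithm from the proof of the harder one. Throughout, write $\gamma := \alpha\beta^{-1}$; the hypothesis $\alpha\beta^{-1} = \beta\alpha^{-1}$ gives $\gamma^2 = (\alpha\beta^{-1})(\beta\alpha^{-1}) = \id$, $\gamma\alpha = (\beta\alpha^{-1})\alpha = \beta$, and $\gamma\beta = (\alpha\beta^{-1})\beta = \alpha$; in particular $\{\id,\gamma\}$ is a subgroup of order $2$ (using $\alpha\neq\beta$). For the ``if'' direction, suppose $\G$ is not balanced and $(s,t)$-equivalent to some $\G'$ satisfying $(\ast)$. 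Then $l(\G;s,t) = l(\G';s,t)$, and any $s$--$t$ path $P$ in $\G'$ meets $s$ only along its first edge: that edge (leaving $s$) contributes a factor in $\{\alpha,\beta\}$ while every other edge contributes a factor in $\{\id,\gamma\}$ (closed under inversion since $\gamma^{-1}=\gamma$), so $\psi_{\G'}(P) = \gamma^m\delta$ with $\delta\in\{\alpha,\beta\}$, $m\in\{0,1\}$, which by $\gamma\alpha=\beta$, $\gamma\beta=\alpha$ lies in $\{\alpha,\beta\}$. Hence $l(\G;s,t)\subseteq\{\alpha,\beta\}$, and since $\G$ is not balanced, $|l(\G;s,t)|\ge 2$ by Lemma~\ref{lem:balanced2}; therefore $l(\G;s,t)=\{\alpha,\beta\}$.

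For the ``only if'' direction, assume $l(\G;s,t)=\{\alpha,\beta\}$; then $\G$ is not balanced by Lemma~\ref{lem:balanced2}. The first step is a reduction: shift $\G$ at $s$ by $\alpha$ to obtain $\G_1$. This keeps $(\G_1,s,t)\in\cD$ and, since shifting at $s$ multiplies the first-edge factor of every $s$--$t$ path on the right by $\alpha^{-1}$, gives $l(\G_1;s,t) = l(\G;s,t)\,\alpha^{-1} = \{\id,\beta\alpha^{-1}\} = \{\id,\gamma\}$. Because shifts at $s$ commute with $(s,t)$-equivalences, and because shifting an arc-labeling in which every arc carries a label in $\{\id,\gamma\}$ (call such a $\Gamma$-labeled graph \emph{$\{\id,\gamma\}$-valued}) back at $s$ by $\alpha^{-1}$ turns the arcs at $s$ into $\{\alpha,\beta\}$ while leaving the rest $\{\id,\alpha\beta^{-1}\}$-valued, i.e.\ produces a graph satisfying $(\ast)$, it suffices to prove: $\G_1$ is $(s,t)$-equivalent to some $\{\id,\gamma\}$-valued graph. (Working with the $\{\id,\gamma\}$-valued reformulation instead of $(\ast)$ directly is essential, because then \emph{every walk}, not just $s$--$t$ paths, has label in $\{\id,\gamma\}$.)

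To prove this, fix an open ear decomposition $C_1,P_2,P_3,\dots$ of the $2$-connected graph $\G_1 + e_{st}$ (which is $2$-connected by Lemma~\ref{lem:cD}) with $e_{st}\in C_1$, and set $\G^{(0)} := C_1 - e_{st}$ (an $s$--$t$ path) and $\G^{(j)} := \G^{(j-1)} + P_{j+1}$; each $\G^{(j)} + e_{st}$ is a prefix of the decomposition, hence $2$-connected, so $(\G^{(j)},s,t)\in\cD$ and $l(\G^{(j)};s,t)\subseteq l(\G_1;s,t)=\{\id,\gamma\}$. I would show by induction on $j$ that $\G^{(j)}$ is $(s,t)$-equivalent to a $\{\id,\gamma\}$-valued graph $\widehat\G^{(j)}$: when $\G^{(j)}$ is balanced (e.g.\ in the base case $\G^{(0)}$), Lemma~\ref{lem:shifting} gives one outright; otherwise, starting from $\widehat\G^{(j)}$, pull the ear $P_{j+1} = x_0 f_1 x_1 \cdots f_k x_k$ along, applying the same shifts and then shifting at the new internal vertices $x_1,\dots,x_{k-1}$ so that $f_1,\dots,f_{k-1}$ all acquire label $\id$ (this does not disturb $\widehat\G^{(j)}$). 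Only the label $\tau$ of $f_k$ remains to be controlled, and the crux is that $\tau\in\{\id,\gamma\}$: using $2$-connectivity of $\G^{(j)} + e_{st}$ one finds two vertex-disjoint paths from $\{x_0,x_k\}$ to $\{s,t\}$ that avoid $e_{st}$ and the new ear, hence lie in $\widehat\G^{(j)}$; splicing them onto $P_{j+1}$ yields an $s$--$t$ path $E$ with $\psi(E) = \ell_2\,\tau\,\ell_1^{-1}$ for some $\ell_1,\ell_2\in\{\id,\gamma\}$ (labels of walks in the $\{\id,\gamma\}$-valued $\widehat\G^{(j)}$); since $\psi(E)\in l(\G^{(j+1)};s,t)\subseteq\{\id,\gamma\}$ and $\{\id,\gamma\}$ is a group, $\tau\in\{\id,\gamma\}$. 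The degenerate situations where an ear endpoint is $s$ or $t$ (or $|V|=2$) reduce to $E$ being $P_{j+1}$ itself. Undoing the reduction then yields the desired $\G'$, which is $(s,t)$-equivalent to $\G$ and satisfies $(\ast)$.

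Finally, all of this is effective: an open ear decomposition with $e_{st}\in C_1$ is computable in polynomial time, and the induction is just polynomially many shiftings and group operations, so running it either produces $\G'$ (after which $l(\G;s,t)\subseteq\{\alpha,\beta\}$, with equality exactly when, in addition, $\G$ is not balanced, which is checkable via Lemma~\ref{lem:shifting}), or hits some $\tau\notin\{\id,\gamma\}$, in which case $l(\G;s,t)\not\subseteq\{\alpha,\beta\}$ and no such $\G'$ exists. I expect the only real work to be the inductive step, specifically the verification $\tau\in\{\id,\gamma\}$: one must choose the two disjoint paths correctly and dispatch the boundary cases, and one must resist the temptation to use the naive ``zero out a spanning tree'' normal form for $(\ast)$, which can genuinely fail on interior edges because closed walks through $s$ in a $(\ast)$-graph carry only conjugates of $\gamma$, not $\gamma$ itself.
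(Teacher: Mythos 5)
Your proof is correct, and the ``only if'' direction takes a genuinely different route from the paper's. The paper zeroes out a spanning tree (shifting at all vertices except $t$), shifts back at $s$ by the appropriate element of $\{\alpha,\beta\}$, and then shows every remaining edge satisfies \eqref{eq:2cyclic} by an exchange argument: take an $s$--$t$ path minimizing the number of violating edges and reroute it through the connected spanning subgraph of non-violating edges to strictly decrease that number, the one-violating-edge case giving a label outside $\{\alpha,\beta\}$ directly. You instead normalize by a shift at $s$ so that the target is the $\{\id,\gamma\}$-valued form with $\gamma=\alpha\beta^{-1}$ of order two, and then induct along an open ear decomposition of $\G+e_{st}$, controlling the last edge of each ear by splicing it into an $s$--$t$ path via two disjoint paths to $\{s,t\}$. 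Your subgroup reformulation is a real simplification of the bookkeeping (every walk, not just every $s$--$t$ path, has label in $\{\id,\gamma\}$), and the ear induction is naturally constructive; the paper's route is shorter and avoids the ear machinery at the cost of a more delicate rerouting step. One caveat: your closing warning that the spanning-tree normal form ``can genuinely fail on interior edges'' is too pessimistic --- that normal form is exactly what the paper uses, and under the hypotheses $(\G,s,t)\in\cD$ and $l(\G;s,t)=\{\alpha,\beta\}$ the rerouting argument shows it cannot fail; what is true is only that zeroing the tree does not \emph{automatically} force the non-tree edges into $\{\id,\alpha\beta^{-1}\}$ without that extra argument.
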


\begin{proof}
It is easy to see that $l(\G; s, t) = \{\alpha, \beta\}$ if $\G$ is not balanced and such $\G'$ exists as follows.
Lemma~\ref{lem:balanced2} immediately implies $|l(\G; s, t)| \geq 2$.
Furthermore, 
the label of any path of length at least $1$ in $\G'$ starting at $s$ is $\alpha$ or $\beta$ as follows.
This can be shown by induction on the path length $\ell \geq 1$.
The base case when $\ell = 1$ is obvious from \eqref{eq:2cyclic}.
When $\ell > 1$, let $P = (s = v_0, e_1, v_1, \ldots, e_\ell, v_\ell)$.
Since the last edge $e_\ell$ is not around $s$, the label $\psi_{\G'}(P) = \psi_{\G'}(e_\ell, v_\ell) \cdot \psi_{\G'}(P[s, v_{\ell - 1}])$ is either
$\id \cdot \alpha = \alpha$, $\id \cdot \beta = \beta$, $\alpha\beta^{-1} \alpha = \beta\alpha^{-1}\alpha = \beta$, or $\alpha\beta^{-1}\beta= \alpha$
by \eqref{eq:2cyclic}, the induction hypothesis, and the assumption that $\alpha\beta^{-1} = \beta\alpha^{-1} = (\alpha\beta^{-1})^{-1}$.
Thus, the label of any $s$--$t$ path in $\G'$ is also $\alpha$ or $\beta$,
and the $(s, t)$-equivalence between $\G$ and $\G'$ leads to $l(\G; s, t) = l(\G'; s, t) = \{\alpha, \beta\}$.

The converse is rather nontrivial.
Suppose that $l(\G; s, t) = \{\alpha, \beta\}$.
Then, Lemma~\ref{lem:balanced2} immediately implies that $\G$ is not balanced.
To construct $\G'$ in the statement,
as with the proof of Lemma~\ref{lem:shifting},
take an arbitrary spanning tree $T$ of the underlying graph $G$,
and obtain $\G'' = (\vec{G}, \psi'')$ by shifting $\G$ at the vertices other than $t$ such that $\psi''(\vec{e}) = \id$ for every edge $e \in E(T)$.
Since $l(\G; s, t) = \{\alpha, \beta\}$ and the label of the unique $s$--$t$ path in $T$ is $\id$ in $\G''$,
we performed a shifting operation by $\alpha$ or $\beta$ at $s$
(recall Definition~\ref{def:shifting} and the assumption that all arcs around $s$ leave $s$).
Hence, by shifting $\G''$ by $\alpha^{-1}$ or $\beta^{-1}$, respectively, at $s$ after the above procedure,
we obtain a $\Gamma$-labeled graph $\G' = (\vec{G}, \psi')$ that is $(s, t)$-equivalent to $\G$.

In what follows, we confirm that $\G'$ indeed satisfies \eqref{eq:2cyclic} for every edge $e \in E(\G)$.
Suppose that $|V(\G)| \geq 3$ (because the case when $V(\G) = \{s, t\}$ is trivial)
and to the contrary that some edge $e = \{x, y\} \in E(\G)$ violates \eqref{eq:2cyclic}.
Let $\tE \subsetneq E(\G)$ be the set of edges satisfying \eqref{eq:2cyclic}.
Note that $E(T) \subseteq \tE$, and hence $\tE$ forms a connected spanning subgraph $\tG$ of $G$.
Since $G + e_{st}$ is 2-connected for a new edge $e_{st} = \{s, t\}$ by Lemma~\ref{lem:cD}, 
there exist two disjoint paths between $\{s, t\}$ and $\{x, y\}$ in $G$ by Menger's theorem,
and hence $G$ has an $s$--$t$ path traversing the edge $e = \{x, y\} \not\in \tE$.
Take an $s$--$t$ path $P$ in $G$ with $E(P) \setminus \tE \neq \emptyset$ so that $|E(P) \setminus \tE|$ is minimized.

If $|E(P) \setminus \tE| = 1$, then $\psi_{\G'}(P) \not\in \{\alpha, \beta\}$,
which contradicts $l(\G'; s, t) = l(\G; s, t) = \{\alpha, \beta\}$.
Otherwise, we have $|E(P) \setminus \tE| \geq 2$.
Let $e_1, e_2 \in E(P) \setminus \tE$ be the first two such edges
traversed in walking along $P$, and $Q$ the subpath of $P$
connecting $e_1$ and $e_2$ (hence, $E(Q) \subseteq \tE$, and possibly $E(Q) = \emptyset$).
Since $\tG$ is a connected spanning subgraph of $G$, there exists a path $R$
from some $u \in V(Q)$ to some $w \in V(P) \setminus V(Q)$ in $\tG$ whose inner vertices are all disjoint from $V(P)$
(such a path $R$ is obtained, e.g., by taking any minimal path between $V(Q)$ and $V(P) \setminus V(Q)$ in $\tG$).
We then construct an $s$--$t$ path $P'$ from $P$ by replacing $P[u, w]$ (or $P[w, u]$)
with $R$ (or $\bar{R}$) so that
$\emptyset \neq E(P') \setminus \tE \subsetneq E(P) \setminus \tE$,
where note that $|E(P') \cap \{e_1, e_2\}| = 1$.
This implies that $1 \leq |E(P') \setminus \tE| \leq |E(P) \setminus \tE| - 1$,
which contradicts the choice of $P$.
\end{proof}

\subsection{Two labels $\alpha, \beta$ with $\alpha\beta^{-1} \neq \beta\alpha^{-1}$}\label{sec:characterization}
We next discuss the case when $\alpha\beta^{-1} \neq \beta\alpha^{-1}$, which is much more difficult.
We state our characterization with a subset $\cDab \subseteq \cD$ for whose member $l(\G; s, t) = \{\alpha, \beta\}$ trivially holds,
and successively define it through Definitions~\ref{def:cDab0}--\ref{def:cDab}.
In short, $(\G, s, t) \in \cDab$ if
$\G$ is constructed by ``gluing'' together
``nice'' planar (as well as several trivial) $\Gamma$-labeled graphs
and their derivations.

\begin{theorem}\label{thm:characterization}
  Let $\alpha$ and $\beta$ be distinct elements in $\Gamma$
  with $\alpha\beta^{-1} \neq \beta\alpha^{-1}$.
  For any $(\G, s, t) \in \mathcal D$, 
  we have $l(\G; s, t) = \{\alpha, \beta\}$ 
  if and only if $(\G, s, t) \in \cDab$.
\end{theorem}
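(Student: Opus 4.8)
The plan is to establish the equivalence by proving its two directions separately, which are of rather different character.

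The direction ``$(\G,s,t)\in\cDab \Rightarrow l(\G;s,t)=\{\alpha,\beta\}$'' is, as the notation suggests, essentially built into Definitions~\ref{def:cDab0}--\ref{def:cDab}, and I would verify it by induction along the construction of a member of $\cDab$. For the trivial base graphs the claim is immediate; for the planar building blocks it follows from a Jordan-curve argument on a plane embedding (which is, in effect, the ``no two disjoint paths'' direction of Theorem~\ref{thm:2path}); and each gluing step preserves the property because every $s$--$t$ path of the glued graph decomposes into subpaths lying in the individual pieces, each contributing a partial label from a two-element set, whose composition is forced to land in $\{\alpha,\beta\}$, while non-balancedness together with Lemma~\ref{lem:balanced2} guarantees that both labels are attained.

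The substance of the theorem is the converse. Suppose $(\G,s,t)\in\cD$ with $l(\G;s,t)=\{\alpha,\beta\}$, and take such a triplet with $|V(\G)|$ minimum subject to $(\G,s,t)\notin\cDab$; I aim to derive a contradiction. By Lemma~\ref{lem:balanced2}, $\G$ is not balanced, and by Lemma~\ref{lem:cD} the graph $\G+e_{st}$ is $2$-connected. The first phase is a decomposition along small vertex cuts. If $\G+e_{st}$ has a vertex cut of size at most $3$, split $\G$ along it and replace each resulting side by the gadget obtained by deleting the interior of that side and inserting, between the boundary vertices, one parallel arc for each pair of them and each label of a path joining them through the deleted interior. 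The key structural fact making this work --- the same phenomenon that makes $3$ the right threshold in Theorem~\ref{thm:2path} --- is that an $s$--$t$ path makes at most one ``excursion'' through the interior of a side bounded by at most three vertices, since it visits each boundary vertex at most once; consequently $s$--$t$ paths of the modified graph correspond bijectively and label-preservingly to those of $\G$, so $l(\,\cdot\,;s,t)$ is unchanged. Each side-graph has fewer vertices, so by minimality it lies in $\cDab$, and reassembling the pieces exhibits $\G$ as one of the permitted gluings --- contradicting the choice of $\G$. Hence $\G+e_{st}$ has no vertex cut of size at most $3$.

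It remains to handle this highly-connected base case, which is the heart of the argument and the step I expect to be hardest. First I would normalize $\G$ by shifting (Definition~\ref{def:shifting}) so that a fixed spanning tree of the underlying graph becomes zero and the $s$--$t$ tree path has label $\alpha$; writing $g:=\alpha^{-1}\beta$, the hypothesis $\alpha\beta^{-1}\ne\beta\alpha^{-1}$ is precisely $g^{2}\ne\id$, which rules out the degenerate regime already settled by Proposition~\ref{prop:2-cyclic} and pins the ``difference'' structure down to a copy of $\ZZ_3$-type data. Using the constraints that $l(\G;s,t)=\{\alpha,\alpha g\}$ imposes on the non-tree edges and on cycles, together with the fact that the absence of small cuts lets one route internally disjoint connecting paths via Menger's theorem, I would transform $\G$ (without affecting its relevant behaviour) into an instance of the $2$-disjoint paths problem --- with terminals drawn from $s$, $t$ and the endpoints of a distinguished non-zero edge --- whose solvability is equivalent to $l(\G;s,t)\not\subseteq\{\alpha,\beta\}$. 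Theorem~\ref{thm:2path} then produces pieces $X_1,\dots,X_k$, each with at most three neighbours, whose contraction yields a graph embeddable in the plane with the four terminals on the outer face in the prescribed cyclic order; but a non-empty $X_i$ would be cut off from $\{s,t\}$ by its (at most three) neighbours and hence give a vertex cut of size at most $3$ in $\G+e_{st}$, contradicting the base-case hypothesis. So all $X_i$ are empty and $\G$, with the terminal identifications undone, is exactly one of the ``nice'' planar building blocks, i.e.\ $(\G,s,t)\in\cDab$. The main obstacle, and where I expect the real work to lie, is this last paragraph: extracting from ``$l(\G;s,t)=\{\alpha,\beta\}$'' a faithful $2$-disjoint-paths instance in the general (possibly non-abelian or infinite) group, and checking that the reduction is tight enough for Theorem~\ref{thm:2path} to apply and for its conclusion to be translated back into the structure required by $\cDab$.
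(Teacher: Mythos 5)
Your sufficiency direction is fine and matches the paper's (it follows from Lemma~\ref{lem:contraction} plus the easy verification for $\cDab^0$). The necessity direction, however, has a genuine gap in its first phase. Your plan is to split $\G$ along \emph{every} vertex cut of size at most $3$ in $\G+e_{st}$, invoke minimality on each side, and ``reassemble.'' This works for $2$-cuts (via Lemma~\ref{lem:cDab}-(3)) and for $3$-cuts whose side $\around{\G}{X}$ is balanced (that is exactly a $3$-contraction), but it fails for $3$-cuts with an \emph{unbalanced} side: the class $\cDab$ is closed only under undoing $2$-contractions (side balanced or in $\cDabp^1$) and $3$-contractions (side balanced). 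There is no permitted gluing that reattaches an unbalanced piece across a $3$-cut, so knowing that your gadget-replaced graph and the side both lie in $\cDab$ does not let you conclude $(\G,s,t)\in\cDab$. This is not a technicality: such unbalanced $3$-cut sides genuinely occur in members of $\cDab$ (Case~(B) of Definition~\ref{def:cDab0}, and the unbalanced face $F_1$ in Case~(C) can sit behind a $3$-cut), and handling them is where most of the paper's work lies (Claim~\ref{cl:3-cut} and the whole of Case~2.2.2, for instance). Consequently you also cannot assume in your second phase that $\G+e_{st}$ is $4$-connected.

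The second phase is also unsubstantiated at its key step. You propose to encode ``$l(\G;s,t)\not\subseteq\{\alpha,\beta\}$'' as a single $2$-disjoint-paths instance and read the planar structure of Case~(C) off Theorem~\ref{thm:2path}. No such reduction is given, and it is doubtful one exists in this form: a third label can arise in many inequivalent ways (in the paper's normalization, as $\alpha^{-1}$ or $\alpha^{2}$ via quite different routings), and the target structure is not merely ``planar with four terminals on the outer face'' but a labeled planar graph with a \emph{unique} unbalanced inner face, a condition Theorem~\ref{thm:2path} cannot certify by itself. The paper's actual route is different: it takes a counterexample minimizing $|V(\G)|+|E(\G)|$, proves there are no contractible sets (Claims~\ref{cl:2-contractible} and \ref{cl:3-contractible}), deletes a single edge $e_0\in\delta_\G(s)$, applies minimality to $\G-e_0$ to get a known structure, and then runs a long case analysis on how $e_0$ can be re-inserted, using Theorem~\ref{thm:2path} only locally. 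Your outline identifies the right tools but does not bridge either of these two gaps, and they constitute the substance of the theorem.
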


We first prepare basic ingredients of $\Gamma$-labeled graphs in $\cDab$ as follows.

\begin{figure}[tbp]\hspace{-3mm}
  \begin{tabular}{cc}
    \begin{minipage}[b]{0.5\hsize}
      \begin{center}
        \includegraphics[scale=0.7]{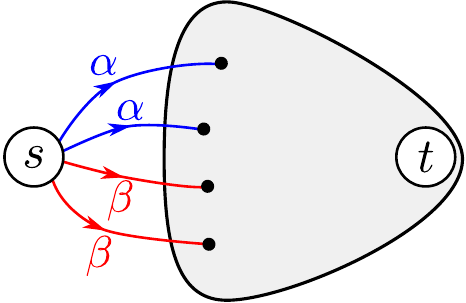}
      \end{center}\vspace{-4mm}
      \caption{$(\G, s, t) \in \cDab^0$ in Case (A1).}
      \label{fig:cDab0A1}
    \end{minipage}
    \begin{minipage}[b]{0.5\hsize}
      \begin{center}
        \includegraphics[scale=0.7]{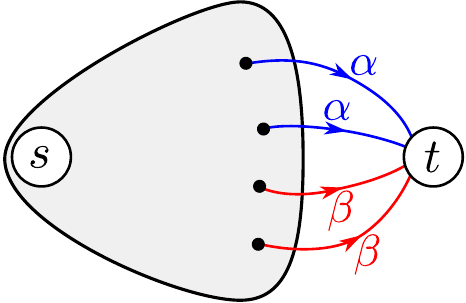}
      \end{center}\vspace{-4mm}
      \caption{$(\G, s, t) \in \cDab^0$ in Case (A2).}
      \label{fig:cDab0A2}
    \end{minipage}\\[7mm]
    \begin{minipage}[b]{0.5\hsize}
      \begin{center}
        \includegraphics[scale=0.7]{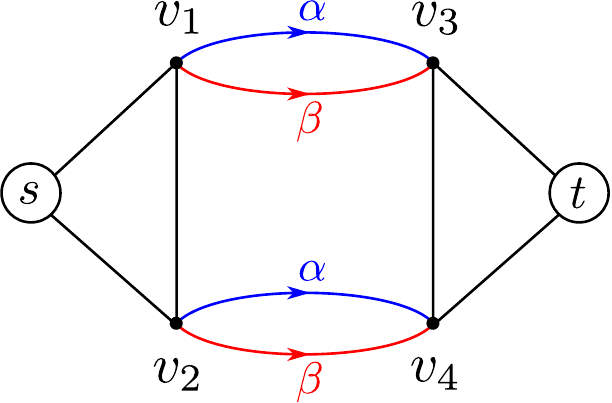}
      \end{center}\vspace{-5mm}
      \caption{$(\G, s, t) \in \cDab^0$ in Case (B).}
      \label{fig:cDab0B}
    \end{minipage}
    \begin{minipage}[b]{0.5\hsize}
      \begin{center}
        \includegraphics[scale=0.7]{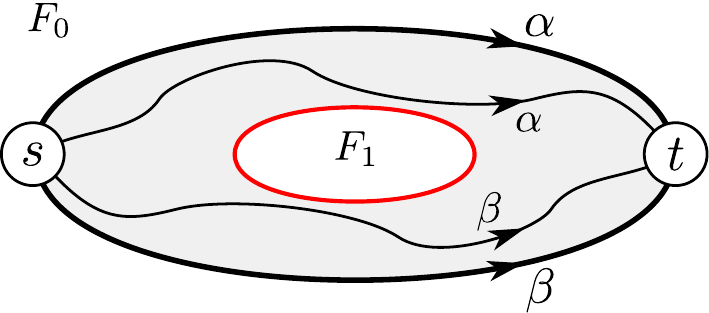}
      \end{center}\vspace{-3mm}
      \caption{$(\G, s, t) \in \cDab^0$ in Case (C).}\vspace{1mm}
      \label{fig:cDab0C}
    \end{minipage}
  \end{tabular}\vspace{-2mm}
\end{figure}

\begin{definition}\label{def:cDab0}
  {\rm For distinct $\alpha, \beta \in \Gamma$ with $\alpha\beta^{-1} \neq \beta\alpha^{-1}$, let $\cDab^0$ be
  the set of all triplets $(\G, s, t) \in \cD$
  satisfying one of the following conditions.
  \begin{itemize}
    \setlength{\itemsep}{.5mm}
  \item[(A)]
    There exists a $\Gamma$-labeled graph $\G' = (\vec{G}, \psi')$
    that is not balanced and is $(s, t)$-equivalent to $\G$ such that either\vspace{-1.5mm}
    \begin{itemize}
      \setlength{\itemsep}{.5mm}
    \item[(A1)]
      $\psi'(\vec{e}) = \alpha$ or $\beta$ for each edge $e \in \delta_\G(s)$ and $\psi'(\vec{e}) = \id$ otherwise,
      where recall the assumption that all arcs around $s$ leave $s$ (see Fig.~\ref{fig:cDab0A1}), or
    \item[(A2)]
      $\psi'(\vec{e}) = \alpha$ or $\beta$ for each edge $e \in \delta_\G(t)$ and $\psi'(\vec{e}) = \id$ otherwise,
      where recall the assumption that all arcs around $t$ enter $t$ (see Fig.~\ref{fig:cDab0A2}).
    \end{itemize}\vspace{-1mm}
  \item[(B)]
    $\G$ (or some equivalent $\Gamma$-labeled graph)
    is $(s, t)$-equivalent to a $\Gamma$-labeled graph
    that consists of six vertices, say $s, v_1, v_2, v_3, v_4, t$,
    six arcs $sv_1, sv_2, v_1v_2, v_3v_4, v_3t, v_4t$ with label $\id$,
    and two pairs of two parallel arcs from $v_i$ to $v_{i+2}$ $(i = 1, 2)$
    whose labels are both $\alpha$ and $\beta$ (see Fig.~\ref{fig:cDab0B}).
  \item[(C)]
    $\G$ can be embedded in the plane with two specified faces $F_0$ and $F_1$ (see Fig.~\ref{fig:cDab0C}) such that\vspace{-1.5mm}
    \begin{itemize}
      \setlength{\itemsep}{.5mm}
    \item
      $F_0$ is the outer face with both $s$ and $t$ on its boundary,
    \item
      one $s$--$t$ path along ${\rm bd}(F_0)$ is of label $\alpha$ and the other is of $\beta$, and
    \item
      $F_1$ is a unique inner face whose boundary is unbalanced, i.e., $\psi_\G({\rm bd}(F_1)) \neq \id$
	and $\psi_\G({\rm bd}(F)) = \id$ for any face $F$ other than $F_0$ or $F_1$.
    \end{itemize}
  \end{itemize}}
\end{definition}

Next, we introduce two new operations for $(\G, s, t) \in \cD$. 
Recall that, for each vertex set $X \subseteq V(\G)$, we define $\around{\G}{X} \coloneqq \G[X \cup N_\G(X)] - E(N_\G(X))$.

\begin{definition}[2-contraction]\label{def:2-contraction}
  {\rm For a vertex set $X \subseteq V(\G) \setminus \{s, t\}$
  such that $N_\G(X) = \{x, y\}$ for some distinct $x, y \in V(\G)$
  and $\around{\G}{X}$ is connected,
  the {\em $2$-contraction} of $X$ is the following operation (see Fig.~\ref{fig:2-contraction}):
  \begin{itemize}
    \setlength{\itemsep}{.5mm}
  \item
    remove all vertices in $X$ together with the incident edges, and
  \item
    add a new arc from $x$ to $y$ with label $\gamma$ for each $\gamma \in l(\around{\G}{X}; x, y)$
    if an equivalent arc not yet exists.
  \end{itemize}
  The resulting $\Gamma$-labeled graph is denoted by $\G \two X$.
  A vertex set $X \subseteq V(\G) \setminus \{s, t\}$
  is said to be {\em $2$-contractible in $\G$}
  if the 2-contraction of $X$ can be performed in $\G$
  and $\around{\G}{X} \neq \G$.\footnote{The latter condition is for excluding $V(\G) \setminus \{s, t\}$ unless $\{s, t\} \in E(\G)$, whose 2-contraction can be always performed and just results in a $\Gamma$-labeled graph consisting of parallel arcs from $s$ to $t$ with labels in $l(\G; s, t)$.}}
\end{definition}

\begin{figure}[tbp]
 \begin{center}
  \includegraphics[scale=0.7]{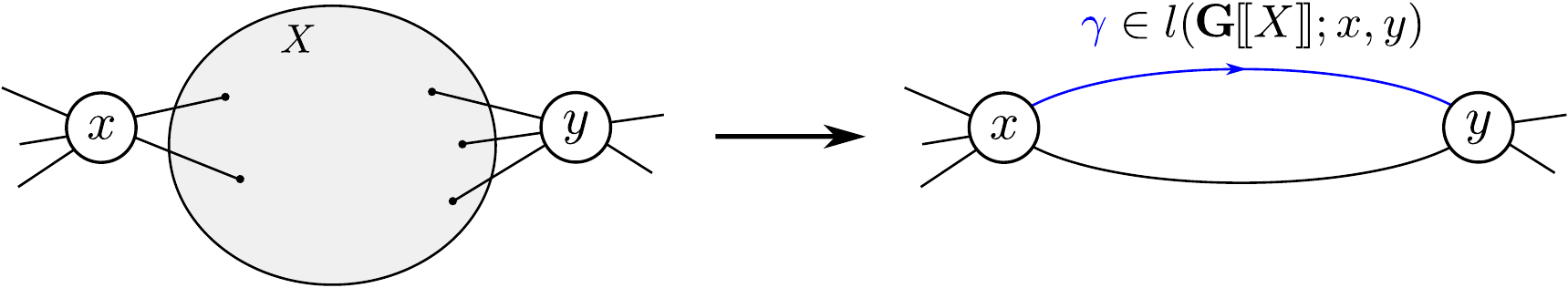}
 \end{center}\vspace{-8mm}
 \caption{2-contraction.}
 \label{fig:2-contraction}
\end{figure}

\begin{figure}[tbp]
 \begin{center}\vspace{5mm}
  \includegraphics[scale=0.6]{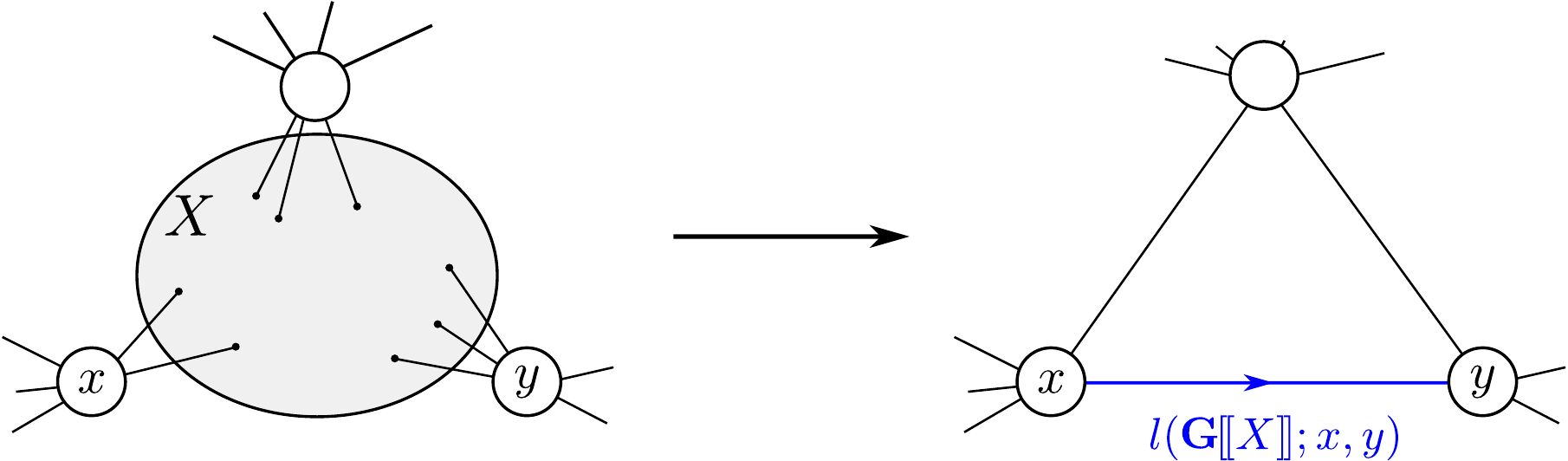}
 \end{center}\vspace{-6mm}
 \caption{3-contraction.}
 \label{fig:3-contraction}
\end{figure}

\begin{definition}[3-contraction]\label{def:3-contraction}
  {\rm For a vertex set $X \subseteq V(\G) \setminus \{s, t\}$ with $|N_\G(X)| = 3$
  such that $\G[X]$ is connected and $\around{\G}{X}$ is balanced,
  the {\em $3$-contraction} of $X$ is the following operation  (see Fig.~\ref{fig:3-contraction}):
  \begin{itemize}
    \setlength{\itemsep}{.5mm}
  \item
    remove all vertices in $X$ together with the incident edges, and
  \item
    add a new arc from $x$ to $y$ with label $l(\around{\G}{X}; x, y)$
    (which consists of a single element by Lemma~\ref{lem:balanced2})
    for each pair of distinct $x, y \in N_\G(X)$ if an equivalent arc not yet exists.
  \end{itemize}
  The resulting $\Gamma$-labeled graph is denoted by $\G \three X$.
  A vertex set $X \subseteq V(\G) \setminus \{s, t\}$
  is said to be {\em $3$-contractible in $\G$}
  if the 3-contraction of $X$ can be performed in $\G$.
  The cycle in $\G \three X$ formed by the three arcs added to $\G$ (or the equivalent arcs that already exist in $\G$) is balanced,
  and we refer to the cycle\footnote{The end vertex and the direction are not essential, and are fixed only when they are necessary.} (or its edge set)
as the \emph{balanced triangle} or just the \emph{triangle}.}
\end{definition}

The 2-contraction 
and the 3-contraction are analogous to the operation that is performed
in the condition 3 in Theorem~\ref{thm:2path},
and we use the same term ``contraction'' to refer to each of them.
We observe that any contraction makes no essential effect on our problem as follows.

\begin{lemma}\label{lem:contraction}
For any $(\G, s, t) \in \cD$,
fix $i \in \{2, 3\}$ and an $i$-contractible vertex set $X \subseteq V(\G) \setminus \{s, t\}$,
and let $\G' \coloneqq \G \conti X$.
Then, $(\G', s, t) \in \cD$ and $l(\G'; s, t) = l(\G; s, t)$.
\end{lemma}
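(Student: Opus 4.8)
The plan is to verify the two assertions separately: first that the new triplet $(\G', s, t)$ still lies in $\cD$, and then that the set of realizable $s$--$t$ path labels is unchanged. For membership in $\cD$, I would check the two defining conditions. The ``no equivalent arcs'' condition is immediate, because in both Definitions~\ref{def:2-contraction} and \ref{def:3-contraction} we add a new arc with a given label only ``if an equivalent arc not yet exists'', and no two of the freshly added arcs between the same pair of vertices carry the same label (for 2-contraction the labels come from the set $l(\around{\G}{X}; x, y)$, so they are pairwise distinct; for 3-contraction at most one arc is added between each pair of $N_\G(X)$). For the condition that every vertex lies on some $s$--$t$ path, I would argue that $N_\G(X)$ is kept entirely (for $i = 3$) or the two boundary vertices $x, y$ are kept (for $i = 2$), and every old $s$--$t$ path avoiding $X$ survives verbatim; meanwhile any vertex $v \notin X$ that used to lie on an $s$--$t$ path passing through $X$ still does, because that path enters and leaves $X$ through $N_\G(X)$, and the portion inside $X$ can be replaced by one of the new arcs (whose existence is exactly what 2-/3-contraction guarantees), yielding a genuine $s$--$t$ path of $\G'$ through $v$. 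The condition $\around{\G}{X} \neq \G$ built into ``$i$-contractible'' (for $i = 2$) ensures $\G'$ is not a trivial multigraph on $\{s,t\}$, and in general $s, t \notin X$ guarantees $s, t$ are retained.

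For the label-set equality $l(\G'; s, t) = l(\G; s, t)$, I would prove both inclusions by a path-translation argument. For $l(\G'; s, t) \subseteq l(\G; s, t)$: take an $s$--$t$ path $P'$ in $\G'$; if $P'$ uses none of the newly added arcs it is already a path in $\G$; otherwise replace each newly added arc, say from $x$ to $y$ with label $\gamma$, by an $x$--$y$ path in $\around{\G}{X}$ realizing that label (which exists since $\gamma \in l(\around{\G}{X}; x, y)$, using Lemma~\ref{lem:balanced2} in the $i = 3$ case to see this set is a singleton that the added arc matches). Since distinct newly added arcs on $P'$ connect distinct pairs among $N_\G(X)$ and their interiors lie in disjoint copies of $X$ (here I need that $P'$ visits each of $x, y, \dots \in N_\G(X)$ at most once, and that the subpaths inside $\around{\G}{X}$ can be chosen internally disjoint from the rest of $P'$ because their inner vertices lie in $X$, which $P'$ avoids), the concatenation is a genuine $s$--$t$ path in $\G$ of the same label. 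For the reverse inclusion: take an $s$--$t$ path $P$ in $\G$; look at its intersection with $X$. Because $N_\G(X)$ has size $2$ (resp.\ $3$) and $P$ is a path, $P \cap \G[X]$ is a disjoint union of subpaths each entering and leaving $X$ through $N_\G(X)$ — and since $|N_\G(X)| \le 3$ and $s, t \notin X$, there is at most one such subpath in the $i = 2$ case (it must use both of $x, y$) and the $i = 3$ case is handled by the balancedness of $\around{\G}{X}$ together with the structure of the added triangle. Replace each such subpath, an $a$--$b$ walk inside $\around{\G}{X}$ with $a, b \in N_\G(X)$, by the corresponding added arc; the resulting $s$--$t$ walk in $\G'$ has the same label, and it is in fact a path because $P$ visited the vertices of $N_\G(X)$ at most once each.

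The main obstacle, and the step deserving the most care, is the $i = 3$ case of the reverse inclusion: an $s$--$t$ path $P$ in $\G$ might pass through $X$ in two separate subpaths (entering/leaving via two distinct pairs among the three vertices of $N_\G(X) = \{p, q, r\}$, e.g.\ one $p$--$q$ excursion and one $p$--$r$ excursion would be impossible in a path, but a $p$--$q$ excursion and then re-entry is subtle), so I must check that the two replacement arcs, together with the portions of $P$ outside $X$, still form a single path and that the labels compose correctly. This is exactly where balancedness of $\around{\G}{X}$ is essential: in a balanced connected graph the label $l(\around{\G}{X}; a, b)$ of an $a$--$b$ path is well-defined independent of the path (Lemma~\ref{lem:balanced2} and Lemma~\ref{lem:shifting}), and the three added arcs form a balanced triangle, so any consistent combination of excursions through $X$ gives the same total label as its replacement; I would spell out that a path $P$ can meet $X$ in at most one nontrivial excursion because visiting two of $\{p,q,r\}$ at the ``ends'' of the first excursion already uses up two of the three portals, and re-entering would force revisiting a portal vertex, contradicting that $P$ is a path. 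Once this structural claim is nailed down, the label bookkeeping is routine given the preceding lemmas.
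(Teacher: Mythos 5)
Your overall strategy coincides with the paper's: translate paths in both directions, count portal visits to bound the number of excursions through $X$, and use the balancedness of $\around{\G}{X}$ to make the label bookkeeping consistent in the $3$-contraction case. Most of the argument is sound.

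There is, however, one step that fails as written, in the inclusion $l(\G'; s, t) \subseteq l(\G; s, t)$ for $i = 3$. You justify concatenating the expansions of several newly added arcs by saying their interiors ``lie in disjoint copies of $X$.'' There is only one $X$: if $P'$ traverses two edges of the balanced triangle, say $\{p,q\}$ and $\{q,r\}$, both expansions are $p$--$q$ and $q$--$r$ paths through the \emph{same} set $X$, and nothing prevents them from sharing internal vertices, in which case the concatenation is a walk but not a path. The paper's fix is a one-liner you are missing: two triangle edges on a path must be consecutive (they share a portal), and since the triangle is balanced they can first be replaced by the third triangle edge without changing the label, reducing to the case of a single added arc, whose expansion is then internally disjoint from the rest of $P'$ because its inner vertices lie in $X$. (Connectivity of $\G[X]$ is also needed to choose that expansion avoiding the third portal.) A second, more minor imprecision: your claim that a path meets $X$ in at most one excursion because re-entry ``would force revisiting a portal'' is not quite right --- a path can exit $X$ at a portal $q$ and immediately re-enter through $q$, visiting it only once; this is harmless (the two excursions are replaced by two consecutive triangle edges), but the counting argument as you state it does not cover it.
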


\begin{proof}
We first confirm that $(\G', s, t) \in \cD$ if $(\G, s, t) \in \cD$.
If some vertex $v \in V(\G') = V(\G) \setminus X$ is not contained in any $s$--$t$ path in $\G'$,
then this is true in $\G$, which contradicts $(\G, s, t) \in \cD$.
Moreover, $\G'$ has no loop or equivalent arcs because this is true for $\G$ and the second procedures in Definitions~\ref{def:2-contraction} and \ref{def:3-contraction} do not yield such an arc.

Next we see $l(\G'; s, t) = l(\G; s, t)$.
Any $s$--$t$ path $P$ in $\G$ cannot enter $\around{\G}{X}$ after leaving it once,
i.e., if $E(P) \cap E(\around{\G}{X}) \neq \emptyset$,
then the edges in $E(P) \cap E(\around{\G}{X})$ form a path with end vertices in $N_\G(X)$, say from $x$ to $y$.
Since $\G'$ has an arc from $x$ to $y$ with label $\psi_{\G[\![X]\!]}(P[x, y]) = \psi_{\G}(P[x, y])$ (or an equivalent arc) by definition,
we can obtain an $s$--$t$ path $P'$ in $\G'$ of label $\psi_{\G}(P) = \psi_{\G}(P[y, t]) \cdot \psi_{\G}(P[x, y]) \cdot \psi_{\G}(P[s, x])$
from $P$ by replacing $P[x, y]$ with the corresponding edge $e = \{x, y\} \in E(\G')$.
To the contrary, any $s$--$t$ path $P'$ in $\G'$ that traverses an edge $e = \{x, y\} \in E(\G') \setminus E(\G)$ 
can be expanded to an $s$--$t$ path $P$ in $\G$ of label $\psi_{\G'}(P') = \psi_{\G'}(P'[y, t]) \cdot \psi_{\G'}(e, y) \cdot \psi_{\G'}(P'[s, x])$
by replacing $e$ with an $x$--$y$ path in $\around{\G}{X}$ of label $\psi_{\G'}(e, y)$.

We remark that in order to assure that $P'$ can be expanded to $P$,
we utilize the fact that $\G[X]$ is connected and $\around{\G}{X}$ is balanced when $i = 3$ (cf.~Definition~\ref{def:3-contraction}).
We may assume that $P'$ traverses at most one edge in the triangle
(if two of them are traversed, then they must be successive in $P'$,
and can be replaced by the other edge in the balanced triangle without changing the label).
Moreover, such an edge $e = \{x, y\} \in E(\G') \setminus E(\G)$ can be expanded by an $x$--$y$ path
in $\around{\G}{X}$ (whose label is unique due to the balancedness)
that does not intersect $N_\G(X) \setminus \{x, y\}$ because $\G[X]$ is connected.
\end{proof}

We are now ready to define $\cDab$,
which is, roughly speaking, the set of triplets $(\G, s, t) \in \cD$ that can be reduced to some $(\G', s, t) \in \cDab^0$
by a sequence of 2-contractions and 3-contractions.

\begin{definition}\label{def:cDab1}
  {\rm For distinct $\alpha, \beta \in \Gamma$ with $\alpha\beta^{-1} \neq \beta\alpha^{-1}$,
  we define $\cDab^1$ as the minimal set of triplets $(\G, s, t) \in \cD$
  with the following conditions:
  \begin{itemize}
    \setlength{\itemsep}{.5mm}
  \item
    $\cDab^0 \subseteq \cDab^1$, and
  \item
    if $(\G \three X, s, t) \in \cDab^1$
    for some 3-contractible $X \subseteq V(\G) \setminus \{s, t\}$,
    then $(\G, s, t) \in \cDab^1$.
  \end{itemize}}
\end{definition}

We remark that any sequence of 3-contractions can be replaced
by 3-contractions of disjoint 3-contractible vertex sets $X_1, X_2, \dots, X_k$
such that $N_\G(X_i) \cap X_j = \emptyset$ if $i \neq j$ as in Theorem~\ref{thm:2path},
which is proved later in Section~\ref{sec:lemmas} (cf.~Lemma~\ref{lem:3-contraction}).

\begin{definition}\label{def:cDab}
  {\rm For distinct $\alpha, \beta \in \Gamma$ with $\alpha\beta^{-1} \neq \beta\alpha^{-1}$, 
  we define $\cDab$ as the minimal set of triplets $(\G, s, t) \in \cD$
  with the following conditions:
  \begin{itemize}
    \setlength{\itemsep}{.5mm}
  \item
    $\cDab^1 \subseteq \cDab$, and
  \item
    if $(\G \two X, s, t) \in \cDab$
    for some 2-contractible $X \subseteq V(\G) \setminus \{s, t\}$
    such that either $\around{\G}{X}$ is balanced or $(\around{\G}{X}, x, y) \in \cDabp^1$,
    where $N_\G(X) = \{x, y\}$ and
    $\alpha', \beta' \in \Gamma$ satisfy $\alpha'{\beta'}^{-1} \neq \beta'{\alpha'}^{-1}$,
    then $(\G, s, t) \in \cDab$.
  \end{itemize}}
\end{definition}

Since any contraction does not change $l(\G; s, t)$ (by Lemma~\ref{lem:contraction}),
it is easy to see that $l(\G; s, t) = \{\alpha, \beta\}$ if $(\G, s, t) \in \cDab$.
The converse direction in Theorem~\ref{thm:characterization} is nontrivial, whose proof is presented later in Section~\ref{sec:proof}.

\clearpage
\section{Algorithm}\label{sec:algorithm}
In this section, we give a proof of Theorem~\ref{thm:non-zero2}
with assuming Theorem~\ref{thm:characterization}.
That is, we present a polynomial-time algorithm that tests, given a $\Gamma$-labeled graph $\G$ with $s, t \in V(\G)$ and $\alpha, \beta \in \Gamma$,
whether $l(\G; s, t) \subseteq \{\alpha, \beta\}$ or not and
returns an $s$--$t$ path $P$ in $\G$ with $\psi_\G(P) \not\in \{\alpha, \beta\}$ if $l(\G; s, t) \not\subseteq \{\alpha, \beta\}$.
We note again that, when $\Gamma \simeq \ZZ_3$,
using such an algorithm,
one can compute $l(\G; s, t)$ itself and find an $s$--$t$ path of label $\gamma$ for each $\gamma \in l(\G; s, t)$ (Corollary~\ref{cor:algorithm}).

\subsection{Algorithm description}
To prove Theorem~\ref{thm:non-zero2},
we give two algorithms, which slightly go farther than required.
One tests whether $|l(\G; s, t)| \leq 2$ or not,
and returns at most two $s$--$t$ paths in $\G$ that attain all labels in $l(\G; s, t)$ when $|l(\G; s, t)| \leq 2$.
The other finds three $s$--$t$ paths in $\G$ whose labels are distinct
when it has turned out that $|l(\G; s, t)| \geq 3$.

We first present the former algorithm.
In particular, when $\Gamma \simeq \ZZ_3$, this algorithm completely computes $l(\G; s, t)$ itself.
%
%
\begin{description}
  \setlength{\itemsep}{0mm}
\item[\algoTTL$(\G, s, t)$]
\item[Input:]
  A $\Gamma$-labeled graph $\G$ and distinct vertices $s, t \in V(\G)$.
\item[Output:]
  The set $l(\G; s, t)$ of all possible labels of $s$--$t$ paths in $\G$
  with those which attain the labels if $|l(\G; s, t)| \leq 2$,
  and a message ``$|l(\G; s, t)| \geq 3$'' otherwise.\vspace{1mm}

\item[Step 0.]
  If $s$ and $t$ are not connected in $\G$, then halt with returning $\emptyset$ ($\G$ contains no $s$--$t$ path).
  Otherwise, let $\G'$ be the unique maximal subgraph of $\G$ such that $(\G', s, t) \in \cD$ and $l(\G'; s, t) = l(\G; s, t)$,
  which is obtained by removing redundant edges and by computing a 2-connected component of a graph (cf.~Lemma~\ref{lem:cD}).
  If there exist at least three parallel edges in $E(\G')$ between the same pair of vertices, then halt with reporting ``$|l(\G; s, t)| \geq 3$.''

\item[Step 1.]
  Test whether $\G'$ is balanced or not by Lemma~\ref{lem:shifting}
  (i.e., take an arbitrary spanning tree, and apply shifting along it).
  If $\G'$ is balanced, then halt with returning an arbitrary $s$--$t$ path in $\G'$ and its label.
  Otherwise, by utilizing an unbalanced cycle (which is detected in testing balancedness),
  construct two $s$--$t$ paths $P$ and $Q$ in $\G'$ whose labels are distinct (cf.~the proof of Lemma~\ref{lem:balanced2}),
  say $\alpha, \beta \in \Gamma$.
  In the following steps,
  we check whether $l(\G'; s, t) = \{\alpha, \beta\}$ or not.

\item[Step 2.]
  If $\alpha\beta^{-1} = \beta\alpha^{-1}$,
  then check the condition in Proposition~\ref{prop:2-cyclic}.
  Return $\{\alpha, \beta\}$ with the two $s$--$t$ paths $P$ and $Q$ if it is satisfied, and report ``$|l(\G; s, t)| \geq 3$'' otherwise.
  Otherwise (i.e., if $\alpha\beta^{-1} \neq \beta\alpha^{-1}$),
  to make $\G'$ 2-connected (unless $V(\G') = \{s, t\}$),
  add to $\G'$ a new arc from $s$ to $t$ with label $\alpha$ (or $\beta$)
  if $s$ and $t$ are not adjacent in $\G'$.

\item[Step 3.]
  While $\G'$ is not 3-connected and $|V(\G')| \geq 4$, do the following procedure.
  Let $\{x, y\} \subsetneq V(\G')$ be a 2-cut in $\G'$,
  and $X$ the vertex set of a connected component of $\G' - \{x, y\}$
  with $X \cap \{s, t\} = \emptyset$
  (such $X$ exists, since $s$ and $t$ are adjacent in $\G'$).
  Test whether $|l(\around{\G'}{X}; x, y)| \leq 2$ or not
  recursively by \algoTTL$(\around{\G'}{X}, x, y)$.
  Update $\G' \leftarrow \G' \two X$ (2-contraction)
  if $|l(\around{\G'}{X}; x, y)| \leq 2$,
  and halt with reporting ``$|l(\G; s, t)| \geq 3$'' otherwise.
    
\item[Step 4.]
  While there exists a 3-contractible vertex set
  $X \subseteq V(\G') \setminus \{s, t\}$,
  update $\G' \leftarrow \G' \three X$ (3-contraction).
  Note that here we use Lemma~\ref{lem:shifting} to check the balancedness of $\around{\G'}{X}$. 

\item[Step 5.]
  If $|V(\G')| \leq 6$,
  then compute $l(\G', s, t)$ by enumerating all $s$--$t$ paths in $\G'$
  and return the result.
  Otherwise, test whether $(\G', s, t) \in \cDab^0$ or not
  by Lemma~\ref{lem:step5} (which will be stated and proved in Section~\ref{sec:proof_algorithm}).
  Return $\{\alpha, \beta\}$ with the two $s$--$t$ paths $P$ and $Q$
  if $(\G', s, t) \in \cDab^0$, and report ``$|l(\G; s, t)| \geq 3$'' otherwise.
\end{description}

Next, we show the latter algorithm,
which finds three $s$--$t$ paths in $\G$ whose labels are distinct
when it has turned out that $|l(\G; s, t)| \geq 3$.
In particular, when $\Gamma \simeq \ZZ_3$, this algorithm finds
three $s$--$t$ paths which attain all labels.
\begin{description}
  \setlength{\itemsep}{0mm}
\item[\algoFTP$(\G, s, t)$]
\item[Input:]
  A $\Gamma$-labeled graph $\G$ and distinct vertices $s, t \in V(\G)$
  such that $|l(\G; s, t)| \geq 3$.
\item[Output:]
  Three $s$--$t$ paths in $\G$ whose labels are distinct.\vspace{1mm}
\item[Step 1.]
  If $N_\G(s) = \{t\}$, then halt with returning three $s$--$t$ paths in $\G$
  each of which consists of a single edge $\{s, t\} \in E(\G)$.
  Otherwise, for each $s' \in N_\G(s) \setminus \{t\}$,
  test whether $|l(\G - s; s', t)| \leq 2$ or not
  by \algoTTL$(\G - s, s', t)$.
\item[Step 2.]
  If $|l(\G - s; s', t)| \leq 2$ for all $s' \in N_\G(s) \setminus \{t\}$,
  then we have already obtained (at most two)
  $s'$--$t$ paths in $\G - s$ which attain all labels in $l(\G - s; s', t)$.
  Choose three $s$--$t$ paths in $\G$ whose labels are distinct among the following ones and halt with returning them:\vspace{-1.5mm}
  \begin{itemize}
    \setlength{\itemsep}{.5mm}
  \item
    the $s$--$t$ paths obtained by extending such $s'$--$t$ paths in $\G - s$ using an edge (possibly parallel edges) $\{s, s'\} \in E(\G)$ for each $s' \in N_\G(s) \setminus \{t\}$, and
  \item 
    the $s$--$t$ paths each of which consists of a single edge $\{s, t\} \in E(\G)$.
  \end{itemize}\vspace{-.5mm}
  
\item[Step 3.]
  Otherwise, for at least one $\tilde{s} \in N_\G(s) \setminus \{t\}$,
  we obtained $|l(\G - s; \tilde{s}, t)| \geq 3$.
  Then, recursively by \algoFTP$(\G - s, \tilde{s}, t)$,
  find three $\tilde{s}$--$t$ paths in $\G - s$ whose labels are distinct.
  Extend the three $\tilde{s}$--$t$ paths using an edge $\{s, \tilde{s}\} \in E$,
  and return the extended $s$--$t$ paths in $\G$.
\end{description}

\subsection{Correctness (proof of Theorem~\ref{thm:non-zero2})}\label{sec:proof_algorithm}
Before starting the proof, 
we show the detailed procedure of Step~5 in \algoTTL.

\begin{lemma}\label{lem:step5}
  Let $(\G, s, t) \in \cD$.
  Suppose that $\G$ is $3$-connected and contains no $3$-contractible vertex set,
  $|V(\G)| > 6$, $\{s, t\} \in E(\G)$, and $\{\alpha, \beta\} \subseteq l(\G; s, t)$
  for some distinct $\alpha, \beta \in \Gamma$ with $\alpha\beta^{-1} \neq \beta\alpha^{-1}$.
  Then, one can test whether $(\G, s, t) \in \cDab^0$ or not in polynomial time.
\end{lemma}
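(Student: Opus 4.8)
The plan is to test membership in each of the four cases (A1), (A2), (B), (C) of Definition~\ref{def:cDab0} separately, and to argue that under the hypotheses of the lemma these tests can be carried out in polynomial time. First I would dispose of Case (B): since $|V(\G)| > 6$ by assumption, $\G$ cannot consist of the six-vertex gadget in (B), and since $\G$ is $3$-connected while the graph in (B) contains a $2$-cut (e.g.\ $\{v_1, v_2\}$), no equivalent or $(s,t)$-equivalent $\Gamma$-labeled graph of $\G$ can be of the form in (B) either. So Case (B) never occurs here and requires no work. Next I would test Cases (A1) and (A2). For (A1), by Lemma~\ref{lem:shifting}-type reasoning applied to $\G - s$, one checks whether $\G - s$ is balanced; if so, shift $\G - s$ so that all its edges get label $\id$, and then inspect whether every arc $e \in \delta_\G(s)$ can simultaneously be shifted (at its non-$s$ endpoint) to label $\alpha$ or $\beta$. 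Concretely, after fixing the shift that trivializes $\G - s$, each arc in $\delta_\G(s)$ has a well-defined residual label; condition (A1) holds iff every such residual label lies in $\{\alpha, \beta\}$ and at least one of $\alpha$, $\beta$ is actually attained (non-balancedness of $\G$). This is a single balancedness check plus $\mathrm{O}(|E(\G)|)$ comparisons. Case (A2) is symmetric with $s$ and $t$ interchanged and all arcs around $t$ entering $t$.

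The substantive work is Case (C), detecting a planar embedding of $\G$ with the prescribed face structure. Since $\G$ is $3$-connected, by Whitney's theorem its planar embedding (if one exists) is combinatorially unique up to reflection and the choice of outer face. So the plan is: first run a planarity-testing algorithm (e.g.\ \cite{HT}-style linear-time planarity) on the underlying graph $G$; if $G$ is non-planar, reject (C). Otherwise, compute the unique combinatorial embedding, enumerate its $\mathrm{O}(|E(\G)|)$ faces, and for each candidate choice of outer face $F_0$ and each candidate inner face $F_1$, verify the three bulleted conditions: that $s$ and $t$ both lie on ${\rm bd}(F_0)$ and the two $s$--$t$ subpaths of ${\rm bd}(F_0)$ have labels $\alpha$ and $\beta$ respectively; that $\psi_\G({\rm bd}(F_1)) \ne \id$; and that $\psi_\G({\rm bd}(F)) = \id$ for every remaining face $F$. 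The last condition is independent of the choice of $(F_0, F_1)$: it just asks that exactly one bounded face other than the one chosen as $F_0$ is unbalanced. So in fact one computes the face labels once, checks there is exactly one unbalanced bounded face (call it the forced $F_1$), and then only searches over choices of $F_0$ among the at most two faces containing both $s$ and $t$ on their boundary (there are at most two such faces in a $2$-connected plane graph, since $\{s,t\} \in E(\G)$ forces $s$ and $t$ to be cofacial via exactly the two faces incident to the edge $\{s,t\}$), checking the $\alpha/\beta$ split of ${\rm bd}(F_0)$. Each label computation is $\mathrm{O}(|E(\G)|)$ group operations, so the whole test is polynomial.

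I expect the main obstacle to be the careful bookkeeping in Case (C): one must make sure that ``the two $s$--$t$ paths along ${\rm bd}(F_0)$'' are genuinely well-defined (which uses that $\G$, being $3$-connected, has ${\rm bd}(F_0)$ a simple cycle through $s$ and $t$, so it splits into exactly two internally disjoint $s$--$t$ paths), and that the choice of reflection of the embedding does not affect which faces are inner/outer or the multiset of face labels — only the cyclic order, which is irrelevant to balancedness. A secondary subtlety is that Definition~\ref{def:cDab0}(C) speaks of embedding $\G$ itself, whereas planarity algorithms work on the underlying graph $G$; but since labels play no role in planarity and the $3$-connectivity of $\G$ pins down the embedding uniquely, transferring between the two is immediate. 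Once these points are nailed down, the polynomial-time bound follows by combining linear-time planarity testing, one linear-time balancedness/shift computation, and $\mathrm{O}(|E(\G)|)$ face-label evaluations each costing a constant number of group operations per traversed arc.
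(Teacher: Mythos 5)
Your treatment of Cases (A1), (A2), and (B) matches the paper's (the paper likewise dismisses (B) via $|V(\G)| > 6$ and checks (A) by testing balancedness of $\G - s$ and $\G - t$), and your overall strategy for Case (C) — planarity test, unique embedding, enumerate candidate outer faces incident to the edge $\{s,t\}$, check face labels — is also the paper's. But there is a genuine gap in the step where you invoke Whitney's theorem to conclude that the embedding of $\G$ is combinatorially unique: that theorem applies to $3$-connected \emph{simple} graphs, whereas the $\Gamma$-labeled graphs in this paper may have parallel edges (only \emph{equivalent} arcs are excluded by $(\G,s,t)\in\cD$, so two parallel arcs with distinct labels are allowed, and indeed such a pair forming an unbalanced digon is the typical realization of the face $F_1$ in Case (C)). With even one doubled edge the face set is no longer unique, so your claim that ``the $3$-connectivity of $\G$ pins down the embedding uniquely'' fails exactly in the situation the lemma is designed to detect.

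The paper closes this gap with two preparatory claims that your proposal is missing: first, that one may assume there are no parallel edges between $s$ and $t$ (a spurious parallel $s$--$t$ edge either certifies $|l(\G;s,t)|\ge 3$ or can be deleted harmlessly); second, that at most one pair of parallel edges can exist anywhere (a second pair would create a second unbalanced inner face and immediately refute Case (C)). Only after these reductions does one get at most two candidate face sets, and combined with the two choices of outer face this yields the at most four cases to check. Your face-counting argument for the outer face also leans on $2$-connectivity where $3$-connectivity (and the absence of parallel $s$--$t$ edges) is what actually forces the candidate outer faces to be the two faces incident to the unique $s$--$t$ edge; and the condition on unbalanced faces is not independent of the choice of $F_0$, since the boundary of the face designated as $F_0$ is itself unbalanced. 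These latter points are repairable bookkeeping, but the parallel-edge issue is a real missing step.
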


\begin{proof}
Since $|V(\G)| > 6$, it is not necessary to consider Case (B) in Definition~\ref{def:cDab0}.
Besides, Case (A) is easily checked by testing whether $\G - s$ or $\G - t$ is balanced or not.
Hence, in what follows, we assume that $(\G, s, t) \in \cDab^0$ is not in Case (A) or (B)
and focus on Case (C).

First, test the planarity of $\G$.
If $\G$ is not planar, then we can conclude $(\G, s, t) \not\in \cDab^0$.
Otherwise, compute a planar embedding of $\G$ in which both $s$ and $t$ are on the outer boundary
(since $\{s, t\} \in E(\G)$, both $s$ and $t$ must be on the boundary of some face).
It should be noted that
such a planar embedding can be computed in polynomial time, e.g., by \cite{HT2}.
Moreover, since $\G$ is 3-connected,
the face set is unique if there are no parallel edges
(see, e.g., \cite[Chapter~4]{Diestel}).
Although there may be parallel edges in $\G$,
we can say that the number of parallel edges is bounded as seen below.

\medskip\noindent
\underline{{\bf Claim.}~~We may assume that no parallel edges exist between $s$ and $t$.}

\medskip
Suppose that $\G$ has parallel edges between $s$ and $t$.
If $\psi_\G(e, t) \not\in \{\alpha, \beta\}$ for some $e = \{s, t\} \in E(\G)$,
then $l(\G; s, t) \neq \{\alpha, \beta\}$ and hence we conclude $(\G, s, t) \not\in \cDab^0$.
Otherwise, since $\G$ has no equivalent arcs (by $(\G, s, t) \in \cD$),
there are exactly two parallel edges $e_\alpha, e_\beta \in E(\G)$ with $\psi_\G(e_\alpha, t) = \alpha$ and $\psi_\G(e_\beta, t) = \beta$.
Since $|V(\G)| > 6$ and any vertex in $V(\G)$ is contained in some $s$--$t$ path in $\G$ (by $(\G, s, t) \in \cD$),
there exists an $s$--$t$ path $P$ in $\G - \{e_\alpha, e_\beta\}$, and let $\gamma \coloneqq \psi_\G(P)$.
If $\alpha \neq \gamma \neq \beta$, then $|l(\G; s, t)| \geq 3$.
Otherwise, we can remove $e_\gamma$ from $\G$
without violating the hypotheses of this lemma and with preserving whether $(\G, s, t) \in \cDab^0$ or not.

\medskip\noindent
\underline{{\bf Claim.}~~We may assume that there exists at most one pair of two parallel edges.}

\medskip
Suppose that $\G$ has parallel edges between $x, y \in V(\G)$ with $\{x, y\} \neq \{s, t\}$.
Since $\G$ is 3-connected, $\{x, y\}$ is not a 2-cut in $\G$.
Hence, some pair of two parallel edges between $x$ and $y$ forms an inner face whose boundary is unbalanced.
If there are at least two such inner faces, then the condition of Case (C) is violated, and we conclude $(\G, s, t) \not\in \cDab^0$.
Otherwise, $\G$ has only one such pair of two parallel edges between $x$ and $y$, and such $\{x, y\}$ is also unique.

\medskip
Recall that we have to test whether there exists a planar embedding of $\G$
such that the outer boundary is unbalanced
and there exists a unique inner face whose boundary is unbalanced.
Since a pair of parallel edges is unique if exists,
there are at most two possible face sets of $\G$.
Furthermore, since there exists exactly one edge $e \in E(\G)$ between $s$ and $t$,
both of the two faces whose boundaries share the edge $e$
can be the outer face, i.e., there are two choices of the outer face.
It can be done in polynomial time
to check, in each of the at most four $( = 2 \times 2 )$ cases,
whether exactly one inner face has an unbalanced boundary or not,
and hence one can do the whole procedure in polynomial time.
\end{proof}

We are now ready to prove Theorem~\ref{thm:non-zero2},
where recall that we assume Theorem~\ref{thm:characterization}.

\begin{proof}[Proof of Theorem~$\ref{thm:non-zero2}$]
Recall that our goal is to test whether $|l(\G; s, t)| \leq 2$ or not, and
to find $\min\{3,\, |l(\G; s, t)|\}$ $s$--$t$ paths in $\G$ whose labels are distinct.
These are achieved as follows.
For the input triplet $(\G, s, t)$ (which may not be in $\cD$),
we first test whether $|l(\G; s, t)| \leq 2$ or not by \algoTTL$(\G, s, t)$.
If we obtain $|l(\G; s, t)| \leq 2$, then we also obtain
at most two $s$--$t$ paths in $\G$ which attain all labels in $l(\G; s, t)$.
Otherwise, we can obtain three $s$--$t$ paths whose labels are distinct
by \algoFTP$(\G, s, t)$.
Hence, it suffices to show the correctness and polynomiality of these two algorithms.

The correctness of \algoFTP\ is obvious, and that of \algoTTL\ is confirmed as follows.
First, we have $(\G', s, t) \in \cD$ and $l(\G'; s, t) = l(\G; s, t)$ at any step by Lemma~\ref{lem:contraction}
(i.e., since the 2-contractions in Step~3 and the 3-contractions in Step~4 neither violate $(\G', s, t) \in \cD$ nor change $l(\G'; s, t)$).
Next, if $\G'$ contains three parallel edges between the same pair of vertices, say $x$ and $y$, at some time,
then one can immediately construct three $s$--$t$ paths of distinct labels in $\G$ by extending those parallel edges
(corresponding to three arcs that are not equivalent)
using two disjoint paths between $\{s, t\}$ and $\{x, y\}$ in $\G'$, which exist by Menger's theorem and Lemma~\ref{lem:cD}.
Finally, the following two facts for $(\G', s, t) \in \cD$ and $\alpha, \beta \in \Gamma$ with $\alpha\beta^{-1} \neq \beta\alpha^{-1}$ are implicitly used in Step~5:
one is that $l(\G'; s, t) = \{\alpha, \beta\}$ is equivalent to $(\G', s, t) \in \cDab$ (by Theorem~\ref{thm:characterization}),
and the other is that, if $\G'$ contains no contractible vertex set, then $(\G', s, t) \in \cDab$ is equivalent to $(\G', s, t) \in \cDab^0$
(by Definitions~\ref{def:cDab1} and \ref{def:cDab}).

We finally confirm the polynomiality of the two algorithms.
Let $T_{\rm labels}(n, m)$ and $T_{\rm paths}(n, m)$ denote the computational time
of \algoTTL$(\G, s, t)$ and of \algoFTP$(\G, s, t)$, respectively,
where $n \coloneqq |V(\G)|$ and $m \coloneqq |E(\G)|$.
It is easy to see that \algoTTL\ runs in polynomial time, i.e., $T_{\rm labels}(n, m)$ is polynomially bounded.
Note that, in the recursion step (Step~3),
we just divide the current graph $\G'$ into two smaller graphs $\G' \two X$ and $\around{\G'}{X}$,
for which we have $|V(\G' \two X)| + |V(\around{\G'}{X})| = |V(\G')| + 2$ (only $x$ and $y$ are shared)
and $|E(\G' \two X)| + |E(\around{\G'}{X})| \leq |E(\G')| + |l(\around{\G'}{X}; x, y)| \leq |E(\G')| + 2$ (recall that the algorithm halts once $|l(\around{\G'}{X}; x, y)| \geq 3$ turns out).
Also, in the 3-contraction step (Step~4), it suffices to check all 3-cuts in $\G'$, whose number is ${\rm O}(n^3)$.
For \algoFTP, by a recurrence relation
\[T_{\rm paths}(n, m) \leq n \cdot T_{\rm labels}(n - 1, m - d_s) + T_{\rm paths}(n - 1, m - d_s) + \text{poly}(n, m),\]
where $d_s \coloneqq |\delta_\G(s)|$,
we have $T_{\rm paths}(n, m) \leq n^2 \cdot T_{\rm labels}(n, m) + \text{poly}(n, m)$.
Hence, $T_{\rm paths}(n, m)$ is also polynomially bounded.
\end{proof}

\clearpage
\section{Proof of Characterization (Necessity Part of Theorem~\ref{thm:characterization})}\label{sec:proof}
In this section,
we give a proof of the necessity part of Theorem~\ref{thm:characterization}.
The proof is done by contradiction, which is sketched in Section~\ref{sec:sketch}.
Several useful lemmas are prepared in Section~\ref{sec:lemmas}.
The main part of the proof begins in Section~\ref{sec:counterexample} by taking a minimal counterexample to derive a contradiction,
and is completed by a case analysis in Section~\ref{sec:case}.

\subsection{Proof sketch}\label{sec:sketch}
To derive a contradiction, assume that 
there exist $\alpha, \beta \in \Gamma$ with $\alpha\beta^{-1} \neq \beta\alpha^{-1}$
and a triplet $(\G, s, t) \in \cD$ such that $l(\G; s, t) = \{ \alpha, \beta \}$ but $(\G, s, t) \not\in \cDab$. 
We choose such $\alpha, \beta \in \Gamma$ and $(\G, s, t) \in \cD$
so that $\G$ is as small as possible (more precisely, the total number of vertices and edges in $\G$ is minimum).

Fix an arbitrary edge $e_0 \in \delta_\G(s)$,
and consider the graph $\G'\coloneqq \G - e_0$.  
By using the minimality of $\G$,
we can show that $(\G', s, t) \in \cDab$
(Claims~\ref{cl:cD} and \ref{cl:2label}).
We consider the following two cases separately:
when $(\G', s, t) \in \cDab^1$ and when not
(in Sections~\ref{sec:case1} and \ref{sec:case2}, respectively).

In both cases, we derive a contradiction by concluding one of the following situations.
\begin{itemize}
  \setlength{\itemsep}{.5mm}
\item
  $(\G', s, t) \in \cDab^0$ or $(\G' \three X, s, t) \in \cDab^0$ for some 3-contractible vertex set $X$,
  and the removed edge $e_0$ can be added without violating the conditions in Definition~\ref{def:cDab0},
  e.g., an embedding of $\G'$ in the plane can be extended to that of $\G = \G' + e_0$ with preserving the conditions of Case (C).
  This contradicts that $(\G, s, t) \not\in \cDab$.
\item
  $\G$ contains a contractible vertex set, 
  which contradicts that $\G$ is a minimal counterexample
  (cf.~Claims~\ref{cl:2-contractible} and \ref{cl:3-contractible}).
\item
  We can construct an $s$--$t$ path of label
  $\gamma \in \Gamma \setminus \{\alpha, \beta\}$ in $\G$
  (mostly by using $e_0$), 
  which contradicts that $l(\G; s, t) = \{ \alpha, \beta \}$. 
\end{itemize}
In each case, we have a contradiction,
which completes the proof. 
We remark again that Theorem~\ref{thm:2path} plays an important role in this case analysis.

\subsection{Useful lemmas}\label{sec:lemmas}
Before starting the proof, we show several lemmas which are utilized in it.
Fix distinct elements $\alpha, \beta \in \Gamma$ with $\alpha\beta^{-1} \neq \beta\alpha^{-1}$.

We first rephrase the conditions of $\cDab^1$ and of $\cDab$
so that they are easy to use.
The next lemma claims that, for any $(\G, s, t) \in \cDab^1$, one can obtain some $(\G', s, t) \in \cDab^0$
by the 3-contractions of nonadjacent vertex sets (as in Theorem~\ref{thm:2path}).
In particular, the 3-contractions of nonadjacent vertex sets do not interfere with each other,
and hence the resulting graph $\G'$ does not depend on the order of performing 3-contractions.

\begin{lemma}\label{lem:3-contraction}
  For any $(\G, s, t) \in \cD$,
  we have $(\G, s, t) \in \cDab^1$ if and only if there exist disjoint $3$-contractible vertex sets
  $X_1, X_2, \dots, X_k \subseteq V(\G) \setminus \{s, t\}$ such that
  \begin{itemize}
    \setlength{\itemsep}{.5mm}
  \item
    $N_\G(X_i) \cap X_j = \emptyset$ if $i \neq j$, and
  \item
    if we define $\G_k \coloneqq \G \three X_k$ and $\G_i \coloneqq \G_{i+1} \three X_i$ $(i = k - 1, \dots, 2, 1)$,
    then $(\G_1, s, t) \in \cDab^0$.
  \end{itemize}
\end{lemma}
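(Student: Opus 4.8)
The statement to prove is Lemma~\ref{lem:3-contraction}, which rephrases membership in $\cDab^1$ in terms of 3-contractions of a \emph{disjoint, non-interfering} family of vertex sets, matching the shape of Seymour's characterization (Theorem~\ref{thm:2path}).

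The plan is to prove both directions, with the ``if'' direction being essentially immediate from Definition~\ref{def:cDab1} and the ``only if'' direction requiring a commutation argument. For the \textbf{``if''} direction: given such a family $X_1,\dots,X_k$, the chain $(\G_1,s,t)\in\cDab^0\subseteq\cDab^1$ together with repeated application of the second bullet of Definition~\ref{def:cDab1} (reading the chain $\G_1 = \G_2\three X_1$, $\G_2 = \G_3 \three X_2$, \dots, $\G = \G_k \three X_k$, wait—more carefully, $\G_1$ is obtained from $\G$ by successively 3-contracting $X_k, X_{k-1}, \dots, X_1$, so $\G = \G_2 \three X_1$ read backwards is wrong). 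I need to be careful about the direction of the chain: Definition~\ref{def:cDab1} says if $(\G\three X, s, t)\in\cDab^1$ then $(\G,s,t)\in\cDab^1$. So starting from $(\G_1,s,t)\in\cDab^0\subseteq\cDab^1$ and noting $\G_1 = \G_2 \three X_1$, we get $(\G_2,s,t)\in\cDab^1$; then $\G_2 = \G_3\three X_2$ gives $(\G_3,s,t)\in\cDab^1$; continuing, $(\G_k,s,t)\in\cDab^1$ and finally $\G_k = \G\three X_k$ gives $(\G,s,t)\in\cDab^1$. Here one must check each $X_i$ is still 3-contractible in the relevant intermediate graph $\G_{i+1}$; this is where the non-interference condition $N_\G(X_i)\cap X_j=\emptyset$ is used, since it guarantees $X_i$ and its neighborhood survive intact through the contractions of $X_{i+1},\dots,X_k$, so $\around{\G_{i+1}}{X_i} = \around{\G}{X_i}$ and 3-contractibility is preserved.

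For the \textbf{``only if''} direction: by definition $\cDab^1$ is the minimal set containing $\cDab^0$ and closed under the ``reverse 3-contraction'' rule, so $(\G,s,t)\in\cDab^1$ means there is \emph{some} sequence of 3-contractions $\G\to\G^{(1)}\to\cdots\to\G^{(k)}$ with $(\G^{(k)},s,t)\in\cDab^0$, where $\G^{(j)} = \G^{(j-1)}\three Y_j$ for vertex sets $Y_j \subseteq V(\G^{(j-1)})\setminus\{s,t\}$ (the $Y_j$ need not be disjoint subsets of $V(\G)$, and $Y_j$ might even involve new vertices created by earlier contractions—but 3-contraction adds no vertices, so $V(\G^{(j)})\subseteq V(\G)$ and hence $Y_j\subseteq V(\G)$). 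The task is to convert this arbitrary sequence into a disjoint, non-interfering family. The natural approach is induction on $k$ (the length of the sequence), and within it, an exchange/uncrossing argument: if the first contracted set $Y_1\subseteq V(\G)$ and a later-contracted set $Y_j$ (pulled back to $V(\G)$) overlap or interfere, one argues they can be merged or reordered. The cleanest route is probably: show that for the first contraction $Y_1$, we may pull all subsequent contractions ``outside'' $Y_1$, i.e., arrange that no $Y_j$ ($j\ge 2$) meets $Y_1\cup N_\G(Y_1)$—intuitively, any contraction happening ``inside'' $\around{\G}{Y_1}$ can be absorbed into $Y_1$ itself (enlarging $Y_1$ and its triangle accordingly, using that $\around{\G}{Y_1}$ is balanced so all labels are determined), and any contraction straddling the boundary can be shown either to contradict 3-connectivity/minimality or to be reorganizable. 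Then $Y_1$ becomes one member $X$ of the final family, $\G\three Y_1$ has a shorter certifying sequence, induction applies, and the non-interference $N_\G(Y_1)\cap X_j=\emptyset$ holds because we arranged the later sets to avoid $N_\G(Y_1)$; conversely one also needs $N_\G(X_j)\cap Y_1 = \emptyset$, which follows since after contracting $Y_1$ the set $X_j$ lives in $\G\three Y_1$ and its neighborhood there cannot reach into the deleted $Y_1$.

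I expect the \textbf{main obstacle} to be the bookkeeping in the ``only if'' direction: formalizing how a contraction performed in an intermediate graph $\G^{(j-1)}$ pulls back to a vertex set in the original $\G$, handling the case where $Y_j$ contains a ``triangle edge'' created by an earlier 3-contraction (one must argue the pulled-back set is still connected and its closed neighborhood balanced, so that it is genuinely 3-contractible in $\G$), and verifying that after merging overlapping sets the 3-contractibility hypotheses (connectivity of $\G[X]$, balancedness of $\around{\G}{X}$) are all preserved. The key structural facts making this go through are: (i) 3-contraction never creates vertices, so everything lives inside $V(\G)$; (ii) balancedness of $\around{\G}{X}$ means $l(\around{\G}{X}; x, y)$ is a single well-defined element for each pair in $N_\G(X)$ (Lemma~\ref{lem:balanced2}), so the result of a 3-contraction is canonical and composites of nested 3-contractions agree with a single 3-contraction of the union; and (iii) Lemma~\ref{lem:contraction}, which ensures $l$ is unchanged throughout, so no hidden inconsistencies arise. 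Once these are in hand, the non-interference clause is just the statement that we can always choose the family to be an ``antichain'' under the containment-of-closed-neighborhoods relation, which the exchange argument delivers.
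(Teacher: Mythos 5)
Your overall strategy matches the paper's: both directions hinge on converting an arbitrary sequence of 3-contractions into a non-interfering family by merging interfering pairs (the paper phrases this as taking a sequence of minimal length $k$ and deriving a contradiction from any adjacency $N_\G(X_i)\cap X_j\neq\emptyset$; your induction-plus-exchange framing is the same idea). Your treatment of the ``if'' direction is also correct, including the observation that non-interference guarantees $\around{\G_{i+1}}{X_i}=\around{\G}{X_i}$ so each $X_i$ stays 3-contractible in the intermediate graphs.

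The gap is in the merging step, which you assert as a ``key structural fact'' ($Y$ a union of nested/adjacent contractible sets is itself 3-contractible and its single contraction equals the composite) without proving it, and your stated justification --- that balancedness makes each individual contraction canonical --- does not deliver it. Two things are missing. First, a reordering argument is needed to reduce to the case where the two interfering sets are contracted \emph{consecutively} (the paper gets this by choosing the interfering pair with $j-i$ minimal, so the intermediate contractions commute past each other); without it, ``merge $Y_1$ with a later $Y_j$'' is not well defined. Second, and more seriously, the balancedness of $\around{\G}{X_i\cup X_j}$ does \emph{not} follow from the balancedness of $\around{\G}{X_i}$ and $\around{\G}{X_j}$ separately: when the two sets share exactly one neighbor $x$, an unbalanced cycle in $\around{\G}{X_i\cup X_j}$ can pass through both vertices of $N(X_j)\setminus\{x\}$ via a subpath inside $X_j$'s part, and in the contracted graph this cycle corresponds to one using the triangle edge $e\subseteq N_{\G_j}(X_{j-1})$ --- an edge that $\around{\G_j}{X_{j-1}}$ deliberately \emph{excludes}, so its balancedness says nothing about that cycle. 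The paper closes this by splitting the cycle at $x$ and showing such an unbalanced cycle would force the balanced triangle created by the first contraction to be unbalanced, a contradiction; your proposal contains no substitute for this argument, and it is the genuinely hard part of the lemma.
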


\begin{proof}
The minimality of $\cDab^1$ in Definition~\ref{def:cDab1} requires that $(\G, s, t) \in \cDab^1$ if and only if
there exists a sequence $\G_1, \G_2, \dots, \G_{k+1} = \G$ of $\Gamma$-labeled graphs such that
$(\G_1, s, t) \in \cDab^0$ and $\G_i = \G_{i+1} \three X_i$ for some 3-contractible $X_i \subseteq V(\G_{i+1}) \setminus \{s, t\}$ for each $i = 1, 2, \ldots, k$.
Take such a sequence so that $k$ is minimized.
We show that then $N_\G(X_i) \cap X_j$ if $i \neq j$.

Suppose to the contrary that we have $N_\G(X_i) \cap X_j \neq \emptyset$
for some $i, j$ with $i < j$ (by symmetry, i.e., $N_\G(X_i) \cap X_j \neq \emptyset$ if and only if
$N_\G(X_j) \cap X_i \neq \emptyset$).
Choose such a pair of $i$ and $j$ so that $j - i$ is minimized.
Then, the 3-contractions of $X_{i+1}, \ldots, X_j \subseteq V(\G_{j+1}) \setminus \{s, t\}$
that yield $\G_{i+1}$ from $\G_{j+1}$ can be performed in an arbitrary order
(i.e., do not interfere with each other),
because $N_{\G_{j+1}}(X_h) \cap X_\ell = \emptyset$ if $i+1 \leq h < \ell \leq j$ by the minimality of $j - i$.
Hence, we may assume that $i = j - 1$ by exchanging $X_{i+1}$ and $X_j$ if necessary.

In what follows, we show that $Y \coloneqq X_j \cup X_{j-1}$ is 3-contractible in $\G_{j+1}$ and $\G_{j+1} \three Y = \G_{j-1}$,
which yield a shorter sequence of 3-contractions, contradicting the minimality of $k$.
Since any edge between $X_j$ and $X_{j-1}$ in $\G$ is not removed by the 3-contraction of $X_h$ $(h > j)$,
we have $N_{\G_{j+1}}(X_j) \cap X_{j-1} \supseteq N_\G(X_j) \cap X_{j-1} \neq \emptyset$
and then $1 \leq |N_{\G_{j+1}}(X_j) \cap X_{j-1}| \leq |N_{\G_{j+1}}(X_j)| = 3$.
We discuss the three cases with respect to the value $|N_{\G_{j+1}}(X_j) \cap X_{j-1}|$ separately.
Note that we often use the following fact: the 3-contraction of $X_j$ in $\G_{j+1}$ just replaces the balanced subgraph $\around{\G_{j+1}}{X_j}$
(the only part that is removed) with a balanced triangle on $N_{\G_{j+1}}(X_j)$ (the only part that is added),
which preserves the labels of corresponding paths and cycles in $\G_{j+1}$ and in $\G_j = \G_{j+1} \three X_j$,
where some subpath in $\around{\G_{j+1}}{X_j}$ may be replaced by the corresponding edge in the triangle 
(cf.~Lemma~\ref{lem:contraction}).

\begin{figure}[tb]
 \begin{center}
  \includegraphics[scale=1.1]{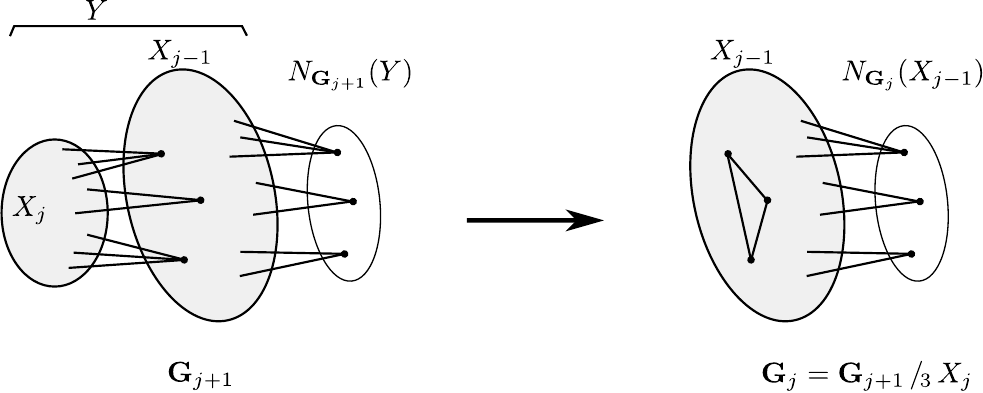}
 \end{center}\vspace{-5mm}
 \caption{When $|N_{\G_{j+1}}(X_j) \cap X_{j-1}| = 3$.}
 \label{fig:3-intersecting}
\end{figure}

Suppose that $|N_{\G_{j+1}}(X_j) \cap X_{j-1}| = 3$, or equivalently $N_{\G_{j+1}}(X_j) \subseteq X_{j-1}$ (see Fig.~\ref{fig:3-intersecting}).
In this case, we have $N_{\G_{j+1}}(Y) = N_{\G_{j+1}}(X_{j-1}) \setminus X_j = N_{\G_j}(X_{j-1})$.
Moreover, $\G_{j+1}[Y]$ is connected as so are $\G_j[X_{j-1}]$ and $\G_{j+1}[X_j]$.
Furthermore, $\around{\G_{j+1}}{Y}$ is balanced as so is $\around{\G_j}{X_{j-1}}$,
because, if $\around{\G_{j+1}}{Y}$ contains an unbalanced cycle, then $\around{\G_j}{X_{j-1}}$ must contain a corresponding unbalanced cycle.
Therefore, $Y$ is indeed 3-contractible in $\G_{j+1}$ and $\G_{j+1} \three Y = \G_{j-1}$.

\begin{figure}[tb]
 \begin{center}\vspace{2mm}
  \includegraphics[scale=1.1]{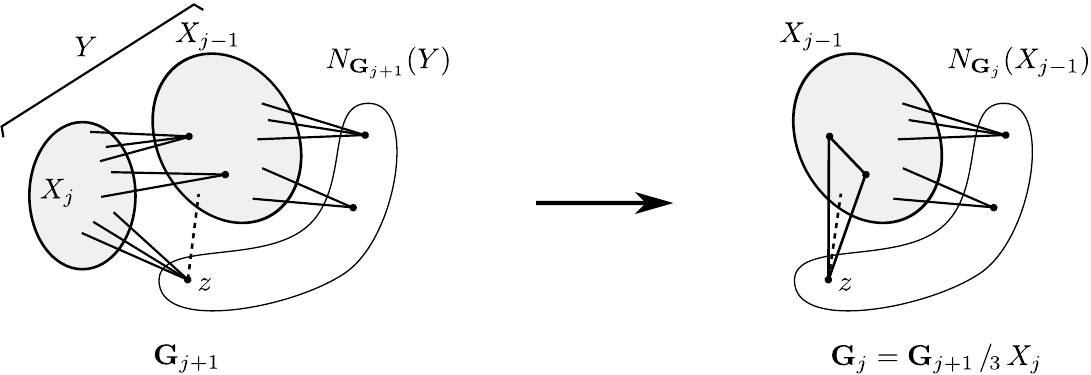}
 \end{center}\vspace{-5mm}
 \caption{When $|N_{\G_{j+1}}(X_j) \cap X_{j-1}| = 2$.}\vspace{-3mm}
 \label{fig:2-intersecting}
\end{figure}

Suppose that $|N_{\G_{j+1}}(X_j) \cap X_{j-1}| = 2$,
and let $z$ be the unique vertex in $N_{\G_{j+1}}(X_j) \setminus X_{j-1}$ (see Fig.~\ref{fig:2-intersecting}).
We then have $z \in N_{\G_{j+1}}(X_j) \setminus X_{j-1} \subseteq N_{\G_{j+1}}(Y)$,
and $z \in N_{\G_j}(X_{j-1})$ due to the edges between $N_{\G_{j+1}}(X_j) \cap X_{j-1}$ and $z$.
By definition of the 3-contraction (of $X_j$ in $\G_{j+1}$),
we also see that $N_{\G_{j+1}}(Y) \setminus \{z\} = N_{\G_{j+1}}(X_{j-1}) \setminus (X_j \cup \{z\}) = N_{\G_j}(X_{j-1}) \setminus \{z\}$,
and hence $N_{\G_{j+1}}(Y) = N_{\G_j}(X_{j-1})$.
The connectivity of $\G_{j+1}[Y]$ and the balancedness of $\around{\G_{j+1}}{Y}$ are assured as with the previous case, and then we are done.

\begin{figure}[tb]
 \begin{center}
  \includegraphics[scale=1.1]{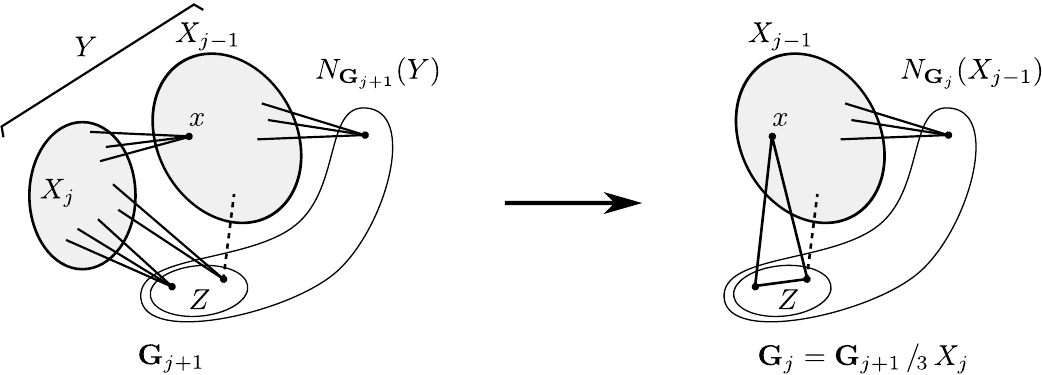}
 \end{center}\vspace{-5mm}
 \caption{When $|N_{\G_{j+1}}(X_j) \cap X_{j-1}| = 1$.}\vspace{-3mm}
 \label{fig:1-intersecting}
\end{figure}

Suppose that $|N_{\G_{j+1}}(X_j) \cap X_{j-1}| = 1$,
let $x$ be the unique vertex in $N_{\G_{j+1}}(X_j) \cap X_{j-1}$, and define $Z \coloneqq N_{\G_{j+1}}(X_j) \setminus \{x\}$ (see Fig.~\ref{fig:1-intersecting}).
We then have $Z = N_{\G_{j+1}}(X_j) \setminus X_{j-1} \subseteq N_{\G_{j+1}}(Y)$,
and $Z \subseteq N_{\G_j}(X_{j-1})$ due to the edges between $x \in X_{j-1}$ and $Z$.
By definition of the 3-contraction (of $X_j$ in $\G_{j+1}$),
we also see that $N_{\G_{j+1}}(Y) \setminus Z = N_{\G_{j+1}}(X_{j-1}) \setminus (X_j \cup Z) = N_{\G_j}(X_{j-1}) \setminus Z$,
and hence $N_{\G_{j+1}}(Y) = N_{\G_j}(X_{j-1})$.
The connectivity of $\G_{j+1}[Y]$ is assured as with the previous two cases,
but we need to be slightly careful to confirm the balancedness of $\around{\G_{j+1}}{Y}$
because the balancedness of $\around{\G_j}{X_{j-1}} = \G_j[X_{j-1} \cup N_{\G_j}(X_{j-1})] - E(N_{\G_j}(X_{j-1}))$ do not care
a possible unbalanced cycle in $\around{\G_{j+1}}{Y}$ that intersects the both vertices in $Z$ with a subpath in $\around{\G_{j+1}}{X_j}$,
which corresponds to an edge $e = Z \subseteq N_{\G_j}(X_{j-1})$ in the triangle after the 3-contraction of $X_j$.

Suppose that $\around{\G_{j+1}}{Y}$ contains such an unbalanced cycle,
and let $C$ be the corresponding unbalanced cycle in $\around{\G_j}{X_{j-1}} + e$
obtained by replacing the subpath in $\around{\G_{j+1}}{X_j}$ between the two vertices in $Z$ with the edge $e$.
If $x \not\in V(C)$, then we can obtain an unbalanced cycle $C'$ in $\around{\G_j}{X_{j-1}}$, a contradiction, from $C$ by replacing the edge $e$ with
the two other edges in the triangle between $x$ and $Z$ (note that $\psi_{\G_{j}}(C) = \psi_{\G_j}(C')$ due to the balancedness of the triangle).
Otherwise, without loss of generality, we regard $x \in V(C)$ as the end vertex of $C = (x = v_0, e_1, v_1, \dots, e_\ell, v_\ell = x)$,
and suppose that $e = e_i = \{v_{i-1}, v_i\}$.
Let $C'$ be the cycle obtained by concatenating $C[v_0, v_{i-1}]$ and the edge $e' = \{v_{i-1}, x\}$ in the triangle,
and $C''$ that obtained by concatenating the edge $e'' = \{x, v_i\}$ in the triangle and $C[v_i, v_\ell]$.
Since $C'$ and $C''$ are in the balanced subgraph $\around{\G_j}{X_{j-1}}$,
we have $\psi_{\G_j}(e', v_{i-1}) = \psi_{\G_j}(C[v_0, v_{i-1}])$ and $\psi_{\G_j}(e'', x) = \psi_{\G_j}(C[v_i, v_\ell])$.
Hence, the triangle $T = (x, e', v_{i-1}, e, v_i, e'', x)$ yielded by the 3-contraction of $X_j$ is unbalanced
as $\psi_{\G_j}(T) = \psi_{\G_j}(C[v_i, v_\ell]) \cdot \psi_{\G_j}(e, v_i) \cdot \psi_{\G_j}(C[v_0, v_{i-1}]) = \psi_{\G_j}(C) \neq \id$, a contradiction. 
Thus we see that $\around{\G_{j+1}}{Y}$ is indeed balanced, which completes the proof. 
\end{proof}

\begin{lemma}\label{lem:G_0}
  For any $(\G, s, t) \in \cD$, we have $(\G, s, t) \in \cDab$ if and only if
  there exists a sequence $\G_0, \G_1, \ldots, \G_r = \G$ of $\Gamma$-labeled graphs satisfying the following conditions$:$
  \begin{itemize}
    \setlength{\itemsep}{.5mm}
  \item
    $\G_0$ consists of two vertices $s$ and $t$ and two parallel edges $e_\alpha$ and $e_\beta$
    between $s$ and $t$ with $\psi_{\G_0}(e_\gamma, t) = \gamma$ for each $\gamma \in \{\alpha, \beta\}$, and
  \item
    $\G_{i-1} = \G_i \two X_i$ for some $X_i \subseteq V(\G_i) \setminus \{s, t\}$
    such that either $\around{\G_i}{X_i}$ is balanced or $(\around{\G_i}{X_i}, x_i, y_i) \in \cDabi^1$,
    where $N_{\G_i}(X_i) = \{x_i, y_i\}$ and $\alpha_i, \beta_i \in \Gamma$ satisfy
    $\alpha_i\beta_i^{-1} \neq \beta_i\alpha_i^{-1}$, for each $i = 1, 2, \ldots, r$.
\end{itemize}
\end{lemma}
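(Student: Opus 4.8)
The plan is to prove both implications. To begin, observe that $(\G_0, s, t) \in \cDab^0$, hence $(\G_0, s, t) \in \cDab^1 \subseteq \cDab$: indeed $\G_0 \in \cD$ (it has no equivalent arcs, and each of its two vertices lies on the $s$--$t$ path $e_\alpha$), it is not balanced (its unique cycle has label $\alpha\beta^{-1} \neq \id$), and it satisfies Case~(A1) of Definition~\ref{def:cDab0} with $\G' = \G_0$, since every edge is incident to $s$ and has label $\alpha$ or $\beta$. The ``if'' direction then follows by induction on the length $r$ of the sequence: the base case $r = 0$ is $\G = \G_0 \in \cDab$, and for the step $(\G_{r-1}, s, t) \in \cDab$ by the induction hypothesis while $\G_{r-1} = \G_r \two X_r$ with $\around{\G_r}{X_r}$ balanced or $(\around{\G_r}{X_r}, x_r, y_r) \in \cDabi^1$; if $X_r$ is $2$-contractible in $\G_r$ (equivalently $\around{\G_r}{X_r} \neq \G_r$), then the closure clause of Definition~\ref{def:cDab} gives $(\G, s, t) \in \cDab$ at once. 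The only remaining possibility $\around{\G_r}{X_r} = \G_r$ I would handle by checking that it forces $\{x_r, y_r\} = \{s, t\}$, hence $|V(\G_{r-1})| = 2$, and then --- because vertex counts do not increase along the sequence and a valid $X_i$ must have exactly two neighbours --- that $r = 1$ and $\G_{r-1} = \G_0$, so $l(\G_r; s, t) = \{\alpha, \beta\}$, whence $\G_r$ is unbalanced and $(\G_r, s, t) \in \cDabi^1$ with $\{\alpha_r, \beta_r\} = \{\alpha, \beta\}$, i.e.\ $(\G, s, t) \in \cDab^1 \subseteq \cDab$.

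For the ``only if'' direction I would induct on $|V(\G)| + |E(\G)|$. By Definition~\ref{def:cDab}, either $(\G, s, t) \in \cDab^1$, or there is a $2$-contractible $X$ with $(\G \two X, s, t) \in \cDab$ and with $\around{\G}{X}$ balanced or $(\around{\G}{X}, x, y) \in \cDabp^1$. In the latter case $\G \two X$ is strictly smaller: it loses at least one vertex, and since $|l(\around{\G}{X}; x, y)| \le 2$ (otherwise $|l(\G; s, t)| \ge 3$, contradicting $(\G, s, t) \in \cDab$) it gains at most two edges while losing at least two. The induction hypothesis then yields a sequence $\G_0, \dots, \G_q = \G \two X$, which extends to $\G_0, \dots, \G_q, \G_{q+1} = \G$ by taking $X_{q+1} = X$ and $(\alpha_{q+1}, \beta_{q+1}) = (\alpha', \beta')$.

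It remains to handle $(\G, s, t) \in \cDab^1$. If $|V(\G)| = 2$, then $\G$ has exactly two parallel edges carrying the labels $l(\G; s, t) = \{\alpha, \beta\}$, so $\G = \G_0$ and $r = 0$ works. Otherwise set $X_1 \coloneqq V(\G) \setminus \{s, t\}$. Since $(\G, s, t) \in \cD$ and $|V(\G)| \ge 3$, both $s$ and $t$ are adjacent to $X_1$, so $N_\G(X_1) = \{s, t\}$; and $\G \two X_1$ is the graph on $\{s, t\}$ whose parallel edges carry the labels of the $s$--$t$ edges of $\G$ together with those in $l(\around{\G}{X_1}; s, t)$, i.e.\ all labels of $s$--$t$ paths in $\G$, which is $l(\G; s, t) = \{\alpha, \beta\}$; hence $\G \two X_1 = \G_0$. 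Therefore the length-$1$ sequence $\G_0, \G$ works as soon as $\around{\G}{X_1}$, which is precisely $\G - E(\{s, t\})$ (delete all edges between $s$ and $t$), is balanced or lies in $\cDab^1$. When $\{s, t\} \notin E(\G)$ this is immediate, since then $\around{\G}{X_1} = \G \in \cDab^1$ and $\alpha\beta^{-1} \neq \beta\alpha^{-1}$.

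The crux of the proof is thus the claim: \emph{if $(\G, s, t) \in \cDab^1$ and $\{s, t\} \in E(\G)$, then $\G - E(\{s, t\})$ is balanced or $(\G - E(\{s, t\}), s, t) \in \cDab^1$.} I would establish it by structural induction following Definition~\ref{def:cDab1}. For the step, a $3$-contraction essentially does not involve the $s$--$t$ edges --- $\around{\G}{X}$ contains no edge with both endpoints outside $X$ and $s, t \notin X$ --- so deletion of $E(\{s, t\})$ commutes with the $3$-contraction up to the one triangle edge that may appear between $s$ and $t$ when $\{s, t\} \subseteq N_\G(X)$; since a $3$-contraction replaces a balanced subgraph by a balanced triangle, it preserves balancedness, and one checks (with a short case analysis on whether $\{s, t\}$ meets $N_\G(X)$) that the claim passes from $\G \three X$ to $\G$. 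The base case is to verify the claim for each configuration (A), (B), (C) of Definition~\ref{def:cDab0}: Cases~(A1) and~(A2) are routine, since every $s$--$t$ edge is incident to $s$ (resp.\ to $t$), so after deletion one is left with a graph again $(s, t)$-equivalent to a normal form of~(A) --- or balanced --- and in Case~(B) there is no $s$--$t$ edge at all. \textbf{The hard part is Case~(C)}: one must show that deleting the at most two $s$--$t$ edges from a planar graph with the face structure of Case~(C) --- outer face $F_0$ carrying the two $s$--$t$ boundary paths of labels $\alpha$ and $\beta$, and a unique unbalanced inner face $F_1$ --- yields either a balanced graph or a graph again admitting such an embedding, possibly after a different choice of outer face and of the (still non-commuting) pair of labels. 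This is a topological argument about how the faces merge as boundary and interior edges are removed, and I expect it to be the main obstacle of the whole proof; the final application of the claim in the ``only if'' direction then supplies the side condition with $(\alpha_1, \beta_1) = (\alpha, \beta)$.
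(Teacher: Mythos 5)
Your proposal follows essentially the same route as the paper's proof. The ``if'' direction is immediate once $(\G_0, s, t) \in \cDab^0$ is checked (and your explicit treatment of the possibly non-$2$-contractible first step $\G_0 = \G_1 \two X_1$ is more careful than the paper's one-line dismissal). For ``only if,'' the paper likewise peels off the $2$-contractions supplied by the minimality clause of Definition~\ref{def:cDab} until it reaches a member of $\cDab^1$, and then reduces that member to $\G_0$ by $2$-contracting $X = V(\G) \setminus \{s, t\}$; the entire content is your ``crux'' claim that $\around{\G}{X} = \G - E(\{s,t\})$ is balanced or again in $\cDab^1$. The paper proves this claim exactly as you propose, except that instead of a structural induction on Definition~\ref{def:cDab1} it invokes Lemma~\ref{lem:3-contraction} to perform all the $3$-contractions at once on pairwise nonadjacent sets, observes that these apply verbatim to $\G - E(\{s,t\})$ (yielding a graph $\G''$ squeezed between $\G' - E(\{s,t\})$ and $\G'$, where $(\G', s, t) \in \cDab^0$), and then asserts in a single sentence that $\G''$ remains in $\cDab^0$ in the same case.

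The step you flag as ``the main obstacle'' --- Case~(C) of the base case --- is thus precisely the step the paper also leaves to the reader, and it is much shorter than you fear, because at that point one already knows $l(\G''; s, t) = l(\around{\G}{X}; s, t) = \{\alpha, \beta\}$ (by Lemma~\ref{lem:balanced2}, since $\around{\G}{X}$ is assumed unbalanced and lies in $\cD$). Keep the embedding of $\G'$ and delete its at most two $s$--$t$ edges one at a time. If the deleted edge is not on the outer boundary, the two inner faces on its sides merge, the merged boundary label is a product of conjugates of the two old ones, so exactly one unbalanced inner face survives and the outer boundary is untouched. If it lies on the outer boundary --- one of the two boundary $s$--$t$ paths is that single edge, say of label $\alpha$ --- the outer face absorbs the inner face $F$ on its other side; if $F \neq F_1$ the replacement boundary path again has label $\alpha$ and Case~(C) persists, while $F = F_1$ would force the replacement path's label to be $\beta$, leaving a plane graph all of whose inner faces are balanced, hence a balanced graph, contradicting $|l(\G''; s, t)| = 2$. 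No change of outer face or of the label pair is ever needed, so your skeleton closes with this short check rather than a genuinely topological argument.
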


\begin{proof}
If there exists such a sequence of $\Gamma$-labeled graphs,
then $(\G, s, t) \in \cDab$ immediately holds by Definition~\ref{def:cDab} and the fact that $(\G_0, s, t) \in \cDab^0 \subseteq \cDab^1$.
We show the converse direction.
Note that the minimality of $\cDab$ in Definition~\ref{def:cDab} requires that, if $(\G, s, t) \in \cDab$, then
there exists a sequence $\G_1, \G_2, \dots, \G_r = \G$ of $\Gamma$-labeled graphs such that
$(\G_1, s, t) \in \cDab^1$ and the second condition in the lemma holds for $i \geq 2$.
Hence, it suffices to show that, for any $(\G, s, t) \in \cDab^1$ with $|V(\G)| \geq 3$,
the 2-contraction of $X \coloneqq V(\G) \setminus \{s, t\}$ in $\G$ results in $\G_0$,
and either $\around{\G}{X}$ is balanced or $(\around{\G}{X}, s, t) \in \cDab^1$.
Since $V(\G \two X) = \{s, t\}$ and $l(\G \two X; s, t) = l(\G; s, t) = \{\alpha, \beta\}$,
we have $\G \two X = \G_0$ by definition (cf.~Definition~\ref{def:2-contraction}).
In what follows, we see that, if $\around{\G}{X}$ is not balanced, then $(\around{\G}{X}, s, t) \in \cDab^1$,
which concludes that either $\around{\G}{X}$ is balanced or $(\around{\G}{X}, s, t) \in \cDab^1$.

Suppose that $\around{\G}{X}$ is not balanced.
Then, Lemma~\ref{lem:balanced2} implies $|l(\around{\G}{X}; s, t)| \geq 2$, and hence $l(\around{\G}{X}; s, t) = l(\G; s, t) = \{\alpha, \beta\}$.
Since $(\G, s, t) \in \cDab^1$,
Lemma~\ref{lem:3-contraction} implies that
there exist nonadjacent 3-contractible vertex sets $Y_1, Y_2, \dots, Y_k \subseteq V(\G) \setminus \{s, t\}$
whose 3-contractions in $\G$ result in a $\Gamma$-labeled graph $\G'$ with $(\G', s, t) \in \cDab^0$ independently of the order.
As $X = V(\G) \setminus \{s, t\}$, the graph $\around{\G}{X}$ is obtained from $\G$ by removing the edges between $s$ and $t$ if exist.
Since any such edge does not make effect on the 3-contractibility of a subset of $V(\G) \setminus \{s, t\}$,
the same sequence of 3-contractions of $Y_1, Y_2, \dots, Y_k$ can be applied to $\around{\G}{X}$,
which results in a $\Gamma$-labeled graph $\G''$ with $l(\G''; s, t) = l(\around{\G}{X}; s, t) = \{\alpha, \beta\}$
such that $\G''$ is a subgraph of $\G'$ and includes $\G' - E(\{s, t\})$ as a subgraph.
No matter in what case $(\G', s, t) \in \cDab^0$ is in Definition~\ref{def:cDab0},
we see $(\G'', s, t) \in \cDab^0$ in the same case, which concludes $(\around{\G}{X}, s, t) \in \cDab^1$.
\end{proof}

For sake of completeness,
we confirm several properties that are intuitively almost trivial.

\begin{lemma}\label{lem:cDab}
  For any $(\G, s, t) \in \cDab$, we have the following properties.
  \begin{itemize}
    \setlength{\itemsep}{.5mm}
  \item[$(1)$]
    Let $\G'$ be the graph obtained from $\G$
    by shifting by $\gamma \in \Gamma$ at $s$.
    Then, $(\G', s, t) \in \cDabp$,
    where $\alpha' \coloneqq \alpha\gamma^{-1}$ and $\beta' \coloneqq \beta\gamma^{-1}$.
  \item[$(2)$]
    Let $\G' \coloneqq \G + s' + e'$ be the graph obtained from $\G$
    by adding a new vertex $s'$ and a new edge $e' = \{s', s\}$ with $\psi_{\G'}(e', s) = \gamma \in \Gamma$.
    Then, $(\G', s', t) \in \cDabp$,
    where $\alpha' \coloneqq \alpha\gamma$ and $\beta' \coloneqq \beta\gamma$.
  \item[$(3)$]
    If $\G = \G' \two X$ for a $\Gamma$-labeled graph $\G'$
    and $X \subseteq V(\G') \setminus \{s, t\}$
    with $(\around{\G'}{X}, x, y) \in \cDabp$,
    where $N_{\G'}(X) = \{x, y\}$ and
    $\alpha', \beta' \in \Gamma$ satisfy
    $\alpha'{\beta'}^{-1} \neq \beta'{\alpha'}^{-1}$,
    then $(\G', s, t) \in \cDab$.
  \end{itemize}
\end{lemma}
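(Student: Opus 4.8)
The plan is to handle all three items as instances of one principle: each of the indicated operations turns a member of $\cDab$ into a member of $\cD_{\alpha',\beta'}$ for a suitably relabeled pair, and in each case the relabeled pair still satisfies $\alpha'{\beta'}^{-1}\neq\beta'{\alpha'}^{-1}$, because $(\alpha\gamma^{\pm1})(\beta\gamma^{\pm1})^{-1}=\alpha\beta^{-1}$ and $(\beta\gamma^{\pm1})(\alpha\gamma^{\pm1})^{-1}=\beta\alpha^{-1}$ (with $\gamma^{-1}$ for item~(1) and $\gamma$ for item~(2)). For (1) I would induct directly along Definitions~\ref{def:cDab0}, \ref{def:cDab1}, and \ref{def:cDab}; for (2) I would run the same kind of induction with a pendant vertex attached throughout; for (3) I would graft a construction sequence of $\around{\G'}{X}$ into one of $\G$.

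For (1), the key observation is that shifting by $\gamma$ at a vertex $v$ leaves invariant all ``intrinsic'' data of a $\Gamma$-labeled graph --- its underlying graph, its cuts, any planar embedding, and the balancedness of every cycle and every face --- and only multiplies the label of each walk starting or ending at $v$ by $\gamma^{\mp1}$ on the appropriate side. I would prove, simultaneously for $\cDab^{0}$, $\cDab^{1}$, and $\cDab$, that shifting by $\gamma$ at the designated source vertex maps $\mathcal D^{0}_{\alpha,\beta},\mathcal D^{1}_{\alpha,\beta},\mathcal D_{\alpha,\beta}$ into $\mathcal D^{0}_{\alpha\gamma^{-1},\beta\gamma^{-1}},\mathcal D^{1}_{\alpha\gamma^{-1},\beta\gamma^{-1}},\mathcal D_{\alpha\gamma^{-1},\beta\gamma^{-1}}$, respectively. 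At the base level this is checked case by case from Definition~\ref{def:cDab0}: in Cases (A1), (B), (C) the shift merely multiplies the distinguished labels at $s$ by $\gamma^{-1}$ while leaving the topological structure intact, so one lands in the same case for $(\alpha\gamma^{-1},\beta\gamma^{-1})$, possibly after a bounded amount of extra shifting at inner vertices; Case (A2) is the one requiring care, since after shifting at $s$ one should additionally shift \emph{every} inner vertex by $\gamma$, which cancels the $\gamma^{-1}$'s that appeared on the arcs leaving $s$ and leaves a single net $\gamma^{-1}$ on every arc entering $t$, recovering a Case (A2) representative. At the level $\cDab^{1}$ I would note that shifting at $s$ commutes with any $3$-contraction (when $s$ is one of the three boundary vertices, the two triangle arcs incident to $s$ pick up exactly the same right factor $\gamma^{-1}$) and preserves balancedness of $\around{\G}{X}$, so the claim propagates along the nonadjacent $3$-contractions of Lemma~\ref{lem:3-contraction}. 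At the level $\cDab$ the same commutation holds for $2$-contractions, and the inductive step of Definition~\ref{def:cDab} needs exactly the $\cDab^{1}$ statement to transport the side condition on the sub-instance $(\around{\G}{X},x,y)$ when $x=s$ or $y=s$; the layering $\cDab\!\leftarrow\!\cDab^{1}\!\leftarrow\!\cDab^{0}$ is acyclic, so there is no circularity.

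For (2), let $\G_0,\G_1,\dots,\G_r=\G$ be a construction sequence for $(\G,s,t)\in\cDab$ as in Lemma~\ref{lem:G_0}, and put $\G'_i\coloneqq\G_i+s'+e'$. I would first verify that $(\G'_0,s',t)\in\mathcal D^{0}_{\alpha\gamma,\beta\gamma}$: $\G'_0$ consists of the three vertices $s',s,t$, and shifting its unique inner vertex $s$ by $\gamma^{-1}$ turns $e'$ into a $\id$-labeled arc and the two parallel $s$--$t$ arcs into arcs of labels $\alpha\gamma$ and $\beta\gamma$, which is a Case (A2) representative for $(\alpha\gamma,\beta\gamma)$. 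For $i\ge 1$, since $s'\notin X_i$ and the only neighbor of $s'$ is $s\notin X_i$, the pendant data $s',e'$ lie outside $X_i\cup N_{\G_i}(X_i)$; hence $\around{\G'_i}{X_i}=\around{\G_i}{X_i}$, the set $X_i$ remains $2$-contractible in $\G'_i$, and $\G'_i\two X_i=\G'_{i-1}$ with the very same side condition that the $\G$-sequence supplies. Therefore $\G'_0,\G'_1,\dots,\G'_r=\G'$ is a valid construction sequence, and Definition~\ref{def:cDab} (applied $r$ times) gives $(\G',s',t)\in\cD_{\alpha\gamma,\beta\gamma}$.

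For (3), I would apply Lemma~\ref{lem:G_0} to $(\around{\G'}{X},x,y)\in\cDabp$ to get a sequence $\bH_0,\bH_1,\dots,\bH_p=\around{\G'}{X}$ with $\bH_0$ the two parallel $x$--$y$ edges of labels $\alpha'$ and $\beta'$ and each step $\bH_{j-1}=\bH_j\two Z_j$ satisfying the required side condition. Define $\mathbf K_j$ to be the $\Gamma$-labeled graph obtained from $\G'$ by deleting $X$ and then gluing $\bH_j$ back in along $\{x,y\}$. Then $\mathbf K_p=\G'$, while $\mathbf K_0$ equals $\G=\G'\two X$ (up to a harmless duplication of parallel $x$--$y$ arcs, which affects neither the set $l(\cdot;\cdot,\cdot)$ nor membership in $\cDab$), so $(\mathbf K_0,s,t)\in\cDab$. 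Moreover, because $Z_j\subseteq V(\bH_j)\setminus\{x,y\}$ consists of vertices internal to the glued copy, we have $N_{\mathbf K_j}(Z_j)=N_{\bH_j}(Z_j)$ and $\around{\mathbf K_j}{Z_j}=\around{\bH_j}{Z_j}$ --- here one must notice that when $N_{\bH_j}(Z_j)=\{x,y\}$ the $x$--$y$ arcs coming from $\G'-X$ are present in $\mathbf K_j$ but are excised by the $-E(N_{\mathbf K_j}(Z_j))$ in the definition of $\around{\cdot}{\cdot}$ --- so $\mathbf K_{j-1}=\mathbf K_j\two Z_j$ carries exactly the $j$-th side condition of the $\bH$-sequence, with the same label parameters. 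Applying Definition~\ref{def:cDab} $p$ times up the chain $\mathbf K_0,\mathbf K_1,\dots,\mathbf K_p$ then yields $(\G',s,t)\in\cDab$. The steps I expect to be the main obstacles are precisely these two pieces of bookkeeping --- the Case (A2) argument in (1), where the naive shift does not suffice and the ``shift every inner vertex'' remedy must be justified, and the identity $\around{\mathbf K_j}{Z_j}=\around{\bH_j}{Z_j}$ together with the routine verification that every $\mathbf K_j$ still lies in $\cD$ (no redundant vertex or edge), which follows from $(\G,s,t)\in\cD$ and the shape of the grafting.
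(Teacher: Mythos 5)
Your proposal is correct and follows essentially the same route as the paper: part (1) by a layered induction through $\cDab^0$, $\cDab^1$, and $\cDab$ (including the same fix for Case (A2) of shifting every inner vertex by $\gamma$, and the same commutation of shifting with 2- and 3-contractions), and parts (2) and (3) by transporting the 2-contraction construction sequence of Lemma~\ref{lem:G_0} through the pendant attachment and the grafting along $\{x,y\}$, respectively. The bookkeeping points you flag (the Case (A2) repair and the identification of $\mathbf K_0$ with $\G$ up to merging equivalent arcs) are handled at the same level of detail, and with the same resolutions, as in the paper's own proof.
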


\begin{proof}
$(1)$
We first confirm that,
if $(\G, s, t) \in \cDab^0$, then $(\G', s, t) \in \cDabp^0$.
Case (A1) and Case (C) are obvious (cf.~Definition~\ref{def:cDab0}).
In Case (A2),
apply shifting by $\gamma$ at each $v \in V(\G) \setminus \{s, t\}$,
and in Case (B), do so at $v_1$ and $v_2$.

We next show that,
if $(\G, s, t) \in \cDab^1$, then $(\G', s, t) \in \cDabp^1$.
Suppose that $(\G, s, t) \in \cDab^1$.
Then, by Lemma~\ref{lem:3-contraction},
one can obtain a $\Gamma$-labeled graph $\tilde{\G}$ such that $(\tilde{\G}, s, t) \in \cDab^0$
from $\G$ by applying independent 3-contractions.
Since any shifting does not make effect on the balancedness,
the same 3-contractions can be applied to $\G'$,
which results in a $\Gamma$-labeled graph $\tilde{\G}'$ with $(\tilde{\G}', s, t) \in \cDabp^0$.
This concludes $(\G', s, t) \in \cDabp^1$.

By Lemma~\ref{lem:G_0}, one can obtain $\G_0$ (a $\Gamma$-labeled graph consisting of only two parallel edges between $s$ and $t$)
from $\G_r = \G$ by a sequence of 2-contractions of some $X_i$ $(i = r, \dots, 2, 1)$ such that
either $\around{\G_i}{X_i}$ is balanced or $(\around{\G_i}{X_i}, x_i, y_i) \in \cDabi^1$.
We prove that the same sequence of 2-contractions can be applied to $\G'$.

Define $\G'_r \coloneqq \G'$.
Then, we can inductively construct a $\Gamma$-labeled graph
$\G'_{i-1} \coloneqq \G'_i \two X_i$,
which coincides with the one obtained from $\G_{i-1}$ by shifting by $\gamma$ at $s$.
This means that we finally obtain a $\Gamma$-labeled graph $\G'_0$
from $\G'$ by the 2-contractions of $X_i$ $(i = r, \ldots, 2, 1)$,
which satisfies $(\G'_0, s, t) \in \cDabp^0$ (in Cases (A) and (C)).
Thus we have $(\G', s, t) \in \cDabp$,
since either $\around{\G'_i}{X_i}$ is balanced or $(\around{\G'_i}{X_i}, x_i, y_i) \in \cDabpi^1$,
where $\alpha'_i = \alpha_i$ and $\beta'_i = \beta_i$ if $s \not\in \{x_i, y_i\}$, and
$\alpha'_i = \alpha_i\gamma^{-1}$ and $\beta'_i = \beta_i\gamma^{-1}$ otherwise
(assume $x_i = s$ without loss of generality by the symmetry of $x_i$ and $y_i$).

\medskip\noindent
$(2)$
Similarly to (1), by Lemma~\ref{lem:G_0},
one can obtain $\G_0$ from $\G$ by a sequence of 2-contractions.
The same sequence of 2-contractions can be applied to $\G'$,
which results in $\G'_0 \coloneqq \G_0 + s' + e'$ such that $(\G'_0, s', t) \in \cDabp^0$ (in Cases (A) and (C)).
Thus we have $(\G', s, t) \in \cDabp$.

\medskip\noindent
$(3)$
Similarly, by Lemma~\ref{lem:G_0}, there exists a sequence $\bH_0, \bH_1, \ldots, \bH_r = \around{\G'}{X}$
such that $\bH_0$ consists of only two parallel arcs from $x$ to $y$ whose labels are $\alpha'$ and $\beta'$,
and $\bH_{i-1}$ is obtained from $\bH_i$ by some 2-contraction.
The same sequence of 2-contractions can be applied to $\G'$,
which results in $\G$.
This implies that $(\G', s, t) \in \cDab$.
\end{proof}

By Lemma~\mbox{\ref{lem:cDab}-$(1)$},
it suffices to consider the case when
$\beta = \id$ and $\alpha^{-1} \neq \alpha$ (i.e., $\alpha^2 \neq \id$).
The following lemma gives a useful characterization of $\cD_{\id,\, \alpha}^0$
in Case (C) (cf.~Definition~\ref{def:cDab0}).

\begin{lemma}\label{lem:cDab0}
  Suppose that $\alpha^{-1} \neq \alpha \in \Gamma$.
  For any triplet $(\G, s, t) \in \cD_{\id,\, \alpha}^0$
  in Case {\rm (C)} in Definition~$\ref{def:cDab0}$ with $|\delta_\G(v)| \geq 3$ for all $v \in V(\G) \setminus \{s, t\}$,
  there exists an $(s, t)$-equivalent $\Gamma$-labeled graph $\G'$
  that can be embedded in the plane with the following conditions $($see Fig.~$\ref{fig:cD01}$$)$.
  \begin{itemize}
    \setlength{\itemsep}{.5mm}
  \item[$1.$]
    The edge set $E(\G)$ is partitioned into $E^0$ and $E^1$
    $($i.e., $E^0 \cup E^1 = E(\G)$ and $E^0 \cap E^1 = \emptyset$$)$,
    where $E^0 = \{\, e \in E(\G) \mid \psi_{\G'}(e, v) = \id~(\forall v \in e) \,\}$. 
  \item[$2.$]
    There exists an $s$--$t$ path $P = (s = u_0, e_1, u_1, \ldots, e_\ell, u_\ell = t)$
    along the outer boundary of $\G' - E^1$ such that\vspace{-1.5mm}
    \begin{itemize} 
      \setlength{\itemsep}{.5mm}
    \item
      every edge $e \in E^1$ connects two vertices $u_i, u_j \in V(P)$ with $i < j$ and $\psi_{\G'}(e, u_j) = \alpha$,
      and is embedded in the outer face of $\G' - E^1$, and
    \item
      for any distinct edges $e_1 = \{u_{i_1}, u_{j_1}\}$ and $e_2 = \{u_{i_2}, u_{j_2}\}$ in $E^1$ with $i_1 < j_1$ and $i_2 < j_2$,
      one of two paths $P[u_{i_1}, u_{j_1}]$ and $P[u_{i_2}, u_{j_2}]$ is a subpath of the other.
    \end{itemize}
  \end{itemize}
\end{lemma}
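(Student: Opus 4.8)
The plan is to realize $E^1$ as the set of edges of $\G$ ``crossed'' by a carefully chosen simple path in the planar dual from $F_0$ to $F_1$, to shift everything off $E^1$ to the identity, and then to read off the required boundary path $P$ and the nested‑chord structure from that dual path. Throughout we are in the case $\beta = \id$ and $\alpha^{-1} \ne \alpha$, and by hypothesis $\G$ is embedded in the plane with outer face $F_0$ (both $s, t$ on ${\rm bd}(F_0)$; one $s$--$t$ subpath $P_{\id}$ of ${\rm bd}(F_0)$ has label $\id$ and the other $P_\alpha$ has label $\alpha$) and with a unique inner face $F_1$ whose boundary is unbalanced. Since $\alpha \ne \id$ we have $E(P_\alpha) \ne \emptyset$, and from $(\G, s, t) \in \cD$ together with the boundary structure one checks that $\G$, hence its planar dual, is $2$‑connected.

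First I would fix a simple path $\pi^\ast \colon F_0 = G_0, G_1, \dots, G_m = F_1$ in the dual whose first crossed edge $f_1$ (the edge of $\G$ shared by $G_0$ and $G_1$) lies in $E(P_\alpha)$; this is possible because $E(P_\alpha) \ne \emptyset$ and, the dual being $2$‑connected, $F_1$ stays reachable from every neighbour of $F_0$ once $F_0$ is deleted from the dual. Let $f_2, \dots, f_m$ be the remaining crossed edges, $E^1 \coloneqq \{f_1, \dots, f_m\}$ and $E^0 \coloneqq E(\G) \setminus E^1$. Since the duals of the edges of $E^1$ form the forest $\pi^\ast$, $\G - E^1$ is connected; its inner faces are exactly the inner faces of $\G$ off $\pi^\ast$, all balanced (the only formerly‑unbalanced inner face $F_1 = G_m$ has been absorbed into the unbounded region), so $\G - E^1$ is balanced—a connected plane graph all of whose inner faces are balanced is balanced (shift a spanning tree to $\id$; every fundamental cycle then encloses only balanced inner faces and so has label $\id$). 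Because $\pi^\ast$ is simple it meets $F_0$ only at its start, so $P_{\id} \subseteq \G - E^1$ and hence $l(\G - E^1; s, t) = \{\id\}$ by Lemma~\ref{lem:balanced2}. By Lemma~\ref{lem:shifting} there are shifts at vertices of $V(\G) \setminus \{s, t\}$ turning $\G - E^1$ into the all‑$\id$ labelling; let $\G'$ be the result of performing the same shifts on $\G$. Then $\G'$ is $(s,t)$‑equivalent to $\G$, has no equivalent arcs (shifting preserves this), and $\psi_{\G'}(e, v) = \id$ for every $e \in E^0$ and $v \in e$. Thus the set of edges of $\G'$ carrying a non‑identity label—the set ``$E^1$'' named in the statement—is contained in $\{f_1, \dots, f_m\}$ and is nonempty (else $\G'$ would be balanced, contradicting $|l(\G; s, t)| = 2$); after re‑assigning to $E^0$ any $f_i$ that happens to carry label $\id$ (which only shrinks $E^1$ to a ``sub‑dual‑path'' and keeps $\G' - E^1$ all‑$\id$), we may assume every $f_i$ carries a non‑identity label, and then $\{E^0, E^1\}$ is the partition in condition~1.

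Next I would establish the nested‑chord structure. Each $f_i$ is embedded inside the outer face $R^\circ$ of $\G' - E^1$ (it borders $G_{i-1}, G_i \subseteq R^\circ$), and being edges of the plane graph $\G'$ the $f_i$ pairwise do not cross. Adding the $f_i$ back to $\G' - E^1$ one at a time in the order $f_1, f_2, \dots$: $f_i$ is a chord of the face currently containing $G_{i-1}$ and $G_i$, and it separates $\{G_0, \dots, G_{i-1}\}$ from $\{G_i, \dots, G_m\}$; hence $f_{i+1}$ lies strictly inside the ``$G_i$‑side'' of $f_i$, so $f_1, \dots, f_m$ form a nested chain. The boundary walk $\omega$ of $R^\circ$ passes through $s$, $t$ (on $P_{\id} \subseteq \omega$) and through both endpoints of every $f_i$. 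The main step—and the step I expect to be the real obstacle—is to extract from $\omega$ an $s$--$t$ path $P = (s = u_0, e_1, u_1, \dots, e_\ell, u_\ell = t)$ that contains every endpoint of every $f_i$, with each $f_i = \{u_{a_i}, u_{b_i}\}$ for some $a_i < b_i$ and the intervals $[a_i, b_i]$ nested (inherited from the chain of chords). This is precisely where the hypothesis $|\delta_\G(v)| \ge 3$ for $v \notin \{s, t\}$ is needed: without it a vertex of $\G$ of degree $2$ can become pendant in $\G' - E^1$ (or a forced repeated vertex of $\omega$), so that no $s$--$t$ subpath of $\omega$ can reach it and the conclusion fails—for instance the triangle on $\{s, a, t\}$ with $\deg a = 2$. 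I plan to prove this by induction on $m$ (equivalently on $|E^1|$), peeling off the innermost chord together with the subgraph of $\G$ it encloses and invoking $2$‑connectivity and the degree bound to keep the hypotheses alive.

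Finally, for each $i$ the concatenation $P[s, u_{a_i}] \cdot f_i \cdot P[u_{b_i}, t]$ is a genuine $s$--$t$ path—two vertex‑disjoint subpaths of $P$ joined by the chord $f_i \notin E(P)$—and its label equals $\psi_{\G'}(f_i, u_{b_i})$ because every edge of $P$ carries label $\id$. Since this label lies in $l(\G'; s, t) = l(\G; s, t) = \{\id, \alpha\}$ and is $\ne \id$ (as $f_i \notin E^0$), it equals $\alpha$. Hence $\psi_{\G'}(f_i, u_{b_i}) = \alpha$ for every $i$, and, together with the partition $\{E^0, E^1\}$, the path $P$, the nesting of the intervals $[a_i, b_i]$, and the fact that the $f_i$ are embedded in the outer face of $\G' - E^1$, this verifies all the asserted properties.
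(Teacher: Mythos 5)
Your outline follows the same route as the paper: take a dual path from the outer face $F_0$ to the unbalanced face $F_1$, let $E^1$ be the crossed edges, observe that $\G - E^1$ is connected and balanced, shift it to the all-$\id$ labelling, and read off the chord structure. But the step you yourself flag as ``the real obstacle''---extracting an $s$--$t$ path $P$ along the outer boundary of $\G' - E^1$ that passes through \emph{every} endpoint of every $f_i$---is left as an unproven plan, and this is a genuine gap rather than a routine verification. Worse, with the dual path chosen as you specify (an arbitrary simple path whose first edge lies in $E(P_\alpha)$), the desired property can actually fail: a simple dual path may enter and leave the fan of faces around a degree-$3$ vertex $v$ by crossing two of its three incident edges, leaving $v$ pendant in $\G - E^1$, so that no $s$--$t$ subpath of the outer boundary walk can contain $v$, let alone the endpoints of the crossed edges. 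So the degree hypothesis alone does not rescue an arbitrary simple dual path, and your proposed induction would have to \emph{reroute} the dual path, not merely peel off an innermost chord.

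The paper's proof closes exactly this gap by a different choice of the dual path: it takes a \emph{shortest} $F_1$--$F_0$ path $Q$ in $G^\ast - E(P_0)$ (deleting all of $E(P_0)$ from the dual, not just constraining the first crossed edge). Shortestness forbids the local detour described above---if two of the three edges around $v$ were crossed, the dual path could be shortcut through the third---so together with $|\delta_\G(v)| \geq 3$ one gets $|\delta_{\G - E(Q)}(v)| \geq 2$ and, by balancedness, $|N_{\G - E(Q)}(v)| \geq 2$ for every $v \neq s, t$; this is what guarantees that the outer boundary of $\G - E(Q)$ consists of $P_0$ together with an $s$--$t$ path $P$ containing all the chord endpoints. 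Your nesting argument for the chords and your final computation that each $f_i$ must carry label $\alpha$ are fine once $P$ exists, and your incidental claims (re-assigning identity-labelled $f_i$ to $E^0$, $2$-connectivity of the dual) become unnecessary under the paper's construction. To complete your proof you should replace the arbitrary simple dual path by a shortest one avoiding $E(P_0)$ and carry out the degree argument above.
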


\begin{figure}[htbp]
  \begin{center}
   \includegraphics[scale=0.8]{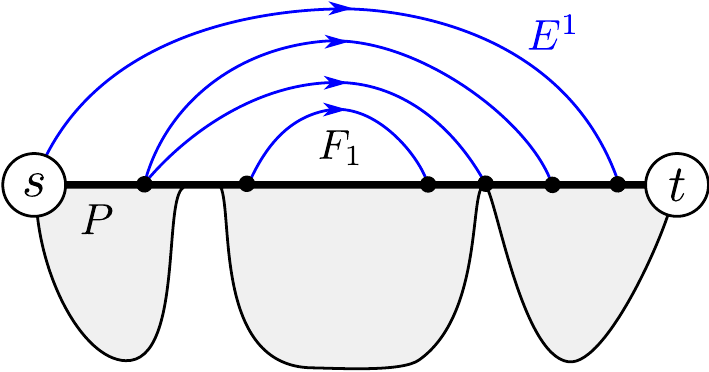}
  \end{center}\vspace{-5mm}
 \caption{An $(s, t)$-equivalent embedding of $(\G, s, t) \in \cD_{\id,\, \alpha}^0$ in Case (C).}\vspace{-2mm}
 \label{fig:cD01}
\end{figure}

\begin{proof}
Fix an embedding of $\G$ with the conditions of Case (C), and 
let $P_0$ and $P_1$ be the $s$--$t$ paths along the boundary of the outer face
$F_0$ of $\G$ whose labels are $\id$ and $\alpha$, respectively.

Let $G^\ast$ be the dual graph of the underlying graph $G = (V, E)$ of $\G$,
i.e., the vertex set of $G^\ast$ is the face set of $G$,
the edge set of $G^\ast$ coincides with $E$,
and each two faces whose boundaries share an edge $e \in E$ in $G$ are connected by the same-named edge $e$ in $G^\ast$.
Take a shortest $F_1$--$F_0$ path $Q$ in $G^\ast - E(P_0)$.
We prove that the conditions hold with $E^1 = E(Q)$.

Let $\G'' \coloneqq \G - E(Q)$.
Then, $\G''$ is connected since $Q$ is a shortest path disjoint from $P_0$,
and is balanced since $F_1$ is the unique unbalanced inner face of $\G$.
Since $\psi_{\G''}(P_0) = \psi_{\G}(P_0) = \id$,
by Lemma~\ref{lem:shifting}, by shifting at vertices in $V \setminus \{s, t\}$,
we can obtain from $\G$ an $(s, t)$-equivalent $\Gamma$-labeled graph $\G'$
such that $\psi_{\G'}(e, v) = \id$ for every $e \in E(\G'') = E \setminus E(Q)$ and $v \in e$.
For every $v \in V \setminus \{s, t\}$, since $|\delta_{\G}(v)| \geq 3$,
we have $|\delta_{\G''}(v)| \geq 2$
(otherwise, there exists a shortcut for $Q$ around $v$ with $|\delta_{\G''}(v)| = 1$, which contradicts that $Q$ is shortest),
and hence $|N_{\G''}(v)| \geq 2$ (otherwise, $\G''$ has parallel edges incident to $v$ with $|N_{\G''}(v)| = 1$, which contradicts that $\G''$ is balanced).
Thus we can take an $s$--$t$ path $P$ in $\G'' = \G - E(Q)$ so that $P$ and $P_0$ form the outer boundary of $\G''$.
Since $\psi_{\G'}({\rm bd}(F)) = \psi_\G({\rm bd}(F)) = \id$ holds for any face $F$ other than $F_0$ or $F_1$,
the edges in $E(Q)$ must satisfy the second condition in $\G'$.
\end{proof}

The following two lemmas are utilized to derive a contradiction
by constructing an $s$--$t$ path of label
$\gamma \not\in \Gamma \setminus \{\alpha, \beta\}$ in $\G$,
where $(\G, s, t) \in \cD$.

\begin{lemma}\label{lem:unbalanced_cycle}
  Suppose that a triplet $(\G, s, t) \in \cD$ satisfies $l(\G; s, t) = \{\alpha', \beta'\}$ for some distinct $\alpha', \beta' \in \Gamma$.
  Then, $\alpha'{\beta'}^{-1} \neq \beta'{\alpha'}^{-1}$ if and only if $\G$ contains no unbalanced cycle $C$ with $\psi_\G(\overline{C}) = \psi_\G(C)$.
\end{lemma}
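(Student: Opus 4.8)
The plan is to first reformulate both conditions group-theoretically. Recall that $\psi_\G(\overline{C}) = \psi_\G(C)^{-1}$ for every cycle $C$, so ``$\G$ contains an unbalanced cycle $C$ with $\psi_\G(\overline{C}) = \psi_\G(C)$'' is the same as ``$\G$ contains a cycle $C$ with $\psi_\G(C) \neq \id$ and $\psi_\G(C)^2 = \id$'', i.e.\ a cycle whose label is an involution (this does not depend on the basepoint or orientation of $C$, since these only conjugate or invert the label). On the other side, since $\alpha'\beta'^{-1}$ and $\beta'\alpha'^{-1}$ are inverse to each other, $\alpha'\beta'^{-1} = \beta'\alpha'^{-1}$ holds exactly when $(\alpha'\beta'^{-1})^2 = \id$; as $\alpha' \neq \beta'$ forces $\alpha'\beta'^{-1} \neq \id$, this says precisely that $\alpha'\beta'^{-1}$ is an involution. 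Hence it suffices to prove that $\alpha'\beta'^{-1}$ is an involution if and only if $\G$ has an unbalanced cycle of involution label.

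The main step reuses the construction in the proof of Lemma~\ref{lem:balanced2}. Given any unbalanced cycle $C$ in $\G$, since $\G + e_{st}$ is $2$-connected (Lemma~\ref{lem:cD}), Menger's theorem yields internally disjoint paths connecting $\{s, t\}$ and $V(C)$ --- an $s$--$x$ path $P$ and a $t$--$y$ path $Q$ with $x, y \in V(C)$ distinct --- and extending the two arcs of $C$ between $x$ and $y$ along $P$ and $\overline{Q}$ produces two $s$--$t$ paths $P_1, P_2$ whose labels $L_1, L_2$ satisfy $L_1 \neq L_2$ (this inequality is exactly the nontriviality $\psi_\G(C) \neq \id$, as in that proof). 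A short computation along the same lines shows that $L_1 L_2^{-1}$ is a conjugate of $\psi_\G(C)$, and hence $(L_1 L_2^{-1})^2 = \id$ if and only if $\psi_\G(C)^2 = \id$. On the other hand, $L_1, L_2 \in l(\G; s, t) = \{\alpha', \beta'\}$ together with $L_1 \neq L_2$ force $\{L_1, L_2\} = \{\alpha', \beta'\}$, so $L_1 L_2^{-1} \in \{\alpha'\beta'^{-1}, \beta'\alpha'^{-1}\}$, and therefore $(L_1 L_2^{-1})^2 = \id$ if and only if $\alpha'\beta'^{-1}$ is an involution. Combining the two equivalences: for every unbalanced cycle $C$ of $\G$, $\psi_\G(C)$ is an involution if and only if $\alpha'\beta'^{-1}$ is.

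It remains to assemble the two directions. If $\alpha'\beta'^{-1} \neq \beta'\alpha'^{-1}$, i.e.\ $\alpha'\beta'^{-1}$ is not an involution, then by the equivalence above no unbalanced cycle of $\G$ has an involution label, which is one direction. Conversely, suppose $\G$ has no unbalanced cycle $C$ with $\psi_\G(\overline{C}) = \psi_\G(C)$; since $l(\G; s, t)$ has two elements, $\G$ is not balanced by Lemma~\ref{lem:balanced2}, so it does contain some unbalanced cycle $C_0$, and the hypothesis says $\psi_\G(C_0)$ is not an involution, whence $\alpha'\beta'^{-1}$ is not an involution either, i.e.\ $\alpha'\beta'^{-1} \neq \beta'\alpha'^{-1}$. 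I expect the only delicate point to be the bookkeeping in the ``short computation'' --- tracking the orientation of $C$ and the order in which edge labels multiply so that $L_1 L_2^{-1}$ really comes out as a conjugate of $\psi_\G(C)$ --- but since the proof of Lemma~\ref{lem:balanced2} already carries out essentially this calculation, this is routine rather than a genuine obstacle.
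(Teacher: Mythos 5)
Your proof is correct. The forward implication --- extending an unbalanced cycle $C$ to two $s$--$t$ paths via Menger's theorem and reading off that $\alpha'{\beta'}^{-1}$ is, up to conjugacy and inversion, the label $\psi_\G(C)$ --- is exactly the paper's argument for that direction. Where you diverge is the converse: the paper invokes Proposition~\ref{prop:2-cyclic} to normalize the label function (every arc at $s$ labeled $\alpha'$ or $\beta'$, every other arc labeled $\id$ or $\alpha'{\beta'}^{-1}$) and then observes that any unbalanced cycle, based at a vertex other than $s$, must carry the involution $\alpha'{\beta'}^{-1}$. You instead notice that the Menger construction is already a two-way equivalence: for \emph{every} unbalanced cycle $C$, the element $L_1L_2^{-1}$ lies in $\{\alpha'{\beta'}^{-1}, \beta'{\alpha'}^{-1}\}$ and is conjugate to $\psi_\G(C)$, so $\psi_\G(C)$ is an involution iff $\alpha'{\beta'}^{-1}$ is; combined with Lemma~\ref{lem:balanced2} (which guarantees some unbalanced cycle exists because $|l(\G;s,t)|=2$), both directions of the lemma follow from this single statement. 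Your route is more self-contained --- it avoids the structural Proposition~\ref{prop:2-cyclic} entirely --- and it yields the slightly stronger conclusion that the labels of all unbalanced cycles of $\G$ are conjugate to $\alpha'{\beta'}^{-1}$ or its inverse. The one point to keep honest is the bookkeeping you flag yourself: the label of a cycle depends on the basepoint up to conjugation and on the orientation up to inversion, and being an involution is invariant under both, which is what makes the comparison well defined.
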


\begin{proof}
We first note that the equality $\psi_\G(\overline{C}) = \psi_\G(C)$ is equivalent to $\psi_\G(C)^{-1} = \psi_\G(C)$ (or $\psi_\G(C)^2 = \id$),
and it does not depend on the choice of the end vertex of the cycle $C$.

Suppose 
that $\G$ contains an unbalanced cycle $C$ with $\psi_\G(\overline{C}) = \psi_\G(C)$,
and we show $\alpha'{\beta'}^{-1} = \beta'{\alpha'}^{-1}$.
By Menger's theorem (cf.~the proof of Lemma~\ref{lem:balanced2}),
for some distinct vertices $x, y \in V(C)$,
take an $s$--$x$ path $P$ and a $y$--$t$ path $Q$ in $\G$
so that $V(P) \cap V(C) = \{x\}$, $V(Q) \cap V(C) = \{y\}$,
and $V(P) \cap V(Q) = \emptyset$,
and choose $y$ as the end vertex of $C$.
Let $\alpha'' \coloneqq \psi_\G(C[x, y])$ and $\beta'' \coloneqq \psi_\G(\overline{C}[x, y])$,
which are distinct since $C$ is unbalanced.
We then have $\alpha''{\beta''}^{-1} = \psi_\G(C) = \psi_\G(\overline{C}) = \beta''{\alpha''}^{-1}$.
By extending $C[x, y]$ and $\overline{C}[x, y]$ using $P$ and $Q$,
we obtain two $s$--$t$ paths in $\G$ whose labels are
$\psi_\G(Q)\cdot\alpha''\cdot\psi_\G(P)$ and
$\psi_\G(Q)\cdot\beta''\cdot\psi_\G(P)$, which are distinct;
one is $\alpha'$ and the other is $\beta'$.
Since $\alpha''{\beta''}^{-1} = \beta''{\alpha''}^{-1}$,
we have $\alpha'{\beta'}^{-1} = \beta'{\alpha'}^{-1}$.

To prove the converse direction, suppose that $\alpha'{\beta'}^{-1} = \beta'{\alpha'}^{-1}$, and we show $\psi_\G(C)^{-1} = \psi_\G(C)$.
By Proposition~\ref{prop:2-cyclic}, we may assume that every arc around $s$ has label $\alpha'$ or $\beta'$
and every other arc has label $\id$ or $\alpha'{\beta'}^{-1}$ by shifting at some vertices in $V(\G) \setminus \{s, t\}$ if necessary
(note that any shifting preserves 
whether $\psi_\G(C)^{-1} = \psi_\G(C)$ or not). 
Since $\alpha' \neq \beta'$, by Lemma~\ref{lem:balanced2}, there exists an unbalanced cycle $C$ in $\G$,
whose label must be $\id$ or $\alpha'{\beta'}^{-1} = \beta'{\alpha'}^{-1}$ by the assumption, where we choose some vertex other than $s$ as the end vertex of $C$.
Thus we have $\psi_\G(C)^{-1} = \psi_\G(C)$.
\end{proof}

In the proof of Theorem~\ref{thm:characterization} starting in the next section,
we assume $(\G, s, t) \in \cD$ and $l(\G; s, t) = \{\alpha, \beta\}$ with $\alpha\beta^{-1} \neq \beta\alpha^{-1}$,
which implies that $\G$ contains no unbalanced cycle $C$ with $\psi_\G(\overline{C}) = \psi_\G(C)$.
In particular, this is also true for any subgraph $\bH$ of $\G$,
and hence, if $(\bH, x, y) \in \cD$ and $l(\bH; x, y) = \{\alpha', \beta'\}$ for distinct $\alpha', \beta' \in \Gamma$,
then $\alpha'{\beta'}^{-1} \neq \beta'{\alpha'}^{-1}$.

\begin{lemma}\label{lem:2-2labels}
  For a triplet $(\G, s, t) \in \cD$,
  if there exist two paths $P_i$ $(i = 1, 2)$ in $\G$
  with the following conditions $($see Fig.~$\ref{fig:2-2labels}$$)$,
  then $|l(\G; s, t)| \geq 3$$:$
  \begin{itemize}
    \setlength{\itemsep}{.5mm}
  \item
    $P_i$ is an $s$--$x_i$ path for $i = 1, 2$ and $x_i \in V(\G) \setminus \{s, t\}$,
  \item
    $\psi_\G(P_1) \neq \psi_\G(P_2)$, and
  \item
    $\{\alpha', \beta'\} \subseteq l(\G - (V(P_i) \setminus \{x_i\}); x_i, t)$
    for $i = 1, 2$, for some $\alpha', \beta' \in \Gamma$ with $\alpha'{\beta'}^{-1} \neq \beta'{\alpha'}^{-1}$.
  \end{itemize}
\end{lemma}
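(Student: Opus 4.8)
The plan is to produce four $s$--$t$ paths in $\G$ and show that their labels cannot all fall into a two‑element set. First I would invoke the third hypothesis: for each $i\in\{1,2\}$, since $\{\alpha',\beta'\}\subseteq l(\G-(V(P_i)\setminus\{x_i\});x_i,t)$, there are $x_i$--$t$ paths $Q_i^{\alpha'}$ and $Q_i^{\beta'}$ in $\G-(V(P_i)\setminus\{x_i\})$ with $\psi_\G(Q_i^{\alpha'})=\alpha'$ and $\psi_\G(Q_i^{\beta'})=\beta'$. Because each $Q_i^{\gamma}$ lives in $\G$ with the vertices of $V(P_i)\setminus\{x_i\}$ deleted, it meets $P_i$ only in $x_i$, so concatenating $P_i$ with $Q_i^{\alpha'}$ and with $Q_i^{\beta'}$ yields genuine $s$--$t$ paths in $\G$ (not merely walks). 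Writing $\mu\coloneqq\psi_\G(P_1)$ and $\nu\coloneqq\psi_\G(P_2)$, and recalling that the label of a concatenation multiplies the label of the second subpath on the left of the first, these four paths carry the labels $\alpha'\mu$, $\beta'\mu$, $\alpha'\nu$, $\beta'\nu$.

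Next I would assume for contradiction that $|l(\G;s,t)|\le 2$, so these four elements take at most two distinct values. Since $\alpha'\neq\beta'$ we have $\alpha'\mu\neq\beta'\mu$ and $\alpha'\nu\neq\beta'\nu$, so both $\{\alpha'\mu,\beta'\mu\}$ and $\{\alpha'\nu,\beta'\nu\}$ have exactly two elements; if their union has at most two elements they must coincide, i.e.\ $\{\alpha'\mu,\beta'\mu\}=\{\alpha'\nu,\beta'\nu\}$. There are two ways to match them. If $\alpha'\mu=\alpha'\nu$, then $\mu=\nu$, contradicting $\psi_\G(P_1)\neq\psi_\G(P_2)$. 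Otherwise $\alpha'\mu=\beta'\nu$ and $\beta'\mu=\alpha'\nu$; substituting $\mu={\alpha'}^{-1}\beta'\nu$ into the second equation gives $\beta'{\alpha'}^{-1}\beta'\nu=\alpha'\nu$, hence $\beta'{\alpha'}^{-1}\beta'=\alpha'$, i.e.\ $\beta'{\alpha'}^{-1}=\alpha'{\beta'}^{-1}$, contradicting the assumed inequality $\alpha'{\beta'}^{-1}\neq\beta'{\alpha'}^{-1}$. In either case we reach a contradiction, so $|l(\G;s,t)|\ge 3$.

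I do not expect a genuine obstacle here: the argument is a short group computation, and both of the remaining hypotheses are used exactly once, $\psi_\G(P_1)\neq\psi_\G(P_2)$ ruling out the ``parallel'' matching and $\alpha'{\beta'}^{-1}\neq\beta'{\alpha'}^{-1}$ ruling out the ``crossed'' one. The only points requiring a little care are (i) verifying that each $Q_i^{\gamma}$ is internally disjoint from $P_i$ so that the concatenation is a path, which is immediate from $Q_i^{\gamma}$ lying in $\G-(V(P_i)\setminus\{x_i\})$, and (ii) getting the side of multiplication right when composing labels; but since the conclusion only compares two‑element sets of labels, the outcome is the same regardless of that convention.
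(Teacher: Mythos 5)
Your proposal is correct and follows essentially the same argument as the paper: build the four concatenated $s$--$t$ paths with labels $\alpha'\mu,\beta'\mu,\alpha'\nu,\beta'\nu$, observe that if only two labels occur the two pairs must match in the ``crossed'' way, and derive $\alpha'{\beta'}^{-1}=\beta'{\alpha'}^{-1}$ for a contradiction. The group computation and the use of each hypothesis match the paper's proof exactly.
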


\begin{figure}[htbp]
 \begin{center}
  \includegraphics[scale=0.8]{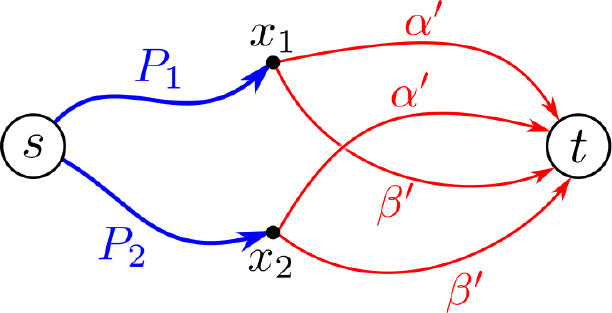}
 \end{center}\vspace{-5mm}
 \caption{Combination of two labels leads to at least three labels.}\vspace{-2mm}
 \label{fig:2-2labels}
\end{figure}

\begin{proof}
For each $i = 1, 2$,
by concatenating $P_i$ and
each of two $x_i$--$t$ paths 
in $\G - (V(P_i) \setminus \{x_i\})$ whose labels are $\alpha'$ and $\beta'$,
we construct four $s$--$t$ paths whose labels are
$\gamma_1 \coloneqq \alpha'\cdot\psi_\G(P_1)$, $\gamma_2 \coloneqq \beta'\cdot\psi_\G(P_1)$,
$\gamma_3 \coloneqq \alpha'\cdot\psi_\G(P_2)$, and $\gamma_4 \coloneqq \beta'\cdot\psi_\G(P_2)$.

Suppose to the contrary that $|l(\G; s, t)| \leq 2$.
Since $\gamma_1 \neq \gamma_2 \neq \gamma_4 \neq \gamma_3 \neq \gamma_1$,
we must have $\gamma_1 = \gamma_4$ and $\gamma_2 = \gamma_3$.
Hence, $\psi_\G(P_1) = {\alpha'}^{-1}\cdot\beta'\cdot\psi_\G(P_2)$
and $\psi_\G(P_1) = {\beta'}^{-1}\cdot\alpha'\cdot\psi_\G(P_2)$,
which implies ${\alpha'}^{-1}\beta' = {\beta'}^{-1}\alpha'$.
This is equivalent to $\alpha'{\beta'}^{-1} = \beta'{\alpha'}^{-1}$,
a contradiction.
\end{proof}

\subsection{Minimal counterexample}\label{sec:counterexample}
In what follows, we prove that,
for any $(\G, s, t) \in \cD$ and $\alpha, \beta \in \Gamma$ with $\alpha\beta^{-1} \neq \beta\alpha^{-1}$,
if $l(\G; s, t) = \{\alpha, \beta\}$, then $(\G, s, t) \in \cDab$,
which completes the proof of Theorem~\ref{thm:characterization}.
To derive a contradiction, suppose to the contrary that
there exist $(\G, s, t) \in \cD$ and $\alpha, \beta \in \Gamma$ with $\alpha\beta^{-1} \neq \beta\alpha^{-1}$
such that $l(\G; s, t) = \{ \alpha, \beta \}$ but $(\G, s, t) \not\in \cDab$.
We choose such $(\G, s, t) \in \cD$ and $\alpha, \beta \in \Gamma$
so that $|V(\G)| + |E(\G)|$ is minimized.
Note that we have $|V(\G)| \geq 3$ obviously.

Moreover, we may assume $\beta = \id$ and $\alpha^{-1} \neq \alpha$ (i.e., $\alpha^2 \neq \id$) as follows.
Let $\G'$ be the $\Gamma$-labeled graph obtained from $\G$ by shifting by $\beta$ at $s$.
Since $l(\G; s, t) = \{\alpha, \beta\}$,
we then have $l(\G'; s, t) = \{ \id, \alpha' \}$ (cf.~Definition~\ref{def:shifting}), where $\alpha' \coloneqq \alpha\beta^{-1} \neq \beta\alpha^{-1} = {\alpha'}^{-1}$.
Lemma~\mbox{\ref{lem:cDab}-(1)} implies that $(\G, s, t) \in \cDab$ if and only if $(\G', s, t) \in \cD_{\id,\, \alpha'}$,
and hence $(\G', s, t) \not\in \cD_{\id,\, \alpha'}$.
This means that $(\G', s, t) \in \cD$ with $\id, \alpha' \in \Gamma$ is also a minimal counterexample,
and hence we can choose it instead of $(\G, s, t) \in \cD$ with $\alpha, \beta \in \Gamma$.
We, however, forget this assumption (which is not essential here)
in this section, and recall it later in Section~\ref{sec:case}.

The minimality assures that $\G$ contains no contractible vertex set as follows.
Recall that a vertex set $X \subseteq V(\G) \setminus \{s, t\}$ is said to be 2-contractible
if $|N_\G(X)| = 2$ holds, $\around{\G}{X}$ is connected, and $\around{\G}{X} \neq \G$ (cf.~Definition~\ref{def:2-contraction}).
In particular, $V(\G) \setminus \{s, t\}$ is not 2-contractible unless $\{s, t\} \in E(\G)$.

\begin{claim}\label{cl:2-contractible}
  There is no $2$-contractible vertex set in $\G$.
\end{claim}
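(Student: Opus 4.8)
The plan is to argue by contradiction, using the minimality of $\G$. Assume that some $2$-contractible vertex set exists and fix one, $X$, of \emph{minimum cardinality}; put $N_\G(X)=\{x,y\}$, $\bH \coloneqq \around{\G}{X}$, and $\G' \coloneqq \G \two X$. First I would record the routine facts: $X\neq\emptyset$ (since $N_\G(X)=\{x,y\}$); $\bH$ is connected and $\bH\neq\G$, so $E(\bH)\subsetneq E(\G)$ while $V(\bH)=X\cup\{x,y\}\subseteq V(\G)$; and $(\bH,x,y)\in\cD$. The last point holds because any $s$--$t$ path $P$ of $\G$ through a fixed $v\in X$ (such $P$ exists as $(\G,s,t)\in\cD$) meets $X$ along a single contiguous stretch, flanked on $P$ by $x$ and by $y$, since $N_\G(X)=\{x,y\}$ and $s,t\notin X$; the corresponding subpath of $P$ is then an $x$--$y$ path inside $\bH$ through $v$, and the two end pieces of $P$ avoid $X$ entirely. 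By Lemma~\ref{lem:contraction}, $(\G',s,t)\in\cD$ and $l(\G';s,t)=l(\G;s,t)=\{\alpha,\beta\}$.

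Next I would bound $|l(\bH;x,y)|$. If there were three $x$--$y$ paths in $\bH$ with pairwise distinct labels, then splicing each of them into $P$ in place of its $\bH$-portion would produce three $s$--$t$ paths of $\G$ with pairwise distinct labels (the pasting is legitimate because the end pieces of $P$ meet $V(\bH)$ only at $x$ or $y$), contradicting $l(\G;s,t)=\{\alpha,\beta\}$. Hence $|l(\bH;x,y)|\le2$, and by Lemma~\ref{lem:balanced2} either $\bH$ is balanced or $l(\bH;x,y)=\{\alpha',\beta'\}$ for distinct $\alpha',\beta'\in\Gamma$.

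If $\bH$ is balanced, the $2$-contraction removes all $|E(\bH)|\ge|X|+1$ edges incident to $X$ (a connected graph on $|X|+2$ vertices) and adds at most one arc, so $|V(\G')|+|E(\G')|\le|V(\G)|+|E(\G)|-2|X|<|V(\G)|+|E(\G)|$; minimality of $\G$ then gives $(\G',s,t)\in\cDab$, and since $\around{\G}{X}=\bH$ is balanced, Definition~\ref{def:cDab} yields $(\G,s,t)\in\cDab$, a contradiction. If instead $l(\bH;x,y)=\{\alpha',\beta'\}$: since $\bH$ is a subgraph of $\G$, which contains no unbalanced cycle $C$ with $\psi_\G(\overline{C})=\psi_\G(C)$ (by Lemma~\ref{lem:unbalanced_cycle}, using $\alpha\beta^{-1}\neq\beta\alpha^{-1}$), Lemma~\ref{lem:unbalanced_cycle} applied to $\bH$ gives $\alpha'{\beta'}^{-1}\neq\beta'{\alpha'}^{-1}$. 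Here the $2$-contraction adds at most two arcs, so we still have $|V(\G')|+|E(\G')|<|V(\G)|+|E(\G)|$, and moreover $|V(\bH)|+|E(\bH)|<|V(\G)|+|E(\G)|$; by minimality, $(\G',s,t)\in\cDab$ and $(\bH,x,y)\in\cDabp$. It remains to upgrade the latter to $(\bH,x,y)\in\cDabp^1$, and this is where the minimal choice of $X$ is used: any $X'$ that is $2$-contractible in $\bH$ with respect to $x,y$ satisfies $X'\subseteq X$, $N_\G(X')=N_\bH(X')$ (as $N_\G(X)=\{x,y\}$), and $\around{\G}{X'}=\around{\bH}{X'}\subsetneq\bH\subsetneq\G$, hence is $2$-contractible in $\G$ with $X'\subsetneq X$ — contradicting minimality. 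So $\bH$ has no $2$-contractible vertex set; since, by Definition~\ref{def:cDab}, $\cDab$ is the minimal set containing $\cDab^1$ that is closed under passing from $(\G\two X,s,t)\in\cDab$ back to $(\G,s,t)\in\cDab$ for a $2$-contractible $X$, a member of $\cDab$ with no $2$-contractible vertex set at all must already lie in $\cDab^1$. Thus $(\bH,x,y)\in\cDabp^1$, and Definition~\ref{def:cDab} once more gives $(\G,s,t)\in\cDab$, the desired contradiction.

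The hard part is the second case: a priori the minimality of $\G$ only delivers $(\bH,x,y)\in\cDabp$, whereas Definition~\ref{def:cDab} demands the stronger $(\bH,x,y)\in\cDabp^1$; choosing $X$ of minimum cardinality is what bridges this, via the observation that $2$-contractible subsets of $\bH$ pull back to strictly smaller $2$-contractible subsets of $\G$. A secondary technical point is the contiguous-stretch structure of an $s$--$t$ path through $X$, which underlies both $(\bH,x,y)\in\cD$ and the three-paths argument; it rests only on $N_\G(X)=\{x,y\}$ and $s,t\notin X$.
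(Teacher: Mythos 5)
Your proof is correct, and its overall skeleton (contradiction via the minimal counterexample, the bound $|l(\around{\G}{X};x,y)|\le 2$, the case split into ``$\around{\G}{X}$ balanced'' versus ``$l(\around{\G}{X};x,y)=\{\alpha',\beta'\}$ with $\alpha'{\beta'}^{-1}\neq\beta'{\alpha'}^{-1}$'') matches the paper's. The one genuinely different ingredient is how you bridge the gap between what minimality of $\G$ gives you, namely $(\around{\G}{X},x,y)\in\cDabp$, and what Definition~\ref{def:cDab} demands, namely $(\around{\G}{X},x,y)\in\cDabp^1$. The paper does not choose $X$ minimal; instead it invokes Lemma~\ref{lem:cDab}-(3), a separately proved strengthening (resting on the sequence characterization of Lemma~\ref{lem:G_0}) which says that the closure rule of $\cDab$ remains valid when the contracted piece is only known to lie in $\cDabp$ rather than $\cDabp^1$. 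You instead pick $X$ of minimum cardinality and observe that any $2$-contractible subset of $\around{\G}{X}$ (with respect to $x,y$) pulls back to a strictly smaller $2$-contractible subset of $\G$ --- using that all edges of $\G$ incident to $X'\subseteq X$ already lie in $\around{\G}{X}$, so $N_\G(X')=N_{\around{\G}{X}}(X')$ and $\around{\G}{X'}=\around{\around{\G}{X}}{X'}$, and that $X$ itself is not $2$-contractible in $\around{\G}{X}$ because $\around{\around{\G}{X}}{X}=\around{\G}{X}$ --- whence $\around{\G}{X}$ has no $2$-contractible set at all and, by minimality of the set $\cDabp$ in Definition~\ref{def:cDab}, must already lie in $\cDabp^1$. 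This is a clean, self-contained alternative that avoids Lemma~\ref{lem:cDab}-(3) entirely at this point in the argument; the paper's route pays for that lemma once but can then use it elsewhere, while yours keeps the claim's proof local at the cost of the extra minimal-choice bookkeeping. Your explicit ``single contiguous stretch'' argument for $(\around{\G}{X},x,y)\in\cD$ and for $|l(\around{\G}{X};x,y)|\le 2$ is also fine (the paper appeals to Menger's theorem via Lemma~\ref{lem:cD} for the latter), and your size counts confirming that both $\G\two X$ and $\around{\G}{X}$ are strictly smaller than $\G$ are accurate.
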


\begin{proof}
Suppose to the contrary that $\G$ contains
a 2-contractible vertex set $X \subseteq V(\G) \setminus \{s, t\}$
with $N_\G(X) = \{x, y\}$.
Since $(\G, s, t) \in \cD$, we also have $(\around{\G}{X}, x, y) \in \cD$.
If $|l(\around{\G}{X}; x, y)| \geq 3$,
then we also have $|l(\G; s, t)| \geq 3$
(since $\G$ contains two disjoint paths between $\{s, t\}$ and $\{x, y\}$
by Lemma~\ref{lem:cD} and Menger's theorem), a contradiction. 
Hence, we have either $|l(\around{\G}{X}; x, y)| = 1$ or $l(\around{\G}{X}; x, y) = \{\alpha', \beta'\}$ for some distinct $\alpha', \beta' \in \Gamma$, for which $\alpha'{\beta'}^{-1} \neq \beta'{\alpha'}^{-1}$ by Lemma~\ref{lem:unbalanced_cycle}.
In the former case, Lemma~\ref{lem:balanced2} implies that $\around{\G}{X}$ is balanced.
In the latter case, since $\around{\G}{X}$ is a proper subgraph of $\G$,
the minimality of $\G$ implies $(\around{\G}{X}, x, y) \in \cDabp$.
Thus, in both cases, $(\G \two X, s, t) \not\in \cDab$ (by Definition~\ref{def:cDab} and Lemma~\ref{lem:cDab}-(3), respectively),
and Lemma~\ref{lem:contraction} implies that $(\G \two X, s, t) \in \cD$ and $l(\G \two X; s, t) = \{\alpha, \beta\}$, 
which contradicts that $(\G, s, t)$ is a minimal counterexample.
\end{proof}

\begin{claim}\label{cl:3-contractible}
  There is no $3$-contractible vertex set in $\G$.
\end{claim}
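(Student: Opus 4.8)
The plan is to mimic the proof of Claim~\ref{cl:2-contractible}. Assume for contradiction that $X\subseteq V(\G)\setminus\{s,t\}$ is a $3$-contractible vertex set, and put $\{a,b,c\}:=N_\G(X)$ (three distinct vertices, any of which may coincide with $s$ or $t$). Since $\G[X]$ is connected and $|N_\G(X)|=3$, we have $|V(\G\three X)|+|E(\G\three X)|<|V(\G)|+|E(\G)|$, and by Lemma~\ref{lem:contraction} $(\G\three X,s,t)\in\cD$ with $l(\G\three X;s,t)=l(\G;s,t)=\{\alpha,\beta\}$ and $\alpha\beta^{-1}\ne\beta\alpha^{-1}$. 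Minimality of $\G$ therefore gives $(\G\three X,s,t)\in\cDab$, and the task is to derive a contradiction by exhibiting either $(\G,s,t)\in\cDab$ or a $2$-contractible vertex set of $\G$ (which would contradict Claim~\ref{cl:2-contractible}).

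If $(\G\three X,s,t)\in\cDab^1$, then, since $X$ is $3$-contractible in $\G$, Definition~\ref{def:cDab1} immediately yields $(\G,s,t)\in\cDab^1\subseteq\cDab$, a contradiction. So assume $(\G\three X,s,t)\in\cDab\setminus\cDab^1$. By Lemma~\ref{lem:G_0} there is a sequence $\G_0,\G_1,\dots,\G_r=\G\three X$ with $\G_{i-1}=\G_i\two X_i$ and each $\around{\G_i}{X_i}$ balanced or in the relevant class $\cDabi^1$; here $r\ge1$ because $\G\three X$ has at least the three vertices $a,b,c$. I claim $Z:=X_r$ is a $2$-contractible vertex set of $\G\three X$. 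Indeed $Z$ already satisfies $Z\subseteq V(\G\three X)\setminus\{s,t\}$, $|N_{\G\three X}(Z)|=2$, and $\around{\G\three X}{Z}$ connected; if moreover $\around{\G\three X}{Z}\ne\G\three X$ then $Z$ is $2$-contractible, while $\around{\G\three X}{Z}=\G\three X$ would force $r=1$ and $\G\three X$ to be balanced or in $\cD^1_{\alpha,\beta}=\cDab^1$, either contradicting $|l(\G\three X;s,t)|=2$ or the standing assumption $\G\three X\notin\cDab^1$. (The remaining degenerate possibility is $r=1$ with $Z=V(\G\three X)\setminus\{s,t\}$ and $\G\three X$ carrying an edge between $s$ and $t$; using the explicit list in Definition~\ref{def:cDab0} together with Lemmas~\ref{lem:shifting} and \ref{lem:balanced2}, one checks that then either $(\G\three X,s,t)$ falls into Case~(A1) of $\cDab^0$, which is impossible since $\G\three X\notin\cDab^1$, or $\G\three X$ is obtained from the six-vertex graph of Case~(B) by attaching such edges, in which case the balanced triangle produced by $3$-contracting $X$ must be one of the two triangles of that graph and hence avoids one of $s,t$, so $\G$ itself carries an edge between $s$ and $t$ and $V(\G)\setminus\{s,t\}$ is $2$-contractible in $\G$.)

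With a genuine $2$-contractible vertex set $Z$ of $\G\three X$ in hand, write $\{x',y'\}:=N_{\G\three X}(Z)$ and lift it to $\G$. If $Z\cap\{a,b,c\}=\emptyset$, then, as every vertex of $X$ is adjacent only to $a,b,c$, the set $Z$ meets no edge incident to $X$ or to the triangle on $\{a,b,c\}$, so $\around{\G}{Z}=\around{\G\three X}{Z}$ and $Z$ is $2$-contractible in $\G$. If $Z\cap\{a,b,c\}\ne\emptyset$, I show that $Z\cup X$ is $2$-contractible in $\G$: a short case analysis on $|Z\cap\{a,b,c\}|\in\{1,2,3\}$ identifies $\{x',y'\}$ with the set of vertices of $\{a,b,c\}$ lying outside $Z$ (these are exactly the neighbours of $Z$ reached only through triangle edges), so $N_\G(Z\cup X)=\{x',y'\}$ still has size two; $\around{\G}{Z\cup X}$ is connected because one may splice the connected graphs $\around{\G\three X}{Z}$ and $\around{\G}{X}$ along $\{a,b,c\}$, replacing each triangle edge used inside $\around{\G\three X}{Z}$ by a path through $X$; and $\around{\G}{Z\cup X}\ne\G$ because $\around{\G\three X}{Z}\ne\G\three X$ leaves some vertex of $\G\three X$ outside. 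In every case $\G$ has a $2$-contractible vertex set, which contradicts Claim~\ref{cl:2-contractible}, completing the argument.

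I expect the lifting in the last paragraph to be the main obstacle: one has to keep careful track of which neighbours of $Z$ in $\G\three X$ come from ``virtual'' triangle edges, and then verify all four defining conditions of $2$-contractibility for $Z$ (respectively $Z\cup X$) inside $\G$. The accompanying finite inspection of the one-step degenerate reductions is routine but needs the complete breakdown of $\cDab^0$; everything else follows at once from the minimality of $\G$ and the closure clauses built into Definitions~\ref{def:cDab1} and \ref{def:cDab}.
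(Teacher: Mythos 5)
Your route is genuinely different from the paper's. The paper takes the witnessing $2$-contraction sequence of $\G \three X$ from Lemma~\ref{lem:G_0}, merges $X$ into the first contracted set $Y$ that meets $N_\G(X)$, and verifies that the modified sequence (with $Y$ replaced by $X \cup Y$) still satisfies the ``balanced or in $\cDabi^1$'' side condition, thereby concluding $(\G, s, t) \in \cDab$ directly. You instead lift only the first contracted set $Z = X_r$ to a $2$-contractible set of $\G$ and invoke Claim~\ref{cl:2-contractible}; when the lift succeeds this is a legitimate shortcut that spares you the side-condition bookkeeping. Your verification that $N_\G(Z \cup X)$ has exactly two elements and that $\around{\G}{Z \cup X}$ is connected is essentially sound, although your identification of $\{x', y'\}$ with $\{a,b,c\} \setminus Z$ is false when $|Z \cap \{a,b,c\}| \geq 2$; what you actually need (and what holds) is only the inclusion $\{a,b,c\} \setminus Z \subseteq \{x', y'\}$ together with $N_\G(Z \cup X) \subseteq \{x', y'\}$ and $2$-connectivity of $G + e_{st}$.

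The genuine gap is the degenerate case you relegate to the parenthesis. Your justification that $\around{\G}{Z \cup X} \neq \G$ (``because $\around{\G \three X}{Z} \neq \G \three X$ leaves some vertex outside'') fails when $\around{\G \three X}{Z} \neq \G \three X$ holds only on account of an edge between $x'$ and $y'$: concretely, if $\{s, t\} \subseteq N_\G(X)$, then $\G \three X$ acquires a triangle edge between $s$ and $t$, so $Z = V(\G \three X) \setminus \{s, t\}$ can be genuinely $2$-contractible in $\G \three X$, yet $Z \cup X = V(\G) \setminus \{s, t\}$ with $\{s,t\} \notin E(\G)$ is \emph{not} $2$-contractible in $\G$, and Claim~\ref{cl:2-contractible} yields no contradiction. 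Your parenthetical does not close this case: it inspects only Cases~(A1) and~(B) of Definition~\ref{def:cDab0}, omitting (A2), (C), and the members of $\cDab^1 \setminus \cDab^0$ reached via $3$-contractions, and the assertion that ``the balanced triangle produced by $3$-contracting $X$ must be one of the two triangles of that graph'' is justified only when $\G \three X - E(\{s,t\})$ is literally the six-vertex graph, not when further $3$-contractions intervene. The case can be closed (attaching $s$--$t$ edges with labels in $\{\alpha, \beta\}$ to a $\cDab^1$-member in Case~(A1), (A2) or~(C) keeps it in $\cDab^1$, forcing the Case~(B) structure, where no vertex is adjacent to both $s$ and $t$, so the $s$--$t$ edge of $\G \three X$ must already lie in $\G$ --- contradicting Claim~\ref{cl:2-contractible}), but that analysis is missing from your write-up; the paper sidesteps it entirely because Lemma~\ref{lem:G_0} never requires the contracted sets to be $2$-contractible in the technical sense, so the merged set need only satisfy the side condition there.
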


\begin{proof}
Suppose to the contrary that $\G$ contains
a 3-contractible vertex set $X \subseteq V(\G) \setminus \{s, t\}$.
By Lemma~\ref{lem:contraction},
the minimality of $\G$ implies $(\G \three X, s, t) \in \cDab$.
Then, by Lemma~\ref{lem:G_0},
there exists a sequence $\G_0, \G_1, \ldots, \G_r = \G \three X$ such that $\G_0$ consists of only two parallel arcs from $s$ to $t$ with labels $\alpha$ and $\beta$,
and $\G_{i-1}$ is obtained from $\G_i$ by some 2-contraction for each $i = 1, 2, \dots, r$.

Since $|V(\G_0)| = 2$ and $|V(\G_r)| \geq |N_\G(X)| = 3$,
we have $N_\G(X) \cap (V(\G_j) \setminus V(\G_{j-1})) \neq \emptyset$ for some $j$ with $1 \leq j \leq r$.
Let $j$ be the maximum such index,
and then we can apply to $\G$ the same sequence of 2-contractions as that to construct $\G_j$ from $\G_r = \G \three X$.
Let $\bH_j$ be the resulting graph,
$Y \coloneqq V(\G_j) \setminus V(\G_{j-1})$ (i.e., $\G_{j-1} = \G_j \two Y$),
and $Z \coloneqq X \cup Y \subseteq V(\bH_j)$.
Note that Lemma~\ref{lem:G_0} requires that either $\around{\G_j}{Y}$ is balanced or $(\around{\G_j}{Y}, x, y) \in \cDabp^1$ for some $\alpha', \beta' \in \Gamma$ with $\alpha'{\beta'}^{-1} \neq \beta'{\alpha'}^{-1}$, where $N_{\G_j}(Y) = \{x, y\}$.
In what follows, we show that $\bH_j \two Z = \G_j \two Y$ and either $\around{\bH_j}{Z}$ is balanced or $(\around{\bH_j}{Z}, x, y) \in \cDabp^1$ (respectively).
This concludes that almost the same sequence of 2-contractions in which $Y$ is just replaced by $Z$ can be applied to $\G$
and results in $\G_0$, which leads to $(\G, s, t) \in \cDab$ by Lemma~\ref{lem:G_0}, a contradiction.

By the maximality of $j$, we have $N_{\bH_j}(X) = N_\G(X)$, $\bH_j[X] = \G[X]$, and $\around{\bH_j}{X} = \around{\G}{X}$,
which implies $X$ is 3-contractible in $\bH_j$ as well as in $\G$.
Moreover, $\around{\bH_j}{Z}$ is a subgraph of $\bH_j$ that includes $\around{\bH_j}{X}$ as a subgraph,
and hence this is true also in $\around{\bH_j}{Z}$.
If $|N_{\bH_j}(X) \cap Y| \geq 2$, then $\around{\G_j}{Y} = \around{\bH_j}{Z} \three X$,
and the claim immediately follows (recall that the 3-contraction of $X$ in $\bH_j$ is replacing a balanced subgraph $\around{\bH_j}{X}$
with a balanced triangle on $N_{\bH_j}(X)$, and also Definition~\ref{def:cDab1}).
Otherwise, let $z$ be a unique vertex in $N_{\bH_j}(X) \cap Y$.
Then, by the definition of 3-contraction, $z$ and the other two vertices in $N_{\bH_j}(X)$ are adjacent in $\G_j$ as well as in $\G_r = \G \three X$.
Hence, $N_{\bH_j}(X) \setminus \{z\} \subseteq N_{\G_j}(Y) = \{x, y\}$, which implies $N_{\bH_j}(X) = \{x, y, z\}$.
We then have $\around{\G_j}{Y} = \around{\bH_j}{Z} \three X - e$, where $e = \{x, y\} \in E(\around{\bH_j}{Z} \three X)$ is an edge added by the 3-contraction of $X$ in $\around{\bH_j}{Z}$.
Since $e$ is in the balanced triangle on $N_{\bH_j}(X) = \{x, y, z\}$ whose other two edges are contained in $\around{\G_j}{Y}$
and form an $x$--$y$ path of label $l(\around{\bH_j}{Z}; x, y)$,
if $\around{\G_j}{Y}$ is balanced then so is $\around{\bH_j}{Z}$,
and if $(\around{\G_j}{Y}, x, y) \in \cDabp^1$ then $(\around{\bH_j}{Z}, x, y) \in \cDabp^1$.
Thus the claim holds in either case, and we complete the proof.
\end{proof}

Fix an arbitrary edge $e_0 = \{s, v_0\} \in \delta_\G(s)$, and let $\G' \coloneqq \G - e_0$.
Claim~\ref{cl:2-contractible} implies that $\G$ contains no edge between $s$ and $t$, and hence $v_0 \neq t$.
The following two claims lead to $(\G', s, t) \in \cDab$ (since $(\G, s, t) \not\in \cDab$ is a minimal counterexample).

\begin{claim}\label{cl:cD}
$(\G', s, t) \in \cD$. 
\end{claim}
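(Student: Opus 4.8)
The plan is to route the claim through Lemma~\ref{lem:cD}. Since $\G' = \G - e_0$ inherits from $\G$ the absence of equivalent arcs and still has $|V(\G')| = |V(\G)| \geq 3$, the assertion $(\G', s, t) \in \cD$ is equivalent to saying that $(G - e_0) + e_{st}$ is $2$-connected, where $e_{st} = \{s, t\}$ is a new edge; and the same lemma applied to $(\G, s, t) \in \cD$ already tells us that $G + e_{st}$ is $2$-connected. So the whole claim reduces to the statement that deleting the single edge $e_0$ from the $2$-connected graph $G + e_{st}$ cannot destroy $2$-connectivity. I would argue this by contradiction, with the goal of exhibiting a $2$-contractible vertex set in $\G$ and invoking Claim~\ref{cl:2-contractible}.

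Assume $(G + e_{st}) - e_0$ is not $2$-connected. It is still connected (a $2$-connected graph is $2$-edge-connected), so it has a cut vertex $x$; a routine argument then shows $x \notin \{s, v_0\}$ and that $x$ separates $s$ from $v_0$ in $(G + e_{st}) - e_0$ --- indeed $(G + e_{st}) - x$ is connected whereas $((G + e_{st}) - e_0) - x$ is not, and re-inserting the single edge $e_0$ reconnects it, which can only happen if its endpoints lie in distinct components. Let $X$ be the vertex set of the component of $((G + e_{st}) - e_0) - x$ containing $v_0$, so $s \notin X$. Then $\G[X]$ is connected, every edge of $G$ with exactly one endpoint in $X$ either runs to $x$ or is $e_0$, and both $x$ and $s$ are genuinely adjacent to $X$ (the former because $x$ is a cut vertex of a connected graph, the latter through $e_0$). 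Hence $N_\G(X) = \{x, s\}$ and $\around{\G}{X} = \G[X \cup \{x, s\}] - E(\{x, s\})$ is connected. If moreover $x \neq t$, then $t$ stays in the same component as $s$ (they are joined by $e_{st}$ and $t \ne x$), so $t \notin X \cup \{x, s\}$ and $\around{\G}{X} \neq \G$; thus $X$ is $2$-contractible, contradicting Claim~\ref{cl:2-contractible}.

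The remaining case $x = t$ needs extra care. Here $((G + e_{st}) - e_0) - t = G - e_0$ is disconnected, and since $e_{st}$ is the unique edge of $(G + e_{st}) - e_0$ joining its parts, $G - e_0$ has exactly two components $A \ni s$ and $B \ni t$, with $v_0 \in B$. I would then take $X := V(B) \setminus \{t\}$: here $v_0 \in X \subseteq V(\G) \setminus \{s, t\}$, $N_\G(X) = \{s, t\}$, and $\around{\G}{X} = \G[V(B) \cup \{s\}]$ (there is no $s$--$t$ edge, by Claim~\ref{cl:2-contractible}) is connected, with $\around{\G}{X} \neq \G$ unless $A = \{s\}$, i.e.\ $\deg_\G(s) = 1$. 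That last possibility must be ruled out separately: if $\delta_\G(s) = \{e_0\}$ with $\gamma := \psi_\G(e_0, v_0)$, then $(\G - s, v_0, t) \in \cD$ with $l(\G - s; v_0, t) = \{\alpha\gamma^{-1}, \beta\gamma^{-1}\}$ and $(\alpha\gamma^{-1})(\beta\gamma^{-1})^{-1} = \alpha\beta^{-1} \neq \beta\alpha^{-1}$, so by the minimality of $\G$ we get $(\G - s, v_0, t) \in \cD_{\alpha\gamma^{-1},\, \beta\gamma^{-1}}$, and then Lemma~\mbox{\ref{lem:cDab}-(2)} would give $(\G, s, t) \in \cDab$, a contradiction. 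Hence $\deg_\G(s) \geq 2$, so $A \neq \{s\}$, $\around{\G}{X} \neq \G$, and $X$ is again $2$-contractible, contradicting Claim~\ref{cl:2-contractible}.

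In every case we reach a contradiction, so $(G - e_0) + e_{st}$ is $2$-connected and $(\G', s, t) \in \cD$. The only genuinely delicate point is the case $x = t$ together with the degenerate possibility $\deg_\G(s) = 1$: there the claim is actually false, so this sub-case has to be killed through the minimality of $\G$ (via Lemma~\mbox{\ref{lem:cDab}-(2)}) rather than through the local structure of $\G$, and one has to reshape the candidate contractible set so that it avoids $t$. Everything else is routine connectivity bookkeeping of Menger type.
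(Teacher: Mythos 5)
Your overall strategy coincides with the paper's: reduce via Lemma~\ref{lem:cD} to the $2$-connectivity of $\G' + e_{st}$, extract a $2$-contractible set to contradict Claim~\ref{cl:2-contractible}, and dispose of the degenerate case $\delta_\G(s) = \{e_0\}$ by the minimality of $\G$ together with Lemma~\mbox{\ref{lem:cDab}-(2)}. The main case $x \neq t$ and the degenerate case are handled correctly.

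However, the sub-case $x = t$ contains a genuine error. You assert that $((G+e_{st})-e_0)-t = G - e_0$ and that $G - e_0$ is disconnected with exactly two components $A \ni s$ and $B \ni t, v_0$. Deleting the vertex $t$ yields $(G-e_0)-t$, not $G-e_0$, and the fact that $t$ is a cut vertex of $(G+e_{st})-e_0$ separating $s$ from $v_0$ only tells you that $(G-e_0)-t$ is disconnected; it does not imply that $G-e_0$ itself is disconnected, since the component of $s$ in $(G-e_0)-t$ may well have a $G$-edge to $t$. When $G-e_0$ is connected, your recipe gives $V(B) = V(\G)$ and $X = V(\G)\setminus\{t\} \ni s$, which is not an admissible $2$-contractible candidate, so the argument as written fails to produce a contradiction in that configuration. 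The fix is small and brings you back to the paper's argument: take $X$ to be the vertex set of the component of $v_0$ in $(G-e_0)-t$; then $N_\G(X) = \{s,t\}$, both $\G[X]$ and $\around{\G}{X}$ are connected, and $X$ is $2$-contractible unless $X = V(\G)\setminus\{s,t\}$ (recall $\{s,t\}\notin E(\G)$ by Claim~\ref{cl:2-contractible}), and that last possibility forces $s$ to be isolated in $\G'$, i.e., it is exactly the degenerate case you already treated via minimality.
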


\begin{proof}
By Lemma~\ref{lem:cD},
it suffices to show that $\G' + e_{st}$ is 2-connected, where $e_{st} = \{s, t\}$ is a new edge (with an arbitrary label),
and suppose to the contrary that it is not.
If $s$ is isolated in $\G'$, then $(\G', v_0, t) \in \cDabp$ by the minimality of $\G$
(where $\alpha' = \alpha \cdot \psi_\G(e_0, v_0)^{-1}$ and $\beta' = \beta \cdot \psi_\G(e_0, v_0)^{-1}$),
and hence $(\G, s, t) \in \cDab$ by Lemma~\ref{lem:cDab}-(2), a contradiction.
Otherwise, $\G'$ is connected (because $(\G, s, t) \in \cD$)
and has a 1-cut $w \in V(\G)$ that separates some nonempty $X \subseteq V(\G) \setminus \{s, t, w\}$ from both $s$ and $t$
(note that possibly $w \in \{s, t\}$).
Take such $X$ so that $\G'[X]$ is connected.
Since $N_{\G'}(X) = \{w\}$ holds and $\G = \G' + e_0$ has no such 1-cut (because $(\G, s, t) \in \cD$),
we have $v_0 \in X$ and hence $N_\G(X) = \{s, w\}$.
Claim~\ref{cl:2-contractible} says that $X$ is not 2-contractible in $\G$, and hence $X = V(\G) \setminus \{s, t\}$ (and $\{s, t\} \not\in E(\G)$).
Then, $w = t$ cannot separate $X$ from $s$ in $\G'$, a contradiction.
\end{proof}

\begin{claim}\label{cl:2label}
  $l(\G'; s, t) = \{\alpha, \beta\}$.
\end{claim}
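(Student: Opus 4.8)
The plan is to argue by contradiction. Since $\G'=\G-e_0$ is a subgraph of $\G$, we have $l(\G';s,t)\subseteq l(\G;s,t)=\{\alpha,\beta\}$, and since $(\G',s,t)\in\cD$ by Claim~\ref{cl:cD}, the graph $\G'$ contains an $s$--$t$ path, so $l(\G';s,t)\neq\emptyset$. Hence, if the claim fails, then $l(\G';s,t)=\{\gamma\}$ for a single $\gamma\in\{\alpha,\beta\}$, and I will show that this situation forces $(\G,s,t)\in\cDab^0$ through Case~(A1) of Definition~\ref{def:cDab0}, contradicting that $(\G,s,t)$ is a counterexample.

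So assume $l(\G';s,t)=\{\gamma\}$. By Lemma~\ref{lem:balanced2}, $\G'$ is balanced, and Lemma~\ref{lem:shifting} provides an $(s,t)$-equivalence turning $\G'$ into the normalized form in which every arc around $s$ has label $\gamma$ and every other arc has label $\id$. Since $s$ is not isolated in $\G'$ (cf.\ the proof of Claim~\ref{cl:cD}), we have $V(\G')=V(\G)$, so the same shifts can be applied to $\G$; let $\G''$ be the result. Then $\G''$ is $(s,t)$-equivalent to $\G$, so $(\G'',s,t)\in\cD$ and $l(\G'';s,t)=\{\alpha,\beta\}$, while $\G''-e_0$ is exactly the normalized form of $\G'$ (deletion of $e_0$, which is incident to $s$, commutes with shifting at vertices other than $s$). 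Now the key computation: an $s$--$t$ path in $\G''$ avoiding $e_0$ lies in $\G''-e_0$ and hence has label $\gamma$; an $s$--$t$ path through $e_0=\{s,v_0\}$ must start with $e_0$ (as $e_0\in\delta_{\G''}(s)$ and all arcs around $s$ leave $s$), so its label equals $\psi_{\G''}(Q)\cdot\psi_{\G''}(e_0,v_0)$ for a $v_0$--$t$ path $Q$ in $\G''-s=(\G''-e_0)-s$, every arc of which has label $\id$; thus any such path has label exactly $\psi_{\G''}(e_0,v_0)$. Since $l(\G'';s,t)=\{\alpha,\beta\}$ contains an element other than $\gamma$, some $s$--$t$ path uses $e_0$, and therefore $\psi_{\G''}(e_0,v_0)\in\{\alpha,\beta\}$.

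Consequently, in $\G''$ every arc around $s$ has label in $\{\alpha,\beta\}$ (those other than $e_0$ have label $\gamma$) and every other arc has label $\id$; moreover $\G''$ is not balanced by Lemma~\ref{lem:balanced2}, since $|l(\G'';s,t)|=2$. Hence $\G''$ witnesses Case~(A1) of Definition~\ref{def:cDab0}, so $(\G,s,t)\in\cDab^0\subseteq\cDab$, a contradiction. I do not expect a substantive obstacle here: the only care needed is routine bookkeeping --- that shifting at vertices other than $s$ preserves membership in $\cD$ and the convention that all arcs around $s$ leave $s$, and that it commutes with the deletion of $e_0$, so that $\G''-e_0$ really is the normalized $\G'$ --- and the mathematical content is just the short label computation of the previous paragraph.
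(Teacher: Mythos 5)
Your proposal is correct and follows essentially the same route as the paper: assume $|l(\G';s,t)|=1$, deduce via Lemma~\ref{lem:balanced2} that $\G'$ (hence $\G-s$) is balanced, and conclude $(\G,s,t)\in\cDab^0$ via Case~(A1), a contradiction. The only difference is that you explicitly carry out the normalization by shifting and verify that $\psi(e_0)$ must lie in $\{\alpha,\beta\}$ — a detail the paper leaves implicit behind its ``cf.~Case~(A1)'' citation — so your write-up is a correctly fleshed-out version of the paper's argument.
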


\begin{proof}
Since each $s$--$t$ path in $\G'$ is also in $\G$,
we see that $l(\G'; s, t) \subseteq l(\G; s, t) = \{\alpha, \beta\}$.
Suppose to the contrary that $|l(\G'; s, t)| = 1$.
Then, $\G'$ is balanced by Lemma~\ref{lem:balanced2} and Claim~\ref{cl:cD},
and hence $\G - s$ is also balanced (which is a subgraph of $\G' = \G - e_0$).
This implies that $(\G, s, t) \in \cDab^0 \subseteq \cDab$
(cf.~Case (A1) in Definition~\ref{def:cDab0}), a contradiction.
\end{proof}

\subsection{Case analysis}\label{sec:case}
Note again that, by the minimality of $\G$, Claims~\ref{cl:cD} and \ref{cl:2label} imply $(\G', s, t) \in \cDab$.
We consider the following two cases separately: when $(\G', s, t) \in \cDab^1$ and when not.
In other words, the former case does not need any 2-contraction to reduce $(\G', s, t)$ to some triplet in $\cDab^0$ (cf.~Definition~\ref{def:cDab1} and Lemma~\ref{lem:3-contraction}),
and the latter involves some 2-contraction.

\subsubsection{Case 1: Without 2-contraction (when $(\G', s, t) \in \cDab^1$)}\label{sec:case1}
Suppose that $(\G', s, t) \in \cDab^1$.
If $(\G', s, t) \not\in \cDab^0$, then, by Lemma~\ref{lem:3-contraction},
there exist nonadjacent 3-contractible vertex sets whose 3-contractions reduce $(\G', s, t) \in \cDab^1$ to some triplet in $\cDab^0$.
Moreover, any 3-contractible vertex set in $\G'$ contains $v_0$ as seen below,
and hence there exists $X \subseteq V(\G) \setminus \{s, t\}$ such that $(\G' \three X, s, t) \in \cDab^0$.
Suppose to the contrary that $\G'$ contains a 3-contractible vertex set $X' \subseteq V(\G) \setminus \{s, t\}$ with $v_0 \not\in X'$. 
Then, the edge $e_0 = \{s, v_0\}$ is disjoint from $X'$,
and hence $N_{\G'}(X') = N_{\G}(X')$, $\G'[X'] = \G[X']$, and $\around{\G'}{X'} = \around{\G}{X'}$.
This implies that $X'$ is 3-contractible also in $\G$, contradicting Claim~\ref{cl:3-contractible}.

If $(\G', s, t) \not\in \cDab^0$, take $X \subseteq V(\G) \setminus \{s, t\}$ with $(\G' \three X, s, t) \in \cDab^0$
and let $\tilde\G \coloneqq \G' \three X$, and otherwise $\tilde\G \coloneqq \G'$ (i.e., if $(\G', s, t) \in \cDab^0$).
Here, recall that we may assume that $\beta = \id$ and $\alpha^{-1} \neq \alpha$ (i.e., $\alpha^2 \neq \id$),
i.e., in what follows, we assume $(\tilde\G, s, t) \in \cD_{\id,\, \alpha}^0$.
We discuss the three cases in Definition~\ref{def:cDab0} separately.

\medskip
\noindent\underline{{\bf Case~1.1.}~~When $(\tilde\G, s, t) \in \cD_{\id,\, \alpha}^0$ is in Case (A).}

\medskip
We first see that also $(\G', s, t) \in \cD_{\id,\, \alpha}^0$ is in Case (A).
The case when $\tilde\G = \G'$ is trivial, and we consider the case when $\tilde\G = \G' \three X$ for some $X \subseteq V(\G) \setminus \{s, t\}$.
By the symmetry of $s$ and $t$ (corresponding to Cases (A1) and (A2)), we show that, if $(\G' \three X, s, t) \in \cD_{\id,\, \alpha}^0$ is in Case (A1),
then so is $(\G', s, t) \in \cD_{\id,\, \alpha}^0$.

By definition, we can obtain from $\G' \three X$ an $(s, t)$-equivalent graph $\tilde\bH$
by shifting at vertices in $V(\G') \setminus (X \cup \{s, t\})$ in which almost all arcs are with label $\id$
and the only exception is that some but not all arcs leaving $s$ are with label $\alpha$ (cf.~Fig.~\ref{fig:cDab0A1}).
Let $\bH$ be the $(s, t)$-equivalent graph obtained from $\G'$ by the same shifting operations.
Then, $\tilde\bH = \bH \three X$ is obtained just by replacing a balanced subgraph $\around{\bH}{X}$ with a balanced triangle on $N_{\bH}(X) = N_{\G'}(X)$.
Let $N_{\bH}(X) = \{x, y, z\}$ so that $x = s$ if $s \in N_{\bH}(X)$. 
Then, an arc $yz$ (or $zy$) exists in $\tilde{\bH} = \bH \three X$ by the definition of 3-contraction
and is with label $\id$ by $y \neq s \neq z$, which implies $l(\around{\bH}{X}; y, z) = \id$.
Hence, by applying Lemma~\ref{lem:shifting} to $\around{\bH}{X}$ with $y$ and $z$,
we can obtain a $(y, z)$-equivalent graph by shifting at vertices in $X \cup \{x\}$ in which all the arcs are with label $\id$.
That is, by applying to $\around{\bH}{X}$ the same shifting operations at the vertices in $X$ (i.e., except for $x$),
we obtain an $(x, y)$- and $(x, z)$-equivalent graph in which almost all arcs in $\around{\bH}{X}$ are with label $\id$ and the only exception is that
all arcs around $x$ leave $x$ with label $\gamma \coloneqq l(\around{\bH}{X}; x, y)$ (by replacing some arcs with equivalent arcs if necessary).
Since an arc $xy$ with label $\gamma$ (or an equivalent arc) exists in $\tilde\bH = \bH \three X$ by the definition of the 3-contraction of $X$,
we have $\gamma = \id$ unless $x = s$ and $\gamma \in \{\id, \alpha\}$ if $x = s$.
In either case, by combining the above shifting operations (at the vertices in $V(\G') \setminus (X \cup \{s, t\})$ and in $X$),
we can obtain from $\G'$ an $(s, t)$-equivalent graph with the condition of Case (A1).
Thus $(\G', s, t) \in \cD_{\id,\, \alpha}^0$ is in Case (A1).

We then assume that $\G' = \G - e_0$ satisfies the condition of Case (A)
by shifting at vertices in $V(\G) \setminus \{s, t\}$ in advance of removing $e_0$ from $\G$ if necessary.
Since $\G$ contains no 2-contractible vertex set (by Claim~\ref{cl:2-contractible}),
$\G - \{s, t\}$ is connected,
which implies that there exists a $v_0$--$w$ path in $\G - \{s, t\}$
for each $w \in N_\G(t)$ (recall that $v_0 \neq t$).
Therefore, if the edge $e_0 = \{s, v_0\}$ violates the condition of Case (A)
(i.e., $\psi_\G(e_0, v_0) \not\in \{\id, \alpha\}$ in Case (A1),
and $\psi_\G(e_0, v_0) \neq \id$ in Case (A2)),
then it is easy to see that $|l(\G; s, t)| \geq 3$
(see Figs.~\ref{fig:case_1.1.1} and \ref{fig:case_1.1.2}), a contradiction.
Note that we use Lemma~\ref{lem:2-2labels} in Case (A2)
(let $P_1 \coloneqq (s)$ and $P_2 \coloneqq (s, e_0, v_0)$).

\begin{figure}[htbp]\vspace{2mm}\hspace{-3mm}
  \begin{tabular}{cc}
    \begin{minipage}[b]{0.5\hsize}
      \begin{center}
        \includegraphics[scale=0.7]{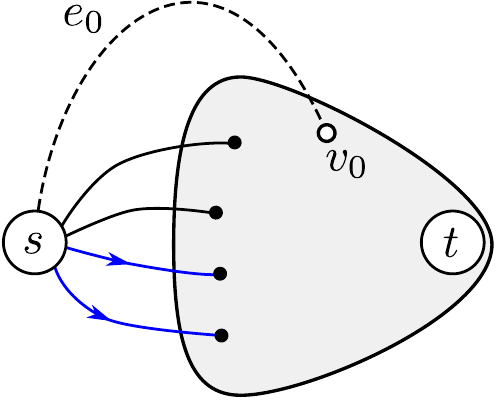}
      \end{center}\vspace{-5mm}
      \caption{$(\G', s, t) \in \cD_{\id,\,\alpha}^0$ is in Case (A1).}
      \label{fig:case_1.1.1}
    \end{minipage}
    \begin{minipage}[b]{0.5\hsize}
      \begin{center}
        \includegraphics[scale=0.7]{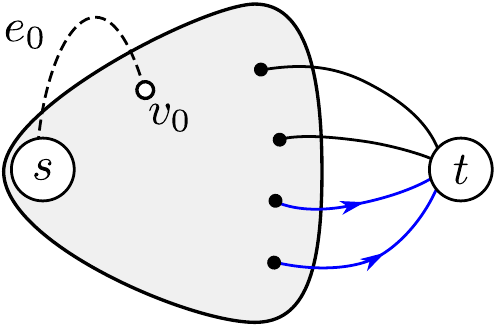}
      \end{center}\vspace{-5mm}
      \caption{$(\G', s, t) \in \cD_{\id,\,\alpha}^0$ is in Case (A2).}
      \label{fig:case_1.1.2}
    \end{minipage}
  \end{tabular}\vspace{-1mm}
\end{figure}

\noindent\underline{{\bf Case~1.2.}~~When $(\tilde\G, s, t) \in \cD_{\id,\, \alpha}^0$ is in Case (B).}

\medskip
If $\tilde\G = \G'$, then it is easy to see $|l(\G; s, t)| \geq 3$ by Lemma~\ref{lem:2-2labels},
since $\G$ contains no equivalent arcs (see Fig.~\ref{fig:case_1.2.1}).
Otherwise, $\tilde\G = \G' \three X$ for some $X \subseteq V(\G) \setminus \{s, t\}$.
If $N_{\G'}(X) = \{s, v_1, v_2\}$,
then $\around{\G}{X}$ is not balanced because $N_\G(X) = N_{\G'}(X)$ but $X$ is not 3-contractible in $\G$ (by Claim~\ref{cl:3-contractible}),
and hence $|l(\G; s, t)| \geq 3$ by Lemma~\ref{lem:2-2labels}
(e.g., we can take two $s$--$v_1$ paths $P_1$ and $P_2$
in $\around{\G}{X}$ with $\psi_\G(P_1) \neq \psi_\G(P_2)$), a contradiction.

Suppose that $N_{\G'}(X) = \{v_3, v_4, t\}$ (see Fig.~\ref{fig:case_1.2.2}).
If there exist two disjoint paths between $\{v_0, t\}$ and $\{v_3, v_4\}$
in $\around{\G}{X}$, then $|l(\G; s, t)| \geq 3$ by Lemma~\ref{lem:2-2labels}, a contradiction;
e.g., if a $v_0$--$v_3$ path and a $t$--$v_4$ path can be taken disjointly in $\around{\G}{X}$,
one can construct two $s$--$v_1$ paths $P_1$ and $P_2$ in $\G[X \cup \{v_1, v_3\}]$ with $\psi_\G(P_1) \neq \psi_\G(P_2)$
and $l(\G - (V(P_i) \setminus \{v_1\}); v_1, t) = \{\id, \alpha\}$ $(i = 1, 2)$.
Otherwise, by Menger's theorem,
$\around{\G}{X}$ contains a 1-cut $w \in X$
separating $\{v_0, t\}$ from $\{v_3, v_4\}$ (possibly $w = v_0$).
In this case, $\{s, w\}$ is a 2-cut in $\G$,
which contradicts Claim~\ref{cl:2-contractible}.

\begin{figure}[htbp]\vspace{1mm}\hspace{-3mm}
  \begin{tabular}{cc}
    \begin{minipage}[t]{0.5\hsize}
      \begin{center}
        \includegraphics[scale=0.7]{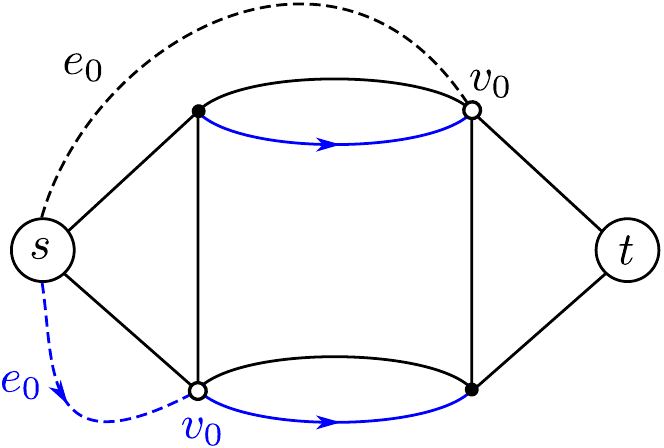}
      \end{center}\vspace{-5mm}
      \caption{\mbox{$(\G', s, t) \in \cD_{\id,\,\alpha}^0$ is in Case (B);\quad\ } \mbox{two possibilities of $e_0 = \{s, v_0\}$ are depicted.}}
      \label{fig:case_1.2.1}
    \end{minipage}
    \begin{minipage}[t]{0.5\hsize}
      \begin{center}
        \includegraphics[scale=0.7]{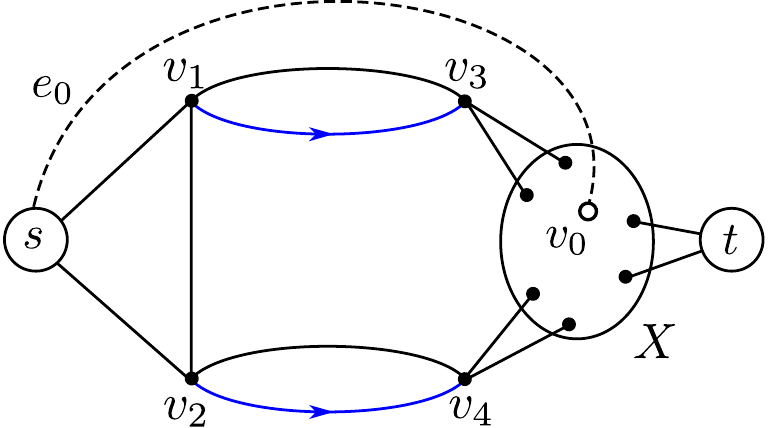}
      \end{center}\vspace{-5mm}
      \caption{$(\G' \three X, s, t) \in \cD_{\id,\,\alpha}^0$ is in Case (B).}
      \label{fig:case_1.2.2}
    \end{minipage}
  \end{tabular}
\end{figure}

\noindent\underline{{\bf Case~1.3.}~~When $(\tilde\G, s, t) \in \cD_{\id,\, \alpha}^0$ is in Case (C).}

\medskip
In order to apply Lemma~\ref{lem:cDab0}, we first confirm the degree condition.

\begin{claim}\label{cl:degree}
$|\delta_{\tilde\G}(v)| \geq 3$ for every $v \in V(\tilde\G) \setminus \{s, t\}$.
\end{claim}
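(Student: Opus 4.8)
The plan is to argue by contradiction: suppose some vertex $v \in V(\tilde\G) \setminus \{s, t\}$ has $|\delta_{\tilde\G}(v)| \leq 2$, and derive a violation of one of the already-established structural facts about the minimal counterexample $\G$. The two established facts I would lean on are Claim~\ref{cl:2-contractible} (no 2-contractible vertex set in $\G$) and Claim~\ref{cl:3-contractible} (no 3-contractible vertex set in $\G$), together with $(\G, s, t) \in \cD$, which forces that every vertex lies on an $s$--$t$ path and that $\G$ has no equivalent arcs. Since $\tilde\G$ is obtained from $\G' = \G - e_0$ by at most one 3-contraction $\G' \three X$, the vertex $v$ is either in $V(\G') \setminus X$ or is a vertex whose neighborhood was altered only by adding triangle edges on $N_{\G'}(X)$; in either case $|\delta_{\tilde\G}(v)|$ differs from $|\delta_{\G}(v)|$ in a controlled way (removing $e_0$ can drop the degree by at most one if $v = v_0$, and the 3-contraction replaces edges into $X$ by at most two triangle edges).

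First I would observe that $|\delta_{\tilde\G}(v)| \neq 0$ since $(\tilde\G, s, t) \in \cD$ (so $v$ lies on an $s$--$t$ path), and $|\delta_{\tilde\G}(v)| \neq 1$ for the same reason (a degree-one vertex other than $s, t$ cannot lie on any $s$--$t$ path). So the only case is $|\delta_{\tilde\G}(v)| = 2$. Write $N_{\tilde\G}(v) = \{x, y\}$ (these are distinct since $\G$, and hence $\tilde\G$, has no loops, and $v \neq x, y$; if the two edges were parallel between $v$ and a single vertex $x$ then $\tilde\G$ would have equivalent arcs unless the two labels differ, but then $\{v\}$ would still have only one neighbor — handle this sub-case separately, concluding $v$ lies on no $s$--$t$ path or that $\{x\} = N_{\tilde\G}(v)$ is a 1-cut, contradicting $(\tilde\G, s, t) \in \cD$ via Lemma~\ref{lem:cD}). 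Then $\{v\}$ itself, or $\{v\}$ together with the part of $X$ it absorbed, forms a 2-contractible set in $\G$: concretely, if $v \in V(\G') \setminus X$ with $v \neq v_0$ then $N_\G(v) = \{x, y\}$ and $\G[\![\{v\}]\!]$ is trivially connected, so $\{v\}$ is 2-contractible in $\G$ (note $\G[\![\{v\}]\!] \neq \G$ since $|V(\G)| \geq 3$), contradicting Claim~\ref{cl:2-contractible}; if $v = v_0$ then $N_\G(v_0) = \{s, x, y\} \setminus \{?\}$ — one checks $e_0$ contributes the edge to $s$, so $N_\G(v_0) \subseteq \{s, x, y\}$ has size at most $3$, and then either $\{v_0\}$ is 2-contractible in $\G$ or one of the pairs $\{s, x\}$, etc., gives a small cut — in all sub-cases either Claim~\ref{cl:2-contractible} or Claim~\ref{cl:3-contractible} is violated.

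The case requiring care is when $v$ is one of the three vertices of $N_{\G'}(X) = N_{\G}(X)$ for the contracted set $X$ (so $\tilde\G = \G' \three X$ and $v$ received new triangle edges). Here I would argue that, since $X$ is 3-contractible in $\G'$ but not in $\G$ (the latter by Claim~\ref{cl:3-contractible}, using that $X \cup \{v_0\}$-type sets were already ruled out), the edge $e_0$ must be incident to $X$, i.e.\ $v_0 \in X$; then $N_\G(X \cup \{v\}) \subseteq (N_\G(X) \setminus \{v\}) \cup N_{\tilde\G}(v)$, and using $|\delta_{\tilde\G}(v)| = 2$ this neighborhood has size at most $2 + 2 - (\text{overlap}) \leq 3$. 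If it has size $2$, then $X \cup \{v\}$ is 2-contractible in $\G$ (connectivity of $\G[\![X \cup \{v\}]\!]$ follows from connectivity of $\G[X]$ plus the edge $vx$ or $vy$ or an edge from $v$ into $X$), contradicting Claim~\ref{cl:2-contractible}; if it has size $3$, one checks $X \cup \{v\}$ is 3-contractible in $\G$ — here I would invoke that $\around{\G'}{X}$ is balanced (part of 3-contractibility in $\G'$) and that adjoining the degree-two vertex $v$ with its two boundary edges cannot create an unbalanced cycle other than one that would already be visible as the balanced triangle's relation, so $\around{\G}{X \cup \{v\}}$ stays balanced — contradicting Claim~\ref{cl:3-contractible}. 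The main obstacle is this last balancedness bookkeeping for the absorbed-triangle-vertex case; it is the same flavor of argument as in the proof of Lemma~\ref{lem:3-contraction} (the $|N(X_j) \cap X_{j-1}| = 1$ analysis), and I expect it to go through by the same "decompose any putative unbalanced cycle through $v$ into pieces lying in the balanced subgraph" technique.
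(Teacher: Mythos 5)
Your overall strategy is the same as the paper's: a vertex $v$ with $|\delta_{\tilde\G}(v)| \leq 2$ either keeps all of its $\tilde\G$-edges from $\G$ (so that $v$ has degree at most $3$ in $\G$ and $\{v\}$ is $2$- or $3$-contractible, or lies on no $s$--$t$ path), or is a vertex of $N_{\G'}(X)$ whose only edges in $\tilde\G$ are the two triangle edges, in which case $N_\G(X \cup \{v\})$ has at most three elements and $X \cup \{v\}$ should be contractible. Your treatment of the size-$3$ sub-case, including the balancedness bookkeeping for $\around{\G}{X \cup \{v\}}$ by decomposing a putative unbalanced cycle into pieces lying in the balanced graph $\around{\G'}{X}$, is exactly the paper's argument (and the paper is no less terse there), so that part is fine.

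The step that fails as written is the assertion that $|N_\G(X \cup \{v\})| = 2$ makes $X \cup \{v\}$ $2$-contractible. By Definition~\ref{def:2-contraction}, $2$-contractibility also requires $\around{\G}{X \cup \{v\}} \neq \G$, and this can fail precisely in the situation where the size-$2$ case arises: it occurs only when $s \in N_{\G'}(X)$, say $N_{\G'}(X) = \{s, v, x\}$, and if in addition $x = t$ and $V(\G) = X \cup \{s, v, t\}$, then $\around{\G}{X \cup \{v\}} = \G$ (using $\{s,t\} \notin E(\G)$ from Claim~\ref{cl:2-contractible}), so no contradiction with Claim~\ref{cl:2-contractible} is available. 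The paper handles this residual branch separately: it shows that then $\around{\G'}{X \cup \{v\}} = \G - e_0$ is balanced (by the same cycle-decomposition argument), so that $(\G, s, t) \in \cDab^0$ falls into Case~(A1) of Definition~\ref{def:cDab0}, which contradicts $(\G, s, t) \notin \cDab$ rather than the no-contractible-set claims. You need to add this branch, or else argue it is vacuous (one can: there $\tilde\G$ would consist only of the balanced triangle on $\{s, v, t\}$, hence be balanced, contradicting $l(\tilde\G; s, t) = \{\id, \alpha\}$). The same caveat, in miniature, applies to your singleton cases: ``$|N_\G(v)| = 2$ implies $\{v\}$ is $2$-contractible'' also needs $\around{\G}{\{v\}} \neq \G$, which $|V(\G)| \geq 3$ alone does not guarantee.
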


\begin{proof}
Suppose to the contrary that there exists a vertex $v \in V(\tilde\G) \setminus \{s, t\}$ with $|\delta_{\tilde\G}(v)| \leq 2$.
If $\delta_{\tilde\G}(v) \subseteq \delta_{\G}(v) \setminus \{e_0\}$,
then $\{v\}$ is contractible in $\G$ (or $v$ is not contained in any $s$--$t$ path, contradicting $(\G, s, t) \in \cD$), which contradicts Claim~\ref{cl:2-contractible} or \ref{cl:3-contractible}.
Otherwise, $\tilde\G = \G' \three X$ for some $X \subseteq V(\G) \setminus \{s, t\}$ with $v \in N_{\G'}(X)$ (and then $v_0 \in X$).
In this case, $\delta_{\tilde\G}(v)$ consists of two edges in the balanced triangle on $N_{\G'}(X)$, and hence $N_\G(v) \subseteq X \cup (N_{\G'}(X) \setminus \{v\})$.
If $s \not\in N_{\G'}(X)$ (see Fig.~\ref{fig:case_1.3}), then $N_{\G}(X \cup \{v\}) = (N_{\G'}(X) \setminus \{v\}) \cup \{s\}$,
and hence $X \cup \{v\}$ is 3-contractible in $\G$, which contradicts Claim~\ref{cl:3-contractible},
where the connectivity of $\G[X \cup \{v\}]$ is guaranteed by that of $\G'[X] = \G[X]$ and $v \in N_{\G'}(X) = N_{\G}(X) \setminus \{s\}$,
and the balancedness of $\around{\G}{X \cup \{v\}}$ follows from that of $\around{\G'}{X}$ and $\delta_\G(X \cup \{v\}) \setminus \delta_{\G'}(X) \subseteq \delta_{\tilde\G}(v) \cup \{e_0\}$.
Otherwise, $N_{\G'}(X) = \{s, v, x\}$ for some $x \in V(\G) \setminus \{s, v\}$ and $N_{\G}(v) \subseteq X \cup \{s, x\}$.
If $\around{\G}{X \cup \{v\}} = \G$, then $\around{\G'}{X \cup \{v\}} = \G - e_0$ is balanced as well as $\around{\G'}{X}$
(note that $\delta_{\G'}(X \cup \{v\}) \setminus \delta_{\G'}(X) \subseteq \delta_{\tilde\G}(v)$),
and hence $(\G, s, t) \in \cDab^0$ is in Case (A1) in Definition~\ref{def:cDab0}, a contradiction.
Otherwise, $X \cup \{v\}$ is 2-contractible in $\G$, which contradicts Claim~\ref{cl:2-contractible}.
\end{proof}

\begin{figure}[htbp]\vspace{-1mm}
 \begin{center}
  \includegraphics[scale=0.7]{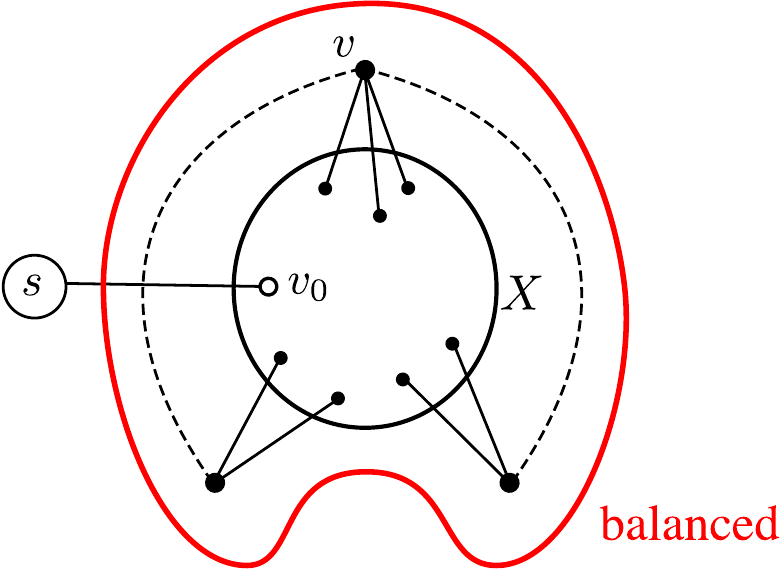}
 \end{center}\vspace{-4mm}
\caption{$X \cup \{v\}$ is 3-contractible in $\G$ when $|\delta_{\tilde\G}(v)| \leq 2$, $\tilde\G = \G' \three X$, and $s \not\in N_{\G'}(X)$.}
 \label{fig:case_1.3}
\end{figure}

We then suppose that $\tilde\G$ is embedded as in Lemma~\ref{lem:cDab0},
where we apply shifting at vertices in $V(\G) \setminus \{s, t\}$ to $\G$ in advance of constructing $\tilde\G$ if necessary.
We denote by $\tE^i \subseteq E(\tilde{\G})$ the edge set corresponding to $E^i$ in Lemma~\ref{lem:cDab0} for each $i = 0, 1$,
and refer to the path $P = (s = u_0, e_1, u_1, \ldots, e_\ell, u_\ell = t)$ along the outer boundary of $\tilde\G - \tE^1$ as $P$ itself,
i.e., $\psi_{\tilde\G}(e, v) = \id$ for each $e \in \tE^0$ and $v \in e$, and $\psi_{\tilde\G}(e, u_j) = \alpha$ for each $e = \{u_i, u_j\} \in \tE^1$ with $i < j$.
We then observe the following property.

\begin{claim}\label{cl:inner}
Some edge in $\tE^1$ connects inner vertices of $P$, i.e., $\tE^1 \setminus \delta_{\tilde\G}(s) \neq \emptyset \neq \tE^1 \setminus \delta_{\tilde\G}(t)$.
\end{claim}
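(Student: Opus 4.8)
The plan is to derive the two non-emptiness statements $\tE^1 \setminus \delta_{\tilde\G}(s) \neq \emptyset$ and $\tE^1 \setminus \delta_{\tilde\G}(t) \neq \emptyset$ by excluding the opposite configurations, and then to strengthen this to the existence of a single edge of $\tE^1$ joining two inner vertices of $P$ by using the nesting property of $\tE^1$. Throughout I use the setup fixed just before the claim: $\tilde\G$ is embedded as in Lemma~\ref{lem:cDab0} (its degree hypothesis is Claim~\ref{cl:degree}), so $E(\tilde\G) = \tE^0 \cup \tE^1$, every edge of $\tE^0$ has label $\id$ at both endpoints, and every edge $e = \{u_i, u_j\} \in \tE^1$ with $i < j$ satisfies $\psi_{\tilde\G}(e, u_j) = \alpha$; moreover we are in Case~1.3, so $(\tilde\G, s, t) \in \cD_{\id,\, \alpha}^0$ with $\beta = \id$ and we may assume $(\tilde\G, s, t)$ is \emph{not} in Case~(A) of Definition~\ref{def:cDab0} (else we are in the situation already settled in Case~1.1). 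Note also that $\tilde\G$ is unbalanced, since $l(\tilde\G; s, t) = \{\id, \alpha\}$ has two elements (Lemma~\ref{lem:balanced2}).

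First I would show $\tE^1 \not\subseteq \delta_{\tilde\G}(s)$. Suppose it were. Then every arc of $\tilde\G$ not incident to $s$ lies in $\tE^0$ and hence has label $\id$; and any edge $e = \{u_i, u_j\} \in \tE^1$ with $i < j$ that is incident to $s = u_0$ must have $i = 0$, so under the convention that all arcs around $s$ leave $s$ its arc leaves $s$ with label $\psi_{\tilde\G}(e, u_j) = \alpha$. Thus every arc around $s$ has label in $\{\id, \alpha\}$ and every other arc has label $\id$; together with the unbalancedness of $\tilde\G$ this places $(\tilde\G, s, t)$ in Case~(A1), contradicting that we are not in Case~(A). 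The argument for $t$ is symmetric: an edge of $\tE^1$ incident to $t = u_\ell$ is necessarily of the form $\{u_i, u_\ell\}$, hence an arc entering $t$ with label $\alpha$, so $\tE^1 \subseteq \delta_{\tilde\G}(t)$ would place $(\tilde\G, s, t)$ in Case~(A2), again contradicting that we are not in Case~(A). Hence $\tE^1 \setminus \delta_{\tilde\G}(s) \neq \emptyset \neq \tE^1 \setminus \delta_{\tilde\G}(t)$.

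Finally I would deduce the ``some edge connects inner vertices'' formulation. By the second bullet of Lemma~\ref{lem:cDab0}, for any two edges of $\tE^1$ one of the two induced subpaths of $P$ is a subpath of the other, so the subpaths associated with $\tE^1$ form a chain under inclusion. Choose $f = \{u_a, u_b\} \in \tE^1$ with $s \notin f$, so $1 \leq a < b$, and $g = \{u_c, u_d\} \in \tE^1$ with $t \notin g$, so $c < d \leq \ell - 1$. If $P[u_a, u_b]$ is a subpath of $P[u_c, u_d]$ then $1 \leq a < b \leq d \leq \ell - 1$, so $f$ joins two inner vertices of $P$; otherwise $P[u_c, u_d]$ is a subpath of $P[u_a, u_b]$, whence $1 \leq a \leq c < d \leq \ell - 1$ and $g$ joins two inner vertices. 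The whole argument is short; I expect the only point requiring attention to be the bookkeeping in the reductions to Cases~(A1) and (A2)---that an edge of $\tE^1$ meeting $s$ (resp.\ $t$) really has $u_0$ (resp.\ $u_\ell$) as its lower (resp.\ upper) indexed endpoint---rather than any genuine obstacle, and the main subtlety of the section is really that this modest claim is the structural lever for everything that follows in Case~1.3.
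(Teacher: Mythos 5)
Your proof is correct and follows essentially the same route as the paper: assuming $\tE^1\subseteq\delta_{\tilde\G}(s)$ (resp.\ $\subseteq\delta_{\tilde\G}(t)$) forces the Case~(A1) (resp.\ (A2)) label pattern, equivalently the balancedness of $\tilde\G-s$ (resp.\ $\tilde\G-t$), and the contradiction comes from the already-settled Case~1.1. Your final paragraph, upgrading the two non-emptiness statements to a single edge between inner vertices via the nesting condition of Lemma~\ref{lem:cDab0}, is a small but legitimate extra step that the paper leaves implicit in its ``i.e.''
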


\begin{proof} 
Suppose to the contrary that $\tE^1 \setminus \delta_{\tilde\G}(s) = \emptyset$ or $\tE^1 \setminus \delta_{\tilde\G}(t) = \emptyset$.
Then, $\tilde\G - s$ or $\tilde\G - t$ is balanced, respectively,
which implies that $(\tilde\G, s, t) \in \cDab^0$ is in Case (A) in Definition~\ref{def:cDab0}.
Thus we derive a contradiction by reducing to Case 1.1.
\end{proof}

In what follows,
we derive a contradiction by showing either that $(\G, s, t) \in \cD_{\id,\, \alpha}^0$,
that $\gamma \in l(\G; s, t)$ for some $\gamma \in \Gamma \setminus \{\id, \alpha\}$
(in particular, $\gamma = \alpha^2$ or $\alpha^{-1}$), or that $\G$ contains a contractible vertex set
(which contradicts Claim~\ref{cl:2-contractible} or \ref{cl:3-contractible}).
We first discuss with the assumption $\tilde\G = \G'$,
and later explain that the case when $\tilde\G = \G' \three X$ for some $X \subseteq V(\G) \setminus \{s, t\}$
can be dealt with in almost the same way with the aid of Theorem~\ref{thm:2path} (cf.~Case 1.3.3).

Assume $\tilde\G = \G' = \G - e_0$.
We then have $(\tilde\G, s, t) \in \cD$ (Claim~\ref{cl:cD}) and hence $\tilde\G - s$ is connected.
Since every edge in $\tE^1$ connects two vertices
on the path $P$ in $\tilde\G - \tE^1$, 
also $\tilde\G - \tE^1 - s$ is connected, and in particular $\tilde\G - s$ contains a $v_0$--$t$ path of label $\id$.
Hence, we may assume that $\psi_\G(e_0, v_0) \in l(\G; s, t) = \{\id, \alpha\}$
(otherwise, we immediately obtain an $s$--$t$ path of label $\gamma \in \Gamma \setminus \{\id, \alpha\}$ in $\G$),
and consider the following two cases separately:
when $\psi_\G(e_0, v_0) = \id$, and when $\psi_\G(e_0, v_0) = \alpha$.

\medskip
\noindent\underline{{\bf Case 1.3.1.}~~When $\tilde\G = \G' = \G - e_0$ and $\psi_\G(e_0, v_0) = \id$.}

\medskip
Let $\tF_0$ and $\tF'_0$ denote the outer faces of $\tilde\G$ and of $\tilde\G - s = \G - s$, respectively.
Let us begin with the case when $v_0 \in V({\rm bd}(\tF'_0))$, which is rather easy.
\begin{description}
  \setlength{\itemsep}{1mm}
\item[Case 1.3.1.1.]
  Suppose that $v_0 \in V({\rm bd}(\tF'_0)) \setminus V(P)$.
  In this case, we can embed $\G = \tilde\G + e_0$ in the plane by adding $e_0 = \{s, v_0\}$ on the outer face $\tF_0$
  so that $(\G, s, t)$ satisfies the conditions of Case (C), a contradiction.

\item[Case 1.3.1.2 {\rm (Fig.~\ref{fig:case_1.3.1.2})}.]
  Otherwise, $v_0 = u_h \in V({\rm bd}(\tF'_0)) \cap V(P)$.
  Take an $s$--$t$ path $P'$ so that the closed walk obtained by removing $s$ from the concatenation of $P$ and $\overline{P'}$
  is the outer boundary of $\tilde\G - \tE^1 - s$.
  Let $j$ be the minimum index such that $E(P[u_j, t]) \subseteq E(P')$,
  and $i < j$ the index such that the concatenation of $P[u_i, u_j]$ and $\overline{P'}[u_j, u_i]$ is a cycle
  (i.e., they intersect only at $u_i$ and $u_j$).

  Take an edge $e' = \{u_{i'}, u_{j'}\} \in \tE^1 \setminus \delta_{\tilde\G}(s)$ so that $j' - i' > 0$ is maximized.
  If $j' \leq i$, then $\G$ contains a 2-cut $\{s, u_i\}$ separating
  $u_{i-1} \neq s$ from $t \neq u_i$, which contradicts Claim~\ref{cl:2-contractible}.
  Hence, $i < j'$.

  If $v_0 = u_h \in V(P) \cap V(P')$ or $h \leq i'$ ((a) in Fig.~\ref{fig:case_1.3.1.2}),
  then we can embed $e_0 = \{s, v_0\}$ without violating the conditions of Case (C).
  Otherwise, we have $j' \leq h < j$ ((b) in Fig.~\ref{fig:case_1.3.1.2})
  since $u_h = v_0 \in V({\rm bd}(\tF'_0)) \cap V(P)$.
  In this case, we can construct an $s$--$t$ path of label
  $\alpha^{-1} \in \Gamma \setminus \{\id, \alpha\}$ in $\G$,
  a contradiction, e.g., by concatenating
  $e_0$, $\overline{P}[u_h, u_{j'}]$, $e' = \{u_{i'}, u_{j'}\}$, $P[u_{i'}, u_i]$, $P'[u_i, u_j]$, and $P[u_j, t]$ if $0 < i' \leq i$.
\end{description}

\begin{figure}[htbp]
 \begin{center}
  \includegraphics[scale=0.9]{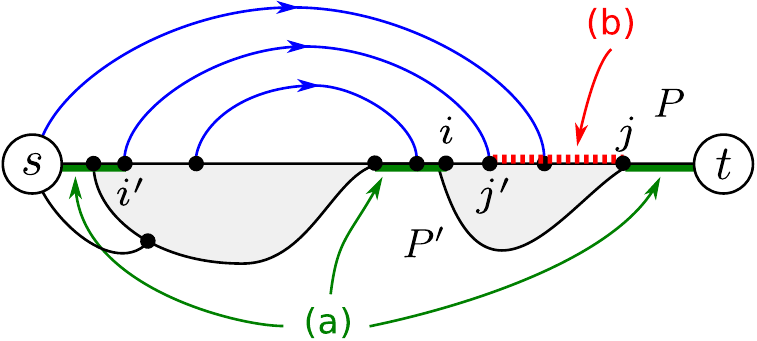}
 \end{center}\vspace{-5mm}
\caption{Case~1.3.1.2 ((a) embeddable, (b) label $\alpha^{-1}$).}\vspace{-2mm}
 \label{fig:case_1.3.1.2}
\end{figure}

We next consider the case when $v_0 \not\in V({\rm bd}(\tF'_0))$.
Take the $s$--$t$ path $P'$ as with Case 1.3.1.2 (i.e., so that $P$ and $P'$ form the outer boundary of $\tilde{\G} - \tE^1 - s$).
Since $v_0 \not\in V({\rm bd}(\tF'_0))$, some $Q \coloneqq P'[u_i, u_j]$ and $P[u_i, u_j]$ form a cycle $C$ that encloses $v_0 \not\in V(Q)$ (possibly $v_0 \in V(P)$),
i.e., $V(C)$ separates $v_0$ from both $s$ and $t$ in $\tilde{\G}$ (and if $v_0 = u_h \in V(P)$ then $i < h < j$).

Suppose that $V(Q)$ separates $v_0$ from $V(P)$ in $\tilde\G = \G - e_0$.
Let $w_1, w_2 \in V(Q)$ be the vertices closest to $u_i, u_j \in V(P) \cap V(Q)$, respectively,
among those which are reachable from $v_0$ in $\tilde\G$ without intersecting $Q$ in between (see Fig.~\ref{fig:case_1.3.1.2-3}).
Then, $\{s, w_1, w_2\}$ is a 3-cut (or a 2-cut if $w_1 = w_2$) in $\G$,
and there exists a 3-contractible (or 2-contractible, respectively) vertex set $X \subseteq V(\G) \setminus V(P)$
such that $v_0 \in X$ and $N_\G(X) = \{s, w_1, w_2\}$, which contradicts Claim~\ref{cl:3-contractible} (or \ref{cl:2-contractible}, respectively).
Thus we can take a $v_0$--$u_h$ path $R$ in $\tilde\G - V(Q)$
(possibly of length $0$, i.e., $v_0 = u_h$)
with $i < h < j$. If there are multiple choices of $R$,
then choose $R$ so that $h$ is maximized under the condition that $V(R) \cap V(P) = \{u_h\}$.

\begin{figure}[htbp]
 \begin{center}\vspace{1mm}
  \includegraphics[scale=0.9]{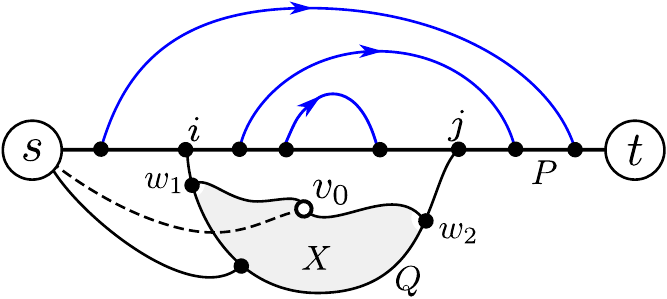}
 \end{center}\vspace{-5mm}
\caption{$X \subseteq V(\G) \setminus V(P)$ with $N_\G(X) = \{s, w_1, w_2\}$ is 3-contractible in $\G$.}
 \label{fig:case_1.3.1.2-3}
\end{figure}

We now focus on the edges in $\tE^1 \setminus \delta_{\tilde\G}(s) \neq \emptyset$ (cf.~Claim~\ref{cl:inner}).
\begin{description}
  \setlength{\itemsep}{1mm}
\item[Case 1.3.1.3 {\rm (Figs.~\ref{fig:case_1.3.1.3a} and \ref{fig:case_1.3.1.3b})}.]
  Suppose that no edge in $\tE^1 \setminus \delta_{\tilde\G}(s)$
  is incident to an inner vertex of $P[u_i, u_j]$.
  If every edge in $\tE^1 \cap \delta_{\tilde\G}(s)$ is incident to
  a vertex on $P[u_j, t]$ (see Fig.~\ref{fig:case_1.3.1.3a}),
  then $\G$ contains a 3-contractible vertex set
  $X \subseteq V(\G) \setminus \{s, u_i, u_j\}$
  such that $v_0 \in X$ and $N_{\G}(X) = \{s, u_i, u_j\}$, a contradiction. 
  Otherwise, there exists an edge $\{s, u_{j'}\} \in \tE^1 \cap \delta_{\tilde\G}(s)$ with $j' < j$ (see Fig.~\ref{fig:case_1.3.1.3b}).
  Then, no edge in $\tE^1 \setminus \delta_{\tilde\G}(s)$ is incident to a vertex on $P[u_j, t]$ (by the condition 2 in Lemma~\ref{lem:cDab0}),
  and hence every edge in $\tE^1 \setminus \delta_{\tilde\G}(s)$
  connects two vertices on $P[u_1, u_i]$.
  Since $\tE^1 \setminus \delta_{\tilde\G}(s) \neq \emptyset$,
  we have $i \geq 2$, and $\G$ contains a 2-cut $\{s, u_i\}$ separating $u_{i-1}$ from $t$,
  which contradicts Claim~\ref{cl:2-contractible}.

\item[Case 1.3.1.4 {\rm (Fig.~\ref{fig:case_1.3.1.4})}.]
  Suppose that there exists an edge $e' = \{u_{i'}, u_{j'}\} \in \tE^1 \setminus \delta_{\tilde\G}(s)$ with $i' < j'$
  such that $i' < h$ and $i < j' < j$.
  In this case, we can construct an $s$--$t$ path of label $\alpha^{-1} \in \Gamma \setminus \{\id, \alpha\}$
  in $\G$, a contradiction, e.g., by concatenating
  $e_0$, $R$, $P[u_h, u_{j'}]$, $e'$, $\overline{P}[u_{i'}, u_i]$, $Q$, and $P[u_j, t]$
  if $i \leq i'$ and $h \leq j'$.

\item[Case 1.3.1.5 {\rm (Fig.~\ref{fig:case_1.3.1.5})}.]
  Suppose that every edge in $\tE^1 \setminus \delta_{\tilde\G}(s)$
  connects two vertices on $P[u_h, t]$.
  In this case, every edge in $\tE^1 \cap \delta_{\tilde\G}(s)$
  is also incident to a vertex on $P[u_h, t]$,
  and $v_0 \neq u_h$ since $v_0 \not\in V({\rm bd}(\tF'_0))$.
  Let $w \in V(Q)$ be the vertex closest to $u_j$ among those which are reachable from $v_0$ in $\G - u_h$ without intersecting $Q$ in between
  (if there is no such vertex $w$, then $\{s, u_h\}$ is a 2-cut separating $v_0$ from $u_j$ in $\G$, which contradicts Claim~\ref{cl:2-contractible}).
  By the maximality of $j$ and $h$ (i.e., the choice of $Q$ and $R$),
  $\{s, u_h, w\}$ separates $v_0 \in V(\G) \setminus \{s, u_h, w\}$
  from $V(P[u_h, t])$ in $\G$,
  and hence $\G$ contains a 3-contractible vertex set
  $X \subseteq V(\G) \setminus \{s, u_h, w\}$
  such that $v_0 \in X$ and $N_{\G}(X) = \{s, u_h, w\}$,
  a contradiction.
\end{description}

\begin{figure}[htbp]\hspace{-3mm}
  \begin{tabular}{cc}
    \begin{minipage}[b]{0.5\hsize}
      \begin{center}
        \includegraphics[scale=0.9]{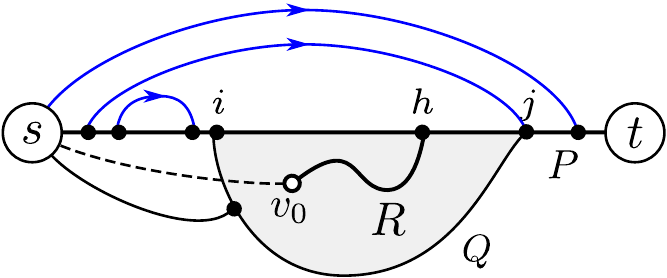}
      \end{center}\vspace{-5mm}
      \caption{Case~1.3.1.3 (a 3-cut $\{s, u_i, u_j\}$).}
      \label{fig:case_1.3.1.3a}
    \end{minipage}
    \begin{minipage}[b]{0.5\hsize}
      \begin{center}
        \includegraphics[scale=0.9]{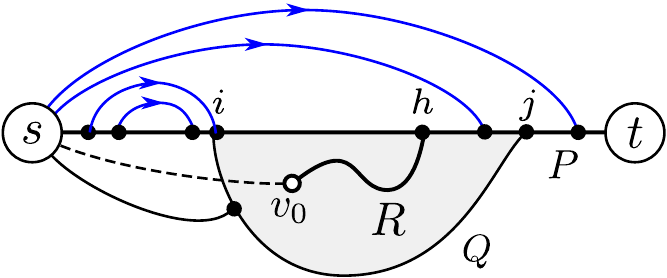}
      \end{center}\vspace{-5mm}
      \caption{Case~1.3.1.3 (a 2-cut $\{s, u_i\}$).}
      \label{fig:case_1.3.1.3b}
    \end{minipage}\\[5mm]
    \begin{minipage}[b]{0.5\hsize}
      \begin{center}
        \includegraphics[scale=0.9]{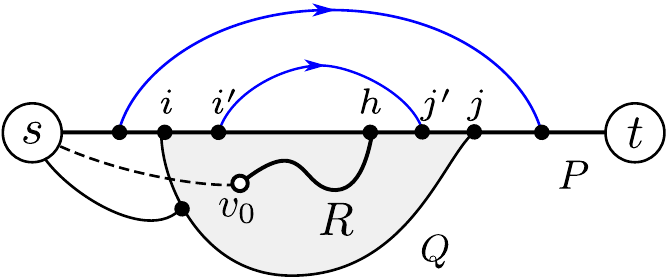}
      \end{center}\vspace{-5mm}
      \caption{Case~1.3.1.4 (label $\alpha^{-1}$).}
      \label{fig:case_1.3.1.4}
    \end{minipage}
    \begin{minipage}[b]{0.5\hsize}
      \begin{center}
        \includegraphics[scale=0.9]{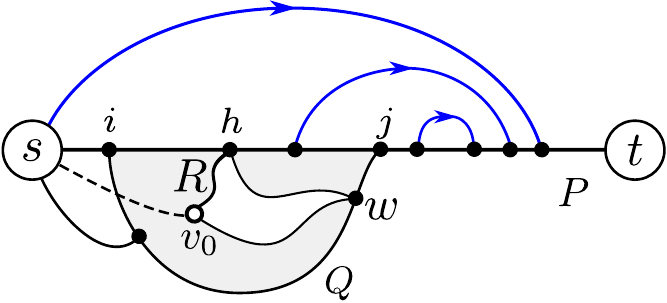}
      \end{center}\vspace{-5mm}
      \caption{Case~1.3.1.5 (a 3-cut $\{s, u_h, w\}$).}
      \label{fig:case_1.3.1.5}
    \end{minipage}
  \end{tabular}\vspace{0mm}
\end{figure}

Take an edge $e' = \{u_{i'}, u_{j'}\} \in \tE^1 \setminus \delta_{\tilde\G}(s) \neq \emptyset$ so that $j' - i' > 0$ is maximized.
Then, we may assume $i < j'$ by Case~1.3.1.3, and $h \leq i'$ or $j \leq j'$ by Case~1.3.1.4,
The former case is, however, forbidden by Case~1.3.1.5, and hence we conclude $i' < h < j \leq j'$.
This implies also that no edge in $\tE^1 \cap \delta_{\tilde\G}(s)$ is incident to an inner vertex of $P[s, u_j]$.
\begin{description}
  \setlength{\itemsep}{1mm}
\item[Case 1.3.1.6 {\rm (Fig.~\ref{fig:case_1.3.1.6})}.]
  Suppose that there exists $i^\ast < h$ such that every edge in $\tE^1 \setminus \delta_{\tilde\G}(s)$
  connects $u_{i^\ast}$ and some $u_{j'}$ with $i^\ast < j'$.
  In this case, by Case~1.3.1.4,
  we may assume every edge in $\tE^1 \setminus \delta_{\tilde\G}(s)$
  is incident to a vertex on $P[u_j, t]$.
  Then, since $\{s, u_{i^\ast}, u_j\}$ separates $v_0 \in V(\G) \setminus \{s, u_{i^\ast}, u_j\}$ from $V(P[u_j, t])$ in $\G$,
  there exists a 3-contractible
  vertex set $X \subseteq V(\G) \setminus \{s, u_{i^\ast}, u_j\}$ in $\G$
  such that $v_0 \in X$ and $N_{\G}(X) = \{s, u_{i^\ast}, u_j\}$,
  a contradiction.
\item[Case 1.3.1.7 {\rm (Fig.~\ref{fig:case_1.3.1.7})}.]
  Suppose that there exists $j^\ast \geq j$ such that every edge in $\tE^1 \setminus \delta_{\tilde\G}(s)$
  connects some $u_{i'}$ with $i' < j^\ast$ and $u_{j^\ast}$.
  In this case, $\{s, u_j, u_{j^\ast}\}$ (possibly $u_j = u_{j^\ast}$) separates $v_0 \in V(\G) \setminus \{s, u_j, u_{j^\ast}\}$ from $V(P[u_j, t])$ in $\G$,
  and hence $\G$ contains a (2- or 3-)contractible vertex set $X \subseteq V(\G) \setminus \{s, u_j, u_{j^\ast}\}$
  such that $v_0 \in X$ and $N_{\G}(X) = \{s, u_j, u_{j^\ast}\}$, a contradiction.
  Note that $u_{j^\ast} \neq t$ by $\tE^1 \setminus \delta_{\tilde\G}(t) \neq \emptyset$ (Claim~\ref{cl:inner}).
\end{description}

\begin{figure}[htbp]\hspace{-3mm}
  \begin{tabular}{cc}
    \begin{minipage}[b]{0.5\hsize}
      \begin{center}
        \includegraphics[scale=0.9]{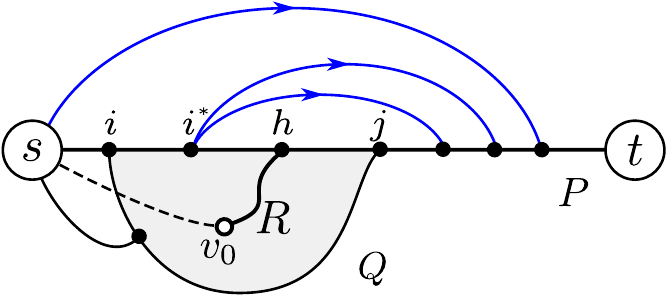}
      \end{center}\vspace{-5mm}
      \caption{Case~1.3.1.6 (a 3-cut $\{s, u_{i^\ast}, u_j\}$).}
      \label{fig:case_1.3.1.6}
    \end{minipage}
    \begin{minipage}[b]{0.5\hsize}
      \begin{center}
        \includegraphics[scale=0.9]{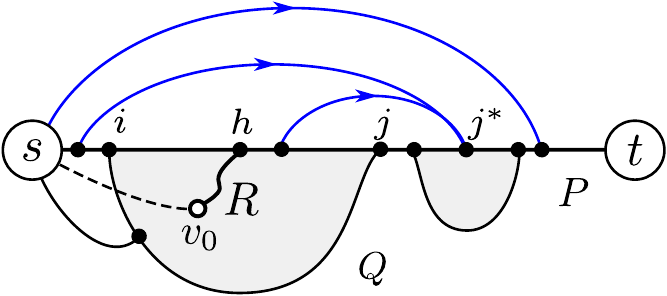}
      \end{center}\vspace{-5mm}
      \caption{Case~1.3.1.7 (a 3-cut $\{s, u_j, u_{j^\ast}\}$).}
      \label{fig:case_1.3.1.7}
    \end{minipage}
  \end{tabular}
\end{figure}

By Cases~1.3.1.6 and 1.3.1.7, we may assume that there exist two edges
$e_1 = \{u_{i_1}, u_{j_1}\}$ and $e_2 = \{u_{i_2}, u_{j_2}\}$ in $\tE^1 \setminus \delta_{\tilde\G}(s)$
such that $i_2 < i_1 < j_1 < j_2$.
We choose $e_2$ so that $j_2 - i_2$ is maximized.
We then have $i_2 < h$ and $j \leq j_2$ by the argument just after Case~1.3.1.5 (for $i_2, j_2$ instead of $i', j'$). 
Since there exists an edge in $\tE^1 \setminus \delta_{\tilde\G}(s)$
incident to an inner vertex of $P[u_i, u_j]$ by Case~1.3.1.3,
we can choose $e_1$ so that $i < i_1$
(which is obvious if $i \leq i_2$, and follows from Case~1.3.1.4 otherwise).
We then have $h < j_1$,
since otherwise we have $i < i_1 < j_1 \leq h < j$,
which implies that $e_1$ satisfies the condition of Case~1.3.1.4.
We choose $e_1$ so that $i_1$ is minimized under the condition that $i < i_1$.
To sum up, we have $i_2 < h <  j_1$, $j \leq j_2$, and $i < i_1$.
\begin{description}
  \setlength{\itemsep}{1mm}
\item[Case 1.3.1.8 {\rm (Fig.~\ref{fig:case_1.3.1.8})}.]
  Suppose that $j \leq i_1$.
  Then, $\{s, u_{i_2}, u_j\}$ separates
  $v_0 \in V(\G) \setminus \{s, u_{i_2}, u_j\}$ from $P[u_j, t]$ in $\G$,
  and hence $\G$ contains a 3-contractible vertex set
  $X \subseteq V(\G) \setminus \{s, u_{i_2}, u_j\}$
  such that $v_0 \in X$ and $N_{\G}(X) = \{s, u_{i_2}, u_j\}$,
  a contradiction.

\item[Case 1.3.1.9 {\rm (Fig.~\ref{fig:case_1.3.1.10})}.]
  Suppose that $j_2 > j$. We then have $i < i_1 < j < j_2$ (also recall that $i_2 < i_1 < j_1 < j_2$ and $i_2 < h < j_1$).
  In this case, we can construct an $s$--$t$ path of label $\alpha^2 \in \Gamma \setminus \{\id, \alpha\}$
  in $\G$, a contradiction, e.g., by concatenating
  $e_0$, $R$, $\overline{P}[u_h, u_{i_1}]$, $e_1$, $\overline{P}[u_{j_1}, u_j]$, $\overline{Q}$, $P[u_i, u_{i_2}]$, $e_2$, and $P[u_{j_2}, t]$
  if $i_1 \leq h$, $j \leq j_1$, and $i \leq i_2$.

\item[Case 1.3.1.10 {\rm (Figs.~\ref{fig:case_1.3.1.9_a} and \ref{fig:case_1.3.1.9_b})}.]
  Otherwise, we have $h \leq i_1$ by $i < i_1 < j_1 < j_2 = j$ and Case~1.3.1.4.
  Let $h^\ast$ be the maximum index such that there exists a $w$--$u_{h^\ast}$
  path $R^\ast$ in $\tilde\G - u_j$ for some $w \in (V(Q) \setminus V(P)) \cup \{v_0\}$
  such that $V(R^\ast) \cap V(Q) \subseteq \{w\}$
  and $V(R^\ast) \cap V(P) = \{u_{h^\ast}\}$.
  Note that $h \leq h^\ast$.

  If $i_1 < h^\ast$, then we have $h < h^\ast$ (recall $h \leq i_1$).
  In this case (see Fig.~\ref{fig:case_1.3.1.9_a}), since $R$ and $R^\ast$ are disjoint
  by the maximality of $h$ and $h^\ast$,
  we can construct an $s$--$t$ path of label
  $\alpha^2 \in \Gamma \setminus \{\id, \alpha\}$
  in $\G$, a contradiction, e.g.,
  by concatenating $e_0$, $R$, $P[u_h, u_{i_1}]$, $e_1$,
  $\overline{P}[u_{j_1}, u_{h^\ast}]$, $\overline{R}^\ast$, $\overline{Q}[w, u_i]$, $\overline{P}[u_i, u_{i_2}]$, $e_2$,
  and $P[u_j, t]$ if $h^\ast \leq j_1$ and $i_2 \leq i$.
  Otherwise (i.e., if $h^\ast \leq i_1$), by the minimality of $i_1$
  and the maximality of $h^\ast$,
  there exists a 2-cut $\{u_{h^\ast}, u_j\}$ separating $u_{j_1}$ from $u_i$
  $(i < h \leq h^\ast \leq i_1 < j_1 < j_2 = j)$ in $\G$ (see Fig.~\ref{fig:case_1.3.1.9_b}), a contradiction.
\end{description}

\begin{figure}[htbp]\vspace{-.5mm}\hspace{-3mm}
  \begin{tabular}{cc}
    \begin{minipage}[b]{0.5\hsize}
      \begin{center}
        \includegraphics[scale=0.9]{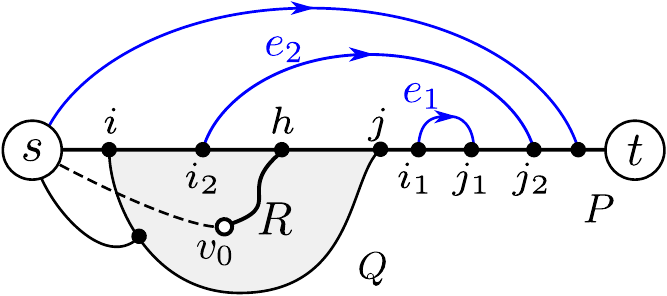}
      \end{center}\vspace{-5mm}
      \caption{Case~1.3.1.8 (a 3-cut $\{s, u_{i_2}, u_j\}$).}
      \label{fig:case_1.3.1.8}
    \end{minipage}
    \begin{minipage}[b]{0.5\hsize}
      \begin{center}
        \includegraphics[scale=0.9]{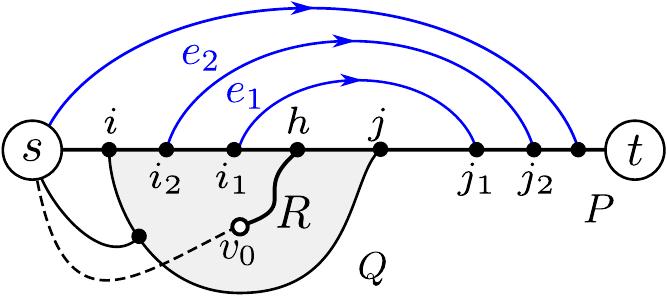}
      \end{center}\vspace{-5mm}
      \caption{Case~1.3.1.9 (label $\alpha^2$).}
      \label{fig:case_1.3.1.10}
    \end{minipage}\\[5mm]
    \begin{minipage}[b]{0.5\hsize}
      \begin{center}
        \includegraphics[scale=0.8]{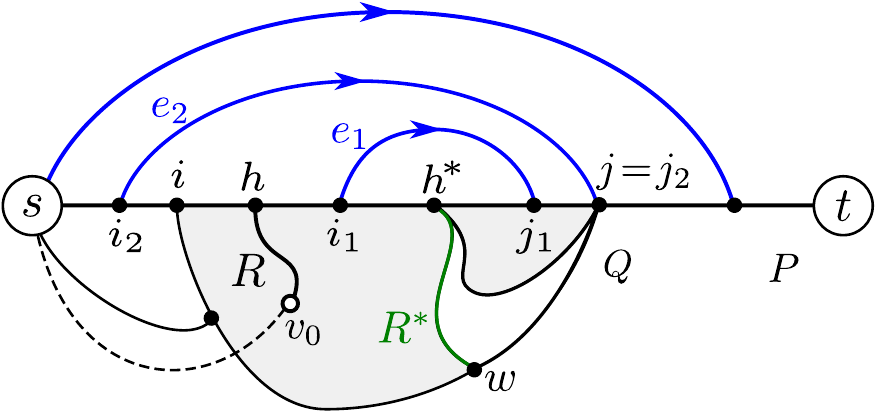}
      \end{center}\vspace{-5mm}
      \caption{Case~1.3.1.10 (label $\alpha^2$).}
      \label{fig:case_1.3.1.9_a}
    \end{minipage}
    \begin{minipage}[b]{0.5\hsize}
      \begin{center}
        \includegraphics[scale=0.8]{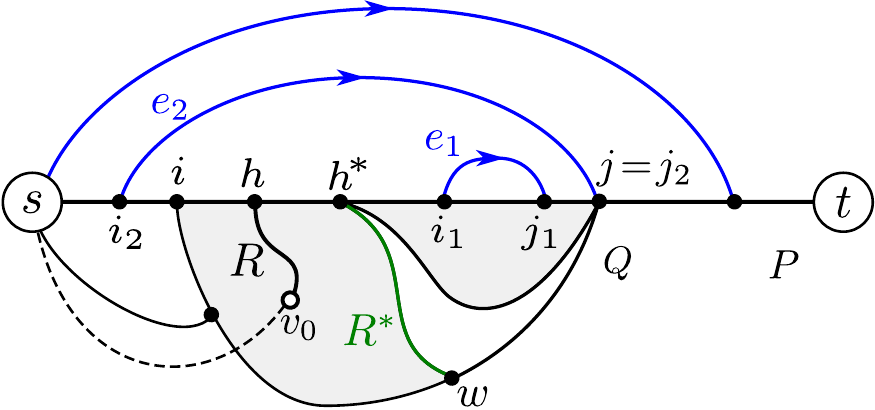}
      \end{center}\vspace{-5mm}
      \caption{Case~1.3.1.10 (a 2-cut $\{u_{h^\ast}, u_j\}$).}
      \label{fig:case_1.3.1.9_b}
    \end{minipage}
  \end{tabular}\vspace{-.5mm}
\end{figure}

\noindent\underline{{\bf Case~1.3.2.}~~When $\tilde\G = \G' = \G - e_0$ and $\psi_\G(e_0, v_0) = \alpha$.}

\medskip
This case is rather easier than Case~1.3.1.
Note that, if there exists a $v_0$--$t$ path of label $\alpha$ in $\tilde\G = \G' = \G - e_0$,
then we can construct an $s$--$t$ path of label $\alpha^2 \in \Gamma \setminus \{\id, \alpha\}$ in $\G$
by extending the $v_0$--$t$ path using $e_0 = \{s, v_0\}$.
%
\begin{description}
  \setlength{\itemsep}{1mm}
\item[Case~1.3.2.1 {\rm (Fig.~\ref{fig:case_1.3.2.1})}.]
  Suppose that $v_0 = u_h \in V(P)$.
  If there exists an edge $e' = \{u_{i'}, u_{j'}\} \in \tE^1 \setminus \delta_{\tilde\G}(s)$ with $i' < j'$
  and $h < j'$ ((a) in Fig.~\ref{fig:case_1.3.2.1}),
  then we can construct an $s$--$t$ path of label $\alpha^2$ in $\G$,
  a contradiction, e.g.,
  by concatenating $e_0$, $P[u_h, u_{i'}]$, $e'$, and $P[u_{j'}, t]$ if $h \leq i'$.
  Otherwise, every edge in $\tE^1 \setminus \delta_{\tilde\G}(s) \neq \emptyset$
  connects two vertices on $P[u_1, u_h]$ ((b) in Fig.~\ref{fig:case_1.3.2.1}).
  Hence, we can embed $\G = \tilde\G + e_0$ in the plane by adding $e_0 = \{s, u_h\}$ on the outer face $\tF_0$ without violating the conditions
  of Case (C) in Definition~\ref{def:cDab0} (cf.~Lemma~\ref{lem:cDab0}), a contradiction.
  
\begin{figure}[htbp]
 \begin{center}
  \includegraphics[scale=0.9]{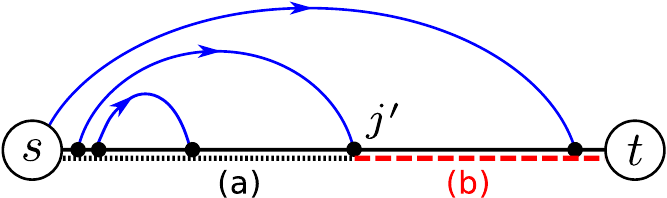}
 \end{center}\vspace{-5mm}
 \caption{Case~1.3.2.1 ((a) label $\alpha^2$, (b) embeddable).}\vspace{-2mm}
 \label{fig:case_1.3.2.1}
\end{figure}

\item[Case~1.3.2.2 {\rm (Figs.~\ref{fig:case_1.3.2.2a} and \ref{fig:case_1.3.2.2b})}.]
  Otherwise, $v_0 \not\in V(P)$.
  Let $i$ and $j$ be the minimum and maximum indices, respectively,
  such that there exist a $v_0$--$u_i$ path $Q$ and a $v_0$--$u_j$ path $R$
  in $\tilde\G - \tE^1 - s$ that do not intersect $P$ in between.
  If there exists an edge $e' = \{u_{i'}, u_{j'}\} \in \tE^1 \setminus \delta_{\tilde\G}(s)$
  with $i' < j'$ and $i < j'$ (see Fig.~\ref{fig:case_1.3.2.2a}),
  then we can construct an $s$--$t$ path of label $\alpha^2$ in $\G$,
  a contradiction, e.g.,
  by concatenating $e_0$, $Q$, $\overline{P}[u_i, u_{i'}]$, $e'$, and $P[u_{j'}, t]$ if $i' \leq i$.

  Otherwise, every edge in $\tE^1 \setminus \delta_{\tilde\G}(s) \neq \emptyset$
  connects two vertices on $P[u_1, u_i]$ (see Fig.~\ref{fig:case_1.3.2.2b}).
  Since $\G$ contains no 3-contractible vertex set (by Claim~\ref{cl:3-contractible}),
  there exists an edge in $\tE^0 \cap \delta_{\tilde\G}(s)$ incident to some vertex in the connected component of $\tilde\G - \{s, u_i, u_j\}$
  that contains $v_0$.
  Hence, because of the minimality of $i$ (and since $\tilde\G$ is embedded in the plane),
  there is no path from an inner vertex of $P[s, u_i]$ to a vertex on $P[u_j, t]$
  in $\tilde\G - \tE^1 - s$ that does not intersect $P$ in between.
  This implies that $\G$ contains a 2-cut $\{s, u_i\}$
  separating $u_1 \neq u_i$ from $t$, a contradiction.
\end{description}

\begin{figure}[htbp]\hspace{-3mm}
  \begin{tabular}{cc}
    \begin{minipage}[b]{0.5\hsize}
      \begin{center}
        \includegraphics[scale=0.9]{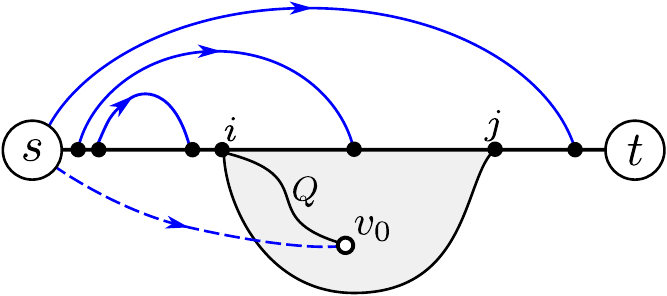}
      \end{center}\vspace{-5mm}
      \caption{Case~1.3.2.2 (label $\alpha^2$).}
      \label{fig:case_1.3.2.2a}
    \end{minipage}
    \begin{minipage}[b]{0.5\hsize}
      \begin{center}
        \includegraphics[scale=0.9]{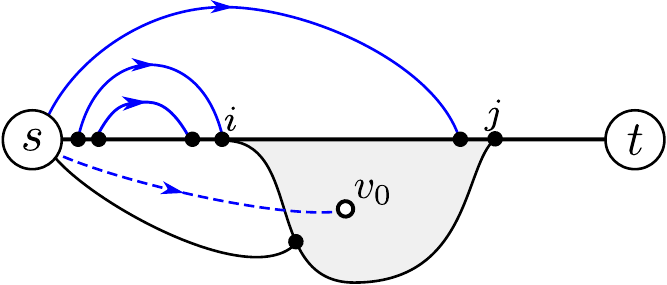}
      \end{center}\vspace{-5mm}
      \caption{Case~1.3.2.2 (a 2-cut $\{s, u_i\}$).}
      \label{fig:case_1.3.2.2b}
    \end{minipage}
  \end{tabular}
\end{figure}

\noindent\underline{{\bf Case~1.3.3.}~~When $\tilde\G = \G' \three X$ for some $X \subseteq V(\G) \setminus \{s, t\}$.}

\medskip
Recall that $X$ must contain $v_0$ by Claim~\ref{cl:3-contractible}.
Suppose that $N_{\G'}(X) = \{y_1, y_2, y_3\}$.
Since the triangle on $N_{\G'}(X)$ yielded by the 3-contraction of $X$ in $\G'$ is a balanced cycle (cf.~Definition~\ref{def:3-contraction}),
it consists of either three edges in $\tE^0$ or one edge in $\tE^0$ and two edges in $\tE^1$
(the case of exactly two edges in $\tE^0$ cannot occur due to the balancedness,
and the case of three edges in $\tE^1$ is forbidden by Lemma~\ref{lem:cDab0}).
Without loss of generality (by the symmetry of $y_1, y_2, y_3$),
assume that one of the three edges in the balanced triangle is $\{y_2, y_3\} \in \tE^0$,
i.e., $l(\around{\G'}{X}; y_2, y_3) = \id$.
Then, by applying Lemma~\ref{lem:shifting} to $\around{\G'}{X}$ with $y_2$ and $y_3$,
we observe that $\around{\G'}{X}$ is $(y_2, y_3)$-equivalent to the graph with all arcs labeled $\id$.
By applying to $\G$ the same shiftings at the vertices in $X = V(\around{\G'}{X}) \setminus \{y_1, y_2, y_3\}$
(in advance of removing $e_0 = \{s, v_0\}$ from $\G$),
we may assume that the label of every arc in $\around{\G'}{X} - y_1$ is $\id$ and every arc around $y_1$ is $\gamma$,
where $\gamma$ is a fixed element in $\{\id, \alpha, \alpha^{-1}\}$
and all arcs around $y_1$ are assumed to enter $y_1$.

Let $\tilde\G'$ be the $\Gamma$-labeled graph
obtained from $\G'$ by the following procedure:\vspace{-1mm}
\begin{itemize}
  \setlength{\itemsep}{0mm}
\item
  merge all vertices in $X$ into $v_0$ with removing the resulting loops and merging the resulting equivalent arcs $y_iv_0$ into as a single arc for each $i = 1, 2, 3$, and
\item
  for each $\{i, j, k\} = \{1, 2, 3\}$,
  add an arc from $y_j$ to $y_k$ with label $l(\around{\G'}{X}; y_j, y_k)$
  if an equivalent arc not yet exists and
  there are two disjoint paths, a $v_0$--$y_i$ path and a $y_j$--$y_k$ path, in $\around{\G'}{X}$
  (note that otherwise, by Theorem~\ref{thm:2path},
  $\around{\G'}{X}$ can be embedded in the plane
  so that $v_0, y_j, y_i, y_k$ are on the outer boundary in this order after some contractions).
\end{itemize}

\begin{figure}[htbp]
 \begin{center}
  \includegraphics[scale=0.7]{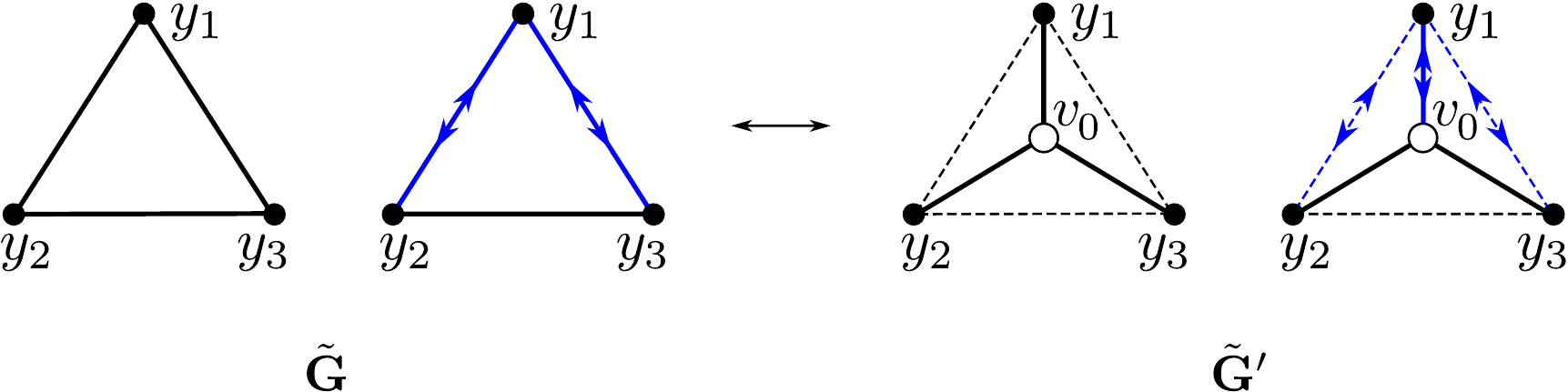}
 \end{center}\vspace{-4mm}
 \caption{Corresponding parts of $\tilde\G$ and $\tilde\G'$ in Case~1.3.3.}
 \label{fig:case_1.3.3}
\end{figure}

Since $\tilde\G$ is embedded as Lemma~\ref{lem:cDab0},
we can naturally embed $\tilde\G'$ (see Fig.~\ref{fig:case_1.3.3}).
Then, by the same case analysis for $\tilde\G'$ instead of $\tilde\G$,
we derive a contradiction of the following four types:
\begin{itemize}
  \setlength{\itemsep}{.5mm}
\item
  $\tilde\G' + e_0$ contains a 2-cut separating some vertex from $\{s, t\}$,
  which is also such a 2-cut in $\G = \G' + e_0$,
\item
  $\tilde\G' + e_0$ contains a 3-contractible vertex set $Y \subseteq V(\tilde\G') \setminus \{s, t\}$ with $v_0 \in Y$,
  which implies that $X \cup Y$ is 3-contractible in $\G = \G' + e_0$,
\item
  $\tilde\G' + e_0$ has an $s$--$t$ path of label $\alpha^{-1}$ or $\alpha^2$,
  which can be expanded to an $s$--$t$ path of the same label in $\G = \G' + e_0$ 
  (possibly by using two disjoint paths, a $v_0$--$y_i$ path and a $y_j$--$y_k$ path, in $\around{\G}{X}$), and
\item
  $e_0$ can be added to the embedding of $\tilde\G'$ without violating the conditions of Case (C),
  which implies that $\G = \G' + e_0$ can be embedded so.
\end{itemize}
In the first three cases, it is almost trivial to derive a contradiction for $\G = \G' + e_0$
from each one for $\tilde\G' + e_0$,
and we only remark the last case in what follows.
Fix an embedding of $\tilde\G'$ with the conditions in Lemma~\ref{lem:cDab0}
to which $e_0$ can be added without violating the conditions.

Suppose that $s = y_i$ for some $i \in \{1, 2, 3\}$.
We then add $e_0 = \{s, v_0\} = \{y_i, v_0\}$ to $\tilde\G'$ in the interior of the triangle on $\{y_1, y_2, y_3\}$.
If $\psi_\G(e_0, v_0) = \psi_{\tilde\G'}(e_i, v_0)$ for the merged edge $e_i = \{y_i, v_0\} \in E(\tilde\G')$, then $\around{\G}{X}$ is balanced
and hence $X$ is 3-contractible also in $\G = \G' + e_0$, which contradicts Claim~\ref{cl:3-contractible}.
Otherwise, in the obtained embedding of $\tilde\G' + e_0$,
the parallel arcs $e_0$ and $e_i$ with different labels from $s = y_i$ to $v_0$ form an unbalanced inner face.
This, however, cannot occur because it violates the conditions of Case (C),
because the embedding of $\tilde\G'$ as well as $\tilde\G$ already has one unbalanced inner face,
whose boundary does not contain $s$ by Claim~\ref{cl:inner}.

Otherwise (i.e., if $s \not\in \{y_1, y_2, y_3\} = N_{\G'}(X)$), since we can add $e_0 = \{s, v_0\}$ to the embedding of $\tilde\G'$,
the vertex $v_0$ must be on the outer boundary of $\tilde\G' - s$.
Suppose that $\tilde\G'$ has an edge between $y_i$ and $y_j$ for every distinct $i, j \in \{1, 2, 3\}$.
Then, some $y_i$ is in interior of the cycle on $\{v_0, y_j, y_k\}$, where $\{i, j, k\} = \{1, 2, 3\}$.
This implies $N_\G(X \cup \{y_i\}) = \{s, y_j, y_k\}$.
Since $X \cup \{y_i\}$ is not 3-contractible in $\G$ (by Claim~\ref{cl:3-contractible}),
there are parallel edges between $y_i$ and either $y_j$ or $y_k$,
which form an unbalanced inner face of $\tilde\G'$.
This however yields another unbalanced inner face (a cycle on $\{v_0, y_i, y_j\}$, $\{v_0, y_i, y_k\}$, or $\{y_i, y_j, y_k\}$),
which contradicts the condition of Case (C) (as well as Lemma~\ref{lem:cDab0}).

Thus we may assume that $\tilde\G'$ has no edge between $y_i$ and $y_j$ for some distinct $i, j \in \{1, 2, 3\}$.
Then, by definition, $\around{\G'}{X}$ does not have two disjoint paths,
a $v_0$--$y_k$ path and a $y_i$--$y_j$ path, where $\{i, j, k\} = \{1, 2, 3\}$.
Since $\around{\G'}{X}$ has no contractible vertex set that does not contain $v_0$
(otherwise, it contradicts Claim~\ref{cl:2-contractible} or \ref{cl:3-contractible}),
by Theorem~\ref{thm:2path}, one can embed $\around{\G'}{X}$ itself in the plane so that
$v_0, y_i, y_k, y_j$ are on the outer boundary in this order.
Using this embedding,
we can expand the embedding of $\tilde\G' + e_0$ to that of $\G = \G' + e_0$
by replacing the corresponding part of $\tilde\G'$ with $\around{\G'}{X}$
without violating the conditions in Lemma~\ref{lem:cDab0},
where recall that all arcs in $\around{\G'}{X} - y_1$ are with label $\id$
and all arcs around $y_1$ enter $y_1$ with the same label $\gamma \in \{\id, \alpha, \alpha^{-1}\}$.

\subsubsection{Case 2: Involving 2-contraction (when $(\G', s, t) \in \cDab \setminus \cDab^1$)}\label{sec:case2}
In this case,
$\G'$ contains a 2-contractible vertex set $X \subseteq V(\G) \setminus \{s, t\}$
by the definition of $\cDab$ (Definition~\ref{def:cDab}).
Due to Section~\ref{sec:case1} (Case 1 implies a contradiction),
we may assume that this situation occurs
regardless of the choice of the edge $e_0 = \{s, v_0\} \in \delta_\G(s)$.
If $e_0 = \{s, v_0\} \in \delta_\G(s)$ is unique (i.e., $|\delta_\G(s)| = 1$),
then we can construct a smaller counterexample $(\G - s, v_0, t) \in \cD$ by Lemma~\mbox{\ref{lem:cDab}-(2)},
and hence there are at least two candidates of $e_0$.

We first observe several useful properties used throughout this section. 

\begin{claim}\label{cl:3-cut}
  Let $X \subseteq V(\G) \setminus \{s, t\}$ be a vertex set
  with $N_\G(X) = \{x, y, z\}$ for some distinct $x, y, z \in V(\G)$
  such that $\G[X]$ is connected $($see Fig.~$\ref{fig:3-cut_general}$$)$.
  Then, the following properties hold.
  \begin{itemize}
  \setlength{\itemsep}{.5mm}
  \item[$(1)$]
  $(\around{\G}{X} - x, y, z) \in \cD$, $(\around{\G}{X} - y, z, x) \in \cD$, and $(\around{\G}{X} - z, x, y) \in \cD$.
  \item[$(2)$]
  At least two of $|l(\around{\G}{X} - x; y, z)| \geq 2$,  $|l(\around{\G}{X} - y; z, x)| \geq 2$, and $|l(\around{\G}{X} - z; x, y)| \geq 2$ hold.
  \item[$(3)$]
  If $|l(\around{\G}{X}; y, z)| = 1$,
  then $X = \{v\}$ for some $v \in V(\G) \setminus \{s, x, y, z, t\}$,
  and $E(\around{\G}{X})$ consists of a single edge between $v$ and $y$,
  one between $v$ and $z$, and two parallel edges $($with distinct labels$)$ between $v$ and $x$
  $($see Fig.~$\ref{fig:3-cut_special}$$)$.
  \end{itemize}
\end{claim}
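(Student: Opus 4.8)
The plan is to prove the three parts in order, using throughout the properties of the minimal counterexample $(\G,s,t)$: that $(\G,s,t)\in\cD$, so $\G+e_{st}$ is $2$-connected (Lemma~\ref{lem:cD}); that $\G$ has no equivalent arcs; that $\G$ has no three parallel edges between any pair of vertices (otherwise, extending them via two vertex-disjoint paths from $\{s,t\}$ given by Lemma~\ref{lem:cD} and Menger's theorem, one gets three $s$--$t$ paths of distinct labels, contradicting $l(\G;s,t)=\{\alpha,\beta\}$); and that $\G$ has no $2$- or $3$-contractible vertex set (Claims~\ref{cl:2-contractible} and \ref{cl:3-contractible}). Note also that $\around{\G}{X}-x$ (with vertex set $X\cup\{y,z\}$) is connected, since $\G[X]$ is connected and each of $y,z$ has an edge to $X$, and symmetrically for $\around{\G}{X}-y$ and $\around{\G}{X}-z$. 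For part $(1)$, by Lemma~\ref{lem:cD} it suffices to show $(\around{\G}{X}-x)+e_{yz}$ is $2$-connected. If some vertex $w$ were a $1$-cut, then, since $y$ and $z$ are joined by $e_{yz}$, some connected component $C$ of the graph obtained by deleting $w$ would lie in $X$; as every edge of $\G$ between $X$ and $\{x,y,z\}$ already lies in $\around{\G}{X}$, the set $C$ is non-adjacent in $\G$ to at least one of $y,z$, so $\G[C]$ is connected and $\around{\G}{C}$ is a connected proper subgraph of $\G$ with $|N_\G(C)|\le2$. Then $C$ is a $2$-contractible vertex set, or $N_\G(C)$ is a $1$-cut of $\G$ separating $C$ from $\{s,t\}$, or $C$ is a whole component of $\G$ --- each impossible. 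The deletions of $y$ and of $z$ are handled identically.

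For part $(2)$, first observe that $\around{\G}{X}$ is not balanced: otherwise $X$, being connected with $|N_\G(X)|=3$, would be $3$-contractible, contradicting Claim~\ref{cl:3-contractible}. Suppose, towards a contradiction, that two of $\around{\G}{X}-x$, $\around{\G}{X}-y$, $\around{\G}{X}-z$ are balanced, say $\around{\G}{X}-x$ and $\around{\G}{X}-y$. Take an unbalanced cycle $C$ of $\around{\G}{X}$; it passes through $x$, entering and leaving $x$ via edges to vertices $a,a'\in X$. The subpath of $C$ from $a$ to $a'$ that avoids $x$ lies in the connected balanced graph $\around{\G}{X}-x$, hence has the same label as any $a$--$a'$ path inside $\G[X]$; replacing it by such a path yields an unbalanced cycle of $\around{\G}{X}$ avoiding both $y$ and $z$, so neither $\around{\G}{X}-y$ nor $\around{\G}{X}-z$ is balanced --- a contradiction. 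Hence at most one of the three graphs is balanced, and combining part $(1)$ with Lemma~\ref{lem:balanced2} shows that at least two of $|l(\around{\G}{X}-x;y,z)|$, $|l(\around{\G}{X}-y;z,x)|$, $|l(\around{\G}{X}-z;x,y)|$ are at least $2$.

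For part $(3)$, assume $|l(\around{\G}{X};y,z)|=1$. Since $\around{\G}{X}-x$ is a connected subgraph of $\around{\G}{X}$ with $(\around{\G}{X}-x,y,z)\in\cD$ by part $(1)$ and $l(\around{\G}{X}-x;y,z)\subseteq l(\around{\G}{X};y,z)$, Lemma~\ref{lem:balanced2} forces $\around{\G}{X}-x$ to be balanced, whereas $\around{\G}{X}$ itself is not (as in part $(2)$). Therefore $(\around{\G}{X},y,z)\notin\cD$ --- otherwise Lemma~\ref{lem:balanced2} would give $|l(\around{\G}{X};y,z)|\ge2$ --- and, since every vertex of $X$ lies on a $y$--$z$ path of $\around{\G}{X}-x$, the vertex $x$ must lie on no $y$--$z$ path of $\around{\G}{X}$. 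If $x$ had two distinct neighbours $a\ne b$ in $X$, then, using the connectivity of $\G[X]$ and $(\around{\G}{X}-x,y,z)\in\cD$, one would produce in $\around{\G}{X}-x$ two vertex-disjoint paths realising the linkage $(y,a),(b,z)$ or the linkage $(y,b),(a,z)$, and inserting $x$ between $a$ and $b$ would give a $y$--$z$ path through $x$, a contradiction; this is the step where the $2$-disjoint-paths characterisation (Theorem~\ref{thm:2path}) is invoked, to rule out the alternative planar-after-contraction configuration, which would itself expose a small contractible set of $\G$ (contradicting Claim~\ref{cl:2-contractible} or \ref{cl:3-contractible}). Hence $x$ is adjacent in $\around{\G}{X}$ to a unique vertex $v\in X$. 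If $X\ne\{v\}$, take a connected component $C$ of $\G[X\setminus\{v\}]$; then $C$ is non-adjacent to $x$, so $N_\G(C)\subseteq\{v,y,z\}$, and $\around{\G}{C}$, being a subgraph of the balanced graph $\around{\G}{X}-x$, is balanced, whence $C$ is a $1$-, $2$-, or $3$-contractible vertex set (or contains a vertex on no $s$--$t$ path) --- impossible. So $X=\{v\}$. Finally, $v$ has exactly one edge to $y$ and exactly one to $z$ (two parallel such edges would have distinct labels since $\G$ has no equivalent arcs, giving $y$--$v$--$z$ paths of distinct labels and $|l(\around{\G}{X};y,z)|\ge2$) and exactly two edges to $x$ (at most two, as $\G$ has no three parallel edges; at least two, since a single $vx$-edge would make $\around{\G}{X}$ a star and hence balanced), which have distinct labels --- precisely the asserted structure.

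The main obstacle is the step flagged in part $(3)$: showing that when $x$ has two distinct neighbours in $X$, the connectivity of $\G[X]$ together with $(\around{\G}{X}-x,y,z)\in\cD$ always yields the two vertex-disjoint paths needed to route a $y$--$z$ path through $x$. This is the only point that genuinely appeals to Theorem~\ref{thm:2path} (together with Menger's theorem); the remaining arguments are local, relying on balancedness, on the behaviour of labels under the available connectivity, and on the exclusion of small contractible subsets of $\G$.
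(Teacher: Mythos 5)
Parts $(1)$ and $(2)$ of your argument are correct. Part $(1)$ is essentially the paper's proof. Part $(2)$ takes a genuinely different route: the paper normalizes the balanced graph $\around{\G}{X}-x$ to have all labels $\id$ and exhibits two paths of distinct labels ending at $x$, whereas you show directly that at most one of the three vertex-deleted subgraphs can be balanced by rerouting an unbalanced cycle of $\around{\G}{X}$ through $\G[X]$; combined with $(1)$ and Lemma~\ref{lem:balanced2} this gives the same conclusion, and your version is a clean alternative.

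Part $(3)$, however, has a genuine gap at exactly the step you flag. You want to show that $x$ has a unique neighbour in $X$, and you propose to do this by showing that two distinct neighbours $a\neq b$ would force a $y$--$z$ path through $x$, invoking Theorem~\ref{thm:2path} and asserting that the obstruction case ``would itself expose a small contractible set of $\G$.'' That assertion is not justified and does not obviously hold: in Seymour's characterization the obstructing sets $X_i$ have at most three neighbours in $\around{\G}{X}-x$, but after restoring $x$ they may have \emph{four} neighbours in $\G$, so they need not be $2$- or $3$-contractible; moreover the family may be empty, in which case the certificate is merely a planar embedding of $\around{\G}{X}-x$ with the terminals in a bad cyclic order, which exposes no contractible set at all, and the two linkages $(y,a),(b,z)$ and $(y,b),(a,z)$ correspond to two \emph{different} bad cyclic orders, so a single application of Theorem~\ref{thm:2path} does not rule both out. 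The step you need is in fact much more elementary and requires no disjoint-paths machinery: since $\around{\G}{X}$ has no equivalent arcs and at least three vertices, $(\around{\G}{X},y,z)\notin\cD$ means by Lemma~\ref{lem:cD} that $\around{\G}{X}+e_{yz}$ has a cut vertex $v$; every component of $\around{\G}{X}+e_{yz}-v$ other than the one containing $\{y,z\}$ would, if it contained any vertex of $X$, contain a vertex of $X$ lying on no $y$--$z$ path of $\around{\G}{X}-x$, contradicting part $(1)$ (equivalently, the $2$-connectivity of $(\around{\G}{X}-x)+e_{yz}$). Hence the component cut off by $v$ is exactly $\{x\}$ and $N_{\around{\G}{X}}(x)=\{v\}$. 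With this repair, the remainder of your part $(3)$ (that $X=\{v\}$ via a contractible component of $\G[X\setminus\{v\}]$, and the edge-multiplicity bookkeeping) goes through and matches the claim.
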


\begin{figure}[htbp]\vspace{-3mm}\hspace{-3mm}
  \begin{tabular}{cc}
    \begin{minipage}[b]{0.5\hsize}
      \begin{center}
        \includegraphics[scale=0.8]{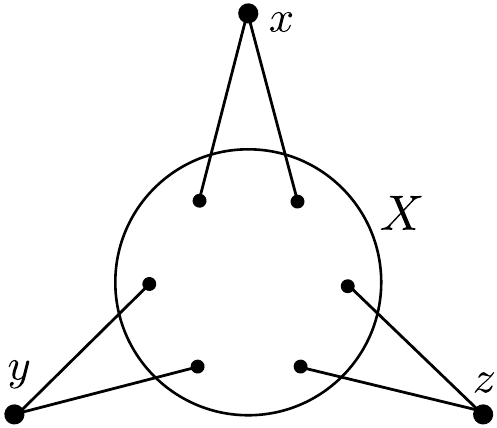}
      \end{center}\vspace{-5mm}
      \caption{The situation of Claim~\ref{cl:3-cut}.}
      \label{fig:3-cut_general}
    \end{minipage}
    \begin{minipage}[b]{0.5\hsize}
      \begin{center}
        \includegraphics[scale=0.8]{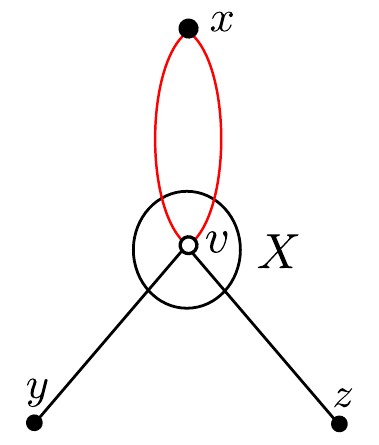}
      \end{center}\vspace{-5mm}
      \caption{When $|l(\around{\G}{X}; y, z)| = 1$.}
      \label{fig:3-cut_special}
    \end{minipage}
  \end{tabular}\vspace{-3mm}
\end{figure}

\begin{proof}
$(1)$
Suppose to the contrary that $(\around{\G}{X} - x, y, z) \not\in \cD$.
Then, $(\around{\G}{X} - x) + e_{yz}$ contains a 1-cut $w \in X \cup \{y, z\}$
by Lemma~\ref{lem:cD}, where $e_{yz} = \{y, z\}$ is a new edge (with an arbitrary label).
The vertex set of a connected component of $(\around{\G}{X} - x) + e_{yz} - w$
that contains none of $y$ and $z$ is separated from both $s$ and $t$
by $\{w, x\}$ in $\G$ (possibly $t = x$),
which is 2-contractible, contradicting Claim~\ref{cl:2-contractible}.
Thus, we have $(\around{\G}{X} - x, y, z) \in \cD$,
and also $(\around{\G}{X} - y, z, x) \in \cD$ and $(\around{\G}{X} - z, x, y) \in \cD$ by the symmetry of $x$, $y$, and $z$.

\medskip\noindent
$(2)$
By symmetry, suppose that $|l(\around{\G}{X} - x; y, z)| \leq 1$, and we show $|l(\around{\G}{X} - y; z, x)| \geq 2$ and $|l(\around{\G}{X} - z; x, y)| \geq 2$.
Since $\G[X]$ is connected, we have $|l(\around{\G}{X} - x; y, z)| = 1$,
and hence $\around{\G}{X} - x$ is balanced by Lemma~\ref{lem:balanced2} (with (1)).
Then, by Lemma~\ref{lem:shifting}, we may assume that all the arcs in $\around{\G}{X} - x$ are with label $\id$ by shifting at vertices in $X \cup \{y\}$ if necessary (note that any shifting operation preserves the balancedness and the number of possible labels of paths between two vertices). 
On the other hand, since $X$ is not 3-contractible in $\G$ (by Clam~\ref{cl:3-contractible}), $\around{\G}{X}$ is not balanced.
Hence, there exist two edges $e_1, e_2 \in \delta_{\G[\![X]\!]}(x)$ such that $\psi_{\G}(e_1, x) \neq \psi_{\G}(e_2, x)$.
Since $\G[X]$ is connected, $\around{\G}{X} - y$ contains two $z$--$x$ paths ending with $e_1$ and with $e_2$, whose labels are distinct,
and $\around{\G}{X} - z$ contains two $x$--$y$ paths starting with $e_1$ and with $e_2$, whose labels are distinct.
Thus we have $|l(\around{\G}{X} - y; z, x)| \geq 2$ and $|l(\around{\G}{X} - z; x, y)| \geq 2$.

\medskip\noindent
$(3)$
Suppose that $|l(\around{\G}{X}; y, z)| = 1$.
Then, since $\around{\G}{X}$ is not balanced (by Clam~\ref{cl:3-contractible}), Lemma~\ref{lem:balanced2} implies $(\around{\G}{X}, y, z) \not\in \cD$,
and hence $\around{\G}{X} + e_{yz}$ contains a 1-cut $v \in X$ by Lemma~\ref{lem:cD}, where $e_{yz} = \{y, z\}$ is a new edge (with an arbitrary label).
Since $(\around{\G}{X} - x, y, z) \in \cD$ implies that $v$ is not a 1-cut in $(\around{\G}{X} + e_{yz}) - x$,
the 1-cut $v$ separates $x$ from all the other vertices in $\around{\G}{X} + e_{yz}$,
which means that $v$ is a unique vertex in $N_{\G[\![X]\!]}(x)$.
On the other hand, since $(\around{\G}{X} - x, y, z) \in \cD$ and $l(\around{\G}{X} - x; y, z) \subseteq l(\around{\G}{X}; y, z)$,
Lemma~\ref{lem:balanced2} implies that $\around{\G}{X} - x$ is balanced.
Hence, there are two parallel edges between $v$ and $x$ that form an unbalanced cycle (recall that $\around{\G}{X}$ is not balanced).
Moreover, if $X \setminus \{v\} \neq \emptyset$, then $\G$ contains a contractible vertex set
$Y \subseteq X \setminus \{v\}$ with $N_\G(Y) \subseteq \{v, y, z\}$, a contradiction.
Thus we are done.
\end{proof}

We now start to derive a contradiction in Case 2, i.e., when $(\G', s, t) \in \cDab \setminus \cDab^1$.
Choose a minimal 2-contractible vertex set $X$ in $\G' = \G - e_0$, and let $N_{\G'}(X) = \{x, y\}$.
We then have $v_0 \in X$ and $s \not\in \{x, y\}$ by Claim~\ref{cl:2-contractible}
($X$ is not 2-contractible in $\G = \G' + e_0$),
$\G'[X] = \G[X]$ is connected (otherwise, some connected component $\G[Y]$ of $\G[X]$ does not contain $v_0$
and hence $N_\G(Y) \subseteq \{x, y\}$, contradicting Claim~\ref{cl:2-contractible} or $(\G, s, t) \in \cD$),
and $\around{\G'}{X}$ is not balanced by Claim~\ref{cl:3-contractible}
($X$ is not 3-contractible in $\G = \G' + e_0$).
Since Claim~\ref{cl:cD} implies $(\around{\G'}{X}, x, y) \in \cD$ (any $s$--$t$ path in $\G'$ intersecting some vertex in $X$ must intersect both $x$ and $y$),
we have $|l(\around{\G'}{X}; x, y)| \geq 2$ by Lemma~\ref{lem:balanced2}.
In particular, by Lemma~\ref{lem:unbalanced_cycle} and Claim~\ref{cl:2label},
we have $l(\around{\G'}{X}; x, y) = \{\alpha', \beta'\}$ for some $\alpha', \beta' \in \Gamma$ with $\alpha'{\beta'}^{-1} \neq \beta'{\alpha'}^{-1}$. 
This concludes $(\around{\G'}{X}, x, y) \in \cDabp^1$,
since $(\G, s, t) \not\in \cDab$ is a minimal counterexample and $X$ is a minimal 2-contractible vertex set in $\G'$.

\medskip
\noindent\underline{{\bf Case~2.1.}~~When $t \in \{x, y\}$.}

\medskip
Without loss of generality (by the symmetry of $x$ and $y$),
we may assume that $y = t$.

\medskip
\noindent\underline{{\bf Case~2.1.1.}~~When $V(\G) = X \cup \{s, x, t\}$.}

\medskip
Recall that $\G$ contains no edge between $s$ and $t$ (otherwise, $V(\G) \setminus \{s, t\}$ is 2-contractible, contradicting Claim~\ref{cl:2-contractible}).
Hence, 
by Lemma~\mbox{\ref{lem:cDab}-(2)}, 
$\G$ contains an edge between $s$ and $x$,
and there exists exactly one such edge $e = \{s, x\} \in E(\G)$ (see Fig.~\ref{fig:case_2.1.1}),
since $(\around{\G'}{X}, x, t) \in \cDabp^1$ and $|l(\G; s, t)| = 2$.
We assume $\psi_\G(e, x) = \id$ by shifting at $x$ if necessary, and then $(\around{\G'}{X}, x, t) \in \cD_{\id,\,\alpha}^1$
(recall that we may assume $\beta = \id$ and $\alpha^{-1} \neq \alpha$).
As with the previous section (Case 1),
let $\tilde\G \coloneqq \around{\G'}{X} \three Y$
for some 3-contractible vertex set $Y \subseteq X$ with $v_0 \in Y$ if exists,
and $\tilde\G \coloneqq \around{\G'}{X}$ otherwise,
so that $(\tilde\G, x, t) \in \cD_{\id,\,\alpha}^0$.
Consider the four cases in Definition~\ref{def:cDab0} separately.
\begin{figure}[b]\hspace{-3mm}
  \begin{tabular}{cc}
    \begin{minipage}[b]{0.5\hsize}
      \begin{center}
        \includegraphics[scale=0.7]{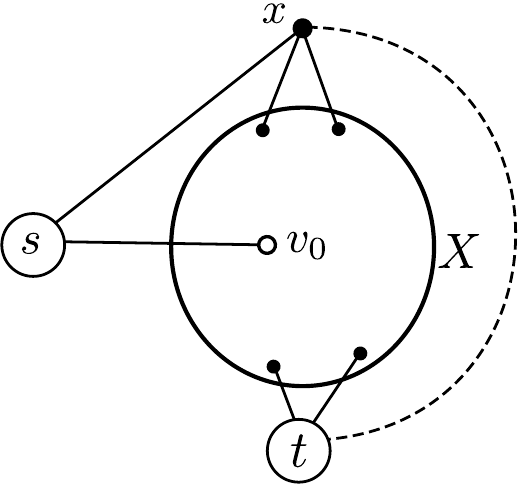}
      \end{center}\vspace{-3mm}
      \caption{Case~2.1.1.}
      \label{fig:case_2.1.1}
    \end{minipage}
    \begin{minipage}[b]{0.5\hsize}
      \begin{center}
        \includegraphics[scale=0.7]{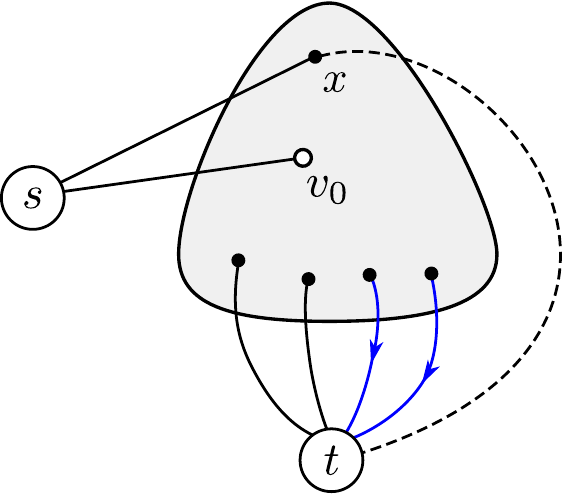}
      \end{center}\vspace{-3mm}
      \caption{Case~2.1.1.1.}
      \label{fig:case_2.1.1.1}
    \end{minipage}
  \end{tabular}
\end{figure}
\begin{description}
  \setlength{\itemsep}{1mm}
\item[Case~2.1.1.1.]
  Suppose that $(\tilde\G, x, t) \in \cD_{\id,\,\alpha}^0$ is in Case~(A2) (see Fig.~\ref{fig:case_2.1.1.1}).
  As with Case~1.1, we also have $(\around{\G'}{X}, x, t) \in \cD_{\id,\, \alpha}^0$,
  and we may assume that the label of every arc in $\around{\G'}{X} - t$ is $\id$ (by shifting at vertices in $X$ if necessary).
  If $\psi_\G(e_0, v_0) = \id$, then obviously $(\G, s, t) \in \cD_{\id,\,\alpha}^0$.
  Otherwise (i.e., if $\psi_\G(e_0, v_0) \neq \id$),
  since $\G[X]$ is connected, 
  there exists a $v_0$--$w$ path in $\around{\G'}{X}$
  for each neighbor $w \in N_\G(t)$, and hence $|l(\G, s, t)| \geq 3$ by Lemma~\ref{lem:2-2labels}, a contradiction.

\item[Case~2.1.1.2.]
  Suppose that $(\tilde\G, x, t) \in \cD_{\id,\,\alpha}^0$ is in Case~(A1) (see Fig.~\ref{fig:case_2.1.1.2_a}).
  Similarly (cf.~Case 1.1), we also have $(\around{\G'}{X}, x, t) \in \cD_{\id,\, \alpha}^0$,
  and we may assume that the label of every arc in $\around{\G'}{X} - x$ is $\id$ and
  every arc around $x$ other than $\vec{e} = sx$ leaves $x$ with label $\id$ or $\alpha$.

  Let $\bH$ be the graph obtained from $\G - s$
  (which coincides with $\around{\G'}{X}$ if $\{x, t\} \not\in E(\G)$)
  by splitting $x$ into two vertices $x_0$ and $x_1$
  so that every arc leaving $x$ in $\G - s$ with label $\alpha^i \in \{\id, \alpha\}$
  leaves $x_i$ in $\bH$ for each $i = 0, 1$ (see Fig.~\ref{fig:case_2.1.1.2_b}).

\begin{figure}[tbp]\vspace{0mm}\hspace{-3mm}
  \begin{tabular}{cc}
    \begin{minipage}[b]{0.5\hsize}
      \begin{center}
        \includegraphics[scale=0.7]{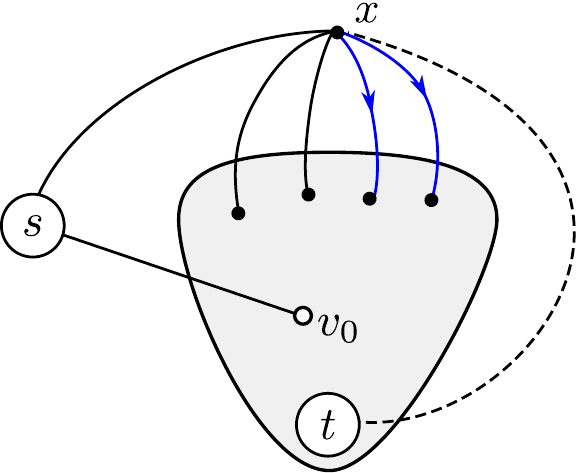}
      \end{center}\vspace{-3mm}
      \caption{Case~2.1.1.2.}
      \label{fig:case_2.1.1.2_a}
    \end{minipage}
    \begin{minipage}[b]{0.5\hsize}
      \begin{center}
        \includegraphics[scale=0.7]{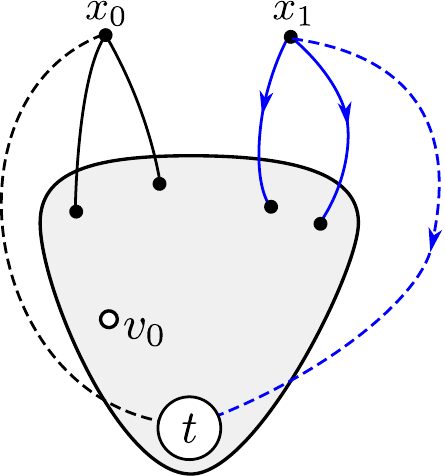}
      \end{center}\vspace{-3mm}
      \caption{$\bH$ in Case~2.1.1.2.}
      \label{fig:case_2.1.1.2_b}
    \end{minipage}
  \end{tabular}\vspace{-3mm}
\end{figure}
  
  Since $l(\G; s, t) = \{\id, \alpha\}$,
  either $\psi_\G(e_0, v_0) = \id$ or $\psi_\G(e_0, v_0) = \alpha$.
  Suppose that $\psi_\G(e_0, v_0) = \id$.
  If $\bH$ contains two disjoint paths, a $v_0$--$x_1$ path $P$ and an $x_0$--$t$ path $Q$,
  then we can construct an $s$--$t$ path of label $\alpha^{-1} \in \Gamma \setminus \{\id, \alpha\}$
  in $\G$ by concatenating $e_0$, $P$, and $Q$ with identifying $x_0, x_1 \in V(\bH)$ as $x \in V(\G)$.
  Otherwise, by Theorem~\ref{thm:2path}, $\bH$ can be embedded in the plane so that
  $v_0, x_0, x_1, t \in V(\bH)$ are on the outer boundary in this order
  (note that if there exists a vertex set
  $Y \subseteq V(\bH) \setminus \{v_0, x_0, x_1, t\} = V(\G) \setminus \{v_0, x, t\}$
  such that $|N_\bH(Y)| \leq 3$, then either $|N_\G(Y)| \leq 2$ or
  $|N_\G(Y)| \leq 3$ and $\around{\G}{Y}$ is balanced, which contradicts
  Claim~\ref{cl:2-contractible} or \ref{cl:3-contractible}, respectively).
  This embedding can be straightforwardly extended to an embedding of $\G$
  by merging $x_0, x_1 \in V(\bH)$ into $x \in V(\G)$
  and by adding the vertex $s$ and the two edges $e_0 = \{s, v_0\}$ and $e = \{s, x\}$ on the outer face.
  The resulting embedding satisfies
  the conditions of Case~(C) in Definition~\ref{def:cDab0}
  (cf.~Lemma~\ref{lem:cDab0}),
  which implies $(\G, s, t) \in \cD_{\id,\,\alpha}^0$, a contradiction.

  Otherwise, $\psi_\G(e_0, v_0) = \alpha$.
  Also in this case, by a similar argument to the above,
  we can either construct an $s$--$t$ path of label
  $\alpha^2 \in \Gamma \setminus \{\id, \alpha\}$
  in $\G$ by concatenating $e_0$ and 
  two disjoint paths, a $v_0$--$x_0$ path and an $x_1$--$t$ path,
  with identifying $x_0, x_1 \in V(\bH)$ as $x \in V(\G)$,
  or embed $\G$ so that $(\G, s, t) \in \cD_{\id,\,\alpha}^0$ is in Case~(C), a contradiction. 

\item[Case~2.1.1.3.]
  Suppose that $(\tilde\G, x, t) \in \cD_{\id,\,\alpha}^0$ is in Case~(B).
  If $\tilde\G = \around{\G'}{X}$, it is easy to confirm that
  $\{x\}$ is 3-contractible in $\G$ (if there is no edge between $x$ and $t$)
  or $|l(\G; s, t)| \geq 3$ (otherwise, i.e., if $\{x, t\} \in E(\G)$) by Lemma~\ref{lem:2-2labels} (see Fig.~\ref{fig:case_2.1.1.3_a}).

  Otherwise (i.e., if $\tilde\G = \around{\G'}{X} \three Y$ for some $Y \subseteq X$),
  we have either $N_{\G'}(Y) = \{x, v_1, v_2\}$
  or $N_{\G'}(Y) = \{v_3, v_4, t\}$.
  Suppose that $N_{\G'}(Y) = \{v_3, v_4, t\}$.
  In this case, we can derive a contradiction by Menger's theorem similarly to Case~1.2.
  That is, $\around{\G'}{Y}$ contains either two disjoint paths between
  $\{v_0, t\}$ and $\{v_3, v_4\}$ or a 1-cut $w \in Y$ separating them (possibly $w = v_0$).
  In the former case, $|l(\G; s, t)| \geq 3$ by Lemma~\ref{lem:2-2labels},
  and in the latter case, $\G$ contains a 2-cut $\{x, w\}$
  separating $\{v_3, v_4\}$ from $\{s, v_0, t\}$,
  which contradicts Claim~\ref{cl:2-contractible}.

\begin{figure}[htbp]\vspace{0mm}\hspace{-3mm}
  \begin{tabular}{cc}
    \begin{minipage}[b]{0.5\hsize}
      \begin{center}
        \includegraphics[scale=0.7]{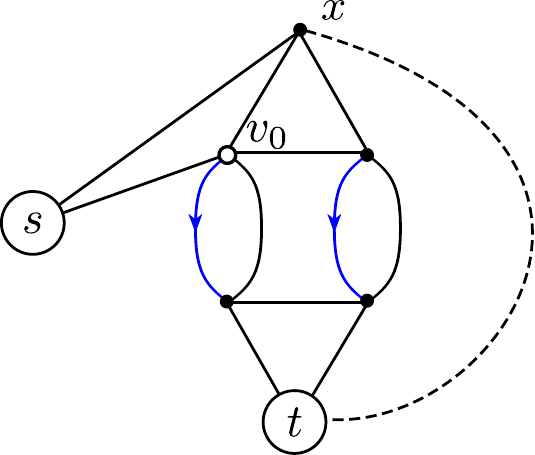}
      \end{center}\vspace{-2mm}
      \caption{Case~2.1.1.3 ($\tilde\G = \around{\G'}{X}$).}
      \label{fig:case_2.1.1.3_a}
    \end{minipage}
    \begin{minipage}[b]{0.5\hsize}
      \begin{center}
        \includegraphics[scale=0.7]{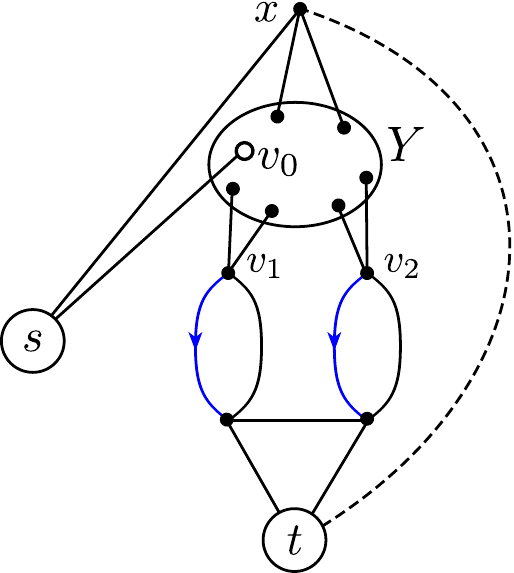}
      \end{center}\vspace{-5mm}
      \caption{Case~2.1.1.3 ($\tilde\G = \around{\G'}{X} \three Y$).}
      \label{fig:case_2.1.1.3_b}
    \end{minipage}
  \end{tabular}\vspace{-2mm}
\end{figure}

  Otherwise, $N_{\G'}(Y) = \{x, v_1, v_2\}$ (see Fig.~\ref{fig:case_2.1.1.3_b}).
  If $\{x, t\} \in E(\G)$, then we can similarly derive a contradiction by Menger's theorem, i.e.,
  either $|l(\G; s, t)| \geq 3$ by Lemma~\ref{lem:2-2labels}
  (if $\around{\G'}{Y}$ contains two disjoint paths between
  $\{v_0, x\}$ and $\{v_1, v_2\}$) or $\G$ contains a 2-cut $\{w, t\}$
  (if $\around{\G'}{Y}$ contains a 1-cut $w \in Y \cup \{x\}$ separating $\{v_0, x\}$ and $\{v_1, v_2\}$).
  Otherwise (i.e., $\{x, t\} \not\in E(\G)$), we have $N_\G(Y \cup \{x\}) = \{s, v_1, v_2\}$.
  Since $Y \cup \{x\}$ is not 3-contractible in $\G$ by Claim~\ref{cl:3-contractible},
  $\around{\G}{Y \cup \{x\}}$ is not balanced.
  If $|l(\around{\G}{Y \cup \{x\}}; s, v_1)| = 1$,
  then $(\around{\G}{Y \cup \{x\}}, s, v_1) \not\in \cD$ by Lemma~\ref{lem:balanced2}
  and hence there exists a 1-cut $w \in Y$ separating both $s$ and $v_1$ from all unbalanced cycles in $\around{\G}{Y \cup \{x\}}$
  by Lemma~\ref{lem:cD}.
  That is, $\G$ contains a 3-contractible vertex set $Z \subseteq Y \cup \{x\}$
  with $N_\G(Z) = \{s, v_1, w\}$ for some $w \in Y$
  (note that $\G'[Y] = \G[Y]$ is connected by Definition~\ref{def:3-contraction}),
  a contradiction.
  Otherwise, i.e., if $|l(\around{\G}{Y \cup \{x\}}; s, v_1)| \geq 2$, 
  we have $|l(\G; s, t)| \geq 3$ by Lemma~\ref{lem:2-2labels}, a contradiction.

\item[Case~2.1.1.4.]
  Suppose that $(\tilde\G, x, t) \in \cD_{\id,\,\alpha}^0$ is in Case~(C).
  In this case, by extending the $x$--$t$ path $P$ (in Lemma~\ref{lem:cDab0})
  to an $s$--$t$ path using the edge $e = \{s, x\}$,
  we can see that $(\G', s, t) \in \cD_{\id,\,\alpha}^0$
  (or $(\G' \three Y, s, t) \in \cD_{\id,\,\alpha}^0$ if $\tilde\G = \around{\G'}{X} \three Y$ for some 3-contractible $Y \subseteq V(\G) \setminus \{s, t\}$)
  is also in Case~(C) (see Fig.~\ref{fig:case_2.1.1.4}),
  which contradicts $(\G', s, t) \not\in \cD_{\id,\,\alpha}^1$.
\end{description}

\begin{figure}[htbp]\vspace{-2mm}
 \begin{center}
  \includegraphics[scale=0.7]{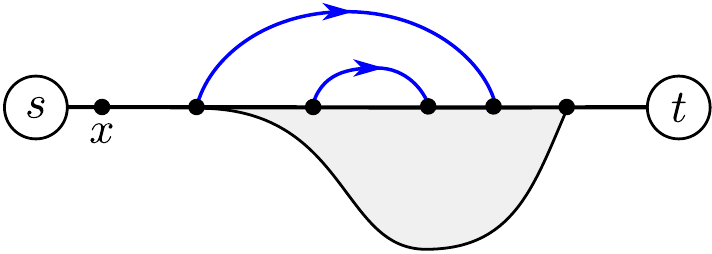}
 \end{center}\vspace{-5mm}
 \caption{Case~2.1.1.4 ($(\G', s, t) \in \cD_{\id,\, \alpha}^0$ is in Case (C) when so is $(\around{\G'}{X}, x, t) \in \cD_{\id,\, \alpha}^0$).}
 \label{fig:case_2.1.1.4}
\end{figure}

\noindent\underline{{\bf Case~2.1.2.}~~When $V(\G) \neq X \cup \{s, x, t\}$.}

\medskip
Let $Y \subseteq V(\G) \setminus (X \cup \{s, x, t\}) \neq \emptyset$
be the vertex set of a connected component of $\G - (X \cup \{s, x, t\})$.
We then have $N_\G(Y) \subseteq \{s, x, t\}$,
and by the definition of $\cD$ (any vertex in $Y$ is contained in some $s$--$t$ path in $\G$) and Claim~\ref{cl:2-contractible} ($Y$ is not 2-contractible in $\G$),
we must have $N_\G(Y) = \{s, x, t\}$ (see Fig.~\ref{fig:case_2.1.2}).
Moreover, we have $(\around{\G}{Y} - t, s, x) \in \cD$ by Claim~\ref{cl:3-cut}-(1).
These hold regardless of the choice of $Y$, and hence $(\G - (X \cup \{t\}), s, x) \in \cD$.
If $|l(\G - (X \cup \{t\}); s, x)| \geq 2$, then $|l(\G; s, t)| \geq 3$ by Lemma~\ref{lem:2-2labels} (recall that $(\around{\G'}{X}, x, t) \in \cDabp^1$), a contradiction.
Thus, we have $|l(\G - (X \cup \{t\}); s, x)| = 1$, and hence $\G - (X \cup \{t\})$ is balanced by Lemma~\ref{lem:balanced2}.
By Lemma~\ref{lem:shifting}, we may assume that all the arcs in $\G - (X \cup \{t\})$ are with label $\id$ by shifting at some vertices in $V(\G) \setminus (X \cup \{s, t\})$ if necessary.

\begin{figure}[htb]
 \begin{center}
  \includegraphics[scale=0.7]{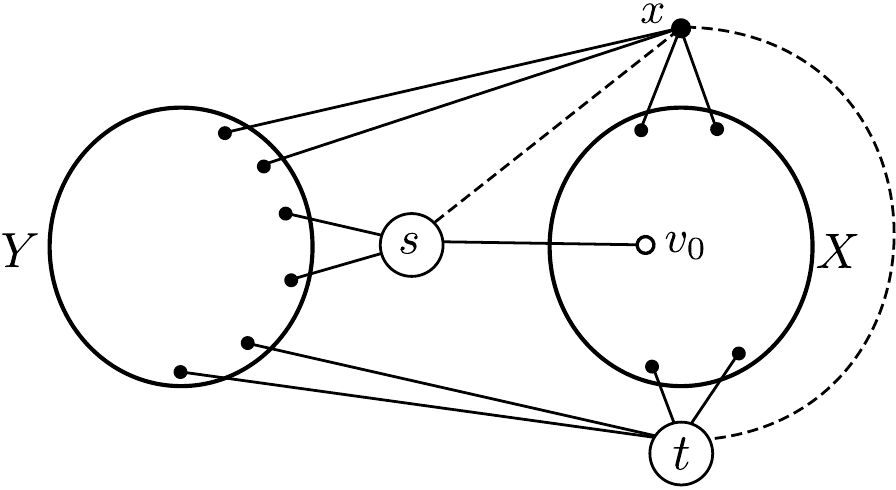}
 \end{center}\vspace{-5mm}
 \caption{Case~2.1.2.}
 \label{fig:case_2.1.2}
\end{figure}

We fix a connected component of $\G - (X \cup \{s, x, t\})$ with vertex set $Y$ (see again Fig.~\ref{fig:case_2.1.2}).
Claim~\ref{cl:3-contractible} implies that $\around{\G}{Y}$ is not balanced.
Since $|l(\around{\G}{Y} - t; s, x)| = 1$ (note that $\around{\G}{Y} - t$ is a subgraph of $\G - (X \cup \{t\})$),
we have $|l(\around{\G}{Y} - s; x, t)| \geq 2$ by Claim~\ref{cl:3-cut}-(2).
Since $l(\G; s, t) = \{\alpha, \beta\}$ with $\alpha\beta^{-1} \neq \beta\alpha^{-1}$,
we have $l(\around{\G}{Y} - s; x, t) = \{\alpha'', \beta''\}$ for some $\alpha'', \beta'' \in \Gamma$
with $\alpha''{\beta''}^{-1} \neq \beta''{\alpha''}^{-1}$ by Lemma~\ref{lem:unbalanced_cycle}. 
Moreover, since $(\around{\G}{X} - t, s, x) \in \cD$ by Claim~\ref{cl:3-cut}-(1),
if $\around{\G}{X} - t$ is not balanced, then $|l(\G; s, t)| \geq 3$ by Lemmas~\ref{lem:balanced2} and \ref{lem:2-2labels}, a contradiction.
Thus, $\around{\G}{X} - t$ is balanced,
and hence by Lemma~\ref{lem:shifting}, we may assume that all the arcs in $\around{\G}{X} - t - e_0$ are with label $\id$
by shifting at some vertices in $X$ after shifting at the vertices in $V(\G) \setminus (X \cup \{s, t\})$ if necessary
(recall that $\delta_{\G[\![X]\!]}(s) = \{e_0\}$ since $X$ is 2-contractible in $\G' = \G - e_0$).

We now have that almost all the arcs in $\G - t$ but $\vec{e}_0 = sv_0$ are with label $\id$.
If $\psi_\G(e_0, v_0) = \id$, then $\G - t$ is balanced, and hence $(\G, s, t) \in \cDab^0$ is in Case (A2) in Definition~\ref{def:cDab0}, a contradiction.
Otherwise, we see $|l(\G; s, t)| \geq 3$ by Lemma~\ref{lem:2-2labels}, a contradiction, as follows.
We choose $P_1 \coloneqq (s, e_0, v_0)$ and $P_2$ as an arbitrary $s$--$x$ path in $\G - (X \cup \{t\})$
so that $\psi_\G(P_1) = \psi_\G(e_0, v_0) \neq \id = \psi_\G(P_2)$.
Recall that $(\around{\G'}{X}, x, t) \in \cDabp^1$ for some $\alpha', \beta' \in \Gamma$ with $\alpha'{\beta'}^{-1} \neq \beta'{\alpha'}^{-1}$,
and note that shifting at $x$ preserves this inequality by definition (cf.~Definition~\ref{def:shifting}).
Since all the arcs in $\around{\G'}{X} = \around{\G}{X} - e_0$ but those around $t$ are with label $\id$,
there are (at least) two arcs entering $t$ from $X$ with different labels $\alpha', \beta' \in \Gamma$ with $\alpha'{\beta'}^{-1} \neq \beta'{\alpha'}^{-1}$.
Also, recall that $\G[X]$ is connected as discussed just before starting Case~2.1,
which implies that, for any neighbor $z \in \delta_{\G[\![X]\!]}(t)$, there exist a $v_0$--$z$ path in $\G[X]$ and an $x$--$z$ path in $\around{\G}{X} - \{s, t\}$.
Thus we can apply Lemma~\ref{lem:2-2labels} to derive $|l(\G; s, t)| \geq 3$, a contradiction.

\medskip
\noindent\underline{{\bf Case~2.2.}~~When $t \not\in \{x, y\}$.}

\medskip
Suppose that $V(\G) = X \cup \{s, x, y, t\}$ (see Fig.~\ref{fig:case_2.2.0}).
Then, by the symmetry of $x$ and $y$,
we may assume that there exists an edge $e = \{s, x\} \in \delta_\G(s)$,
for which $(\G - e, s, t) \in \cDab \setminus \cDab^1$
(otherwise, we can choose $e$ instead of $e_0$, and reduce this case to Case 1).
Moreover, $t$ is adjacent to both $x$ and $y$ since otherwise $\{s, y\}$ or $\{s, x\}$ is a 2-cut in $\G$,
which contradicts Claim~\ref{cl:2-contractible}.
Hence, by choosing $e$ instead of $e_0$,
we can reduce this case to Case~2.1 (since $x$ and $t$ are adjacent,
$t$ must be a neighbor of any 2-contractible vertex set in $\G - e$ that contains $x$).

In what follows, we assume $V(\G) \neq X \cup \{s, x, y, t\}$. Then, the following claim holds.

\begin{figure}[bp]\vspace{-1mm}\hspace{-3mm}
  \begin{tabular}{cc}
    \begin{minipage}[b]{0.5\hsize}
      \begin{center}
        \includegraphics[scale=0.7]{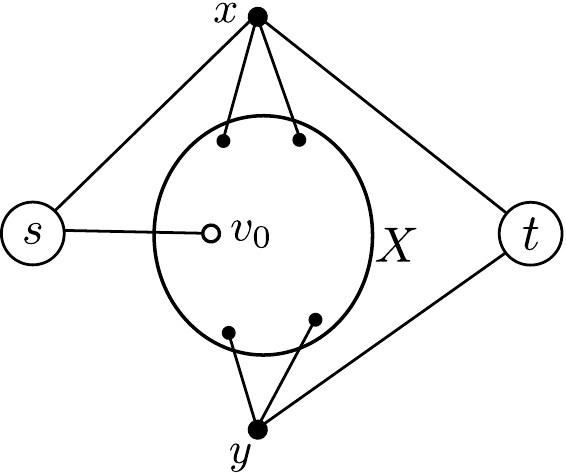}
      \end{center}\vspace{-5mm}
      \caption{Case~2.2 ($V(\G) = X \cup \{s, x, y, t\}$).}
      \label{fig:case_2.2.0}
    \end{minipage}
    \begin{minipage}[b]{0.5\hsize}
      \begin{center}
        \includegraphics[scale=0.7]{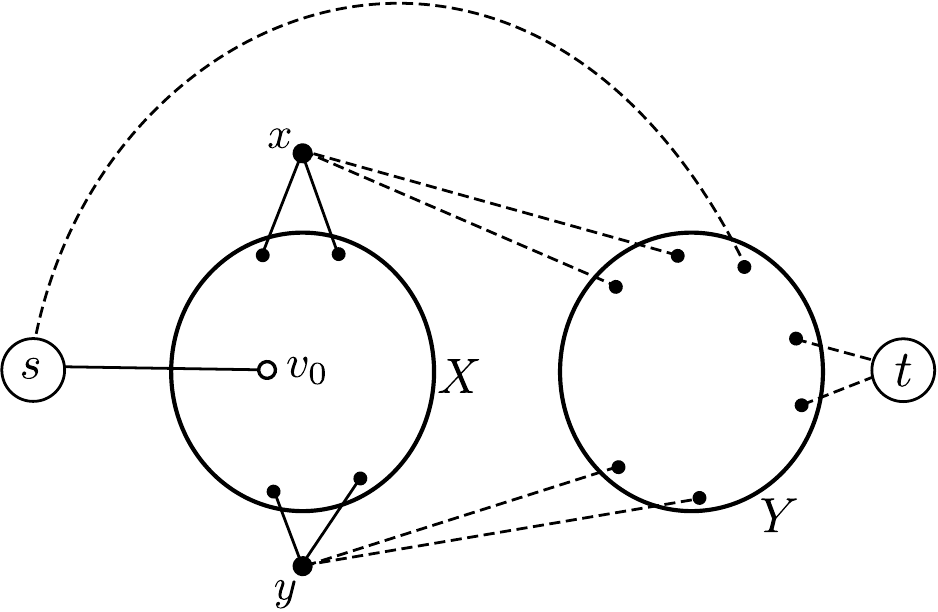}
      \end{center}\vspace{-5mm}
      \caption{Case~2.2 ($V(\G) \neq X \cup \{s, x, y, t\}$).}
      \label{fig:case_2.2}
    \end{minipage}
  \end{tabular}\vspace{-3mm}
\end{figure}

\begin{claim}\label{cl:Y}
  Let $Y \subseteq V(\G) \setminus (X \cup \{s, x, y, t\}) \neq \emptyset$ be the vertex set of a connected component of $\G - (X \cup \{s, x, y, t\})$ $($see Fig.~$\ref{fig:case_2.2}$$)$.
  Then, either $N_\G(Y) = \{s, x, y, t\}$ or $|N_\G(Y)| = 3$, and $\around{\G}{Y}$ is not balanced in the latter case.
\end{claim}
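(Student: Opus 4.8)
The plan is to derive everything from the two structural facts already in force for the minimal counterexample: $\G$ contains no $2$-contractible set (Claim~\ref{cl:2-contractible}) and no $3$-contractible set (Claim~\ref{cl:3-contractible}), together with $(\G, s, t) \in \cD$. The argument has no genuine obstacle; it is a matter of checking that the relevant contraction can be \emph{performed} so that these claims apply, and of excluding the degenerate small cases.

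First I would locate $N_\G(Y)$. Recall that $v_0 \in X$ and $e_0 = \{s, v_0\}$, so in $\G = \G' + e_0$ we have $N_\G(X) = N_{\G'}(X) \cup \{s\} = \{s, x, y\}$, and in particular $X$ is disjoint from $\{s, x, y, t\}$. Since $Y$ is the vertex set of a connected component of $\G - (X \cup \{s, x, y, t\})$ and $Y \cap \{s, x, y, t\} = \emptyset$, no edge of $\G$ joins $Y$ to $X$ (such an edge would put a vertex of $Y$ in $N_\G(X) = \{s, x, y\}$); hence $N_\G(Y) \subseteq \{s, x, y, t\}$. Next I would bound $|N_\G(Y)|$ below: if $|N_\G(Y)| \le 1$ then, as $Y$ contains neither $s$ nor $t$, no $s$--$t$ walk can both enter and leave $Y$ without repeating the unique cut vertex, so no vertex of $Y$ lies on an $s$--$t$ path, contradicting $(\G, s, t) \in \cD$.

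Then I would rule out $|N_\G(Y)| = 2$. In that case $Y \subseteq V(\G) \setminus \{s, t\}$ with $N_\G(Y) = \{a, b\}$ for distinct $a, b$; the subgraph $\around{\G}{Y}$ is connected because $\G[Y]$ is connected (being a component) and each of $a, b$ has a neighbor in $Y$; and $\around{\G}{Y} \ne \G$ because $X \ne \emptyset$ (it contains $v_0$) and $X$ is disjoint from $Y \cup N_\G(Y) \subseteq Y \cup \{s, x, y, t\}$. So the $2$-contraction of $Y$ can be performed and $Y$ would be $2$-contractible in $\G$, contradicting Claim~\ref{cl:2-contractible} (cf.~Definition~\ref{def:2-contraction}). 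Therefore $|N_\G(Y)| \in \{3, 4\}$, and since $N_\G(Y) \subseteq \{s, x, y, t\}$ the case $|N_\G(Y)| = 4$ yields $N_\G(Y) = \{s, x, y, t\}$.

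Finally, suppose $|N_\G(Y)| = 3$ and, for contradiction, that $\around{\G}{Y}$ is balanced. Since $\G[Y]$ is connected and $|N_\G(Y)| = 3$, the $3$-contraction of $Y$ can be performed (cf.~Definition~\ref{def:3-contraction}), so $Y$ is $3$-contractible in $\G$, contradicting Claim~\ref{cl:3-contractible}. Hence $\around{\G}{Y}$ is not balanced when $|N_\G(Y)| = 3$, which completes the proof. The only points that need a little care are the verifications that $\around{\G}{Y}$ is connected and strictly contained in $\G$ (needed for Definition~\ref{def:2-contraction}) and the elementary exclusion of $|N_\G(Y)| \le 1$; both are routine.
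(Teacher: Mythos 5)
Your proof is correct and follows essentially the same route as the paper: bound $N_\G(Y)$ inside $\{s,x,y,t\}$ using $N_\G(X)=\{s,x,y\}$, then rule out $|N_\G(Y)|\le 2$ via $(\G,s,t)\in\cD$ and Claim~\ref{cl:2-contractible}, and rule out a balanced $\around{\G}{Y}$ with $|N_\G(Y)|=3$ via Claim~\ref{cl:3-contractible}. The paper states this more tersely ("$Y$ is not contractible in $\G$"), while you spell out the verification of the contractibility conditions, which is the only content of the claim.
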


\begin{proof}
Since $\{s, x, y\}$ is a 3-cut separating $X$ from $t$ in $\G$, we have $N_\G(Y) \subseteq \{s, x, y, t\}$.
Moreover, since $Y$ is not contractible in $\G$ by Claims~\ref{cl:2-contractible} and \ref{cl:3-contractible} (and $(\G, s, t) \in \cD$),
we have $|N_\G(Y)| \geq 3$ and if $|N_\G(Y)| = 3$ then $\around{\G}{Y}$ is not balanced.
\end{proof}

Fix such $Y \subseteq V(\G) \setminus (X \cup \{s, x, y, t\})$, and consider the two cases in Claim~\ref{cl:Y} separately.

\medskip
\noindent\underline{{\bf Case~2.2.1.}~~When $N_{\G}(Y) = \{s, x, y, t\}$.}

\medskip
In particular, we may assume that there exists an edge $e' = \{s, v'\} \in \delta_\G(s)$ with $v' \in Y$
such that $\G - e'$ contains a 2-contractible vertex set $X' \subseteq V(\G) \setminus \{s, t\}$
with $v' \in X'$ and $N_{\G - e'}(X') = \{x', y'\}$
for some distinct $x', y' \in V(\G) \setminus \{s, v'\}$
(recall that, if $\G - e'$ contains no 2-contractible vertex set,
then we can reduce this case to Case~1 by choosing $e'$ instead of $e_0$).
Choose minimal $X'$, and then $\G[X']$ is connected.
If $t \in \{x', y'\}$, then this case reduces to Case~2.1 by choosing $e'$ instead of $e_0$.
Otherwise, since $\G[Y]$ is connected, we have $\{x', y'\} \cap Y \neq \emptyset$.
Without loss of generality, we assume $y' \in Y$.
We first consider the case when $x' \in V(\G) \setminus (Y \cup \{s, x, y, t\})$. 
\begin{description}
\item[Case~2.2.1.1 {\rm (Fig.~\ref{fig:case_2.2.1})}.]
Suppose that $x' \in V(\G) \setminus (Y \cup \{s, x, y, t\})$.
Since $\G[Y]$ is connected, $y'$ $(\not\in \{v', t\})$ separates $v'$ from $t$ in $\around{\G}{Y} - s$.
If $y'$ separates $v'$ also from both $x$ and $y$ in $\around{\G}{Y} - s$, then $\{s, y'\}$ is a 2-cut separating $v'$ from $t$ in $\G$,
which contradicts Claim~\ref{cl:2-contractible}.
Moreover, if $y'$ separates $v'$ from either $x$ or $y$ in $\around{\G}{Y} - s$,
then $\{y, y'\}$ or $\{x, y'\}$, respectively,
is a 2-cut separating $v'$ from $t$ in $\G - e'$, which contradicts the minimality of $X'$ (since $y \in X'$ or $x \in X'$, respectively).
Hence, we may assume $\{x, y\} \subseteq X'$, and then $\{x, t\}, \{y, t\} \not\in E(\G)$.

In particular, $\around{\G}{Y} - y'$ contains a $v'$--$x$ path and a $v'$--$y$ path,
which can be extended to $v'$--$s$ paths in $\G - y'$ through $\around{\G}{X}$.
The other vertex $x'$ must be on such a path (otherwise, $s \in X'$, a contradiction) but $x' \not\in \{s, x, y\}$, and hence $x' \in X$. 
If $Y \neq V(\G) \setminus (X \cup \{s, x, y, t\})$,
then there exists $Y' \subseteq V(\G) \setminus (X \cup Y \cup \{s, x, y, t\})$ such that $\G[Y']$ is connected
and $N_\G(Y') \cap \{x, y\} \neq \emptyset \neq N_\G(Y') \cap \{s, t\}$ (by Claim~\ref{cl:Y}),
and hence $\{x', y'\}$ cannot separate $v'$ from both $s$ and $t$ in $\G - e'$, a contradiction.
Thus, we have $Y = V(\G) \setminus (X \cup \{s, x, y, t\})$,
and hence $\{s, y'\}$ is a 2-cut in $\G$ separating $v'$ from $t$,
which contradicts Claim~\ref{cl:2-contractible}. 
\end{description}

We next consider the case when $x' \in Y \cup \{x, y\}$.
We then have $X' \subseteq Y$ and $(\around{\G}{X'} - s, x', y') \in \cD_{\alpha'', \beta''}^1$ for some $\alpha'', \beta'' \in \Gamma$
with $\alpha''{\beta''}^{-1} \neq \beta''{\alpha''}^{-1}$ as with $(\around{\G}{X} - s, x, y) \in \cDabp^1$ discussed just before starting Case~2.1 (where note that $\around{\G}{X'} - s = \around{(\G - e')}{X'}$ and $\around{\G}{X} - s = \around{\G'}{X}$).
By Claim~\ref{cl:3-cut}-(2), without loss of generality (by the symmetry of $x$ and $y$),
we may assume that $|l(\around{\G}{X} - y; s, x)| \geq 2$.

Let $Z \subseteq Y \cup \{x, y, t\}$ be the set of vertices that are contained in some $x$--$t$ path in $\G[Y \cup \{x, y, t\}]$,
i.e., $\G[Z] + e_{xt}$ is a 2-connected component of $\G[Y \cup \{x, y, t\}] + e_{xt}$ by Lemma~\ref{lem:cD}, where $e_{xt} = \{x, t\}$ is a new edge (with an arbitrary label).
Since $(\G[Z], x, t) \in \cD$, if $\G[Z]$ is not balanced, then $|l(\G[Z]; x, t)| \geq 2$ by Lemma~\ref{lem:balanced2},
and hence $|l(\G; s, t)| \geq 3$ by Lemmas~\ref{lem:unbalanced_cycle} and \ref{lem:2-2labels}, a contradiction.
In particular, since $(\around{\G}{X'} - s, x', y') \in \cD_{\alpha'', \beta''}^1$ and $\{x', y'\}$ is a 2-cut in $\G[Y \cup \{x, y, t\}]$ as well as in $\G - e'$,
if $Z \cap X' \neq \emptyset$, then $Z \supseteq X' \cup \{x', y'\} = V(\around{\G}{X'} - s)$, and hence $\G[Z]$ is not balanced as well as $\around{\G}{X'} - s$.
Thus, this cannot occur, and we have $Z \cap X' = \emptyset$ (and equivalently $|Z \cap \{x', y'\}| \leq 1$).
Moreover, if $y \in Z$, then there exists a 1-cut $w \in Z \setminus \{x, t\} \subseteq Y \setminus X'$ separating $X'$ from $\{x, y, t\}$ in $\G[Y \cup \{x, y, t\}]$ by Lemma~\ref{lem:cD},
which implies that $\{s, w\}$ is a 2-cut separating $X'$ from $t$ in $\G$, contradicting Claim~\ref{cl:2-contractible}.

\begin{figure}[tbp]\hspace{-2mm}
  \begin{tabular}{cc}
    \begin{minipage}[b]{0.45\hsize}
      \begin{center}
        \includegraphics[scale=0.7]{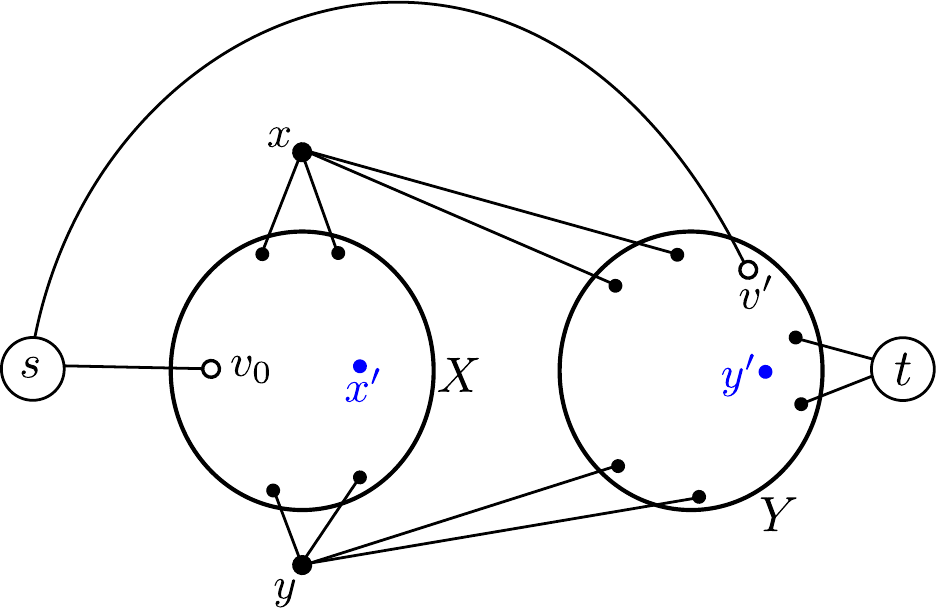}
      \end{center}\vspace{0mm}
      \caption{Case~2.2.1.1.}
      \label{fig:case_2.2.1}
    \end{minipage}
    \begin{minipage}[b]{0.55\hsize}
      \begin{center}
        \includegraphics[scale=0.7]{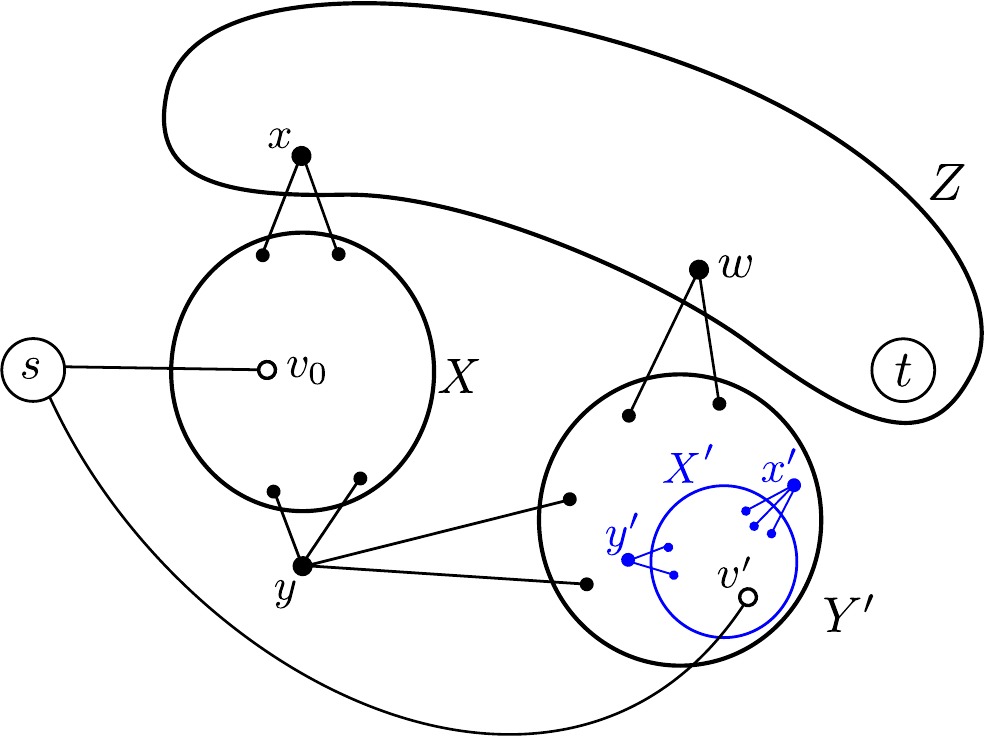}
      \end{center}\vspace{-3mm}
      \caption{$Z$ and $Y'$ in $\G[Y \cup \{x, y, t\}]$.}
      \label{fig:case_2.2.1.12}
    \end{minipage}
  \end{tabular}\vspace{-2mm}
\end{figure}

Thus, there exists a 1-cut $w \in Z \setminus \{x, t\}$ separating $X' \cup \{y\}$ from $\{x, t\}$ in $\G[Y \cup \{x, y, t\}]$.
Let $Y' \subseteq Y \setminus Z \neq \emptyset$ be the vertex set of a connected component of $\G[Y \setminus Z]$ (see Fig.~\ref{fig:case_2.2.1.12}),
and then $N_\G(Y') = \{s, w, y\}$ (otherwise, $Y'$ is 2-contractible in $\G$ or consists of vertices not contained in any $s$--$t$ path in $\G$, contradicting Claim~\ref{cl:2-contractible} or $(\G, s, t) \in \cD$, respectively).
By Claim~\ref{cl:3-cut}-(2), we have at least one of $|l(\around{\G}{Y'} - w; s, y)| \geq 2$ and $|l(\around{\G}{Y'} - s; y, w)| \geq 2$.
If the former holds, then we immediately obtain $|l(\G; s, t)| \geq 3$, a contradiction, from Lemma~\ref{lem:2-2labels}
by concatenating two $s$--$y$ paths of distinct labels in $\around{\G}{Y'} - w$, two $y$--$x$ paths of distinct labels in $\around{\G'}{X} = \around{\G}{X} - s$, and an arbitrary $x$--$t$ path in $\G[Z]$.
Hence, we have $|l(\around{\G}{Y'} - s; y, w)| \geq 2$.
\begin{description}
  \setlength{\itemsep}{1mm}
\item[Case~2.2.1.2.]
Suppose that $V(\G) \neq X \cup Y \cup \{s, x, y, t\}$ in addition to $x' \in Y \cup \{x, y\}$.
Then, there exists a vertex set $W \subseteq V(\G) \setminus (X \cup Y \cup \{s, x, y, t\}) \neq \emptyset$ such that
$\G[W]$ is connected and $N_\G(W)$ includes at least one of $\{s, x\}$ and $\{y, t\}$ (by Claim~\ref{cl:Y}).
When $\{s, x\} \subseteq N_\G(W)$ (see Fig.~\ref{fig:case_2.2.1.2-1}), we derive $|l(\G; s, t)| \geq 3$, a contradiction, from Lemma~\ref{lem:2-2labels}
by concatenating an $s$--$x$ path in $\around{\G}{W}$, two $x$--$y$ paths of distinct labels in $\around{\G'}{X} = \around{\G}{X} - s$, two $y$--$w$ paths of distinct labels in $\around{\G}{Y'} - s$, and a $w$--$t$ path in $\G[Z] - x$ (recall that $\G[Z]$ contains an $x$--$t$ path intersecting $w \neq x$).
When $\{y, t\} \subseteq N_\G(W)$ (see Fig.~\ref{fig:case_2.2.1.2-2}), we also derive $|l(\G; s, t)| \geq 3$, a contradiction, from Lemma~\ref{lem:2-2labels}
by concatenating two $s$--$x$ paths of distinct labels in $\around{\G}{X} - y$, an $x$--$w$ path in $\G[Z] - t$ (recall that $\G[Z]$ contains an $x$--$t$ path intersecting $w \neq t$),
two $w$--$y$ paths of distinct labels in $\around{\G}{Y'} - s$, and a $y$--$t$ path in $\around{\G}{W}$.

\begin{figure}[htbp]\vspace{3mm}\hspace{-3mm}
  \begin{tabular}{lr}
    \begin{minipage}[b]{0.5\hsize}
      \begin{center}
        \includegraphics[scale=0.67]{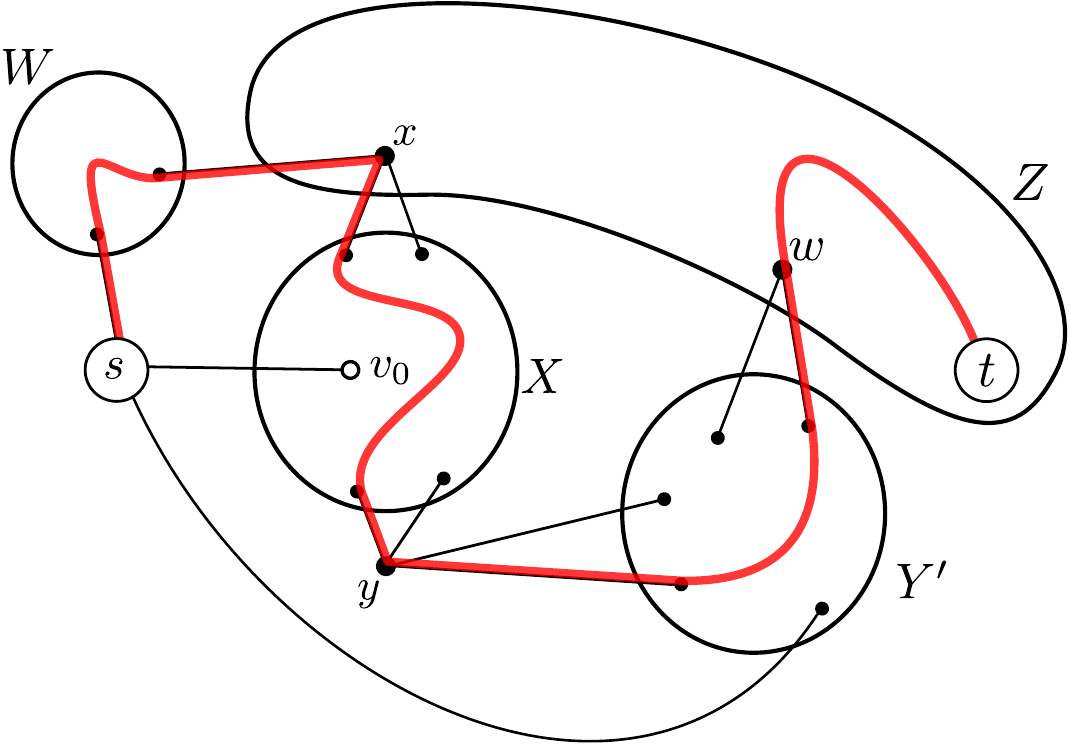}
      \end{center}\vspace{-3mm}
      \caption{Case~2.2.1.2 ($\{s, x\} \subseteq N_\G(W)$).}
      \label{fig:case_2.2.1.2-1}
    \end{minipage}
    \begin{minipage}[b]{0.5\hsize}
      \begin{center}
        \includegraphics[scale=0.67]{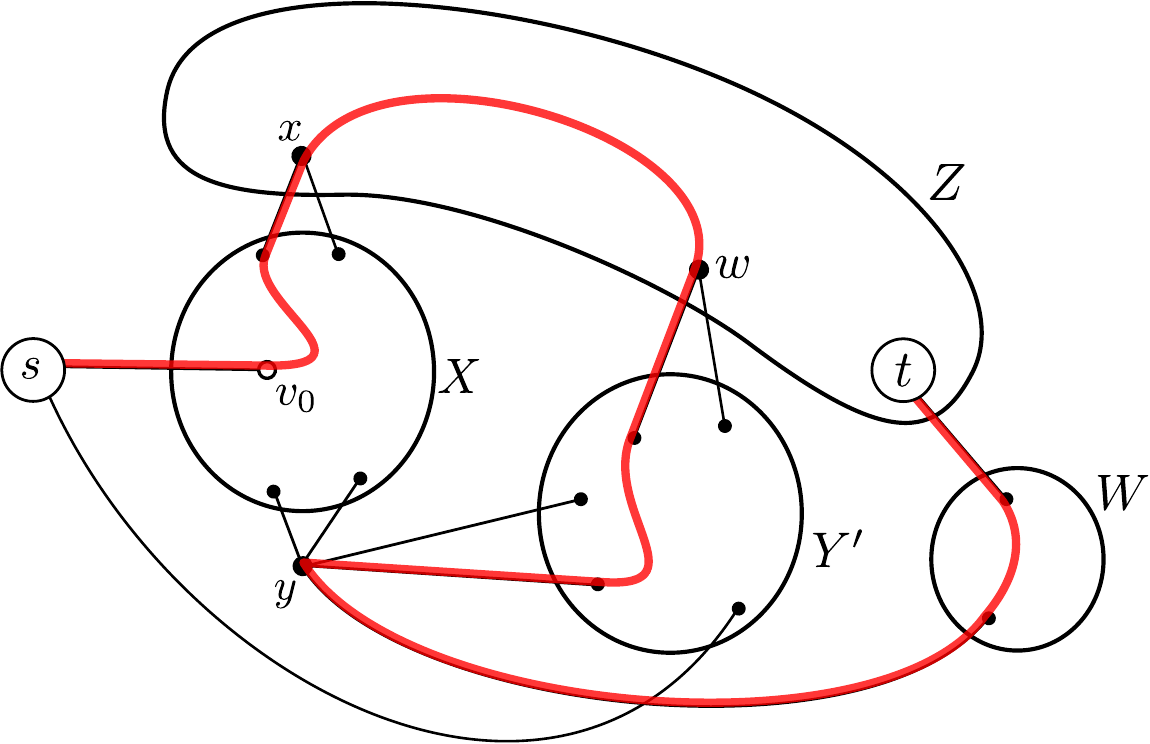}
      \end{center}\vspace{-3mm}
      \caption{Case~2.2.1.2 ($\{y, t\} \subseteq N_\G(W)$).}
      \label{fig:case_2.2.1.2-2}
    \end{minipage}
  \end{tabular}
\end{figure}

\item[Case~2.2.1.3.]
Otherwise, $V(\G) = X \cup Y \cup \{s, x, y, t\}$ and $x' \in Y \cup \{x, y\}$.
If $w$ separates $x$ from $t$ in $\G[Z]$,
then $\{s, w\}$ is a 2-cut in $\G$ (recall that $\G[Z] + e_{xt}$ is a 2-connected component of $\G[Y \cup \{x, y, t\}] + e_{xt}$, where $e_{xt} = \{x, t\}$),
contradicting Claim~\ref{cl:2-contractible}.
Hence, $\G[Z] - w$ contains an $x$--$t$ path.
If $|l(\around{\G}{Y'}; s, y)| \geq 2$, then we obtain $|l(\G; s, t)| \geq 3$, a contradiction,
by Lemma~\ref{lem:2-2labels} (take two $s$--$y$ paths of distinct labels in $\around{\G}{Y'}$, two $y$--$x$ paths of distinct labels in $\around{\G'}{X} = \around{\G}{X} - s$, and an $x$--$t$ path in $\G[Z] - w$).
Otherwise, by Claim~\ref{cl:3-cut}-(3), we have $Y' = \{v\}$ for some $v \in V(\G) \setminus (X \cup Z \cup \{s, y\})$,
and $E(\around{\G}{Y'})$ consists of a single edge between $v$ and $s$, one between $v$ and $y$, and two parallel edges between $v$ and $w$.
Moreover, if $Y' \neq Y \setminus Z$, then there exists another $Y'' \subseteq Y \setminus Z$ such that $\G[Y'']$ is connected
and $N_\G(Y'') = \{s, w, y\}$ (see Fig.~\ref{fig:case_2.2.1.3-2}), and we then have $|l(\G; s, t)| \geq 3$, a contradiction, by Lemma~\ref{lem:2-2labels} (take two $s$--$w$ paths of distinct labels in $\around{\G}{Y'} - y$, a $w$--$y$ path in $\around{\G}{Y''} - s$, two $y$--$x$ paths of distinct labels in $\around{\G'}{X} = \around{\G}{X} - s$, and an $x$--$t$ path in $\G[Z] - w$). Thus we have $Y' = Y \setminus Z$,
and also $\{y, w\} \not\in E(\G)$ by the same reason.

Since $x$ does not separate $w$ from $t$ in $\G[Z]$ (by the definition of $Z$),
we have $|l(\around{\G}{X}; s, y)| = 1$ (otherwise, Lemma~\ref{lem:2-2labels} concludes $|l(\G; s, t)| \geq 3$ as with the above discussion).
Hence, by Claim~\ref{cl:3-cut}-(3),
we have $X = \{v_0\}$ 
and $E(\around{\G}{X})$ consists of a single edge between $v_0$ and $s$, one between $v_0$ and $y$, and two parallel edges between $v_0$ and $x$.
Since $\{y\}$ is not contractible in $\G$ (by Claims~\ref{cl:2-contractible} and \ref{cl:3-contractible}),
there exist parallel edges between $s$ and $y$ (see Fig.~\ref{fig:case_2.2.1.3-3}),
where recall $Y \setminus Z = Y' = \{v\}$ and $\{y, w\} \not\in E(\G)$, and also that $w$ separates $y$ from $Z$ in $\G[Y \cup \{x, y, t\}]$.
We then obtain $|l(\G; s, t)| \geq 3$, a contradiction, by Lemma~\ref{lem:2-2labels} (take two $s$--$y$ paths of distinct labels consisting of parallel edges, two $y$--$x$ paths of distinct labels in $\around{\G'}{X} = \around{\G}{X} - s$, and an $x$--$t$ path in $\G[Z]$).
\end{description}

\begin{figure}[htb]\vspace{0mm}\hspace{-3mm}
  \begin{tabular}{cc}
    \begin{minipage}[b]{0.5\hsize}
      \begin{center}
        \includegraphics[scale=0.7]{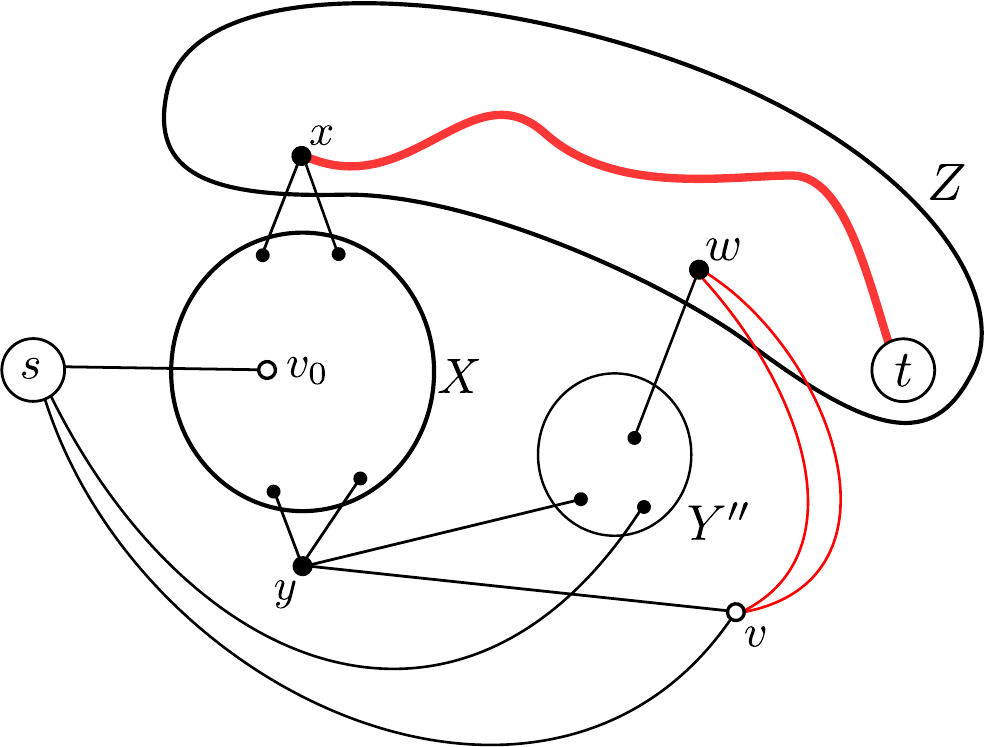}
      \end{center}\vspace{-4mm}
      \caption{Case~2.2.1.3 ($Y' \neq Y \setminus Z$).}
      \label{fig:case_2.2.1.3-2}
    \end{minipage}
    \begin{minipage}[b]{0.5\hsize}
      \begin{center}
        \includegraphics[scale=0.7]{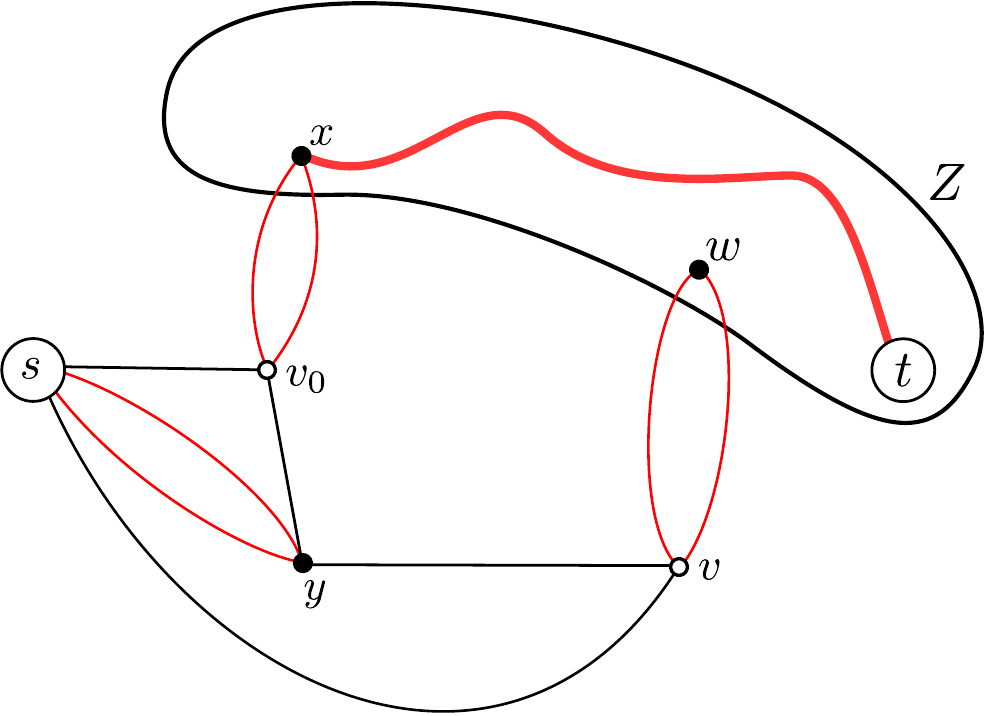}
      \end{center}\vspace{-3mm}
      \caption{Case~2.2.1.3 ($|X| = |Y'| = 1$).}
      \label{fig:case_2.2.1.3-3}
    \end{minipage}
  \end{tabular}
\end{figure}

\noindent\underline{{\bf Case~2.2.2.}~~When $|N_\G(Y)| = 3$.}

\medskip
We then have $\around{\G}{Y}$ is not balanced by Claim~\ref{cl:3-contractible}.
We discuss the following three cases separately:
when $N_\G(Y) = \{s, x, y\}$, when $N_\G(Y) = \{x, y, t\}$, and when $N_\G(Y) = \{s, x, t\}$ (including the case $N_\G(Y) = \{s, y, t\}$ by symmetry).
\begin{description}
  \setlength{\itemsep}{1mm}
\item[Case~2.2.2.1 {\rm (Fig.~\ref{fig:case_2.2.2.1})}.]
Suppose that $N_\G(Y) = \{s, x, y\}$.
Since neither $\{s, x\}$ nor $\{s, y\}$ is a 2-cut in $\G$ (by Claim~\ref{cl:2-contractible}),
there exist a $y$--$t$ path in $\G - (X \cup Y \cup \{s, x\})$ and an $x$--$t$ path in $\G - (X \cup Y \cup \{s, y\})$.
By Claim~\ref{cl:3-cut}-(2), we have $|l(\around{\G}{Y} - y; s, x)| \geq 2$ or $|l(\around{\G}{Y} - x; s, y)| \geq 2$,
and hence we derive $|l(\G; s, t)| \geq 3$, a contradiction, from Lemma~\ref{lem:2-2labels},
e.g., by taking two $s$--$x$ paths of distinct labels in $\around{\G}{Y} - y$, two $x$--$y$ paths of distinct labels in $\around{\G'}{X} = \around{\G}{X} - s$ (recall that $(\around{\G'}{X}, x, y) \in \cDabp^1$), and a $y$--$t$ path in $\G - (X \cup Y \cup \{s, x\})$.

\begin{figure}[htb]\vspace{0mm}\hspace{-3mm}
  \begin{tabular}{cc}
    \begin{minipage}[b]{0.5\hsize}
      \begin{center}
        \includegraphics[scale=0.7]{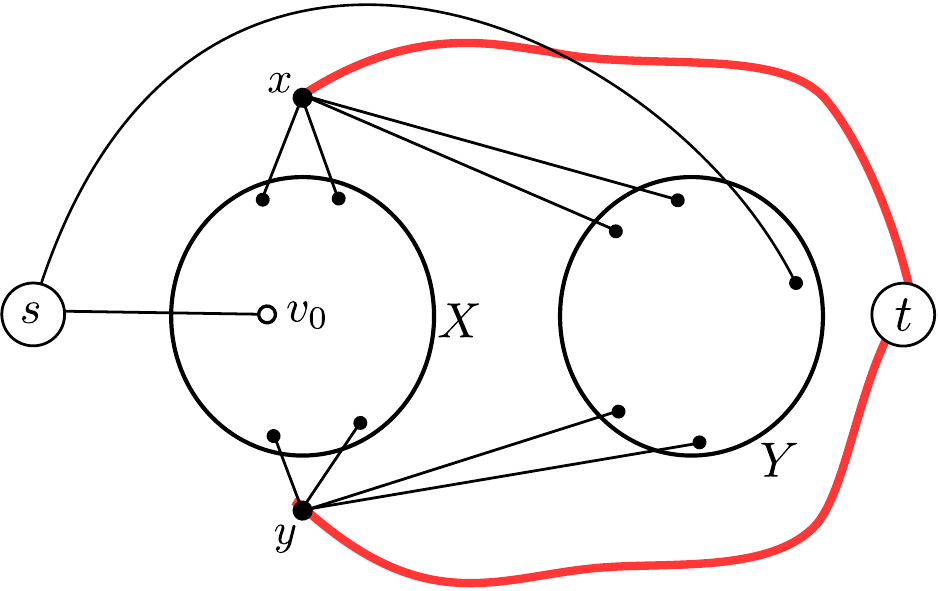}
      \end{center}\vspace{-4mm}
      \caption{Case~2.2.2.1.}
      \label{fig:case_2.2.2.1}
    \end{minipage}
    \begin{minipage}[b]{0.5\hsize}
      \begin{center}
        \includegraphics[scale=0.7]{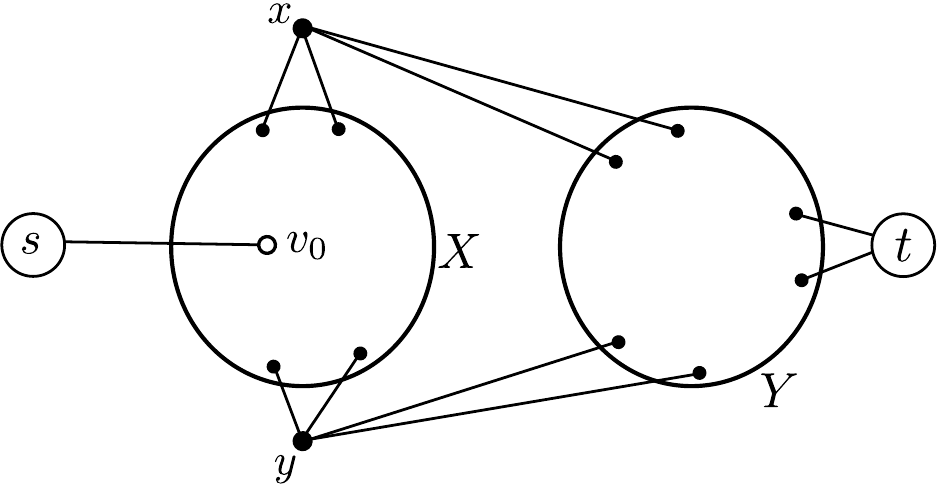}
      \end{center}\vspace{-2mm}
      \caption{Case~2.2.2.2.}
      \label{fig:case_2.2.2.2-1}
    \end{minipage}
  \end{tabular}\vspace{-2mm}
\end{figure}

\item[Case~2.2.2.2 {\rm (Fig.~\ref{fig:case_2.2.2.2-1})}.]
Suppose that $N_\G(Y) = \{x, y, t\}$.
By Claim~\ref{cl:3-cut}-(2), without loss of generality (by the symmetry of $x$ and $y$),
we may assume that $|l(\around{\G}{X} - y; s, x)| \geq 2$.
If $|l(\around{\G}{Y}; x, t)| \geq 2$, then we immediately obtain $|l(\G; s, t)| \geq 3$ by Lemma~\ref{lem:2-2labels}, a contradiction.
Otherwise, by Claim~\ref{cl:3-cut}-(3), we have $Y = \{v\}$ for some $v \in V(\G) \setminus (X \cup \{s, x, y, t\})$,
and $E(\around{\G}{Y})$ consists of a single edge between $v$ and $x$, one between $v$ and $t$, and two parallel edges between $v$ and $y$.
Then, similarly, since $|l(\around{\G}{X}; s, y)| = 1$ must hold (otherwise, Lemma~\ref{lem:2-2labels} implies $|l(\G; s, t)| \geq 3$, a contradiction),
by Claim~\ref{cl:3-cut}-(3), we have $X = \{u\}$ for some $u \in V(\G) \setminus \{s, v, x, y, t\}$ (in particular, $u = v_0$),
and $E(\around{\G}{X})$ consists of a single edge between $u$ and $s$, one between $u$ and $y$, and two parallel edges between $u$ and $x$.

If $V(\G) = \{s, u, v, x, y, t\}$ (see Fig.~\ref{fig:case_2.2.2.2-2}), then both $s$ and $t$ have at least one neighbor other than $u = v_0$ and $v$, respectively, since $\G$ contains no $2$-contractible vertex set (by Claim~\ref{cl:2-contractible}).
In this case, to satisfy $|l(\G; s, t)| = 2$, the triplet $(\G, s, t)$ must be in Case (B) of $\cDab^0$ with $v_1 = u$, $v_2 = y$, $v_3 = x$, and $v_4 = v$ (see Definition~\ref{def:cDab0} and Fig.~\ref{fig:cDab0B}), contradicting $(\G, s, t) \not\in \cDab$.
Otherwise, by Claim~\ref{cl:Y}, there exists a vertex set $Z \subseteq V(\G) \setminus \{s, u, v, x, y, t\} \neq \emptyset$ such that $\G[Z]$ is connected
and $N_\G(Z)$ includes at least one of $\{s, x\}$ and $\{y, t\}$ (see Fig.~\ref{fig:case_2.2.2.2-3}).
When $\{s, x\} \subseteq N_\G(Z)$, we derive $|l(\G; s, t)| \geq 3$, a contradiction, from Lemma~\ref{lem:2-2labels}
by concatenating an $s$--$x$ path in $\G[Z \cup \{s, x\}]$, two $x$--$y$ paths of distinct labels in $\around{\G'}{X} = \around{\G}{X} - s$, and two $y$--$t$ paths of distinct labels in $\around{\G}{Y} - x$.
When $\{y, t\} \subseteq N_\G(Z)$, we also derive $|l(\G; s, t)| \geq 3$, a contradiction, from Lemma~\ref{lem:2-2labels}
by concatenating two $s$--$x$ paths of distinct labels in $\around{\G}{X}$, two $x$--$y$ paths of distinct labels in $\around{\G}{Y} - t$, and a $y$--$t$ path in $\G[Z \cup \{y, t\}]$.

\begin{figure}[htbp]\vspace{2mm}\hspace{-3mm}
  \begin{tabular}{cc}
    \begin{minipage}[b]{0.5\hsize}
      \begin{center}
        \includegraphics[scale=0.8]{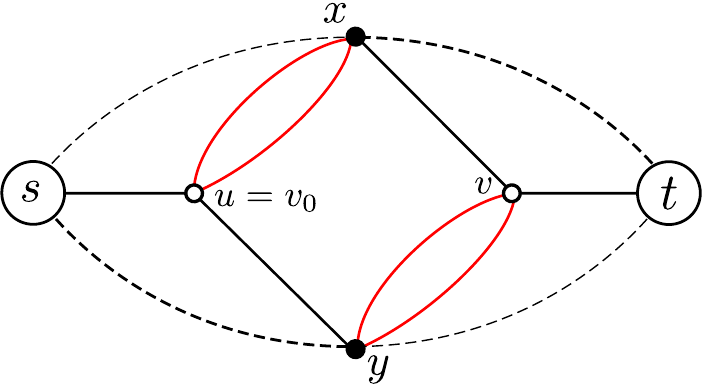}
      \end{center}\vspace{-1mm}
      \caption{Case~2.2.2.2 ($|V(\G)| = 6$).}
      \label{fig:case_2.2.2.2-2}
    \end{minipage}
    \begin{minipage}[b]{0.5\hsize}
      \begin{center}
        \includegraphics[scale=0.8]{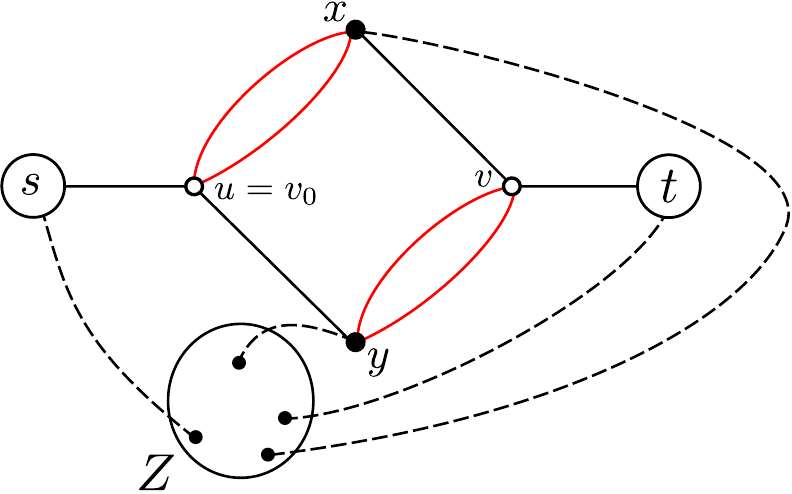}
      \end{center}\vspace{-3mm}
      \caption{Case~2.2.2.2 ($|V(\G)| > 6$).}
      \label{fig:case_2.2.2.2-3}
    \end{minipage}
  \end{tabular}
\end{figure}

\item[Case~2.2.2.3 {\rm (Fig.~\ref{fig:case_2.2.2.3-1})}.]
Suppose that $N_\G(Y) = \{s, x, t\}$, where note again that this case includes the case $N_\G(Y) = \{s, y, t\}$ by the symmetry of $x$ and $y$.
Since $\{s, x\}$ is not a 2-cut in $\G$ (by Claim~\ref{cl:2-contractible}),
there exists a $y$--$t$ path in $\G - (X \cup Y \cup \{s, x\})$.
Hence, if $|l(\around{\G}{Y} - t; s, x)| \geq 2$, then Lemma~\ref{lem:2-2labels} implies $|l(\G; s, t)| \geq 3$, a contradiction,
where recall that $(\around{\G'}{X}, x, y) \in \cDabp^1$.
Otherwise, by Claim~\ref{cl:3-cut}-(2), we have $|l(\around{\G}{Y} - s; x, t)| \geq 2$.
Then, since $|l(\around{\G}{X}; s, x)| = 1$ must hold (otherwise, Lemma~\ref{lem:2-2labels} implies $|l(\G; s, t)| \geq 3$, a contradiction),
by Claim~\ref{cl:3-cut}-(3), we have $X = \{v_0\}$ 
and $E(\around{\G}{X})$ consists of a single edge between $v_0$ and $s$, one between $v_0$ and $x$, and two parallel edges between $v_0$ and $y$ (see Fig.~\ref{fig:case_2.2.2.3-2}).

If there exists an $s$--$y$ path in $\G - (Y \cup \{x, v_0, t\})$,
then we derive $|l(\G; s, t)| \geq 3$, a contradiction, from Lemma~\ref{lem:2-2labels}
by concatenating such an $s$--$y$ path, two $y$--$x$ paths of distinct labels in $\around{\G'}{X} = \around{\G}{X} - s$,
and two $x$--$t$ paths of distinct labels in $\around{\G}{Y} - s$.
Moreover, if there exists a $y$--$x$ path in $\G - (Y \cup \{x, v_0, t\})$,
then we also derive $|l(\G; s, t)| \geq 3$, a contradiction, from Lemma~\ref{lem:2-2labels}
by concatenating two $s$--$y$ paths of distinct labels in $\around{\G}{X} - x$, such a $y$--$x$ path,
and two $x$--$t$ paths of distinct labels in $\around{\G}{Y} - s$.
Hence, we assume that there is no such path in $\G - (Y \cup \{x, v_0, t\})$,
and then there is no vertex set $Z \subseteq V(\G) \setminus (Y \cup \{s, v_0, x, y, t\}) \neq \emptyset$
such that $\G[Z]$ is connected and $y \in N_\G(Z)$ (by Claim~\ref{cl:Y}).
Thus, we have $N_\G(y) \subseteq \{v_0, t\}$,
which contradicts Claim~\ref{cl:2-contractible} (or $(\G, s, t) \in \cD$).
\end{description}

\begin{figure}[tbp]\vspace{0mm}\hspace{-3mm}
  \begin{tabular}{cc}
    \begin{minipage}[b]{0.5\hsize}
      \begin{center}
        \includegraphics[scale=0.7]{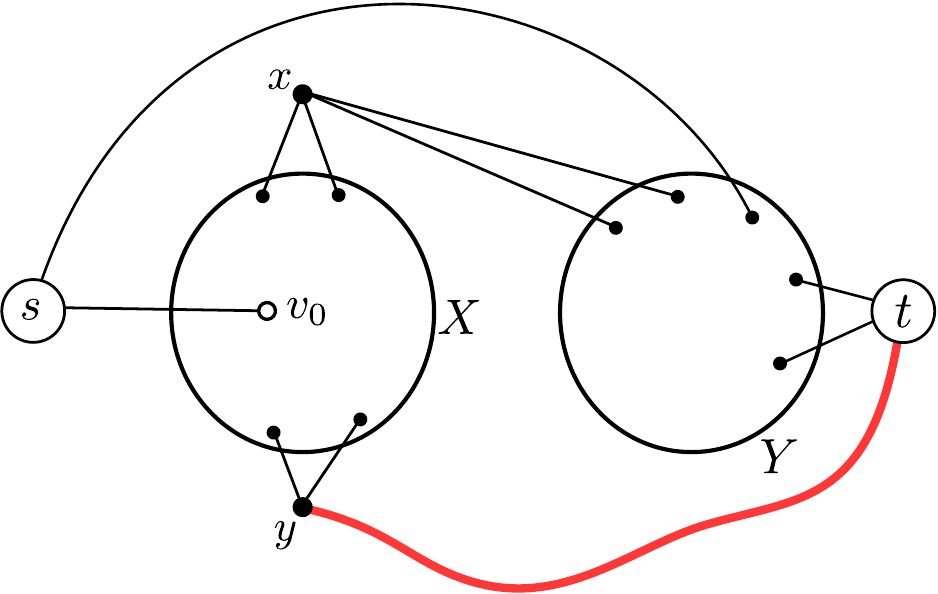}
      \end{center}\vspace{-4mm}
      \caption{Case~2.2.2.3.}
      \label{fig:case_2.2.2.3-1}
    \end{minipage}
    \begin{minipage}[b]{0.5\hsize}
      \begin{center}
        \includegraphics[scale=0.7]{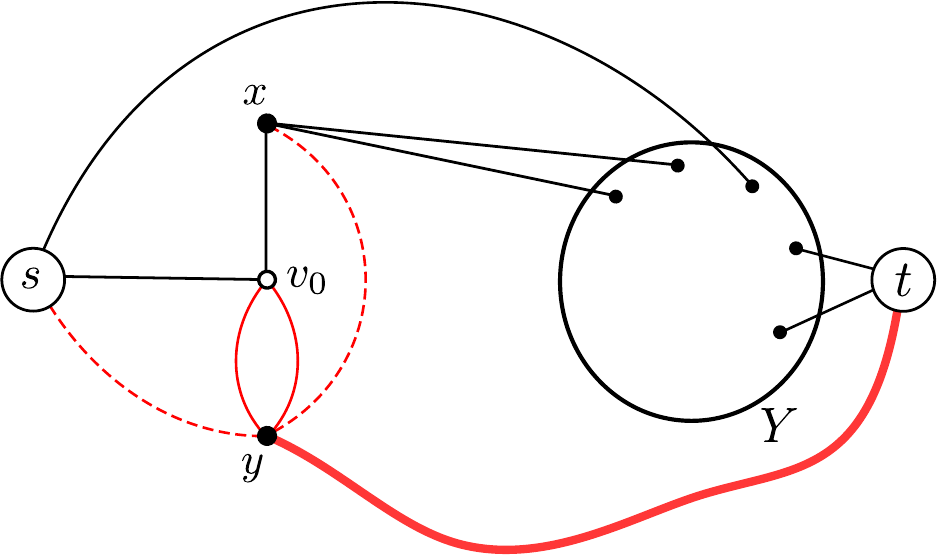}
      \end{center}\vspace{-2mm}
      \caption{Case~2.2.2.3 ($|X| = 1$).}
      \label{fig:case_2.2.2.3-2}
    \end{minipage}
  \end{tabular}\vspace{-2mm}
\end{figure}

\subsection*{Acknowledgments}
We deeply grateful to anonymous reviewers for their careful reading and giving a number of insightful comments.
This work was supported by JSPS KAKENHI Grant Numbers 24106002, 24700004, and 26887014,
by JSPS Grant-in-Aid for JSPS Research Fellow Grant Number 13J02522,
by JST ERATO Grant Number JPMJER1201, and by JST CREST Grant Number JPMJCR14D2.


\begin{thebibliography}{99}

\bibitem{APY}
E.~M.~Arkin, C.~H.~Papadimitriou, M.~Yannakakis:
\newblock{Modularity of cycles and paths in graphs}.
\newblock{\em Journal of the ACM}, {\bfseries 38} (1991), pp.~255--274.



\bibitem{non-zero_algorithm}
M.~Chudnovsky, W.~Cunningham, J.~Geelen:
\newblock{An algorithm for packing  non-zero $A$-paths in group-labelled graphs}.
\newblock{\em Combinatorica}, {\bfseries 28} (2008), pp.~145--161.

\bibitem{non-zero}
M.~Chudnovsky, J.~Geelen, B.~Gerards, L.~Goddyn, M.~Lohman, P.~D.~Seymour:
\newblock{Packing non-zero $A$-paths in group-labelled graphs}.
\newblock{\em Combinatorica}, {\bfseries 26} (2006), pp.~521--532.



\bibitem{Diestel}
R.~Diestel:
\newblock{\em Graph Theory, 5th ed.},
Springer, Berlin, Heidelberg, 2017.

\bibitem{FHW}
S.~Fortune, J.~Hopcroft, J.~Wyllie: 
\newblock{The directed subgraph homeomorphism problem}.
\newblock{\em Theoretical Computer Science}, {\bfseries 10} (1980), pp.~111--121.



\bibitem{HT}
J.~Hopcroft, R.~Tarjan:
\newblock{Efficient algorithm for graph manipulation}.
\newblock{\em Communications of the ACM}, {\bfseries 16} (1973), pp.~372--378.

\bibitem{HT2}
J.~Hopcroft, R.~Tarjan:
\newblock{Efficient planarity testing}.
\newblock{\em Journal of the ACM}, {\bfseries 21} (1974), pp.~549--568.

\bibitem{Huynh}
T.~Huynh:
{\em The Linkage Problem for Group-Labelled Graphs}.
Ph.D.~Thesis,  Department of Combinatorics and Optimization,
University of Waterloo, Ontario, 2009.

\bibitem{HJW2019}
T.~Huynh, F.~Joos, P.~Wollan:
\newblock{A unified Erd\H{o}s--P\'{o}sa theorem for constrained cycles}.
\newblock{\em Combinatorica}, {\bfseries 39} (2019), pp.~91--133.


\bibitem{KW}
K.~Kawarabayashi, P.~Wollan:
\newblock{Non-zero disjoint cycles in highly connected group labelled graphs}.
\newblock{\em Journal of Combinatorial Theory, Ser. B}, {\bfseries 96} (2006), pp.~296--301.

\bibitem{KKY}
Y.~Kawase, Y.~Kobayashi, Y.~Yamaguchi:
\newblock{Finding a path in group-labeled graphs with two labels forbidden}.
\newblock{\em Proceedings of the 42nd International Colloquium on Automata, Languages, and Programming $($ICALP 2015$)$}, pp.~797--809, 2015.

\bibitem{LP}
A.~S.~LaPaugh, C.~H.~Papadimitriou:
\newblock{The even-path problem for graphs and digraphs}.
\newblock{\em Networks}, {\bfseries 14} (1984), pp.~507--513.

\bibitem{LRS2017}
D.~Lokshtanov, M.~S.~Ramanujan, S.~Saurabh:
\newblock{The half-integral Erd\H{o}s-P\'{o}sa property for non-null cycles}.
\newblock{\em arXiv preprints}, arXiv:1703.02866, 2017.


\bibitem{McCuaig}
W.~McCuaig:
\newblock{P\'{o}lya's permanent problem}.
\newblock{\em The Electronic Journal of Combinatorics}, {\bfseries 11} (2004), R79.



\bibitem{Polya}
G.~P\'{o}lya:
\newblock{Aufgabe 424}.
\newblock{\em Arch. Math. Phys.}, {\bfseries 20} (1913), p.~271.

\bibitem{GM13}
N.~Robertson, P.~D.~Seymour: 
\newblock{Graph minors.\ XIII. the disjoint paths problem}.
\newblock{\em Journal of Combinatorial Theory, Ser.~B}, {\bfseries 63} (1995), pp.~65--110.

\bibitem{RST}
N.~Robertson, P.~D.~Seymour, R.~Thomas:
\newblock{Permanents, Pfaffian orientations, and even directed circuits}.
\newblock{\em Annals of Mathematics}, {\bfseries 150} (1999), pp.~929--975.


\bibitem{Shiloach}
Y.~Shiloach:
\newblock{A polynomial solution to the undirected two paths problem}.
\newblock{\em Journal of the ACM}, {\bfseries 27} (1980), pp.~445--456.

\bibitem{2path}
P.~D.~Seymour:
\newblock{Disjoint paths in graphs}.
\newblock{\em Discrete Mathematics}, {\bfseries 29} (1980), pp.~293--309.

\bibitem{TY}
S.~Tanigawa, Y.~Yamaguchi:
\newblock{Packing non-zero $A$-paths via matroid matching}.
\newblock{\em Discrete Applied Mathematics}, {\bfseries 214} (2016), pp.~169--178.

\bibitem{Thomassen}
C.~Thomassen:
\newblock{2-linked graphs},
\newblock{\em European Journal of Combinatorics}, {\bfseries 1} (1980), pp.~371--378.



\bibitem{Wollan}
P.~Wollan:
\newblock{Packing cycles with modularity constraints}.
\newblock{\em Combinatorica}, {\bfseries 31} (2011), pp.~95--126.

\bibitem{Yamaguchi}
Y.~Yamaguchi:
\newblock{Packing $A$-paths in group-labelled graphs via linear matroid parity}.
\newblock{\em SIAM Journal on Discrete Mathematics}, {\bfseries 30} (2016), pp.~474--492.


\end{thebibliography}
\end{document}